\newtheorem{maintheorem}{Theorem}
\newtheorem{theorem}{Theorem}[section]
\newtheorem{lemma}[theorem]{Lemma}
\newtheorem{proposition}[theorem]{Proposition}
\newtheorem{corollary}[theorem]{Corollary}
\newtheorem{definition}[theorem]{Definition}
\def\XXint#1#2#3{{\setbox0=\hbox{$#1{#2#3}{\int}$ }
\vcenter{\hbox{$#2#3$ }}\kern-.6\wd0}}
\newcommand{\E}{\mathbb{E}}
\renewcommand{\P}{\mathbb{P}}
\newcommand{\Z}{\mathbb{Z}}
\newcommand{\N}{\mathbb{N}}
\newcommand{\R}{\mathbb{R}}
\begin{document}

\title{Last Passage Percolation with a Defect Line \\ and the Solution of the Slow Bond Problem}

\author{R.
Basu
\thanks{Department of Mathematics, Stanford University. Email: rbasu@stanford.edu}
 \and
V. Sidoravicius
\thanks{Courant Institute of Mathematical Sciences, New York, NYU-ECNU Institute of Mathematical Sciences at NYU Shanghai and
Cemaden, Sao Jose dos Campos. Email: vs1138@nyu.edu}
\and
A. Sly
\thanks{University of California, Berkeley. Supported by an Alfred Sloan Fellowship and NSF grant DMS-1208338. Email: sly@stat.berkeley.edu}
}

\date{}
\maketitle

\begin{abstract}
We address the question of how a localized microscopic defect, especially if it
is small with respect to certain dynamic parameters,
affects the macroscopic behavior of a system. In particular we consider two classical exactly solvable models:
Ulam's problem of the maximal increasing sequence and the totally
asymmetric simple exclusion process. For the first model, using its
representation as a Poissonian version of directed last passage percolation on $\mathbb R^2$,
we introduce the defect by placing a positive density of extra points along the diagonal line.  For the latter,
the defect is produced by decreasing the jump rate of each particle when it crosses the origin.

The powerful algebraic tools for studying these processes break down in the perturbed versions of the models.
Taking a more geometric approach we show that in both cases the presence of an arbitrarily small defect  affects the macroscopic
behavior of the system: in Ulam's problem the time constant increases, and for the
exclusion process the flux of particles decreases.  This, in particular,  settles the longstanding ``Slow Bond Problem''.
\end{abstract}

\section{Introduction}

One of the fundamental questions of equilibrium and non-equilibrium dynamics refers to the following
problem: how can a localized defect,  especially if it is small
with respect to certain dynamic parameters,
affect the macroscopic behavior of a system?
Two canonical examples are directed last passage percolation (DLPP) with a diagonal defect line and the one dimensional totally
asymmetric simple exclusion process (TASEP) with a slow bond at the origin.
In their unmodified form, these models are exactly solvable and in the KPZ universality class. 
They have been the subject of intensive study yielding a rich and detailed picture including Tracy-Widom scaling limits~\cite{BDJ99, Jo99}.
Under the addition of small modifications, however, the algebraic tools used to study these models break down.  In this paper we bring a new more geometric 
approach to determine the effect of defects.

For TASEP with a slow bond one asks whether the flux of particles is affected at any arbitrarily small value of slowdown at the origin or if when the defect becomes too weak, the fluctuations in the  bulk destroy the effect of the obstruction so that its presence becomes macroscopically undetectable.  Originally posed by Janowsky and Lebowitz in 1992, this question has proved controversial with various groups of physicists arriving at competing conclusions on the basis of empirical simulation studies and heuristic arguments~(see \cite{clst13} for a detailed background). 
In DLPP the question becomes whether the asymptotic speed is changed in the macroscopic  neighborhood of such a defect at any value of its strength.  Equivalently, one may ask if its asymptotic shape is changed and becomes faceted. 

Such a vanishing presence of the macroscopic effect as a function of the strength of
obstruction represents what sometimes is called, in physics literature, a  {\emph {dynamic phase transition}}.
The existence of such a transition, its scaling properties and the behavior of the system
near  the obstruction are among the most important issues.
In this work we prove that indeed an arbitrarily small defect affects the macroscopic behaviour of these models resolving the longstanding slow bond problem.  We begin with a description of the models and our main results.



\medskip

\noindent {\bf Maximal increasing subsequence.} We consider the classical Ulam's problem of the maximal
increasing subsequence of a random permutation recast in the language of continuum Poissonian last passage percolation:
Let $\Pi$ be a Poisson point process of intensity $1$ on $\mathbb R ^2$.
We let $L_n$ denote the maximum number of points in $\Pi$ along any oriented path from $(0,0)$ to $(n,n)$  calling it the \emph{length} of a maximal
path. Conditional on the number of points in the square $[0,n]^2$ this is distributed as the length of the longest
increasing subsequence of a random permutation. Using a correspondence with Young-Tableaus, Vershik and Kerov~\cite{VerKer77} and Logan and Shepp~\cite{LogShep77}  established that
\begin{equation}
\label{e:unperturbedlln}
\lim_{n\rightarrow \infty} \dfrac{\mathbb E L_n}{n}=2.
\end{equation}
(See also the proof by Aldous and Diaconis using interacting particle systems~\cite{AD95}).
For $\lambda >0$, let $\Sigma_{\lambda}$ be a one dimensional Poisson process of intensity
$\lambda$ on the line $x=y$ independent of $\Pi$ and let $\Pi_{\lambda}$ be the point process obtained
by the union of $\Pi$ and $\Sigma_{\lambda}$. We study the question of how the length of the maximal path is affected by this reinforcing of the diagonal.

Let $L_n^{\lambda}$ denote the maximum number of points of $\Pi_{\lambda}$ on an
increasing path from $(0,0)$ to $(n,n)$. It is easy to observe that taking $\lambda$ sufficiently large
changes the law of large numbers for $L_n^{\lambda}$ from that of $L_n$, i.e., for $\lambda$ sufficiently
large
\begin{equation}
\label{e:largeperturbedlln}
\lim_{n\rightarrow \infty} \dfrac{\mathbb E L_n^{\lambda}}{n}>2.
\end{equation}
An important problem is whether there is a non-trivial phase transition in $\lambda$, i.e., whether
for any $\lambda > 0$ the law of large numbers for $L_n^{\lambda}$ differs from that of $L_n$, or
there exists  $\lambda_c >0$, such that the law of large number for $L_n^{\lambda}$ is same
as that of $L_n$ for $\lambda < \lambda_c$. Our first main result settles this question:
\begin{maintheorem}
\label{t:maintheoremppp}
For every $\lambda>0$,
\begin{equation}
\label{e:perturbedlln}
\lim_{n\rightarrow \infty} \dfrac{\mathbb E L_n^{\lambda}}{n}>2.
\end{equation}
\end{maintheorem}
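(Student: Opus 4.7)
The plan is to reduce the problem to a finite-scale estimate via superadditivity, then to construct, at some well-chosen scale, a path that beats $2K$ in expectation by a positive fraction of $K$. For the reduction, concatenating an optimal path on $[0,n]^2$ with an optimal path on $[n,n+m]^2$ through $(n,n)$ gives $L_{n+m}^\lambda\ge L_n^\lambda+L_m^\lambda\circ\theta_n$; taking expectations and using stationarity of $\Pi_\lambda$ along the diagonal yields $\mathbb{E}[L_{n+m}^\lambda]\ge\mathbb{E}[L_n^\lambda]+\mathbb{E}[L_m^\lambda]$. Fekete's lemma then gives existence of $\mu(\lambda):=\lim_n\mathbb{E}[L_n^\lambda]/n=\sup_K\mathbb{E}[L_K^\lambda]/K$, finite because the crude bound $L_n^\lambda\le L_n+|\Sigma_\lambda\cap[0,n]|$ caps the sup by $2+\lambda$. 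Crucially, since Fekete turns superadditivity into a supremum, proving $\mu(\lambda)>2$ requires a genuinely \emph{linear} excess at some finite scale: one has to exhibit a $K=K(\lambda)$ and $\eta=\eta(\lambda)>0$ with $\mathbb{E}[L_K^\lambda]\ge(2+\eta)K$; a sublinear excess of order $K^{1/3}$ at scale $K$ is insufficient, since it would only certify $\mu(\lambda)\ge 2+O(K^{-2/3})\to 2$.

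The instinctive construction is to pin the path at a subset of defect points on the diagonal and interpolate with unperturbed geodesics. Using the Baik--Deift--Johansson asymptotic $\mathbb{E}[T((a,a),(b,b))]=2(b-a)-\sigma(b-a)^{1/3}+o((b-a)^{1/3})$ for the ambient LPP, pinning at $m$ equally spaced defect points in $[0,K]^2$ produces expected length $2K+m-\sigma m^{2/3}K^{1/3}+o(\cdot)$. Choosing $m=\lambda K$ gives a linear excess $(\lambda-\sigma\lambda^{2/3})K$, which is strictly positive only for $\lambda$ above an absolute threshold $\lambda_0=\sigma^3$. The naive pinning strategy therefore resolves the problem for sufficiently large $\lambda$, but collapses for small $\lambda$; handling the subcritical regime $\lambda<\lambda_0$ is the heart of the problem.

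For $\lambda<\lambda_0$ the construction must exploit stochastic-geometric features of geodesics that are invisible at the level of first-moment pinning costs. My plan is a multi-scale hierarchy: at scales $k_0\ll k_1\ll\cdots$, prove inductively that on each tile of side $k_i$ the LPP with defect exceeds the unperturbed one by a controlled margin, and that these tile-level improvements can be chained into a linear-in-$K$ excess at the outermost scale. The base step relies on transversal-fluctuation estimates showing that the unperturbed geodesic from $(0,0)$ to $(k_0,k_0)$ has positive probability of coming within constant distance of several defect points on the diagonal, each of which can be absorbed by a local detour at $O(1)$ ambient cost and thus contributes a net gain of $+1-O(1)$. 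The inductive step, and the main obstacle, is ensuring that the per-scale improvement does not dilute so badly that it fails to be linear at scale $K$: this demands quantitative geodesic coalescence, FKG-type correlation inequalities to couple `good' tiles simultaneously, and monotonicity couplings to transfer estimates from unperturbed to perturbed LPP. The integrable-probability tools that yield Tracy--Widom asymptotics in the unperturbed model are unavailable in the presence of the defect, so every bound must be obtained purely geometrically; this constraint, rather than any single calculation, is what makes the argument delicate.
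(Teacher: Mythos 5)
Your superadditivity reduction is essentially right, but your diagnosis of what the reduction demands is wrong, and this error distorts the rest of the plan. Since Fekete gives $\mu(\lambda)=\sup_K\mathbb{E}[L_K^\lambda]/K$, it suffices to produce a \emph{single} $K$ with $\mathbb{E}[L_K^\lambda]>2K$, and an excess of order $K^{1/3}$ is perfectly adequate for this: $\mathbb{E}[L_K^\lambda]\ge 2K+cK^{1/3}$ with $c>0$ already certifies $\mu(\lambda)\ge 2+cK^{-2/3}>2$. Your remark that this ``would only certify $\mu(\lambda)\ge 2+O(K^{-2/3})\to 2$'' treats the bound as if it must be passed to the limit; it must not. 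The real difficulty is more delicate than ``linear versus sublinear'': combining $\mathbb{E}[L_n]\ge 2n-4n^{1/3}$ with a naive reinforcement gain of $O(\lambda n^{1/3})$ fails for small $\lambda$ only because the prefactor $\lambda$ cannot beat the prefactor $4$, not because the power of $n$ is too small. So what one actually needs is an $n^{1/3}$-order improvement with a prefactor that can be made arbitrarily large as $n$ grows. The paper achieves this by summing nearly independent improvements over a divergent number of dyadic-like scales $r\in\mathcal{R}$, $|\mathcal{R}|\sim\log\log n$, so that the total gain is $\gtrsim\lambda|\mathcal{R}|\,n^{1/3}\ge 5n^{1/3}$.

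Your pinning computation for large $\lambda$ is fine as far as it goes, but the small-$\lambda$ plan you sketch is missing the device that makes the multi-scale summation possible. Attempting to extract a gain from the diagonal directly forces you to understand excursions of the geodesic from the diagonal at all intermediate scales, which is exactly the distributional information that is unavailable. The paper sidesteps this by reinforcing a random translate $\ell_{\mathfrak{M}}$ of the diagonal and averaging over the offset $\mathfrak{M}$: this changes the frame of reference so that, for each scale $r$ and each location $x$, one only needs to show (using local percolation estimates, FKG to build barriers, and a resampling argument in a small window $D$) that with probability bounded away from zero there exists a local alternative path which is only $\delta r^{1/3}$ shorter than the geodesic yet spends $\gtrsim r$ time in a specified offset band of height $O(r^{2/3})$. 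That band then captures $\gtrsim\lambda r^{1/3}$ extra reinforcement points in expectation, and the geometric separation of offset bands across scales (Conditions 1 and 3 in the paper's outline) lets these contributions add rather than interfere. Without the randomized-offset reformulation, the inductive chaining you propose would require exactly the intermediate-scale excursion statistics that you would then still have to establish. The tools you list (transversal fluctuations, coalescence, FKG, monotone coupling) are the right toolbox, and your multi-scale instinct is in the right direction, but as stated the plan cannot close because the central reformulation is absent and the Fekete step is framed around a target that is both unnecessary and, for small $\lambda$, unattainable by the pinning method.
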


\medskip
\noindent {\bf The slow bond problem.} Consider DLPP on $\mathbb Z^2_+$, defined by associating with each vertex
$x\in \mathbb Z^2_+$ an independent random variable $\xi_x \sim \exp (1)$. The last passage time is defined as
$$
T_n^{0} = \max_{\pi} \sum_{i=0}^{2n+1} \xi_{x_i};
$$
maximized over all oriented paths in $\mathbb Z^2_+$ from $(0,0)$ to $(n,n)$.  It is well known \cite{Ro81} that
\begin{equation}
\label{e:largeperturbeddlln}
\lim_{n\rightarrow \infty} \dfrac{\mathbb E T_n^{0}}{n} = 4
\end{equation}

\medskip
By a well known mapping (see e.g.\ \cite{sas}) $T_n^0$ also describes passage times of particles in the totally asymmetric exclusion process. Consider the continuous time TASEP $X(t) = (\eta_k(t))^{\infty}_{k=- \infty} \in \{0,1\}^{\mathbb Z}$ for $t\ge 1$. The dynamics of the particles is as follows, a particle at position $k$ (i.e $\eta_k = 1$)  jumps with exponential rate one to $k+1$ provided that position is vacant  (i.e.\ $\eta_{k+1} =0$).
Started from the initial configuration $\mathbb {I}_{( - \infty,0]} (k)$, the so called ``step initial condition'', this process was studied in \cite{Ro81}. In this setting, the time for the particle from position $-n$ to move to 1 is distributed as $T_n^0$. Indeed, it is exactly $T_n^0$ if we couple TASEP and DLPP so that the variable $\xi_{(i,j)}$ represents the time which the particle starting at $-i$ has to wait to perform its $j$-th jump once that position is vacant. The inverse value of the expression in (\ref{e:largeperturbeddlln}) corresponds to the asymptotic rate of particles crossing the bond between $0$ and $1$.

Now let us modify the distribution of passage times, by taking
\begin{equation}
\xi_{(x,y)} \sim
\begin{cases}
\exp (1) \; & \text{if}  \; x \neq y, \\
\exp{(1-\epsilon}) \; & \text{if} \; x=y.
\end{cases}
\end{equation}
and ask the same question: does the law of large numbers for $T_n^{\epsilon}$ change for any $\epsilon > 0$ where $T_n^{\epsilon}$ denotes the last passage time in this setting.

In the TASEP representation this change corresponds to a local modification of the dynamics:
the exponential clock governing particles jumping across the edge $(0,1)$ is decreased from rate 1 to rate $1 - \epsilon$ introducing a \emph{slow bond}.   This version of the process was proposed by Janowsky and Lebowitz~\cite{JL1} (see also  \cite{JL2}), as a model for understanding non-equilibrium
stationary states.

The jump-rate decrease at the origin will increase the particle  density to the immediate left of such
a ``slow bond'' and decrease the density to its
immediate right. The difficulty in analyzing this process comes from the fact that
the effect of any local perturbation in  non-equilibrium systems carrying fluxes of conserved quantities is felt at large scales.  What was not obvious, was if this perturbation, in addition to local effects, may also have a global effect and in particular change the current in the system i.e.\  whether the LLN for $T_n^\epsilon$ changes for any value $\epsilon > 0$ or whether $\epsilon_c$ is strictly greater than 0.

This question generated considerable controversy in theoretical physics and mathematical community, which was
supported from opposite sides by numerical analysis and some theoretical arguments (see \S~\ref{s:background}),
and became known in the literature as the ``Slow Bond Problem'' (\cite{JL1, JL2, timo, mmmthn, htm03}), see \cite{clst13} for a detailed account.
Our second result settles this problem:
\begin{maintheorem}
\label{t:maintheoremdlpp}
In Exponential directed last passage percolation model for every $\epsilon>0$,
\begin{equation}
\label{e:perturbedlln2}
\lim_{n\rightarrow \infty} \dfrac{\mathbb E T_n^{\epsilon}}{n}>4.
\end{equation}
\end{maintheorem}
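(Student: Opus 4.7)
The plan is to prove Theorem~\ref{t:maintheoremdlpp} by reducing it to (and running in parallel with) the Poissonian statement of Theorem~\ref{t:maintheoremppp}, via a coupling that exposes the slow bonds as a Bernoulli process of ``bonuses'' along the diagonal. The key identity is
\[
\exp(1-\epsilon)\;\stackrel{d}{=}\;U+BV,
\]
with $U\sim\exp(1)$, $V\sim\exp(1-\epsilon)$, and $B\sim\mathrm{Bernoulli}(\epsilon)$ independent; both sides have moment generating function $(1-\epsilon)/(1-\epsilon-t)$. Applying this at every diagonal vertex $(k,k)\in\Z^2_+$ couples the slow-bond environment to the unperturbed i.i.d.\ environment $\{U_x\}$ augmented by a Bernoulli-thinned random subset $M\subset\{0,\dots,n\}$ of the diagonal (density $\epsilon$), whose vertices carry independent $\exp(1-\epsilon)$ bonuses. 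In this coupling
\[
T_n^\epsilon \;=\; \max_\pi\Bigl(\sum_{(i,j)\in\pi}U_{(i,j)} \;+\; \sum_{(k,k)\in\pi,\,k\in M}V_{(k,k)}\Bigr),
\]
so the marked set $M$ plays the role of the diagonal Poisson process $\Sigma_\lambda$ from Theorem~\ref{t:maintheoremppp} at intensity $\lambda=\epsilon$.

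Given this coupling, I would establish the exponential-DLPP analogue of Theorem~\ref{t:maintheoremppp}: for every $\epsilon>0$ there is a constant $c(\epsilon)>0$ such that the maximum above exceeds $\E T_n^0$ by at least $c(\epsilon)\,n$ in expectation. Combined with~\eqref{e:largeperturbeddlln} this yields~\eqref{e:perturbedlln2}. The proof would mirror the geometric blueprint of Theorem~\ref{t:maintheoremppp}: partition the diagonal $[0,n]$ into mesoscopic blocks of length $L=L(\epsilon)$ and, in each block, construct a competitor path that collects a positive fraction of the marked bonuses in $M$ while staying within transversal distance $L^{2/3}$ of the diagonal. The required inputs are the standard KPZ-geometric toolbox for exponential DLPP---shape theorem, $n^{1/3}$ one-point fluctuations with Tracy--Widom-type tail bounds, $n^{2/3}$ transversal fluctuation estimates, curvature of the limit shape, and coalescence of geodesics---none of which rely on the integrable structure that breaks under the slow bond.

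The technical heart of the argument, and the step I expect to be the main obstacle, is the block-level statement that a linear-in-$L$ number of marked bonuses can be harvested at only sublinear cost to the bulk LPP value. A naive equal-spacing constraint argument fails: forcing the geodesic through $m$ equally spaced diagonal sites costs $\Theta(L^{1/3}m^{2/3})$ by the one-point fluctuation asymptotics $\E T_L = 4L-cL^{1/3}+o(L^{1/3})$, and for $m=\epsilon L$ this is $\Theta(\epsilon^{2/3}L)$, which dominates the naive gain $\Theta(\epsilon L)$ whenever $\epsilon<1$. The resolution, which is the content of the multi-scale construction imported from the proof of Theorem~\ref{t:maintheoremppp}, is to exploit coalescence: geodesics to nearby diagonal marks share a common backbone, so the amortized cost per harvested mark is strictly smaller than the single-point bound suggests. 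Making this amortization quantitative in the anisotropic exponential lattice setting (as opposed to the rotationally invariant Poissonian model), with enough precision to produce a net positive gain uniformly in the block, is the principal technical burden in passing from Theorem~\ref{t:maintheoremppp} to Theorem~\ref{t:maintheoremdlpp}.
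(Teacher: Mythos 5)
Your coupling identity is exactly the one the paper uses in \S~\ref{s:discrete} (the decomposition $\zeta_{1-\epsilon}\stackrel{d}{=}\zeta_1+B_\epsilon\zeta'_{1-\epsilon}$), and the high-level plan of re-running the Poissonian argument in the exponential lattice using the Johansson/Baik--Ferrari--P\'ech\'e moderate-deviation estimates is also what the paper does. But your route to the underlying geometric inequality diverges from the paper's in a way that leaves a real gap.

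First, you never invoke superadditivity, and this changes the target dramatically. The paper's reduction is: $\E T_n^\epsilon$ is superadditive, so it suffices to exhibit a single $n$ (and a single reinforced line $y=x+m$) with $\E T_n^\epsilon > 4n$; since $\E T_n^0 \ge 4n - c\,n^{1/3}$, this only requires a gain of order $n^{1/3}$, not a linear gain. You instead propose to prove a linear gain $c(\epsilon)n$ directly via a single-scale block construction. Second, and more seriously, your block construction asks the competitor to stay within $O(L^{2/3})$ of the fixed diagonal with $L=L(\epsilon)$ constant, in order to harvest a positive fraction of the Bernoulli marks. But the unperturbed geodesic wanders to transversal distance $\sim n^{2/3}\gg L^{2/3}$ from the diagonal, so forcing a competitor into an $O(1)$-width strip around the diagonal is not a sublinear modification: it is tantamount to pinning, which costs a definite fraction of the time constant, and the coalescence/amortization heuristic does nothing to pay for this detour. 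The paper sidesteps this entirely by reinforcing a \emph{randomly shifted} line $\ell_m$ with $m$ uniform in an $n^{2/3}$-window (\S~\ref{s:finish}): the reference frame is then the local behaviour of the geodesic at each scale $r\in\mathcal{R}$, not its (macroscopic) excursion from the fixed diagonal, and each of the $|\mathcal{R}|$ scales contributes $\Theta(n^{1/3})$ to the expected gain. That random-shift mechanism, together with superadditivity, is the engine you are missing; coalescence is not what closes the cost--gain gap here. Your observation about the anisotropy of the exponential lattice (uniform moderate deviations only for bounded aspect ratios, hence extra care with steep paths, and dropping the area condition) does match the paper's remark and is the genuine bookkeeping difference between the two models.
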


One of the key features of the exactly solvable models in the KPZ universality class, in particular the two models described above, is that they exhibit fluctuation exponent of $\frac{1}{3}$, i.e.\ $L_n$ and $T_n^0$ have fluctuations of order $n^{1/3}$, see Section \ref{s:background} for more details. Adding defects changes this as well. In fact, it can be shown using our techniques that as a consequence of Theorem \ref{t:maintheoremppp} and Theorem \ref{t:maintheoremdlpp}, for any positive value of $\lambda$ (resp.\ $\epsilon$) there is pinning and the fluctuation of $L_n^{\lambda}$ (resp.\ $T_n^{\epsilon}$) is of the order $n^{1/2}$, and moreover the limiting behaviour is Gaussian as opposed to Tracy-Widom in the exactly solvable cases. We shall not provide a detailed proof of this, but a further discussion is provided at the conclusion of the paper.  

 \bigskip

\subsection{Background}\label{s:background}
Non-equilibrium interfaces with localized defect that display nontrivial scaling properties are common in physical, chemical and biological systems. The problem we are
interested in can be cast in several different, but closely related forms: as a stochastic driven transport
through narrow channels  with obstructions \cite{htm03}, as a growth model with defect line \cite{mmmthn}, or as a polymer pinning problem of a one-dimensional interface \cite{hn, ay}.
Most of these models in two dimensions   (sometimes interpreted as 1+1 dimension) belong, in absence of defects, to the Kardar-Parisi-Zhang (KPZ) universality class. The question if arbitrarily small microscopic obstruction may change local macroscopic behavior of non-equilibrium systems became broadly discussed starting in the late eighties.

For the TASEP model with a slow bond Janowsky and Lebowitz in \cite{JL2} provided a non-rigorous mean field argument, suggesting that if the jump rate at the origin is $1-\epsilon$,  then the current should become equal to $(1-\epsilon)/(2- \epsilon)^2$, thus  supporting the conjecture that $\epsilon_c = 0$. This conjecture was also supported by theoretical renormalization group arguments in the study of a directed polymer pinning transition at low temperatures \cite{hn}. An alternative heuristic argument based on ``influence percolation'' was discussed in \cite{bssv}. In a more recent work \cite{clst13}, based on a non-rigorous theoretical argument and analysis of the first sixteen terms of formal power series expansion of the current, authors predicted that for small values of $\epsilon>0$ the current should behave as $1/4 - \gamma \exp(-a/\epsilon)$ with $a \approx 2$.

On the rigorous side, a first upper bound for the critical value of the slow-down rate was derived in~\cite{crez}  by approximating the slow bond model with an exclusion process whose rates vary more regularly
in space. An alternative bound for the critical slow-down was provided in \cite{lgt2}. Finally the most complete and general hydrodynamic limit results were  obtained in \cite{timo} for all values $0 < \epsilon < 1$ of the slow-down. However the hydrodynamic limit can not make the distinction of whether the slow bond disturbs the hydrodynamic profile for all values of $\epsilon > 0$.  Letting $\kappa_{_{1- \epsilon}}$ denote the inverse maximal current in presence of a $1-\epsilon$ slow bond~\cite{timo}  obtained the following bound:
\begin{equation}
\max\Big \{ 4, \frac{3}{2} + \frac{(1-\epsilon)^2 + 2(2-\epsilon)}{2(1-\epsilon)(2 - \epsilon)}   \Big\} \leq \kappa_{_{1-\epsilon} }\leq 3 + \frac{1}{1-\epsilon}.
\end{equation}


At the same time, a competing set of theoretical arguments, mostly appearing in the theoretical physics literature, supported also by numerical
data,   pointed towards the possibility that $\epsilon_c >0$.
In \cite{km}  early numerical data for a related polynuclear growth model,
involving parallel updating, was interpreted as suggesting that the critical delay value
in TASEP with slow bond model  should be  $\epsilon_c \approx 0.3$. In another study, based on a finite size scaling
analysis of simulation data~\cite{htm03}  concluded that $\epsilon_c \approx 0.2$. For a very recent numerical study suggesting $\epsilon_{c}=0$, see \cite{SPS15}.

An important rigorous step forward was made  by Baik and Rains \cite{br} where, among several
cases of interest,  they also consider the so called  ``symmetrized'' version
of the maximal increasing sequence with a defect line, for which they showed that $\lambda_c =1$.   At first glance this may seem at odds with Theorem~\ref{t:maintheoremppp} showing that $\lambda_c =0$ in the original model.
It is shown in~\cite[Theorem 3.2]{br} that the constant in the
LLN in the symmetrized system with $0 < \lambda \leq 1$ reinforcement on the diagonal coincides
with that of the LLN in the non-symmetrized system with no reinforcement on the diagonal, and is equal to 2.
However, if we look in both processes at the picture of their level lines, sometimes also called Hammersley process trajectories,
(see~\cite{AD95}), we observe that  in the non-symmetrized model with no perturbation the level lines are in equilibrium
and in particular, their intersection with the main diagonal
forms a stationary point process of intensity 2 (see \cite{ssv}). However, in the symmetrized case with no reinforcement
the level lines are ``out of equilibrium'' in vicinity $n^{2/3}$ of the main diagonal.  Adding an extra rate $\lambda$ Poisson point with $0 < \lambda < 1$ on the main diagonal brings this process  closer to equilibrium as $\lambda$ increases from $0$ to $1$.  When $\lambda$ reaches  1, at which point the level lines in symmetrized process become  ``equilibrated''.
After that for any positive increase above the value 1 the LLN in the symmetrized (and now equilibrated) model changes.  In the context of the original non-symmetrized model there is
no need to pay this extra cost in order to equilibrate the system and this corresponds to a change of the LLN at any positive value of reinforcement.

\subsection{Tracy-Widom Limit, Moderate Deviations and $n^{2/3}$ Fluctuations}

The two models that we consider (i.e., the longest increasing subsequence and the exponential last passage percolation) are exactly solvable in absence of a defect and it is possible to obtain scaling limits and precise moderate deviation tail bounds for $L_n$ and $T_n$. 
We shall treat these results from the exactly solvable models as a ``black box'' in our arguments. Using these estimates the problems at hand can be treated as percolation  type questions. Here we collect the results we need for the longest increasing subsequence model which is the model we shall primarily work with in this paper. Similar results are also available in the literature for the exponential directed last passage percolation model, and we shall quote them in \S~\ref{s:discrete} where we explain how to adapt our arguments to the Exponential case.

\subsubsection{Scaling limit}
Baik, Deift and Johansonn \cite{BDJ99} proved the following fundamental result about fluctuations of $L_n$. Let $\Pi$ be a homogeneous Poisson point process on $\R^2$ with rate 1. Let $u_{t}$ be a point on the first quadrant of $\R^2$ such that the area of the rectangle with bottom left corner $(0,0)$ and the top right corner $u_{t}$ is $t$. Let $X_{u_{t}}$ denote the maximum number of points on $\Pi$ on an increasing path from $(0,0)$ to $u_{t}$. By the scaling of Possion point process it is clear that the distribution of $X_{t}=X_{u_{t}}$ depends on $u_{t}$ only through $t$. The following Theorem is the main result from \cite{BDJ99}.
\begin{theorem}
\label{BDJ99}
Let $F_{TW}$ be the GUE Tracy-Widom distribution.
As $t \rightarrow \infty$,
\begin{equation}
\label{e:twlimit}
\dfrac{X_{t}-2\sqrt{t}}{t^{1/6}} \stackrel{d}{\rightarrow} F_{TW}
\end{equation}
where $\stackrel{d}{\rightarrow}$ denotes convergence in distribution.
\end{theorem}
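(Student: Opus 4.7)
The plan is to prove this via the classical route that links the longest increasing subsequence to determinantal point processes and the Airy kernel. First I would exploit the Poisson structure: restricted to any rectangle of area $t$ with corners $(0,0)$ and $u_t$, the point process $\Pi$ contains $N \sim \mathrm{Poisson}(t)$ points whose coordinates, conditioned on $N$, form an i.i.d.\ uniform sample. By projecting onto the two axes and re-indexing, $X_t$ has the same distribution as the length of the longest increasing subsequence of a uniformly random permutation of size $N$. This reduces the geometric problem to a purely combinatorial one, with the Poisson number of points playing the role of a natural ``Poissonization'' parameter.

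Second, I would bring in the RSK correspondence. Under RSK, the length of the longest increasing subsequence of a permutation of size $N$ equals $\lambda_1$, the length of the first row of the Young diagram associated to the output pair, and this diagram is distributed according to the Plancherel measure on partitions of $N$. Averaging over $N \sim \mathrm{Poisson}(t)$ gives the \emph{Poissonized Plancherel measure} on the set of all partitions. The crucial algebraic input is that the shifted configuration $\{\lambda_i - i + \tfrac{1}{2}\}$ is a determinantal point process on $\mathbb{Z} + \tfrac{1}{2}$, whose correlation kernel is the \emph{discrete Bessel kernel}
\begin{equation*}
K_t(x,y) = \sqrt{t}\,\frac{J_x(2\sqrt{t})\,J_{y+1}(2\sqrt{t}) - J_{x+1}(2\sqrt{t})\,J_y(2\sqrt{t})}{x-y},
\end{equation*}
expressible via Bessel functions. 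One can obtain this either through Gessel's Toeplitz-determinant identity combined with the Borodin--Okounkov formula, or directly via the character theory of the infinite symmetric group.

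Third, and this is the technical heart of the argument, I would perform edge asymptotics. Under the scaling $x = 2\sqrt{t} + t^{1/6}\xi$, $y = 2\sqrt{t} + t^{1/6}\eta$, one needs uniform asymptotics of $J_\nu(2\sqrt{t})$ in the transitional regime $\nu \approx 2\sqrt{t}$. This is carried out in BDJ via Riemann--Hilbert steepest descent applied to the orthogonal polynomials on the unit circle whose norming constants encode the Toeplitz determinant; the result is that $t^{1/6}K_t(x,y)$ converges to the Airy kernel $A(\xi,\eta)$ uniformly on compact sets, with controlled tail bounds. The Fredholm determinant representation of the gap probability then gives
\begin{equation*}
\mathbb{P}\!\left(\frac{\lambda_1 - 2\sqrt{t}}{t^{1/6}} \le s\right) \longrightarrow \det(I - A)_{L^2(s,\infty)} = F_{TW}(s).
\end{equation*}

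Finally I would de-Poissonize. Since $X_t$ is a monotone function of the point set and $N$ concentrates sharply around $t$ on scales $\sqrt{t} \ll t^{1/6}\cdot t^{1/2}$, a standard sandwiching argument (Johansson's de-Poissonization lemma, or the monotonicity-based argument of Baik--Deift--Johansson) transfers the Poissonized limit to the fixed-$t$ statement. The dominant obstacle, by a wide margin, is the third step: all of the analytic difficulty lives in the Riemann--Hilbert / steepest-descent analysis producing the Airy kernel from the Bessel kernel, whereas the reduction, the determinantal structure, and the de-Poissonization are comparatively soft. It is precisely because this machinery is delicate and specific to the unperturbed model that the present paper must abandon it and substitute a geometric approach once the defect is added.
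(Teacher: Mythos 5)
The paper does not prove Theorem~\ref{BDJ99} at all: it is quoted verbatim from Baik--Deift--Johansson and treated throughout as a black box, exactly as announced in the sentence preceding it (``The following Theorem is the main result from \cite{BDJ99}''). So there is no internal proof to compare your sketch against; the relevant question is whether your outline is a faithful account of the cited proof.

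It is mostly faithful, but you have quietly merged two distinct proof routes. The original BDJ99 argument goes: Poissonize, invoke Gessel's Toeplitz-determinant identity for the Poissonized distribution function, rewrite the Toeplitz determinant via orthogonal polynomials on the unit circle with weight $e^{2\sqrt{t}\cos\theta}$, do Riemann--Hilbert steepest descent on those OPUC, and land on the Hastings--McLeod solution of Painlev\'e~II, which \emph{is} the Tracy--Widom distribution in its Painlev\'e form; then de-Poissonize. At no point does BDJ99 pass through the discrete Bessel kernel or show convergence of a correlation kernel to the Airy kernel. That determinantal-point-process route --- Poissonized Plancherel measure is determinantal with discrete Bessel kernel, $t^{1/6}K_t \to$ Airy kernel, Fredholm determinant gives $F_{TW}$ --- is Borodin--Okounkov--Olshanski (2000) and, independently, Johansson (2001). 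Both routes are valid and both yield Theorem~\ref{BDJ99}, and you even gesture at the Toeplitz/OPUC picture in passing, but the way you've written the third and fourth steps presents the Bessel-to-Airy kernel convergence as the mechanism inside BDJ99, which it is not. Your first, second, and final (de-Poissonization) steps are accurate, as is your closing observation that the failure of this exactly-solvable machinery under perturbation is precisely what forces the geometric approach of the present paper.
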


For a definition of the GUE Tracy-Widom distribution (also known as $F_2$ distribution) which also arises as the distribution of the scaling limit of largest eigenvalue in GUE random matrices, see \cite{BDJ99}. 

\subsubsection{Moderate deviation estimates}
We also require estimates from the tails of the distribution and quote the following moderate deviation estimates for upper and lower tails of longest increasing subsequence from \cite{LM01} and \cite{LMS02} respectively.
The following theorem is an immediate corollary of Theorem~1.3 of~\cite{LM01}.

\begin{theorem}
\label{t:moddevuppertail}
There exists absolute constants $C_1$, $s_0$ and $t_0>0$ such that for all $t>t_0$ and $s>s_0$, the following holds.

\begin{equation}
\label{e:moddevuppertail}
\P[X_{t}\geq 2\sqrt{t}+st^{1/6}]\leq e^{-C_1 s^{3/2}}.
\end{equation}
\end{theorem}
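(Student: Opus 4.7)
The plan is to reduce from the Poissonian model $X_t$ to the combinatorial quantity $L_n$ (the longest increasing subsequence of a uniform random permutation of $[n]$) via de-Poissonization, and then establish the moderate deviation bound for $L_n$ using the Robinson--Schensted--Knuth correspondence together with a quantitative Vershik--Kerov variational argument.

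First I would de-Poissonize. Let $N_t$ denote the number of points of $\Pi$ in a rectangle of area $t$; then $N_t \sim \mathrm{Poisson}(t)$ and, conditional on $N_t = n$, one has $X_t \stackrel{d}{=} L_n$. A Chernoff bound gives $\P(N_t > t + t^{2/3}) \leq e^{-c t^{1/3}}$, and monotonicity of $L_n$ in $n$ reduces the task to proving
\begin{equation*}
\P\!\left(L_n \geq 2\sqrt{n} + s n^{1/6}\right) \leq e^{-C s^{3/2}}
\end{equation*}
for all sufficiently large $n$ and $s$; since $s^{3/2} \ll t^{1/3}$ throughout the relevant range, the Poisson deviation tail is absorbed into the main bound.

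The combinatorial estimate is the heart of the matter. By the RSK correspondence, $L_n$ has the law of the first-row length $\lambda_1$ of a partition $\lambda \vdash n$ drawn from Plancherel measure $(f^\lambda)^2/n!$, so
\begin{equation*}
\P(L_n \geq k) = \sum_{\lambda \vdash n,\ \lambda_1 \geq k} \frac{(f^\lambda)^2}{n!}.
\end{equation*}
Since the total number of partitions is $e^{O(\sqrt{n})}$, controlling the maximum summand suffices throughout the moderate deviations window. Using the hook-length formula $f^\lambda = n! / \prod_x h_\lambda(x)$ together with Stirling's approximation, one rewrites $\log((f^\lambda)^2/n!)$ as a discretized functional of the continuous profile of $\lambda/\sqrt{n}$, whose unique maximum (equal to zero) is attained at the Vershik--Kerov limit shape $\Omega$, whose first row has edge exactly at $2\sqrt{n}$.

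The main obstacle is the quantitative analysis of the resulting constrained variational problem. Given the hard constraint $\lambda_1 \geq 2\sqrt{n} + s n^{1/6}$, the cheapest deformation of $\Omega$ concentrates near the right edge on the natural edge scale $n^{1/6}$, and a careful second-variation computation (effectively a localized quadratic expansion of the VK functional against an Airy-function-like edge profile) shows that the minimum cost scales as $c\, s^{3/2}$, with constant matching the Tracy--Widom right-tail rate $\tfrac{4}{3}$ in the limit. The uniformity of this expansion throughout the moderate deviations window $s \in [s_0, c\, n^{1/3}]$, together with careful Stirling error control near the boundary of the diagram, is the technical core of the argument of Löwe--Merkl, which we invoke as a black box.
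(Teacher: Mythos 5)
Your proposal ultimately rests on invoking L\"owe--Merkl for the hard content, which is exactly what the paper does: the paper offers no proof of this theorem at all, stating simply that it is ``an immediate corollary of Theorem~1.3 of~\cite{LM01}.'' Your added exposition of the RSK/Plancherel/Vershik--Kerov machinery is a reasonable account of what is inside that black box, so in substance you and the paper take the same route.

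One small caution on the de-Poissonization wrapper you add (which may be unnecessary, depending on whether~\cite{LM01} states its Theorem~1.3 for $X_t$ directly): the fixed window $N_t \le t + t^{2/3}$ gives a Poisson tail $e^{-ct^{1/3}}$, which dominates $e^{-Cs^{3/2}}$ only for $s \lesssim t^{2/9}$; the claim ``$s^{3/2}\ll t^{1/3}$ throughout the relevant range'' is false near the top of the moderate-deviation window $s\sim t^{1/3}$, where $s^{3/2}\sim t^{1/2}\gg t^{1/3}$. To cover the whole stated range of $s$ one should take an $s$-dependent window, e.g.\ $N_t \le t + s^{3/4}t^{1/2}$, so that the Poisson tail is $e^{-cs^{3/2}}$ while the shift $2\sqrt{t+s^{3/4}t^{1/2}}-2\sqrt t = O(s^{3/4})$ is still dominated by $s\,t^{1/6}$.
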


The corresponding estimate for the lower tail was proved in \cite{LMS02}, the following theorem is an immediate corollary of Theorem 1.2 from that paper.

\begin{theorem}
\label{t:moddevlowertail}
There exists absolute constants $C_1$, $s_0$ and $t_0>0$ such that for all $t>t_0$ and $s>s_0$, the following holds.

\begin{equation}
\label{e:moddevlowertail}
\P[X_{t}\leq 2\sqrt{t}-st^{1/6}]\leq e^{-C_1 s^{3/2}}.
\end{equation}
\end{theorem}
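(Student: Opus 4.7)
The theorem controls the lower tail of $X_t$ at the fluctuation scale $t^{1/6}$, and my plan is three-fold: de-Poissonize to a uniform permutation problem, invoke the determinantal representation of the Plancherel measure, and finish with a first-moment style trace bound on the discrete Bessel kernel.

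Conditionally on the total count $N$ of Poisson points in a rectangle of area $t$, which is Poisson with mean $t$, the configuration is uniformly random, so $X_t$ has the same law as $L_N$, where $L_m$ denotes the length of the longest increasing subsequence of a uniform permutation of $[m]$. Setting $m_{*} = \lfloor t - \sqrt{t}\log t \rfloor$, a Chernoff bound gives $\P[N < m_{*}] \leq e^{-c(\log t)^2}$, and stochastic monotonicity of $m \mapsto L_m$ yields
\begin{equation*}
\P\bigl[X_t \leq 2\sqrt{t} - s t^{1/6}\bigr] \leq e^{-c(\log t)^2} + \P\bigl[L_{m_{*}} \leq 2\sqrt{t} - s t^{1/6}\bigr].
\end{equation*}
Since $2\sqrt{t} - 2\sqrt{m_{*}} = O(\sqrt{\log t})$ is negligible on the scale $t^{1/6}$ once $s \geq s_0$, the task reduces to upper-bounding $\P[L_{m_{*}} \leq 2\sqrt{m_{*}} - c\, s\, m_{*}^{1/6}]$.

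By the Robinson--Schensted--Knuth correspondence the first row length of a Plancherel-distributed partition equals $L_{m_{*}}$ in law, and by the Borodin--Okounkov--Olshanski theorem the shifted Young diagram $\{\lambda_i - i\}_{i \geq 1}$ is a determinantal point process on $\mathbb{Z}$ with a discrete Bessel kernel $B^{\theta}$, $\theta = \sqrt{m_{*}}$. The event $\{L_{m_{*}} \leq k\}$ says this process has no points above $k-1$, so writing its probability as a Fredholm determinant expanded over eigenvalues and using $\prod(1-\lambda_i) \leq \exp(-\sum \lambda_i)$ gives
\begin{equation*}
\P[L_{m_{*}} \leq k] \leq \exp\bigl(-\operatorname{tr}(B^{\theta}|_{(k-1,\infty)})\bigr).
\end{equation*}
With $k = 2\theta - s\theta^{1/3}$ it therefore suffices to show $\operatorname{tr}(B^{\theta}|_{(k,\infty)}) \geq C s^{3/2}$.

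The diagonal $B^{\theta}(j,j)$ is a quadratic combination of Bessel functions $J_j(2\theta)$, and a classical steepest descent expansion yields $B^{\theta}(j,j) \asymp \theta^{-1/2}\sqrt{2\theta - j}$ uniformly for $j$ in the bulk interval $[\theta, 2\theta - \theta^{1/3}]$---the discrete analogue of the Vershik--Kerov semicircle density of the Plancherel measure. Summing,
\begin{equation*}
\operatorname{tr}(B^{\theta}|_{(k, 2\theta)}) \asymp \frac{1}{\sqrt{\theta}} \sum_{j=k}^{\lfloor 2\theta\rfloor} \sqrt{2\theta - j} \asymp \frac{(s\theta^{1/3})^{3/2}}{\sqrt{\theta}} = s^{3/2},
\end{equation*}
as required. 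The main technical obstacle is making the saddle-point asymptotic uniform across the moderate deviation window, especially at the transition scale $\theta^{1/3}$ below the edge, where the Airy edge scaling must be matched to the bulk semicircle density; I would handle this by splitting the sum into an Airy piece and a bulk piece and verifying consistency. For $s$ beyond the moderate deviation regime ($s \gtrsim \theta^{2/3}$) one can bypass the above estimate and invoke a crude Deuschel--Zeitouni large deviation bound, which automatically yields a stronger decay.
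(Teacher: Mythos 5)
The paper does not prove this estimate; it simply cites Theorem 1.2 of \cite{LMS02} and uses it as a black box throughout, so there is no ``paper's own proof'' to compare with. That said, your sketch is broadly in the spirit of the determinantal-process approach used in that reference: cast the event $\{X_t\le k\}$ as a gap probability, bound the Fredholm determinant by $\prod_i(1-\lambda_i)\le\exp(-\sum_i\lambda_i)$, and lower-bound the trace using the asymptotics of the discrete Bessel kernel near the edge. The trace inequality is in fact exactly why this route yields the suboptimal exponent $s^{3/2}$ rather than $s^3$, which, as the paper's Remark after the theorem points out, is all that is needed.

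There is, however, a genuine misstep in the middle. The Borodin--Okounkov--Olshanski determinantal description applies to the \emph{poissonized} Plancherel measure, not to the Plancherel measure at a fixed size $m_*$: the set $\{\lambda_i-i\}$ for a fixed-$m_*$ Plancherel diagram is not a determinantal point process. After your de-Poissonization to $L_{m_*}$, the appeal to BOO is therefore unjustified. The detour is also unnecessary: $X_t$ is by construction already the first-row length of a $t$-poissonized Plancherel diagram (RSK applied to the Poisson configuration), so BOO applies directly with parameter $t$ and the trace argument should be run on $X_t$ itself, with the de-Poissonization step deleted. (A minor arithmetic slip: with $m_* = t - \sqrt{t}\log t$ the centering shift $2\sqrt{t}-2\sqrt{m_*}$ is of order $\log t$, not $\sqrt{\log t}$, though this remains negligible relative to $st^{1/6}$.) With those corrections, what is left is the heavy analytic ingredient you rightly flag --- uniform steepest-descent control of $J_j(2\sqrt{t})$ across the moderate-deviation window, matching the Airy scaling at the edge to the Vershik--Kerov bulk density --- which is the actual content of \cite{LMS02} and is not reproduced by a sketch at this level.
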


Observe that $t_0$, $s_0$ and $C_1$ can be taken to be same in Theorem \ref{t:moddevuppertail} and Theorem \ref{t:moddevlowertail}.
It is also clear by the translation invariance of the Poisson process that the same bounds can be obtained for the the number of points on a maximal increasing path on any pair of points that determine a rectangle with area $t$.

\textbf{Remark:}
Observe that the result as stated in Theorem \ref{t:moddevlowertail} is not optimal. Comparing with the tail of the Tracy-Widom distribution, one expects an exponent of $s^{3/2}$ for the upper tail and an exponent $s^{3}$ for the lower tail. Indeed the result from \cite{LMS02} gives the optimal bound for a certain range of $s$, but we do not need it in our work. The optimal tail estimates have also been obtained by Riemann-Hilbert problem approach in certain other KPZ models \cite{BXX01, CLW15}.

%
%

\subsubsection{Transversal Fluctuation}
Consider all increasing paths $\gamma$ from $(0,0)$ to $(n,n)$ in $\Pi$ containing the maximum number of points. From now on we shall often interpret a maximal paths as a piecewise linear function $\gamma: [0,n]\to [0,n]$. The maximum transversal fluctuation $F_n$ is defined as $\max_{x\in [0,n],\gamma}|\gamma(x)-x|$. The scaling exponent for the transversal fluctuation $\xi$ is defined by

$$\xi=\inf\{\theta >0: \liminf_{n}\P[F_{n}\geq n^{\theta}] = 0\}.$$
Johansson \cite{J00} proved the following theorem.
\begin{theorem}
In the above set-up we have $\xi=\frac{2}{3}$.
\end{theorem}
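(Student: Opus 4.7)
The plan is to prove the two inequalities $\xi \le 2/3$ and $\xi \ge 2/3$ separately; the upper bound is the substantive step and can be built directly from the moderate-deviation estimates of Theorems~\ref{t:moddevuppertail}--\ref{t:moddevlowertail} via a union bound. Fix $\theta > 2/3$. Any maximal path with transverse fluctuation $\ge n^\theta$ must pass through some grid point $v = (k, k+\Delta)$ with $|\Delta| \ge n^\theta$, in which case its length equals $X^{(0,0)\to v} + X^{v\to (n,n)}$. Optimising the expected sum in $k$ (the optimum is $k = (n-\Delta)/2$ by symmetry) gives
\begin{equation*}
\E[X^{(0,0)\to v}] + \E[X^{v\to (n,n)}] \le 2\sqrt{n^2 - \Delta^2} \le 2n - \Delta^2/n,
\end{equation*}
whereas $\E X_n \ge 2n - Cn^{1/3}$. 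Hence the event that a maximal path transits $v$ forces a centred deviation of order $\Delta^2/n - Cn^{1/3} \gtrsim n^{2\theta - 1} \gg n^{1/3}$ --- that is, of order $s := n^{2\theta - 4/3}$ fluctuation scales, with $s \to \infty$. Applying the upper-tail bound \eqref{e:moddevuppertail} to the two segments and the lower-tail bound \eqref{e:moddevlowertail} to $X_n$ gives probability at most $\exp(-c\, n^{3\theta - 2})$ per $v$, and an $O(n^2)$ union bound over a unit-grid discretisation of the candidate midpoints then drives $\P[F_n \ge n^\theta]$ to $0$.

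For the matching lower bound, fix $\theta < 2/3$. The strategy is endpoint-perturbation: compare the on-diagonal maximum $X_n$ with the maximum length $X^{(0,0) \to v_R}$ to the shifted endpoint $v_R = (n+R, n-R)$ at $R = n^{2/3}$. Since the rectangle area $n^2 - R^2$ differs from $n^2$ by only $n^{4/3}$, Theorem~\ref{BDJ99} places $X_n$ and $X^{(0,0)\to v_R}$ on the same $n^{1/3}$ fluctuation scale. If the maximal paths to $(n,n)$ were uniformly confined within $\pm n^\theta$ of the diagonal with $n^\theta \ll n^{2/3}$, then a monotonicity/coupling argument based on the concavity of the limit shape $t \mapsto 2\sqrt{t}$ would force $X^{(0,0)\to (n,n)} - X^{(0,0)\to v_R}$ to concentrate much more tightly than the joint Tracy--Widom / Airy asymptotics permit. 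Avoiding this contradiction yields $\liminf_n \P[F_n \ge n^\theta] > 0$.

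The main obstacle is the lower bound: the upper bound is essentially a single clean union bound built from the two moderate-deviation tails quoted in the excerpt, but the lower bound genuinely requires inputs beyond those tails --- namely the full one-point scaling limit together with a geometric coupling or coalescence argument for geodesics. Johansson's original proof in \cite{J00} bypasses this by exploiting the determinantal structure of Poissonian LPP, and the present paper simply cites it as a black box.
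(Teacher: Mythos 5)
The paper does not prove this theorem --- it is quoted as a black box from Johansson~\cite{J00}, and the paper proves instead the sharper upper-bound estimate of Theorem~\ref{t:transversal}, so there is no proof in the paper to compare yours against. Your upper-bound direction ($\xi \le 2/3$) is the standard and essentially correct argument: conditional on a geodesic passing through a point $v=(k,k+\Delta)$ with $|\Delta|\ge n^\theta$, the sum $X_{\mathbf{0},v}+X_{v,\mathbf{n}}$ has mean at most $2\sqrt{n^2-\Delta^2}\le 2n-\Delta^2/n$, and since for the Poissonian model the moderate-deviation bounds depend only on the rectangle area, not the aspect ratio, the tail estimate $e^{-C_1 s^{3/2}}$ applies to both segments uniformly in $k,\Delta$; a flat union bound over $O(n^2)$ lattice points then suffices for the crude $n^{\theta}$ statement (the chaining in Theorem~\ref{t:transversal} is only needed to upgrade this to an $e^{-ck}$ tail at the exact scale $n^{2/3}$). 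One should still note that the union bound is over the lattice points near a \emph{deviation} location rather than ``midpoints,'' and that for areas below the threshold $t_0$ one falls back on a trivial Poisson bound, but these are cosmetic.

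The lower-bound direction ($\xi\ge 2/3$) is a genuine gap, as you acknowledge. The paragraph you give is not an argument: you invoke a ``monotonicity/coupling argument based on the concavity of the limit shape'' that ``would force $X_{\mathbf{0},\mathbf{n}}-X_{\mathbf{0},v_R}$ to concentrate much more tightly than the joint Tracy--Widom / Airy asymptotics permit,'' but the quoted inputs --- the one-point Tracy--Widom limit and the one-point moderate deviations --- give no information about joint laws or about the spatial structure of geodesics, and concavity of $t\mapsto 2\sqrt{t}$ on its own controls means, not fluctuations. To make something like this precise one typically needs either the two-point/Airy-process structure from the determinantal analysis (which is exactly what Johansson uses) or a multiscale block-decomposition argument showing that confinement to width $w\ll n^{2/3}$ would drive the passage-time variance to $\gg n^{2/3}$; neither is contained in the material quoted by the paper. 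Since the paper only ever uses the upper-bound direction (transversal fluctuations are $O(n^{2/3+o(1)})$), this gap is not consequential for the paper's argument, but it does mean your proposal proves strictly less than the cited theorem.
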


This theorem bounds the maximal fluctuation of the maximal paths from the diagonal as having order $n^{2/3 + o(1)}$. This motivates a lot of our constructions. However for our proof, we need a slightly sharper estimate which we establish using Theorem \ref{t:moddevlowertail} and Theorem \ref{t:moddevuppertail} (see Theorem \ref{t:transversal}).


\subsection{Outline of the proof}
\label{s:outline}
In this subsection we present an outline of the proof  for the  case of the Poissonian last passage percolation.
The proof of Theorem~\ref{t:maintheoremdlpp} follows similarly (see \S~\ref{s:discrete} for details).

We start with the observation that due to superadditivity of the passage times
$$
\mathbb E L^\lambda_{n+m} \geq \mathbb E L^\lambda_{n} + \mathbb E L^\lambda_{m}
$$
for any $\lambda > 0$, so it suffices to prove that for some $n$ we have $\mathbb E [L_n^{\lambda}] > 2n$.
Using the Tracy-Widom  limit from Theorem \ref{BDJ99}  and the moderate deviation  inequalities
from  Theorems~\ref{t:moddevuppertail} and~\ref{t:moddevlowertail} we have
\begin{equation}
\mathbb E [L_n] \geq 2n - 4 n^{1/3}.
\end{equation}
Thus, it is sufficient to prove that for $n$ sufficiently large
\emph{
\begin{align}
\label{e:impr2}
& {\text{reinforcing the diagonal by a rate $\lambda$ Poisson point process increases the length of}} \notag \\
& {\text{longest increasing  path from $(0,0)$ to $(n,n)$ by at least $5n^{1/3}$ in expectation.}}
\end{align}
}
Let us consider two  ways in which the reinforced environment may improve upon the original unperturbed environment.  Based on the transversal fluctuation exponent, the maximal path in the unperturbed environment is expected to spend  $O(n^{1/3})$  of time within distance 1 of
the diagonal.  Then, in expectation, the length of this path should increase by $O(\lambda n^{1/3})$ using only small local changes in the path.
Now consider a second scenario where the maximal path deviates from the diagonal for a macroscopic time.  Suppose further that an alternative path exists which differs in length from the maximal path by only $\delta n^{1/3}$ and spends $c n^{1/3}$ more time close to the diagonal.  This event can be shown to occur with constant probability.  If $0<\delta \ll c\lambda$ then in the reinforced environment the alternative path will make use of more points along the diagonal and be $O(\lambda n^{1/3})$ longer.  Thus we have identified  improvements of $O(\lambda n^{1/3})$ originating from changes to the path on both  the shortest and longest length scales.

In order to establish \eqref{e:impr2} we need an improvement of $5 n^{1/3}$ instead of $O(\lambda n^{1/3})$ so we look for improvements on all length scales between $1$ and $n$.
However, this task  becomes complicated since we do not have a good picture of the distributions of excursions from the diagonal of intermediate sizes.  In light of this limitation, we instead consider the question of reinforcing along translates of the diagonal $\ell_{m} = \{(x,y): y= x + m\} $.  We will do this randomly, along $\ell_{\mathfrak{M}}$  where $\mathfrak{M}$  uniformly distributed in $[-Kn^{2/3}, Kn^{2/3}]$.  This shifts our frame of reference from the excursions of the maximal path away from the diagonal to the local behaviour of the path, which we examine at a range of different length scales.

Let $L^{\lambda, m}_n$ be the length of the longest increasing path from $(0,0)$ to
$(n,n)$ in the environment  reinforced along $\ell_m$. Since $L^{\lambda, m}_n$ is itself superadditive for all fixed $m$, it is enough to show that for some $m$ we have $\mathbb  E [L^{\lambda, m}_n] > 2n$.  Hence it will suffice if for some $n$,
\begin{equation} \label{e:dream}
\mathbb  E [L^{\lambda, \mathfrak{M}}_n] > 2n.
\end{equation}

To obtain (\ref{e:dream})  we analyze the unperturbed
environment at a range of different  scales.
For fixed length scale $r=10^i$ and  spatial location $x=kr, \; k \in [n/r]$ we consider the trajectory of the maximal path $\Gamma$ in $B_{r,x} := [kr, (k+1)r ) \times [0,n]$ and say that a good alternative exists if the following all hold:
\begin{enumerate}
\item Denoting $(x,y)$ as the point $\Gamma$ enters $B_{r,x}$, the transversal fluctuations of $\Gamma$ from $\ell_{y-x}$ is at most $\frac{M}{2} r^{2/3}$.
\item There exists an alternative path $\Gamma^*$ which coincides with $\Gamma$ outside $B_{r,x}$ such that the length of $\Gamma^*$ is only $\delta r^{1/3}$ less than $\Gamma$.
\item The path $\Gamma^*$ has a segment of length at least $c r$ in $B_{x,r}$ between the lines $\ell_{y-x-2M r^{2/3}}$ and $\ell_{y-x-M r^{2/3}}$.
\end{enumerate}
The main work of the proof is to show that for most locations $x$, a good alternative path exists with probability at least $p(\lambda,\delta,c)>0$.

As a consequence of Condition 3 conditional on $\mathfrak{M}\in [y-x-2Mr^{2/3},y-x-Mr^{2/3}]$, the effect of reinforcement increases $\Gamma^*$ by $\frac{c' \lambda r}{r^{2/3}}$ on average. Provided that $c' \gg \delta$ then, conditional on a good alternative and $\mathfrak{M}\in [y-x-2Mr^{2/3},y-x-Mr^{2/3}]$, $\Gamma^*$ improves on the original $\Gamma$ by $\tilde{c} \lambda r^{1/3}$.  Summing over all $n/r$ locations for $x$ at scale $r$ we have a total improvement of $p(\lambda,\delta,c) \cdot \frac{n}{r} \cdot \tilde{c} \lambda r^{1/3} \cdot \frac{M r^{2/3}}{2Kn^{2/3}} = \frac{p(\lambda,\delta,c) \tilde{c} \lambda M}{2K} n^{1/3}$.

To boost the total expected improvement to $5 n^{1/3}$ we take improvements over a range of scales $r$.  Since we chose $r$ to be exponentially growing, a combination of Conditions 1 and 3 ensure geometrically that the use of alternative paths on one scale do not interfere with those on other scales.  With a large constant number of scales we establish (\ref{e:dream}).

The fact that a good alternative path exists with probability independent of the scale is motivated by the self-similar scaling of the process.  The principal difficulty in the proof is the effect of the conditioning in analysing the neighbourhood of the environment around the maximal path.  Our approach is geometric based on two main tools.  One is a series of percolation arguments showing that the neighbourhood of the maximal path must be ``typical'' in most locations and scales.  The second is the use of the FKG inequality since conditioning on the trajectory of the maximal path is a negative event on the remaining configuration.  This is used to show that with some probability there exist barriers around the path which force all alternative paths to be local.  Having localized the problem we show that an alternative path with the prescribed properties exists with probability independent of the scale.

\bigskip

{\bf Organisation of the paper:}
The rest of this paper is organised as follows. As mentioned before we shall provide details only for the proof of Theorem \ref{t:maintheoremppp} while pointing out the adaptations needed for the proof of Theorem \ref{t:maintheoremdlpp}. We start with setting up the notations and terminology in \S~\ref{s:prelim}. In \S~\ref{s:events} we define for a fixed scale $r$, and a fixed location $x$, events $G_x$, $H_x$ and $R_x$ which are key to the construction of an alternative path as explained above, we also explain how we condition on $R_x$. Using estimates of probabilities of these events (Theorem \ref{l:rdecreasing} and Theorem \ref{p:gxhxconditional} whose proofs are deferred until later), in \S~\ref{s:locsuccess}, we show that with a probability bounded uniformly away from 0, an alternative path satisfying the necessary conditions exists which deviates from the topmost maximal path only near $x$. This is the heart of the argument. Using this, and adding extra points on different offset diagonals as explained above, we complete the proof of Theorem \ref{t:maintheoremppp} in \S~\ref{s:finish}. In \S~\ref{s:maxpathnice}, we work out certain percolation-type estimates showing that the maximal path behaves sufficiently regularly at a typical location. Probability bounds on $G_x$ are proved in \S~\ref{s:gx}, and for $H_x$ and $R_x$ in \S~\ref{s:rxcond} which ultimately finishes the proof of Theorem \ref{l:rdecreasing} and Theorem \ref{p:gxhxconditional}. Throughout these proofs we use a number of results, which are consequences of the moderate deviation estimates Theorem \ref{t:moddevlowertail} and Theorem \ref{t:moddevuppertail}. For convenience, we have organized these results in \S~\ref{s:moddev}, \S~\ref{s:para}, \S~\ref{s:trans} and \S~\ref{s:const}. However they are quoted throughout the paper. Finally in \S~\ref{s:discrete} we briefly describe how to modify the arguments for the Poissonian last passage percolation case to prove Theorem \ref{t:maintheoremdlpp}.

\section{Notations and Preliminaries}
\label{s:prelim}
In this section, we introduce certain notations for the Poissonian last passage percolation model. The same notations with minor modifications can be used for the Exponential directed last passage percolation model also, see \S~\ref{s:discrete} for details of the Exponential case.

%

\subsection{Path, length and area}
Define the partial order $<$ on $\R^2$ by $u=(x,y)< u'=(x',y')$ if $x<x'$, and $y<y'$. For $u<u'\in \R^2$, an \emph{increasing path} $\gamma$ from $u$ to $u'$ is a piecewise linear path joining a finite sequence of points $u=u_0<u_1<\cdots < u_k=u'$. For $u_0=(x_0,y_0)<u_{k}=(x_k,y_k)$ and an increasing path $\gamma$ from $u_0$ to $u_k$, and for $x_0\leq x\leq x_k$, let $\gamma_{x}$ be such that $(x,\gamma_x)\in \gamma$. Notice that $\gamma_x$ is uniquely defined. We shall sometimes identify the path with the sequence of points that define it.

We define the length of an increasing path with respect to a background point configuration on $\R^2$. Let $\Omega$ be a  point configuration on $\R^2$. Consider an increasing path $\gamma$ from $u$ to $u'$ given by $\gamma=\{u=u_0<u_1<\cdots < u_k=u'\}$. Then \emph{length} of $\gamma$ in $\Omega$, denoted $\ell_{\gamma}^{\Omega}$ is defined by

$$\ell_{\gamma}^{\Omega}= \#\{0\leq j <k: u_{j}\in \Omega\}.$$

Notice that, in the above definition, for definiteness, we count the starting point of the path, but not the end point. 

For $u<u'$ in $\R^2$, let $A(u,u')$ denote the area of the rectangle $\mbox{Box}(u,u')$ with bottom left corner $u$ and top right corner $u'$. For an increasing path $\gamma$ containing $u$ and $u'$, let $\gamma(u,u')$ denote the restriction of $\gamma$ between $u$ and $u'$. Let $\gamma(u,u')=\{u=u_0<u_1<\cdots < u_k=u'\}$. For a given environment  $\Omega$  let $i_{1}<i_{2}<\cdots i_{\ell} \in [k-1]$ be such that $u_{i_j}$ are all the points on $\gamma\cap \Omega$ (ignoring the end points of $\gamma$). Set $i_0=0$ and $i_{\ell+1}=k$. Then the \emph{region} of $\gamma(u,u')$ in the environment $\Omega$, denoted $O_{\gamma}^{\Omega}(u,u')$, is defined to be the union of the rectangles $\mbox{Box}(u_{i_j},u_{i_{j+1}})$ for $j\in \{0,1,\ldots ,\ell\}$. The \emph{area} of the path $\gamma(u,u')$ in the environment $\Omega$, denoted $A_{\gamma}^{\Omega}$ is the area of the region $O_{\gamma}^{\Omega}(u,u')$, i.e.,
$$A_{\gamma}^{\Omega}=\sum_{j=0}^{\ell} A(u_{i_j},u_{i_{j+1}}).$$

We shall drop the superscript $\Omega$ if the environment is clear from the context.

\subsection{Statistics of the Unperturbed Configuration}
We let $\Pi$ denote a rate 1  Poisson process on $\mathbb{R}^2$ which we refer to   as the \emph{unperturbed configuration}, i.e.\ without reinforcements.

\begin{itemize}
\item
For $u,u'\in \R^2$, let $X_{u,u'}$ denote the length of longest increasing path in $\Pi$ from $u$ to $u'$. While the longest increasing path need not be unique,  $X_{u,u'}$ is well defined.
\end{itemize}

For $u=(x,y)<u'=(x',y')$ in $\R^2$, let $d(u,u')=(x'-x)+(y'-y)$ be the $\ell_1$ distance between $u$ and $u'$. It will be useful for us to consider following centered versions of $X_{u,u'}$.

\begin{itemize}
\item Let $$\tilde{X}_{u,u'}=X_{u,u'}-\E X_{u,u'}.$$

\item Let
$$\hat{X}_{u,u'}=X_{u,u'}-d(u,u').$$
\end{itemize}
Observe that, by Theorem \ref{BDJ99} and superadditivity, it follows that $\E[X_{u,u'}] \leq d(u,u')$. The reason behind the choice of centering by $d(u,u')$ is the following. If the straight line joining $u$ and $u'$ has slope very close to $1$, then $d(u,u')$ gives the right centring up to first order. Also observe that for $u_1<u_2<\cdots <u_k$ we have $\sum_{i=1}^{k-1} \hat{X}_{u_i,u_{i+1}}\leq \hat{X}_{u_1,u_k}.$

\subsubsection{Statistics of constrained paths}


We define the following notations for paths subject to certain constraints.
\begin{itemize}
\item For $u,u'\in \R^2$ with $u<u'$, and $S\subseteq \R^2$, we define $X^S_{u,u'}$ to be the length of the longest increasing path from $u$ to $u'$ that does not go through $S$. The centered length is denoted by $\tilde{X}_{u,u'}^{S}$, i.e., $\tilde{X}_{u,u'}^{S}=X^S_{u,u'}-\E X_{u,u'}$. Similarly we also define $\hat{X}_{u,u'}^{S}$.

\item  For $u,u'\in \R^2$ with $u<u'$, and $S\subseteq \R^2$, we define $^SX_{u,u'}$ to be the length of the maximal increasing path from $u$ to $u'$ that intersects the set $S$. We define $^S\tilde{X}_{u,u'}$ and $^S\hat{X}_{u,u'}$ similarly.
\end{itemize}


\subsection{Choice of Parameters}
Throughout the proof we shall make use of a number of parameters which need to satisfy certain constrains among themselves. We record here the parameters used, the relationship between them, and the order in which we need to fix them. The precise values of the parameters will not be of importance to us.

\textbf{Reinforcement parameter $\lambda$:} $\lambda>0$ will be kept fixed throughout the proof, this is the rate at which the diagonal $\{x=y\}$ (and its translates) are reinforced.

\textbf{Scale $r$:} As explained in the introduction, we shall work out estimates for functions of $\Pi$ at different length scales, the scale will be indexed by $r$. Let $$\mathcal{R}=\biggl\{10^k \frac{n}{\log^{10} n} : 1\leq k\leq \frac{1}{100}\log \log n\biggr\}.$$ We shall take $r$ to be one of the elements of $\mathcal{R}$.

\textbf{Parameters:} We choose the parameters in the following order. All these parameters are positive numbers and are independent of $r\in \mathcal{R}$, but they can depend on $\lambda$.
\begin{enumerate}
\item $\psi$ will be an absolute constant sufficiently large.

\item $\eta$ will be an absolute positive constant sufficiently small.

\item We choose $\tilde{C}$ sufficiently large depending on $\psi$.

\item We choose $M$ sufficiently large depending on other parameters chosen so far.

\item The parameter $C$ will be a sufficiently large constant depending on $M$.
\item  $0<\alpha'<1$ is chosen to be sufficiently small constant depending on $M$.

\item $\rho$ is chosen sufficiently small depending on $C$.

\item $\delta<1$ is chosen to be sufficiently small depending all other constants chosen so far (and $\lambda$).

\item We choose $\varepsilon$ small enough depending on $C$ and $M$ and $\delta$ and $\rho$.

\item $L$ is chosen sufficiently large depending on $\delta$ and $\varepsilon$.

\item $C^*$ is chosen sufficiently large depending on all other parameters.
\end{enumerate}

The functional form of the constraints that these parameters will need to satisfy will be specified later on. Without loss of generality we shall also assume that $n$ is an integer multiple of $r$ and $L^{3/2}r$ which will be convenient for some of our estimates. Also $\log$ will always denote natural logarithm unless mentioned otherwise.


\section{Defining the Key Events}
\label{s:events}
As explained in the introduction, we shall define some key events on which we shall be able to obtain local modifications of the longest path which will lead to improvements in the reinforced environment. These events will be defined for different locations in each scale $r$.

For the rest of this section, let $r\in \mathcal{R}$ be fixed. All of our events will be defined for this fixed $r$.

\subsection{Geometric Definitions: The $(x,y,r)$-Butterfly}
For a fixed $r$, let
$$\mathcal{X}_{r}=\left\{(k+\frac{1}{2})r: k\in \{\frac{n}{3r}, \frac{n}{3r}+1, \ldots , \frac{2n}{3r}-1\}\right\}.$$ For a fixed $x\in \mathcal{X}_r$ and for a fixed $y\in r^{2/3}\Z$ we define a geometric object, which we shall call the {\bf $(x,y,r)$-butterfly}, denoted as $\mathbb{B}(x,y,r)$, which will be a union of parallelograms as described below.

First we need the following notation. For $(x,y)\in \R^2$, $\ell,h \geq 0$, let $\mathcal{P}(x,y,\ell,h)$ denote the parallelogram whose corners are given by $(x-\frac{\ell}{2}, x-\frac{\ell}{2}+y)$, $(x-\frac{\ell}{2}, x-\frac{\ell}{2}+y+h)$, $(x+\frac{\ell}{2}, x+\frac{\ell}{2}+y)$, $(x+\frac{\ell}{2}, x+\frac{\ell}{2}+y+h)$. Unless otherwise mentioned our parallelograms will always be closed.

The butterfly $\mathbb{B}(x,y,r)$ consists of the following parallelograms.

\begin{itemize}
\item The {\bf body} of the $\mathbb{B}(x,y,r)$ is the parallelogram $$T=T_{x,y,r}:=\mathcal{P}(x,y-Lr^{2/3},r, (L+M)r^{2/3}).$$

\item The {\bf left wing} ${W}^1$  and the {\bf right wing} ${W}^2$ of $\mathbb{B}(x,y,r)$ is defined as follows
$${W}^1={W}^1_{x,y,r}=\mathcal{P}(x-\frac{r}{2}(1+L^{3/2}), y-L^{11/10}r^{2/3}, L^{3/2}r, 2L^{11/10}r^{2/3})$$

and $${W}^2={W}^2_{x,y,r}=\mathcal{P}(x+\frac{r}{2}(1+L^{3/2}), y-L^{11/10}r^{2/3}, L^{3/2}r, 2L^{11/10}r^{2/3}).$$
\end{itemize}

The $(x,y,r)$-butterfly is defined as $$\mathbb{B}:=T\cup {W}^1 \cup W^2.$$
Notice that the $(x,y,r)$-butterfly implicitly depends on the parameters $L$ and $M$ which are chosen later satisfying the constraints described above.

We further define some important subsets of the butterfly (omitting the subscript $(x,y,r)$).

\begin{itemize}
\item Let $\mathfrak{C}=\mathcal{P}(x,y-Lr^{2/3},\frac{4r}{5},(L+M)r^{2/3})$. We shall call $\mathfrak{C}$ the \emph{central column} of $\mathbb{B}$.

\item Let $D=\mathcal{P}(x,y-(M+\frac1{10})r^{2/3}, \frac{r}{10}, \frac{r^{2/3}}{10})$.

\item Let $\Lambda=\mathcal{P}(x,y-2Mr^{2/3},\frac{4r}{5},3Mr^{2/3}).$

\item Let $$B_1^*=\mathcal{P}(x-\frac{9r}{20},y-Lr^{2/3}, \frac{r}{10},(L+M)r^{2/3})$$
and
$$B_2^*= \mathcal{P}(x+\frac{9r}{20},y-Lr^{2/3}, \frac{r}{10},(L+M)r^{2/3}).$$

We shall call $B_1^{*}$, $B_2^{*}$ {\bf barriers} of the butterfly $\mathbb{B}$.

\item Let $F=\mathcal{P}(x,y-Lr^{2/3},r,0)$ be called the {\bf floor} of the butterfly $\mathbb{B}$ and let $F^{+}$ denote the region in $\mathbb{B}$ above $F$.
\end{itemize}

Different parts of the anatomy of the butterfly are illustrated in Figure \ref{f:butterfly}. This and other figures we use in this paper are drawn in the tilted coordinate $(x',y')=(x,y-x)$ in which the parallelograms with one pair of sides parallel to the line $x=y$ and other pair of sides parallel to the $y$-axis ( e.g. the parallelograms constituting a butterfly) become rectangles with sides parallel to the axes. This is merely a convenience in drawing and does not have any other significance.

\begin{figure*}[h]
\begin{center}
\includegraphics[width=0.6\textwidth]{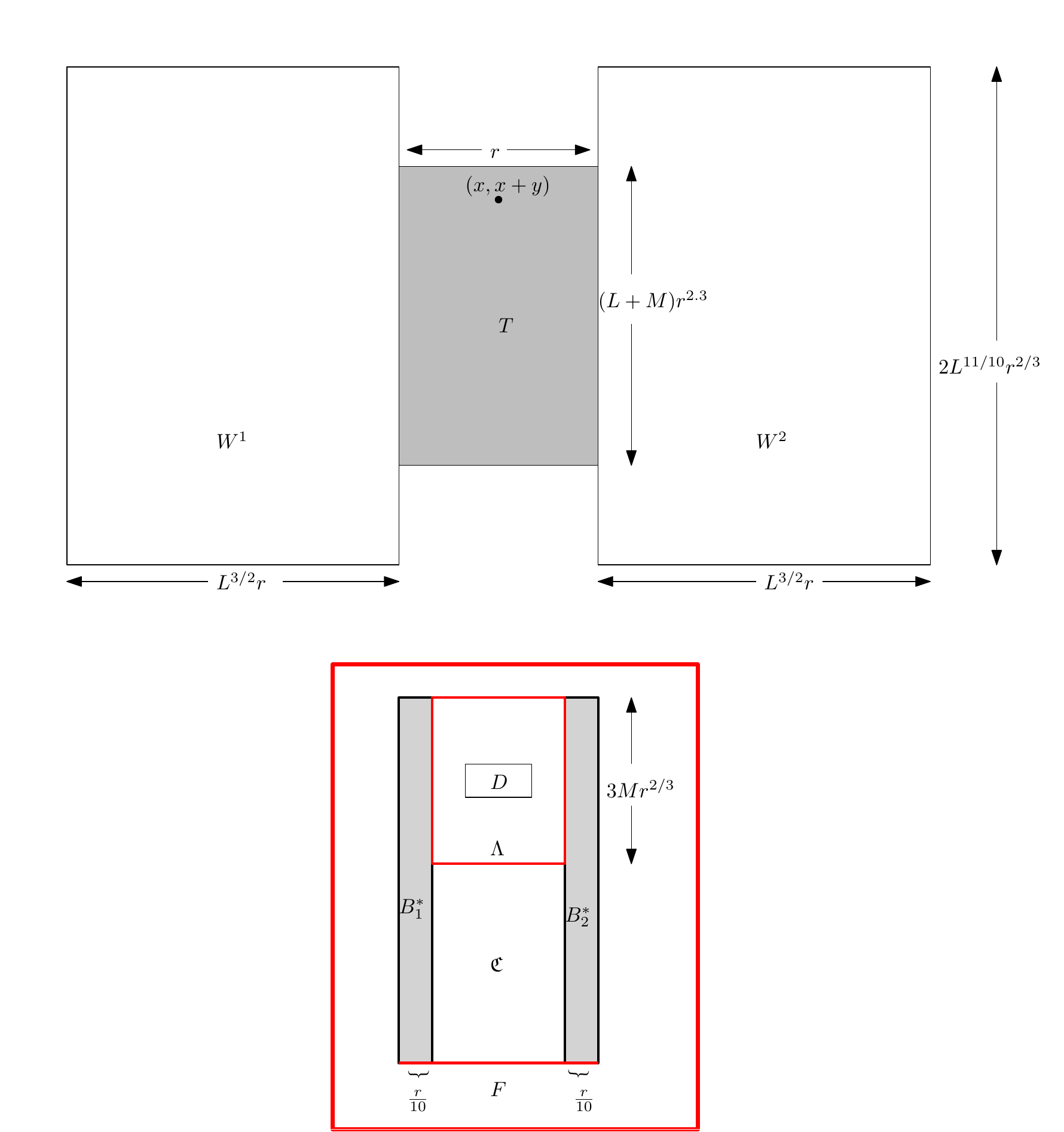}
\caption{Anatomy of a butterfly $\mathbb{B}(x,y,r)$. The figure above shows the parallelograms $W^1,W^2$ and $T$ making up the butterfly. The inset figure below illustrated the detailed anatomy of the body $T$.}
\label{f:butterfly}
\end{center}
\end{figure*}

\subsection{Defining the event $G_{x,y}$:}
Now we are ready to define an event $G_{x,y}$ for $x\in \mathcal{X}_r$ and $y\in r^{2/3}\Z$, which is one of the key events in our proof. We shall say $G_{x,y}$ holds if a long list of conditions are satisfied. For convenience we have divided the conditions into the a number of parts.

Notice that $G_{x,y}$ will consist of conditions that are \textbf{typical}, and for our purposes $G_{x,y}$ will be a good event that holds with large probability. A typical condition in the definition of $G_{x,y}$ will be as follows: for a parallelogram (in the butterfly) we shall ask that for all pairs of points in the parallelogram such that the slope of the line joining them is bounded away from $0$ and $\infty$ the length of the maximal path between these two points is neither too large nor too small (i.e., has on scale fluctuations).
 For our purposes we shall need to consider the above condition (or some variant) for a number of different parallelograms, it might be useful to think about them as \textit{good} parallelograms. Some of these good parallelograms have been illustrated in Figure \ref{f:goodpara}.

\begin{figure*}[!ht]
\begin{center}
\includegraphics[width=0.8\textwidth]{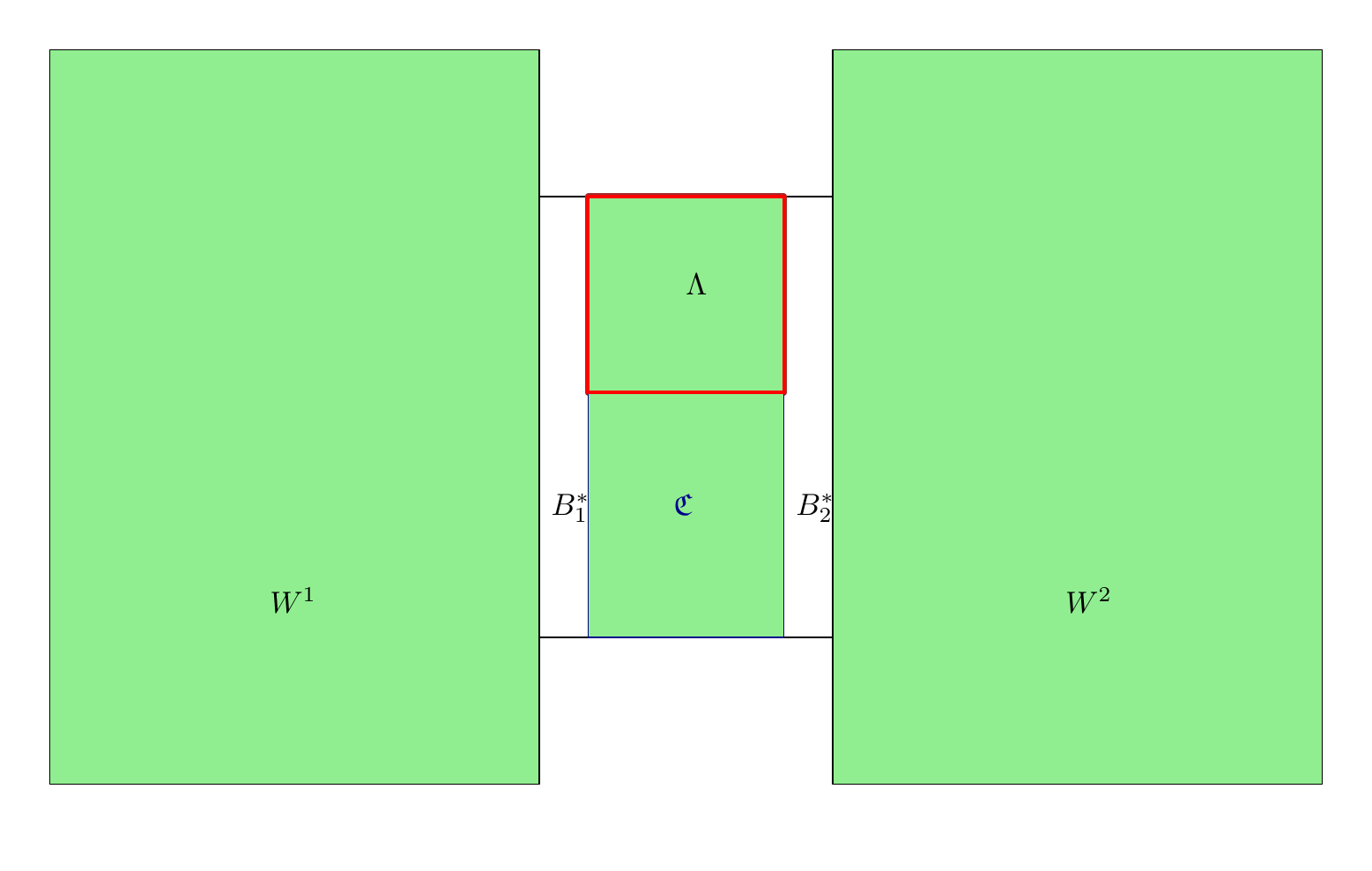}
\caption{Some of the good parallelograms in $\mathbb{B}(x,y,r)$ as described in the conditions defining $G(x,y)$ are marked in green. Notice that even though $\Lambda$ is contained in $\mathfrak{C}$, we need to ask them to be good separately, as $\mathfrak{C}$ has a much larger width and hence the on-scale fluctuations for $\mathfrak{C}$ is much larger than that of $\Lambda$.}
\label{f:goodpara}
\end{center}
\end{figure*}

To state the above condition formally, we shall use the following notation. For a region $U\subseteq \R^2$, we define $\mathcal{S}(U)=\mathcal{S}^{\psi}(U)\subseteq U^2$ as follows. For $u=(x,y)$ and $u'=(x',y')\in U$, $(u,u')\in \mathcal{S}(U)$ iff $\frac{2}{\psi}< \frac{y'-y}{x'-x}\leq \frac{\psi}{2}$.

\subsubsection{The local conditions: $G_{x,y}^{\rm loc}$}
We say $G_{x,y}^{\rm loc}$ holds if the following conditions are satisfied.

\begin{enumerate}
\item
Let $U=\mathcal{P}(x,y-Mr^{2/3}/10,r,2Mr^{2/3}/10)$. For all $(u',u'')\in \mathcal{S}(U)$ we have
\begin{equation}
\label{e:gxloc1}
|\tilde{X}_{u',u''}^{\partial U}|\leq Cr^{1/3}.
\end{equation}

\item We have $\forall (u,u') \in \mathcal{S}(\mathfrak{C})$
\begin{equation}
\label{e:gxloc2}
|\tilde{X}_{u,u'}|\leq CL^{1/2}r^{1/3}.
\end{equation}

\item For all $u,u'\in \mathcal{S}(\Lambda)$ we have
\begin{equation}
\label{e:gxloc3}
|\tilde{X}_{u,u'}|\leq Cr^{1/3}.
\end{equation}

Also let $2D$ denote the dilation that doubles $D$ keeping the centre fixed.  Then we have for all $u,u'\in \mathcal{S}(\Lambda\setminus 2D)$, such that both of $u$ and $u'$ are not in $\{(x^{*},y^{*}):|x-x^*|\leq r/5\}$

\begin{equation}
\label{e:gxloc4}
|\tilde{X}_{u,u'}^D|\leq Cr^{1/3}.
\end{equation}


\item We have $\forall u\in \mathcal{P}(x,y-2Mr^{2/3}, \frac{r}{5}, 2Mr^{2/3})$, $u'\in B_1^*\cap \mathfrak{C}$ (resp.\ $u'\in B_2^*\cap \mathfrak{C}$) (i.e., $u'$ is in the boundary between the barriers and the central column)

\begin{equation}
\label{e:gxloc5}
\hat{X}_{u',u}\leq Cr^{1/3}~ (\text{resp.}~ \hat{X}_{u,u'}\leq Cr^{1/3}).
\end{equation}


\item
We have $\forall u,u'\in F$
\begin{equation}
\label{e:gxloc6}
|\tilde{X}_{u,u'}^{F^{+}}|\leq Cr^{1/3}.
\end{equation}
\end{enumerate}

\subsubsection{The Area Condition: $G_{x,y}^{\rm a}$}
Consider the parallelogram $U=\mathcal{P}(x,y-\frac{3Mr^{2/3}}{2},\frac{4r}{5}, Mr^{2/3})$. We say that $G_{x,y}^{\rm a}$ holds if for all paths $\gamma$ from $u=(x,y)$ to $u'=(x',y')$ with $u<u'\in U$, $|x-x'|\geq \alpha' r$ with $\alpha' r \leq  \ell_{\gamma}^{\Pi} \leq 3r$, we have
\begin{equation}
\label{e:gxarea}
A_{\gamma}^{\Pi} \geq \alpha' \eta r.
\end{equation}
Notice that this condition depends on all the parameters in our construction.



\subsubsection{Resampling Condition: $G_{x,y}^{\rm rs}$}

Taking equally spaced line segments parallel to its sides we divide the parallelogram $D$ into a grid of  $\frac{1}{100\varepsilon^{5/3}}$ many parallelograms of size $\varepsilon r\times (\varepsilon r)^{2/3}$ each.  We denote these by $D_1,D_2,\ldots $ so $D=\cup_{i} D_i$. Let $u_i=(x_i,y_i)$ denote the bottom left corner of $D_i$. Define the parallelogram $\tilde{D}_i$ whose corners are $(x_i,y_i-\varepsilon^{2/3}r^{2/3})$, $(x_i+\varepsilon r, y_i-\varepsilon^{2/3}r^{2/3}+\varepsilon r)$, $(x_i, y_i+2\varepsilon^{2/3}r^{2/3})$ and $(x_i+\varepsilon r, y_i+2\varepsilon^{2/3}r^{2/3}+\varepsilon r)$. Parallelograms $D_i$ and $\tilde{D}_{i}$ are illustrated in Figure \ref{f:resamp}.

\begin{figure*}[!ht]
\begin{center}
\includegraphics[width=0.4\textwidth]{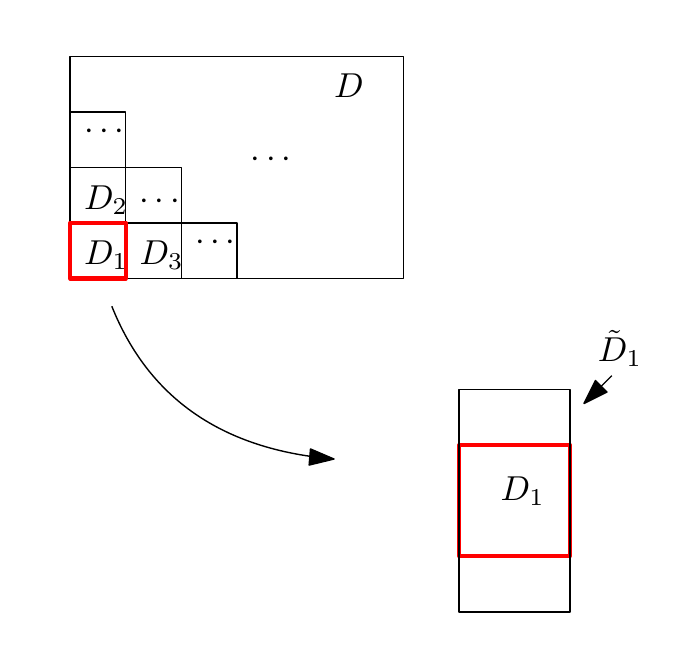}
\caption{Parallelograms $D_i$ and $\tilde{D}_i$ as defined in the resampling condition}
\label{f:resamp}
\end{center}
\end{figure*}

Now let $\Pi^{*}$ be another i.i.d.\ copy of $\Pi$. Let $\Pi^{(*,i)}$ denote the point process obtained by replacing the the point configuration of $\cup_{h=1}^{i} D_h$ in $\Pi$ by the corresponding point configuration in  $\Pi^{*}$. From now on, whenever we write some statistics of a point configuration with a superscript $(i)$, this will denote the statistic for the point configuration $\Pi^{(*,i)}$.

Let $$\Delta_i=\sup_{u,u'\in \mathcal{S}(\tilde{D}_i)} |X_{u,u'}^{(i-1)}-X_{u,u'}^{(i)}|.$$

We say $G_{x,y}^{\rm rs}$ holds if the following condition is satisfied.

\begin{itemize}
\item
\begin{equation}
\label{e:gxrs}
\P\left[\max_{i} \Delta_i \leq \frac{\delta r^{1/3}}{2}\mid \Pi\right]\geq 1-e^{-C/\varepsilon^{1/4}}.
\end{equation}
\end{itemize}

Notice that all the above conditions can be checked by looking at the point configuration in $[x-r/2,x+r/2]\times \R$. i.e., these events will be independent for different values of $x$.

\subsubsection{The Wing condition: $G_{x,y}^{w}$}
We say $G_x^{w}$ holds if the following condition is satisfied.
We have $\forall u,u'\in \mathcal{S}({W}^i)$ for $i=1,2$
\begin{equation}
\label{e:gxnbhd2}
|\tilde{X}_{u,u'}|\leq CL^{3/4}r^{1/3}.
\end{equation}

\begin{itemize}
\item Finally we define
$$G_{x,y}= G_{x,y}^{\rm loc}\cap G_{x,y}^{w}\cap G_{x,y}^{\rm rs}\cap G_{x,y}^{\rm a}.$$
\end{itemize}

\subsection{Defining the event $G_x$:}
\label{s:gxdef}


Let $\Gamma$ be the topmost maximal path in $\Pi$ from $\mathbf{0}=(0,0)$ to $\mathbf{n}=(n,n)$. For $x\in \mathcal{X}_r$ we define the event $G_x$ as follows. Let $y^*=y(x,\Gamma)=\inf\{y\in r^{2/3}\Z: x+y\geq \Gamma_{x}\}$. We shall denote $\mathbb{B}(x,r)=\mathbb{B}(x,y^*,r)$. Also let
$$B_1= \{(x',y')\in B_1^*: x-r/2\leq x' \leq x-2r/5, x'+y^*-Lr^{2/3}\leq y'< \Gamma_{x'}\};$$
$$B_2= \{(x',y')\in B_2^*: x+r/2\geq x' \geq x+2r/5, x'+y^*-Lr^{2/3}\leq y'< \Gamma_{x'}\};$$
For $x\in \mathcal{X}_r$, $i=1,2$, we shall call $B_i=B_i(x,r)$ \emph{walls} in the column $x$. Also for an increasing path $\gamma$ from $(0,0)$ to $(n,n)$, $B_i(x,\gamma,r)$ will be defined similarly, replacing $\Gamma$ by $\gamma$. See Figure \ref{f:wall}.

\begin{figure*}[!ht]
\begin{center}
\includegraphics[width=0.4\textwidth]{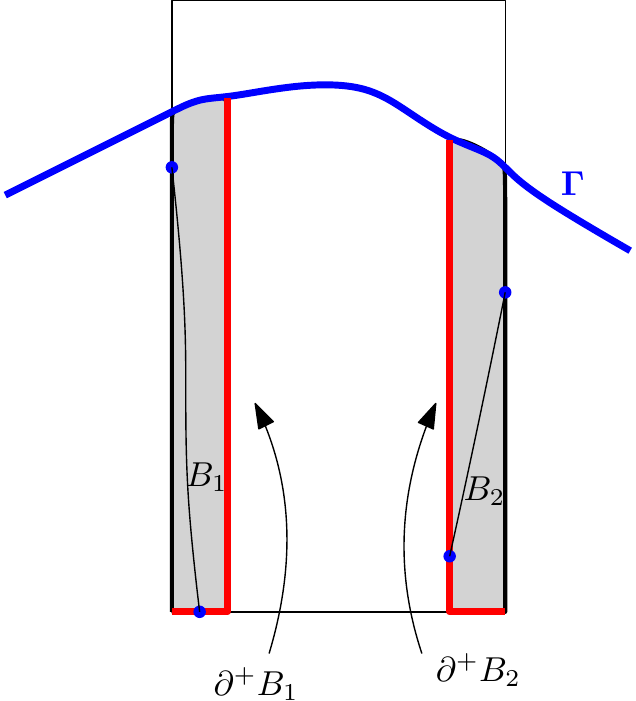}
\caption{Geometry of Walls: Walls $B_1$ and $B_2$ as defined in Section \ref{s:gxdef} where $\Gamma$ is the topmost maximal path. We have marked in red $\partial ^{+} B_1$ and $\partial ^{+} B_2$ as defined in Section \ref{s:rx} in red. Pairs of blue points denote some typical pairs of points which are asked to have not too large distance (i.e.\ length of maximal path from one to the other) in the definition of $R_{x}$ (first condition). In the second condition in definition of $R_{x}$, we stipulate that any path across either wall (i.e., one of the grey regions in the figure) will have much smaller than typical length}
\label{f:wall}
\end{center}
\end{figure*}

We say that $G_x$ holds if all of following conditions are satisfied.

\begin{itemize}
\item {\bf The local conditions: $G_{x}^{\rm loc}$:}
We say that $G_{x}^{\rm loc}$ holds if $G_{x,y^*}^{\rm loc}$ holds with the following two modifications.
\begin{itemize}
\item Instead of condition 1 in the definition of $G_{x,y}^{\rm loc}$ above we have that for all $u'=(x',y'), u''=(x'',y'')\in \Gamma$ with $x',x''\in [x-r/2, x+r/2]$ and $|x'-x''|\geq r^{3/4}$ we have
\begin{equation}
\label{e:gxmaxloc1}
|\tilde{X}_{u',u''}|\leq Cr^{1/3}.
\end{equation}

\item We replace $B_1^*(x,y^*,r)$ and $B_2^*(x,y^*,r)$ in condition 4 by $B_1(x,r)$ and $B_2(x,r)$ respectively.
\end{itemize}

\item {\bf The area condition} $G_{x}^{{\rm a}}:=G_{x,y^*}^{{\rm a}}$.

\item {\bf The Wing condition} $G_{x}^{w}:=G_{x,y^*}^{w}$.

\item {\bf The resampling condition} $G_{x}^{\rm rs}:=G_{x,y^*}^{\rm rs}$.

\item {\bf The fluctuation condition: $G_{x}^{f}$:} We say $G_{x}^{f}$ holds if the following conditions are satisfied.

\begin{enumerate}
\item $|\Gamma_x-x|\leq Cn^{2/3}$, 
\item  We have  for $(x',y')\in \Gamma$
\begin{equation}
\label{e:gxmaxnbhd1}
\frac{|(y'-y)-(x'-x)|}{r^{2/3}} \leq
\begin{cases}
\frac{M}{10}~\text{if}~\frac{|x'-x|}{r}\leq 1,\\
L^{11/10}~\text{if}~|x'-x| =(1/2+L^{3/2})r.
\end{cases}
\end{equation}
\end{enumerate}
\end{itemize}

\subsection{Defining $R_{x,\gamma}$, $R_{x,y}$ and $R_{x}$:}
\label{s:rx}

Let $\gamma$ be an increasing path from $\mathbf{0}$ to $\mathbf{n}$. For $x\in \mathcal{X}_r$, define $y(x,\gamma)=\inf\{y'\in r^{2/3}\Z: x+y'\geq \gamma_{x}\}$. Set $B_i=B_i(x,\gamma,r)$. Also let $\partial^{+}(B_i)$  denote the union of $B_i\cap \mathfrak{C}$ and the bottom boundary of $B_i$. See figure \ref{f:wall}. We define $\partial^{+} B_i^{*}$ similarly.
We say $R_{x,\gamma}$ holds if the following conditions are satisfied.

\begin{enumerate}
\item[(i)] We have $\forall u=(x',y')\in B_1$ (resp. $B_2$)  with $y'\geq x'+y(x,\gamma)-Mr^{2/3}$ and $\forall u'\in \partial^{+} B_1$ (resp.\ $\partial ^{+}B_2$)
\begin{equation}
\label{e:rx1}
\hat{X}_{u,u'}^{B_1^{c}\cup \gamma}\leq Cr^{1/3}~(\text{resp.}~\hat{X}_{u',u}^{B_2^{c}\cup \gamma}\leq Cr^{1/3}) .
\end{equation}

\item[(ii)]
We have $\forall u\in B_1\cap W^1$ (resp.  $\forall u\in B_2\cap \mathfrak{C}$) and $\forall u'\in B_1\cap \mathfrak{C}$ (resp.\ $\forall u'\in B_2\cap W^2)$

\begin{equation}
\label{e:rx2}
\tilde{X}_{u,u'}^{B_1^{c}\cup \gamma}\leq -C^*r^{1/3}~(\text{resp.}~\tilde{X}_{u,u'}^{B_2^{c}\cup \gamma}\leq -C^*r^{1/3}).
\end{equation}
\end{enumerate}

The second condition above means that any path that crosses the walls from left to right are much shorter than typical paths. Recall that $C^{*}$ is chosen sufficiently large depending on other parameters.

We also make the following definitions.
\begin{itemize}
\item For $x\in \mathcal{X}_r$ and $y\in r^{2/3}\Z$, let $R_{x,y}$ denote the event such that \eqref{e:rx1} and \eqref{e:rx2} holds in $\mathbb{B}(x,y,r)$ with $B_i$ replaced by $B_i^*(x,y,r)$.

\item We define $R_x:=R_{x,\Gamma}$ where $\Gamma$ is the topmost maximal path from $\mathbf{0}$ to $\mathbf{n}$ in $\Pi$.
\end{itemize}

\subsection{Defining $H_{x,\gamma}$, $H_{x,y}$ and $H_x$:}
Let $\gamma$ be an increasing path from $\mathbf{0}$ to $\mathbf{n}$. For $x\in \mathcal{X}_r$, define $y(x,\gamma)$ as before.
We say $H_{x,\gamma}$ holds if the following conditions are satisfied in the butterfly $\mathbb{B}(x,y(x,\gamma),r)$.

\begin{enumerate}
\item[(i)] For all $u,u'\in F$, we have
\begin{equation}
\label{e:hx1}
^{\Lambda}\tilde{X}_{u,u'}^{\gamma}\leq -Lr^{1/3}.
\end{equation}

\item[(ii)] For all $u\in F$, $u'\in \mathcal{P}(x,y-2Mr^{2/3},r, 3Mr^{2/3})$ and $u'$ below $\gamma$ we have
\begin{equation}
\label{e:hx2}
\hat{X}_{u,u'}^{\gamma} \leq -Lr^{1/3}~\text{or}~\hat{X}_{u',u}^{\gamma} \leq -Lr^{1/3}
\end{equation}
depending on whether $u<u'$ or $u'<u$.
\end{enumerate}
We also make the following definitions.
\begin{itemize}
\item Let $H_{x,y}$ denote the event such that in $\mathbb{B}(x,y,r)$ \eqref{e:hx1} holds without the requirement of avoiding $\gamma$ and \eqref{e:hx2} holds without the requirement of avoiding $\gamma$ or the requirement $u'\in \gamma$.

\item We define $H_x:=H_{x,\Gamma}$ where $\Gamma$ is the topmost maximal path from $\mathbf{0}$ to $\mathbf{n}$ in $\Pi$.
\end{itemize}


%
%
%
%

\subsection{Conditioning on $R_x$}
We want to show that for a fixed $r$, for a large fraction of  $x\in \mathcal{X}_r$, $G_x\cap H_x\cap R_x$ hold with probability bounded away from $0$ uniformly in $r$. It turns out that each of $G_x$ and $H_x$ holds with probability close to $1$, however $R_x$ only holds with a small probability (bounded away from $0$). For this reason, in many of our probabilistic estimates we shall need to condition on $R_x$ for $x\in \mathcal{X}_r$ and deal with the conditional probability measures. For the sake of clarity we shall use the measure $\mu$ for the the measure on configurations $\Pi$ distributed according to a homogeneous Poisson process of rate $1$. The generic notation $\P$ will also refer to this measure unless specified otherwise.


The following theorem gives a lower bound on the probability of $R_x$.

\begin{theorem}
\label{l:rdecreasing}
Let $\gamma$ be an increasing path from $\mathbf{0}$ to $\mathbf{n}$. Let $r\in \mathcal{R}$ be fixed. For $x\in \mathcal{X}_r$ let $A=A_x^{\gamma}:=\R^2\setminus (B_1(x,\gamma,r) \cup B_2(x,\gamma,r))$. Let $\Pi_{A}$ denote the point configuration $\Pi$ restricted to $A$. Then we have
$$\mu(R_{x,\gamma}\mid \Pi_{A}, \Gamma =\gamma)\geq \mu(R_{x,\gamma})\geq \min_{y\in r^{2/3}\Z}\mu(R_{x,y})\geq \beta>0.$$
\end{theorem}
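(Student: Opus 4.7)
The plan has three ingredients, matching the three inequalities. The first uses FKG positive association, the second is a direct set-theoretic comparison, and the third combines translation invariance of the Poisson process with a moderate-deviation argument deferred to \S~\ref{s:rxcond}.

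For the first inequality, once $\gamma$ is fixed the walls $B_1=B_1(x,\gamma,r)$ and $B_2=B_2(x,\gamma,r)$ are determined and lie strictly below $\gamma$, so $R_{x,\gamma}$ is measurable with respect to $\Pi_{B_1\cup B_2}$ alone and is therefore independent of $\Pi_A$; in particular $\mu(R_{x,\gamma}\mid \Pi_A)=\mu(R_{x,\gamma})$. The content of the first inequality is that conditioning further on $\{\Gamma=\gamma\}$ only helps. I would verify that both events are \emph{decreasing} functions of the Poisson configuration in $B_1\cup B_2$: every constraint in \eqref{e:rx1}--\eqref{e:rx2} is an upper bound on a constrained last-passage length, and constrained last-passage lengths are monotone non-decreasing in the underlying points; similarly, any extra Poisson point in $B_1\cup B_2$ either does nothing or produces a wall-crossing path strictly longer than $\gamma$, which destroys the maximality of $\gamma$ (and being below $\gamma$ cannot invalidate ``topmost''). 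Applying FKG to the conditional Poisson process on $B_1\cup B_2$ then gives
\[
\mu\bigl(R_{x,\gamma}\cap\{\Gamma=\gamma\}\,\big|\,\Pi_A\bigr)\;\geq\;\mu(R_{x,\gamma}\mid \Pi_A)\,\mu(\Gamma=\gamma\mid \Pi_A),
\]
and dividing by $\mu(\Gamma=\gamma\mid \Pi_A)$ yields $\mu(R_{x,\gamma}\mid \Pi_A,\Gamma=\gamma)\geq \mu(R_{x,\gamma})$.

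For the second inequality, set $y^{\ast}:=y(x,\gamma)$. By construction $B_i(x,\gamma,r)\subseteq B_i^{\ast}(x,y^{\ast},r)$ and $\partial^+ B_i\subseteq \partial^+ B_i^{\ast}$, so every pair $(u,u')$ ranged over in the definition of $R_{x,\gamma}$ also appears in the definition of $R_{x,y^{\ast}}$. Moreover, a path realising the constraint ``stays in $B_i\setminus \gamma$'' in particular stays in $B_i^{\ast}$, so pointwise $X^{B_i^c\cup\gamma}_{u,u'}\leq X^{(B_i^{\ast})^c}_{u,u'}$. Consequently both the upper-bound condition \eqref{e:rx1} and the one-sided condition \eqref{e:rx2} of $R_{x,y^{\ast}}$ imply their $R_{x,\gamma}$-counterparts, so $R_{x,y^{\ast}}\subseteq R_{x,\gamma}$, and hence $\mu(R_{x,\gamma})\geq \mu(R_{x,y^{\ast}})\geq \min_y \mu(R_{x,y})$.

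For the third inequality, the butterfly $\mathbb{B}(x,y,r)$ is the vertical translate of $\mathbb{B}(x,0,r)$ by $(0,y)$, and every quantity entering the definition of $R_{x,y}$---including the $\ell^1$-distance $d(\cdot,\cdot)$ and the centering $\E X_{u,u'}$---depends only on coordinate differences. Translation invariance of the underlying Poisson process therefore gives $\mu(R_{x,y})=\mu(R_{x,0})$ for every $y\in r^{2/3}\Z$, so the minimum equals $\mu(R_{x,0})$. The core obstacle, which I expect to be the main technical difficulty, is the absolute lower bound $\mu(R_{x,0})\geq \beta>0$ uniformly in $r\in\mathcal{R}$: one must simultaneously control upper tails of paths constrained to lie in a narrow barrier \eqref{e:rx1} and lower tails of paths squeezed through it \eqref{e:rx2} for an uncountable family of endpoint pairs. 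This calls for a discretization of endpoint pairs combined with the moderate-deviation bounds of Theorems~\ref{t:moddevuppertail} and~\ref{t:moddevlowertail}, together with another FKG argument---since all the ``tail-suppression'' conditions are decreasing in the Poisson configuration in the barrier---to combine the two families of constraints. This last step is carried out in \S~\ref{s:rxcond}.
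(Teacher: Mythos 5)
Your proof is correct and takes essentially the same route as the paper's: the first inequality by FKG applied to the conditional measure on the walls (the paper notes both events are decreasing in the off-$\gamma$ configuration), the second by the set inclusion $R_{x,y(x,\gamma)}\subseteq R_{x,\gamma}$ (which the paper dismisses as trivial), and the third deferred to Proposition~\ref{p:rxy} in \S~\ref{s:rxcond}. Your extra observation that translation invariance collapses the minimum over $y$ to a single value is valid and streamlines the presentation, but is not needed since Proposition~\ref{p:rxy} is already stated uniformly in $y$.
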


\begin{definition}[Conditional measure]
\label{d:mustar}
Define the measure $\mu_x^{*}$ on configurations in $\R^2$ by conditioning on the configuration in the walls of column $x$ such that $R_{x}$ holds. That is, denoting $A=A_x^{\gamma}$ and $B=B_1(x,\gamma,r)\cup B_2(x,\gamma, r)$ and for point configurations $\Pi_{A}$ restricted to $A$ and $\Pi_{B}^{*}$ restricted to $B$ we have 
$$\mu_x^{*}(\Pi_A, \Pi^*_B):=\sum_{\gamma}\mu(\Gamma=\gamma, \Pi_A)\mu(\Pi^*_B\mid \Gamma=\gamma, \Pi_A, R_{x,\gamma})I(\gamma)$$
where the sum above is over all increasing paths $\gamma$ from $\mathbf{0}$ to $\mathbf{n}$ and $I(\gamma)$ denotes the indicator that $\gamma$ is the topmost maximal path from $\mathbf{0}$ to $\mathbf{n}$ uniquely determined by $\Pi_{A}$ and $\Pi_{B}^{*}$.
\end{definition}

Observe the following mechanism to sample a point configuration from the measure $\mu_{x}^*$. Sample a point configuration $\Pi$ from the measure $\mu$. Notice that $\Gamma, A$ and $B$ are defined as functions of $\Pi$. Write $\Pi=(\Pi_{A},\Pi_{B})$. Now resample the point configuration on $B$ as follows. Draw a configuration $\Pi_{B}^*$ from the Poissonian measure conditioned on the following event: in the configuration $(\Pi_{A}, \Pi^*_{B})$, $\Gamma$ is the topmost maximal path and $R_{x}$ holds. Replace $\Pi_{B}$ by $\Pi^*_{B}$ to obtain a sample from the measure $\mu_x^*$. 

We record the basic properties of $\mu_x^*$ in the following lemma.

\begin{lemma}
\label{l:mustarbasic}
The measure $\mu_x^{*}$ satisfies the following two properties:\
\begin{enumerate}
\item[(i)] We have $\mu_x^*\preceq \mu$ where $\preceq$ denotes stochastic domination.
\item[(ii)] We have
\begin{equation}
\label{e:mustardensity}
\dfrac{{\rm d}\mu_x^{*}}{{\rm d}\mu}\leq \max_{y}\frac{1}{\mu(R_{x,y})}\leq \frac{1}{\beta}.
\end{equation}
\end{enumerate}
\end{lemma}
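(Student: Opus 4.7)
My plan is to handle the two parts separately. Part (ii) follows from a direct Radon--Nikodym computation, while part (i) rests on an FKG-type monotonicity argument that exploits the fact that $R_{x,\gamma}$ is a decreasing event.

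For part (ii), I would first note that under the sampling mechanism described immediately after Definition~\ref{d:mustar}, the joint law of $(\Gamma,\Pi_A)$ is the same under $\mu_x^*$ and $\mu$, and the only modification is that the conditional law of $\Pi_B$ given $(\Gamma=\gamma,\Pi_A)$ is further conditioned on $R_{x,\gamma}$. This yields the pointwise factorisation
\[
\frac{d\mu_x^*}{d\mu}(\Pi) \;=\; \frac{\mathbf{1}_{R_{x,\gamma}}(\Pi_B)}{\mu(R_{x,\gamma}\mid \Gamma=\gamma,\Pi_A)}.
\]
Theorem~\ref{l:rdecreasing} bounds the denominator below by $\min_{y\in r^{2/3}\Z}\mu(R_{x,y})\geq\beta$, uniformly in $\gamma$ and $\Pi_A$, so the derivative is at most $1/\beta$ everywhere on the support of $\mu_x^*$.

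For part (i), I would couple $\mu$ and $\mu_x^*$ by first sampling the common marginal on $(\Gamma,\Pi_A)$ and then, conditionally on $(\Gamma=\gamma,\Pi_A)$, jointly sampling $\Pi_B\sim\nu_1:=\mu(\cdot\mid\Gamma=\gamma,\Pi_A)$ and $\Pi_B^{*}\sim\nu_2:=\nu_1(\cdot\mid R_{x,\gamma})$ in a monotone way so that $\Pi_B^{*}\subseteq\Pi_B$ as point sets. Integrating over the common marginal then yields $\mu_x^{*}\preceq\mu$. The key observations making the monotone coupling available are: (a) $R_{x,\gamma}$ is a decreasing event in the Poisson configuration on $B$, since each condition defining $R_{x,\gamma}$ bounds a last passage time from above and last passage times are increasing functions of the point configuration; and (b) conditional on $\Pi_A$, the event $\{\Gamma(\Pi_A,\cdot)=\gamma\}$ is almost surely decreasing in $\Pi_B$, because adding Poisson points to the walls can only create new candidate paths from $\mathbf{0}$ to $\mathbf{n}$ that would dethrone $\gamma$ as the topmost maximal path.

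The main obstacle, and the step that requires the most care, is turning these two monotonicity observations into an actual monotone coupling of $\nu_1$ and $\nu_2$: FKG for the underlying Poisson measure on $B$ does not automatically transfer through conditioning on the intricate event $\{\Gamma=\gamma\}$. My plan is to work directly at the level of the Poisson measure on $B$ and view both $\nu_1$ and $\nu_2$ as restrictions of Poisson to nested decreasing events (namely $\{\Gamma=\gamma\}$ and $\{\Gamma=\gamma\}\cap R_{x,\gamma}$), then realise the desired monotone coupling either through a Holley-type construction or through a rejection/thinning scheme tailored to the finitely many monotone path-length inequalities comprising $R_{x,\gamma}$, using crucially that these inequalities refer only to paths staying inside $B_1\cup B_2$ and therefore decouple cleanly from the configuration on $A$.
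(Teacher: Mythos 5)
Your treatment of part (ii) is correct and coincides with the paper's: the pointwise factorisation of $\frac{d\mu_x^*}{d\mu}$ together with Theorem~\ref{l:rdecreasing} gives the chain of inequalities.

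For part (i), however, the crucial step is left as a ``plan'' and the plan does not obviously succeed. You correctly record that, conditionally on $(\Gamma=\gamma,\Pi_A)$, the two conditional laws on the wall configuration are $\nu_1=\mu_B(\cdot\mid D_1)$ and $\nu_2=\mu_B(\cdot\mid D_1\cap D_2)$ with $D_1=\{\Gamma(\Pi_A,\cdot)=\gamma\}$ and $D_2=R_{x,\gamma}$ both decreasing, and you correctly flag that FKG does not pass through the conditioning on $D_1$. But this is not a technicality one can expect to clear with a generic Holley or thinning construction: comparing $\mu_B(\cdot\mid D_1\cap D_2)$ with $\mu_B(\cdot\mid D_1)$ via the Holley criterion requires that whenever $\omega\in D_1\cap D_2$ and $\omega'\in D_1$, one has $\omega\vee\omega'\in D_1$; in other words $D_1$ would have to be closed under taking unions of point configurations. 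The event $\{\Gamma(\Pi_A,\cdot)=\gamma\}$ is not closed under $\vee$ --- two wall configurations that each individually leave $\gamma$ as the topmost maximal path can, once superposed, produce a strictly longer path through the walls --- so the Holley hypothesis fails, and for exactly the same reason a conditioned Poisson measure of the form $\mu_B(\cdot\mid D_1)$ need not be positively associated, so you cannot deduce $\nu_2\preceq\nu_1$ from ``$D_2$ decreasing'' alone. (A two-coordinate toy example with $D_1=\{X_1+X_2\leq 1\}$, $D_2=\{X_1=0\}$ shows that nested conditioning on decreasing events genuinely fails to be monotone in general.) Thus the step you yourself identify as ``the step that requires the most care'' is a real gap in the proposal, not merely bookkeeping: one needs an argument tailored to the particular structure of $\{\Gamma=\gamma\}$ and $R_{x,\gamma}$ rather than an off-the-shelf Holley or rejection scheme. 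For what it is worth, the paper's own one-line justification ``$(i)$ follows from the FKG inequality'' is equally silent on this point, so the difficulty you located is genuine and not something the paper's proof already resolves explicitly.
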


\begin{proof}
Notice that $(i)$ follows from the FKG inequality and it is clear from definition that the first inequality in (\ref{e:mustardensity}) holds, the second inequality follows from Theorem \ref{l:rdecreasing}.
\end{proof}

Finally we have the following theorem.

\begin{theorem}
\label{p:gxhxconditional}
There exists $\mathcal{X}_r^*\subseteq \mathcal{X}_r$ with $|\mathcal{X}_r^*|\geq \frac{9}{10} |\mathcal{X}_r|$ such that for all $x\in \mathcal{X}_r^{*}$ we have
$$\mu_x^*(G_x\cap H_x)\geq \frac{9}{10}.$$
\end{theorem}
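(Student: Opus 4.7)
The plan is to combine the Radon--Nikodym bound $d\mu_x^*/d\mu \leq 1/\beta$ from Lemma \ref{l:mustarbasic}(ii), where $\beta > 0$ is supplied by Theorem \ref{l:rdecreasing}, with high-probability bounds on $G_x \cap H_x$ under the unconditioned measure $\mu$. For any event $E$ we have $\mu_x^*(E) \leq \mu(E)/\beta$, so it suffices to exhibit $\mathcal{X}_r^* \subseteq \mathcal{X}_r$ with $|\mathcal{X}_r^*| \geq \tfrac{9}{10}|\mathcal{X}_r|$ on which $\mu(G_x^c \cup H_x^c) \leq \beta/10$. By Markov applied to the function $x \mapsto \mu(G_x^c \cup H_x^c)$, it is enough to prove a per-$x$ bound of the form $\mu(G_x^c \cup H_x^c) \leq \beta/100$ valid outside an exceptional set of size at most $|\mathcal{X}_r|/10$.

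I would then decompose $G_x \cap H_x$ into its elementary subconditions and handle them in two groups. For the \emph{configuration-only} subconditions---those appearing in $G_{x,y}^{\mathrm{loc}}$, $G_{x,y}^a$, $G_{x,y}^{\mathrm{rs}}$, $G_{x,y}^w$, and $H_{x,y}$ for each fixed offset $y \in r^{2/3}\Z$---each is a uniform upper or lower bound on a centered passage time over a class $\mathcal{S}(\cdot)$ of point pairs, possibly with an avoidance constraint. A union bound over a sufficiently fine discrete net in $\mathcal{S}(\cdot)$, combined with the moderate deviation estimates of Theorems \ref{t:moddevuppertail} and \ref{t:moddevlowertail} (plus their counterparts for area and for resampling stability), produces a failure probability of order $e^{-\Omega(C^{3/2})}$. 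Since $C,L,M$ are chosen sufficiently large, this is $\ll \beta/100$; summing over the polynomially many admissible values of $y$ consistent with the transversal fluctuations of $\Gamma$ preserves the bound, so specialising to the random $y^* = y(x,\Gamma)$ costs nothing.

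The second group consists of the \emph{$\Gamma$-dependent} conditions: the fluctuation event $G_x^f$, the modification (\ref{e:gxmaxloc1}) replacing condition 1 of $G_{x,y}^{\mathrm{loc}}$ with a statement along $\Gamma$, and the replacement of $B_i^*$ by the actual walls $B_i$ determined by $\Gamma$. The global bound $|\Gamma_x-x|\leq Cn^{2/3}$ is uniform in $x$ by the transversal fluctuation estimate. The on-scale condition (\ref{e:gxmaxnbhd1}) is a scale-$r$ transversal fluctuation bound for $\Gamma$ near column $x$, while (\ref{e:gxmaxloc1}) is a point-to-point regularity of $\Gamma$ that reduces to a standard KPZ-type comparison using the moderate deviation estimates. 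Each of these events can fail on only a small expected fraction of $x \in \mathcal{X}_r$, so by Markov we may collect all such failures into a bad set of size at most $|\mathcal{X}_r|/10$ and define $\mathcal{X}_r^*$ as its complement.

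The principal obstacle is the correlation between $\Gamma$'s trajectory and the point configuration in the butterfly, which prevents a naive application of the unconditioned estimates. I would handle this in two stages: first, establish that $\Gamma$ exhibits typical behaviour at most $x$ (the source of the bad $1/10$ fraction), and second, exploit the fact that on this event the conditioning on $\Gamma$ is localised to a thin tube around it, so that the configuration in the remainder of the butterfly is close to an unconditioned Poisson process and the moderate deviation estimates apply to the configuration-only subconditions even after restricting to typical $\Gamma$. Combined with the Radon--Nikodym bound, this yields $\mu_x^*(G_x \cap H_x)\geq \tfrac{9}{10}$ for all $x \in \mathcal{X}_r^*$ as required.
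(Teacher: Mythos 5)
Your core strategy fails because of the parameter ordering. The Radon--Nikodym bound $d\mu_x^*/d\mu \leq 1/\beta$ would indeed reduce the problem to showing $\mu(G_x^c \cup H_x^c) \leq \beta/10$, but this target is not attainable. The quantity $\beta$ arises in Theorem~\ref{l:rdecreasing} via Lemma~\ref{l:rxycond2}, where it is of order $\kappa^{(L+2M)^2}$ with $\kappa$ itself depending on $C^*$; both $L$ and $C^*$ are chosen \emph{after} $C$ in the parameter hierarchy. Your failure-probability bound for $G_x^c \cup H_x^c$ is of order $e^{-\Omega(C^{3/2})}$ (a small but fixed constant once $C$ is set), whereas $\beta$ can be made arbitrarily small by the subsequent choices of $L$ and $C^*$. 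There is no room to enforce $e^{-\Omega(C^{3/2})} \ll \beta$, so the Markov/Radon--Nikodym route collapses. The $1/\beta$ bound from Lemma~\ref{l:mustarbasic}(ii) is used elsewhere in the paper (e.g.\ to transfer $\mu_x^{*,i}$-probability to $\mu$-probability in Theorem~\ref{t:successmu}), but it cannot absorb a loss of $\beta^{-1}$ here.

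The paper circumvents this by splitting $G_x \cap H_x$ into two pieces handled by entirely different mechanisms, neither of which incurs the $1/\beta$ loss. For $G_x$ it observes that every condition defining $G_x$ is measurable with respect to the point configuration \emph{outside} the interiors of the walls $B_1,B_2$; since $\mu_x^*$ resamples only inside the walls and preserves the marginal on the complement, one gets the exact identity $\mu_x^*(G_x) = \mu(G_x)$, after which Corollary~\ref{t:gxbound} supplies $\mu(G_x) \geq 95/100$ for $9/10$ of the $x$'s. For $H_x$, which genuinely does depend on the wall configuration (paths from the floor $F$ into $\Lambda$ can cross $B_1,B_2$), the paper uses the FKG inequality: $\{\Gamma=\gamma\}$, $R_{x,\gamma}$, and $H_{x,\gamma}$ are all decreasing in the configuration off $\gamma$, hence $\mu_x^*(H_x \mid \Gamma=\gamma) \geq \mu(H_x \mid \Gamma = \gamma) \geq \mu(H_{x,\gamma}) \geq \min_y \P[H_{x,y}] \geq 98/100$ by Proposition~\ref{p:hxybound}. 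That monotonicity argument gives a constant lower bound uniform in $x$ with no $\beta$ dependence at all --- this is the missing idea in your proposal. A union bound then yields $\mu_x^*(G_x \cap H_x) \geq 1 - 5/100 - 2/100 \geq 9/10$ on $\mathcal{X}_r^*$.
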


We shall prove Theorem \ref{l:rdecreasing} and Theorem \ref{p:gxhxconditional} over \S~\ref{s:maxpathnice}, \S~\ref{s:gx} and \S~\ref{s:rxcond}. Before that we show how using these two theorems we can prove Theorem \ref{t:maintheoremppp}.

\section{Resampling in $D$: Getting an almost optimal alternative path}
\label{s:locsuccess}
Let $\Gamma$ be the topmost maximal path in $\Pi$ from $\mathbf{0}$ to $\mathbf{n}$. The aim of this section is to prove for $x\in \mathcal{X}_r$ such that $G_x\cap H_x\cap R_x$ holds, with probability bounded away from $0$ independent of $r$, there exists a sufficiently regularly behaving alternative path, which deviates from $\Gamma$ only in $(x-\frac{r}{2}, x+\frac{r}{2})$ and is shorter than $\Gamma$ by at most an amount of $\delta r^{1/3}$, where $\delta$ is a small constant depending on $\lambda$. This is illustrated in Figure \ref{f:altpath}.

\begin{figure*}[h]
\begin{center}
\includegraphics[width=0.8\textwidth]{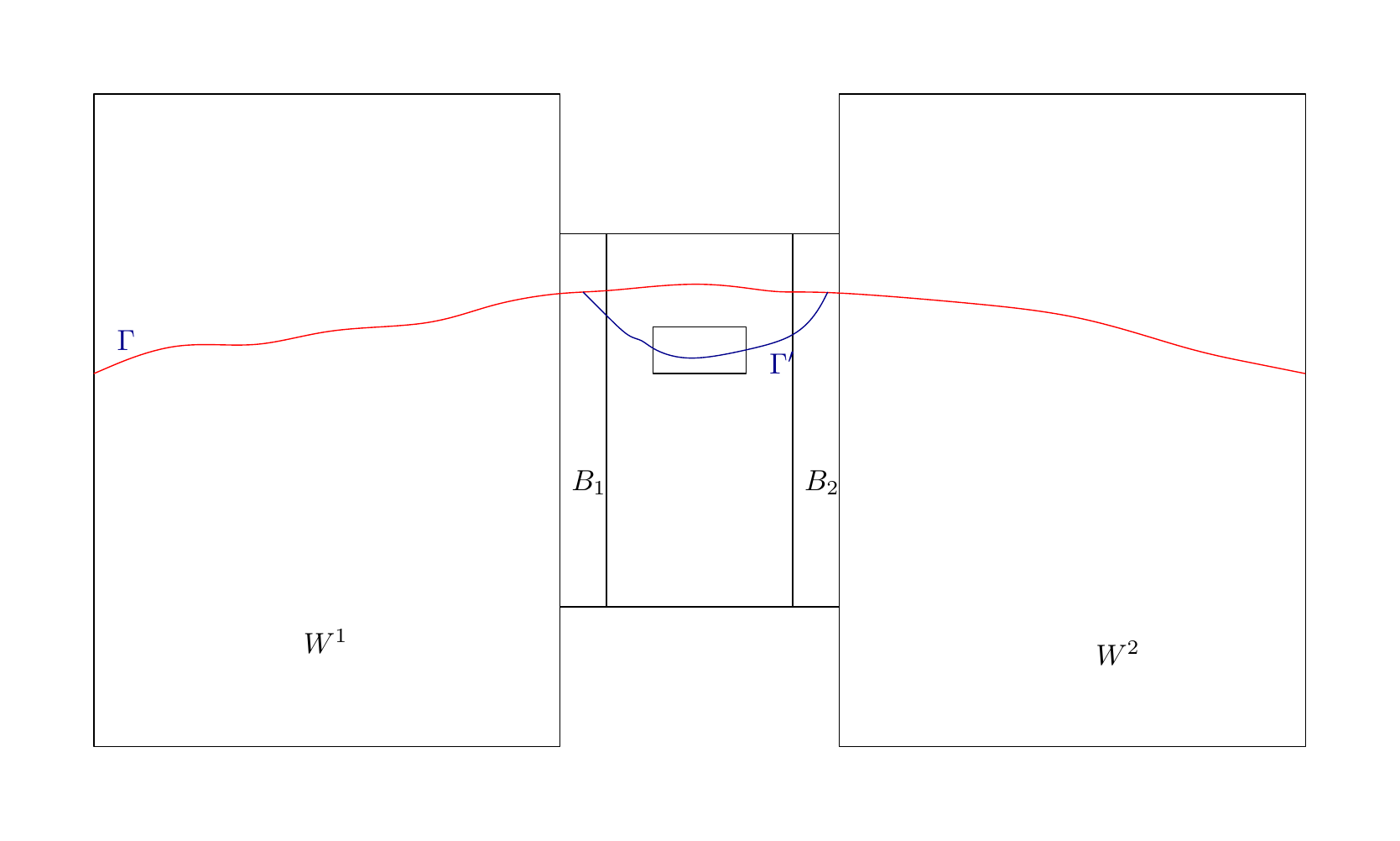}
\caption{An alternative path in $\mathbb{B}(x,r)$: $\Gamma$ is the topmost maximal path and $\Gamma'$ is an alternative path passing through $D$}
\label{f:altpath}
\end{center}
\end{figure*}

The strategy for showing the above is as follows. Consider the butterfly $\mathbb{B}(x,r)=\mathbb{B}(x,y(x,\Gamma),r)$. Resample the rectangles $D_i$ in $D=D(\mathbb{B}(x,r))$ one by one, conditioned on $\Gamma$ and also the configuration outside $D_i$. Since this process is reversible it gives us a way to estimate the probability of such a configuration.  We shall show that by the end of this process with positive probability we get an alternative path satisfying some conditions, to be made precise later.

Before proving that our job is to ensure that the alternative path we get by the above procedure satisfies the required regularity conditions.

\subsection{The alternative path deviates locally}
Fix $r$ and $x\in \mathcal{X}_r$. We first prove that on $G_x\cap H_x\cap R_x$, any competitive (i.e.\ not too short compared to $\Gamma$) alternative path which passes through $D$ will be very likely to deviate from $\Gamma$ only in the interval $[x-\frac{r}{2}, x+\frac{r}{2}]$. To make things precise we need to define the following global event $Q$. 

\begin{definition}[Steepness condition]
\label{d:steep}
An increasing path $\gamma$ from $\mathbf{0}$ to $\mathbf{n}$ is called {\bf steep} if there exists $\frac{n}{10}<x_1<x_2< \frac{9n}{10}$ such that $(x_2-x_1)\vee (\gamma_{x_2}-\gamma_{x_1}) \geq \frac{n^{2/3}}{2\log^{7}n}$  and $\frac{\gamma_{x_2}-\gamma_{x_1}}{x_2-x_1}\notin (\frac{20}{\psi}, \frac{\psi}{20})$.

For a point configuration $\Pi$, let $Q$ denote the event that: (i) the maximal path from $\mathbf{0}$ to $\mathbf{n}$ has length at least $2n-n^{0.35}$ and (ii) for every steep $\gamma$ from $\mathbf{0}$ to $\mathbf{n}$ we have $\ell_{\gamma}\leq 2n-n^{2/5}$.
\end{definition}

The event $Q$ asserts that any path containing a very high or low slope portion and not competitive in length with the global maximal path from $\mathbf{0}$ to $\mathbf{n}$. We shall show later that $Q$ is overwhelmingly likely (see Theorem \ref{t:steep}), but for now let us show that on $G_x\cap H_x\cap R_x\cap Q$, competitive alternative paths deviate locally. We shall need the following notation to state our next lemma.


Let $\gamma$ be another increasing path from $\mathbf{0}$ to $\mathbf{n}$ such that $\gamma$ passes through $D=D(\mathbb{B}(x,r))$. Let {\bf $D$-entry of $\gamma$} be the point $u_1=(x_1,y_1)$ where $\gamma$ intersects $D$ first, i.e., for each $x<x_1$, we have $(x,\gamma_x)\notin D$. Similarly let {\bf $D$-exit of $\gamma$} be the point $u_2=(x_2,y_2)$ where $\gamma$ intersects $D$ last. We define the {\bf split} of $\gamma$ to be the point $u_3=(x_3,y_3)$ such that $x_3=\sup_{x'<x_1}\{x':\gamma_{x'}=\Gamma_{x'}\}$. Similarly the {\bf confluence} of $\gamma$ is defined to be the point $u_4=(x_4,y_4)$ such that $x_4=\inf_{x'>x_2}\{x':\gamma_{x'}=\Gamma_{x'}\}$. It will suffice to consider the paths that deviate from $\Gamma$ only between the {\bf split} and the {\bf confluence}. We have the following lemma.

\begin{lemma}
\label{l:onghr}
Let $\Gamma$ be the topmost maximal increasing path from $\mathbf{0}$ to $\mathbf{n}$ in $\Pi$. Let $x\in \mathcal{X}_r$. Let $\gamma$ be another increasing path from $\mathbf{0}$ to $\mathbf{n}$ passing through $D=D(\mathbb{B}(x,r))$ with {\bf $D$-entry} $u_1=(x_1,y_1)$, {\bf $D$-exit} $u_2=(x_2,y_2)$, {\bf split} $u_3=(x_3,y_3)$ and {\bf confluence} $u_4=(x_4,y_4)$. Suppose $\Gamma=\gamma$ except on $(x_3,x_4)$. Also suppose either $x_3<x-\frac{r}{2}$ or $x_4>x+\frac{r}{2}$. The on $G_x\cap H_x \cap R_x\cap Q$, we have $$\ell_{\gamma}\leq \ell_{\Gamma}- \delta \varepsilon^{-2}r^{1/3}.$$
\end{lemma}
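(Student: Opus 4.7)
By the symmetry of the butterfly construction we assume $x_3 < x - r/2$; the case $x_4 > x + r/2$ is analogous. Since $\gamma$ agrees with $\Gamma$ outside $(x_3, x_4)$, one has $\ell_\Gamma - \ell_\gamma = X_{u_3, u_4} - \ell_{\gamma|_{u_3 \to u_4}}$, and the goal reduces to showing $\ell_{\gamma|_{u_3 \to u_4}} \leq X_{u_3, u_4} - \delta \varepsilon^{-2} r^{1/3}$. The key initial observation, which comes for free from the definitions, is that $\gamma$ lies strictly below $\Gamma$ on the entire open interval $(x_3, x_1)$: by the definition of the split $u_3$ the two continuous paths $\gamma$ and $\Gamma$ never meet on $(x_3, x_1)$, and since $u_1 \in D$ is placed strictly below $\Gamma$ inside the butterfly, continuity forces $\gamma < \Gamma$ throughout. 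In particular, as $\gamma$ traverses the vertical strip $S = [x-r/2, x-2r/5]$ (the $x$-extent of $B_1^*$), it does so below $\Gamma$.

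In the principal sub-case, $\gamma$ also stays above the floor $F$ throughout $S$. Then the portion of $\gamma$ in $S$ is an increasing path contained in $B_1$ and avoiding $\Gamma$; letting $u_l \in B_1 \cap W^1$ and $u_r \in B_1 \cap \mathfrak{C}$ denote its entry and exit, condition (ii) of $R_x$ (equation \eqref{e:rx2}) yields
$$
\ell_{\gamma|_{u_l \to u_r}} \leq \E X_{u_l, u_r} - C^{*} r^{1/3}.
$$
Combined with the local fluctuation bound $|X_{u_l, u_r} - \E X_{u_l, u_r}| \leq C r^{1/3}$ from $G_x^{\rm loc}$, this upgrades to $\ell_{\gamma|_{u_l \to u_r}} \leq X_{u_l, u_r} - (C^{*} - C) r^{1/3}$. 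Writing $\ell_{\gamma|_{u_3 \to u_4}} = \ell_{\gamma|_{u_3 \to u_l}} + \ell_{\gamma|_{u_l \to u_r}} + \ell_{\gamma|_{u_r \to u_4}}$, bounding $\ell_{\gamma|_{u_3 \to u_l}} \leq X_{u_3, u_l}$ and $\ell_{\gamma|_{u_r \to u_4}} \leq X_{u_r, u_4}$, and invoking the superadditivity inequality $X_{u_3, u_l} + X_{u_l, u_r} + X_{u_r, u_4} \leq X_{u_3, u_4}$, one obtains
$$
\ell_{\gamma|_{u_3 \to u_4}} \leq X_{u_3, u_4} - (C^{*} - C) r^{1/3}.
$$
Since $C^{*} \gg C,\ \delta\varepsilon^{-2}$ by the parameter hierarchy, the required deficit follows.

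In the remaining sub-case, $\gamma$ dips below the floor $F$ somewhere in $S$. Then by continuity $\gamma$ contains a point $v \in F$, and from $v$ the path must ascend while remaining below $\Gamma$ in order to reach $u_1 \in D \subseteq \mathcal{P}(x, y - 2Mr^{2/3}, r, 3Mr^{2/3})$. Condition (ii) of $H_x$ (equation \eqref{e:hx2}) applies to the pair $(v, u_1)$, giving $\hat X^{\Gamma}_{v, u_1} \leq -Lr^{1/3}$ and hence $\ell_{\gamma|_{v \to u_1}} \leq d(v, u_1) - Lr^{1/3}$; an identical superadditivity bookkeeping around the pair $(v, u_1)$ then produces $\ell_{\gamma|_{u_3 \to u_4}} \leq X_{u_3, u_4} - (L - C) r^{1/3}$, again delivering the deficit since $L \gg C,\ \delta\varepsilon^{-2}$. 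The main obstacle lies in controlling the $O(r^{1/3})$ slack in the superadditivity step: one needs the endpoint pairs $(u_3, u_l), (u_l, u_r), (u_r, u_4)$ (resp.\ $(u_3, v), (v, u_1), (u_1, u_4)$) to have slopes in the safe range $(2/\psi, \psi/2)$ so that the moderate deviation and centered-length estimates apply uniformly; this is ensured by the fluctuation and wing conditions $G_x^f$ and $G_x^w$, while the global steepness event $Q$ rules out the pathological alternative paths with macroscopic near-horizontal or near-vertical segments for which these estimates would otherwise fail, in which case the first clause of $Q$ directly gives $\ell_\gamma \leq 2n - n^{2/5} \ll \ell_\Gamma - \delta\varepsilon^{-2} r^{1/3}$.
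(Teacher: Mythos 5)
The proposal takes a genuinely different route from the paper: rather than the paper's case analysis by type of $T$-entry and $T$-exit, in which the deficit is obtained by comparing the chain of $\hat X$-values along $\gamma$ with the chain of $\hat X$-values along an explicitly constructed competing path $\gamma'$ (e.g.\ $\hat X_{u_7,u_5}+\hat X_{u_5,u_2}+\hat X^{\Gamma}_{u_2,u_6}$ versus $\hat X_{u_7,u_8}+\hat X^{F^+}_{u_8,u_6}$ in Case 1.1), you try to compare $\ell_\gamma(u_3,u_4)$ against $X_{u_3,u_4}$ directly via superadditivity through a single intermediate point. This shortcut breaks down, and the gap in your Case B is fatal.

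From $H_x$(ii) you obtain $\ell_\gamma(v,u_1)\le d(v,u_1)-Lr^{1/3}$, and your ``identical superadditivity bookkeeping'' would need to upgrade this to $\ell_\gamma(v,u_1)\le X_{v,u_1}-(L-C)r^{1/3}$ in order to combine with $X_{u_3,v}+X_{v,u_1}+X_{u_1,u_4}\le X_{u_3,u_4}$. That upgrade requires $d(v,u_1)-X_{v,u_1}=-\hat X_{v,u_1}\le Cr^{1/3}$. But $v$ lies on $F$, at diagonal offset $y^*-Lr^{2/3}$, while $u_1\in D$ sits at offset $\approx y^*-Mr^{2/3}$, with horizontal separation $\Theta(r)$. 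By the penalty computation of Lemma~\ref{l:penalty}, $\mathbb{E}\hat X_{v,u_1}=-\Theta(L^2 r^{1/3})$, so $-\hat X_{v,u_1}$ is typically of order $L^2r^{1/3}$, not $O(r^{1/3})$. The $Lr^{1/3}$ deficit from $H_x$ is therefore completely swamped: chasing through your inequalities yields $\ell_\gamma(u_3,u_4)\le X_{u_3,u_4}-\hat X_{v,u_1}-Lr^{1/3}\approx X_{u_3,u_4}+(L^2/2-L)r^{1/3}$, a vacuous upper bound rather than a deficit. Note that the slope from $v$ to $u_1$ is $1+O(Lr^{-1/3})$, well within $(20/\psi,\psi/20)$, so the steepness event $Q$ gives you nothing here either. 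The paper avoids this trap by comparing $\gamma$ to a $\gamma'$ that also dips to floor height, so the $\Theta(L^2r^{1/3})$ (or wing-scale) penalties occur on both sides of the comparison and cancel, leaving only the $-Lr^{1/3}$ from $H_x$ on the $\gamma$-side.

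There is also a secondary gap in your Case A: you invoke a bound $|\tilde X_{u_l,u_r}|\le Cr^{1/3}$ ``from $G_x^{\rm loc}$,'' but $u_l$ sits on the line $x'=x-r/2$ (right boundary of $W^1$, left boundary of $B_1^*$) and $u_r$ on $x'=x-2r/5$ (left boundary of $\mathfrak{C}$), both possibly near the floor. No single good parallelogram in the definitions of $G_x^{\rm loc}$ or $G_x^w$ contains both, so this fluctuation control is not available from the stated events. This one might be patchable by a chaining argument or an additional event, but it is not justified as written. The Case B issue, however, is a wrong idea: point-to-point superadditivity through a floor point cannot deliver the deficit, and a genuine comparison with a competing path is required.
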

%

\begin{proof}
First let us make some notations. Recall that $T$ denotes the body of the butterfly $\mathbb{B}(x,r)$. We define the {\bf $T$-entry} of $\gamma$ as the point $u_5=(x_5,y_5)\in \gamma$ such that
$$x_5=\sup\{x'<x_1: \forall  x_1> x''\geq x', \ ~(x'',\gamma_{x''})\in T\}.$$
On $G_x\cap H_x\cap R_x$, depending on $T$-entries we can classify $\gamma$ into following three categories. {\bf Enter with $\Gamma$}: if $x_5<x_3$. {\bf Enter through $F$}: if $u_5\in F$. {\bf Enter through wall}: if $u_5$ is on the left boundary of $B_1$. Similarly we define the $T$-exit $u_6=(x_6,y_6)$ of $\gamma$ and classifiy $\gamma$ as {\bf exit with $\Gamma$}, {\bf exit through $F$} and {\bf exit through wall}.

The proof of the lemma is based on analysis of a few cases.


{\bf Case 1. Enter with $\Gamma$:} We shall need to consider two subcases.

\begin{figure*}[h]
\begin{center}
\includegraphics[width=0.8\textwidth]{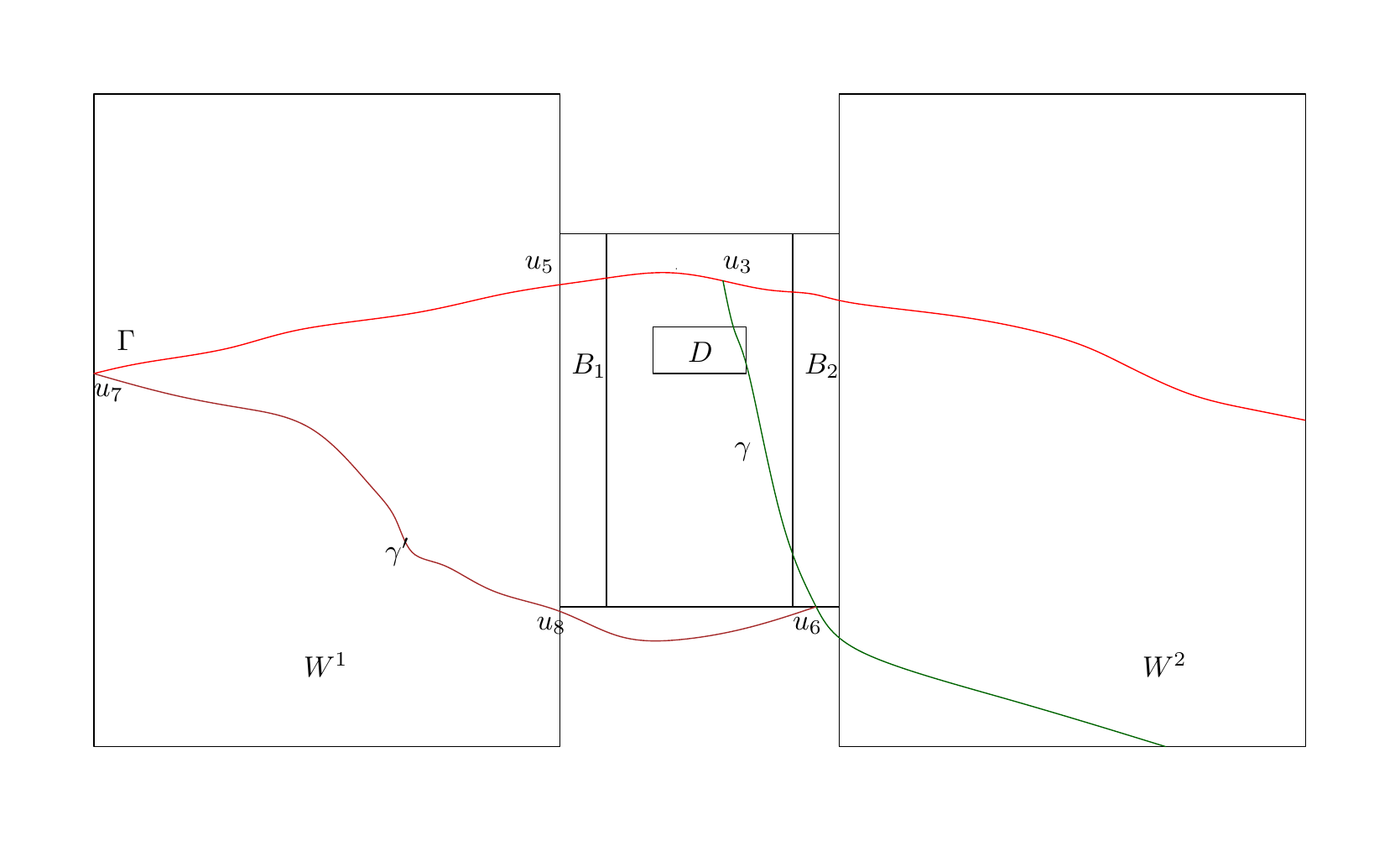}
\caption{Case 1.1: $\gamma$ denotes the path that does not deviate locally, $\gamma'$ is the path it is compared with}
\label{f:gammaf}
\end{center}
\end{figure*}

{\bf Case 1.1. Exit through $F$:} Let $u_7=(x_7,y_7)$ be the point on $\Gamma$ such that $x_7=x-(\frac{1}{2}+L^{3/2})r$ and $u_8=(x_8,y_8)$ be the point on $F$ with $x_8=x-\frac{r}{2}$.

Notice that it sufficies to prove that

\begin{equation}
\label{e:ghr11}
\hat{X}_{u_7,u_5}+\hat{X}_{u_5,u_2}+ \hat{X}_{u_2,u_6}^{\Gamma}\leq \hat{X}_{u_7,u_8} + \hat{X}_{u_8,u_6}^{F^{+}} -\delta \varepsilon^{-2}r^{1/3}.
\end{equation}

See Figure \ref{f:gammaf}. Observe that on $G_x\cap H_x\cap R_x$ we have

$$\hat{X}_{u_7,u_5}\leq 2CL^{3/4}r^{1/3};\qquad \hat{X}_{u_7,u_8}\geq -4CL^{3/4}r^{1/3};$$
$$\hat{X}_{u_5,u_2} \leq 4Cr^{1/3};\qquad \hat{X}^{\Gamma}_{u_2,u_6}\leq -Lr^{1/3};\qquad \hat{X}^{F^{+}}_{u_8,u_6} \geq -2Cr^{1/3}.$$

It follows that (\ref{e:ghr11}) holds since $L$ is sufficiently large (recall that $L$ was chosen sufficiently large depending on $\varepsilon$).

{\bf Case 1.2. Exit through wall:}
In this case, let $u_9=(x+\frac{2r}{5}, \gamma_{x+2r/5})$, $u_{10}=(x_{10}, y_{10})=(x+\frac{r}{2},y_{10})\in F$. Let $u_{11}=(x_{11},y_{11})$ be the point where $\gamma$ last exits $W^2$. Observe that $u_9, u_6 \in B_2$. Also observe that if either $(u_{6}, u_{11})\notin \mathcal{S}(W^2)$ or  $(u_{10}, u_{11})\notin \mathcal{S}(W^2)$, then $\gamma$ is steep and we are done by definition of $Q$. Hence assume otherwise. It suffices to show that

\begin{equation}
\label{e:ghr12}
\hat{X}_{u_7,u_5}+\hat{X}_{u_5,u_9}+ \hat{X}_{u_9,u_6}+ \hat{X}_{u_6,u_{11}} \leq \hat{X}_{u_7,u_8} + \hat{X}_{u_8,u_{10}}^{F^{+}}+ \hat{X}_{u_{10},u_{11}} -\delta \varepsilon^{-2}r^{1/3}.
\end{equation}

Observe that on $G_x\cap H_x\cap R_x\cap Q$ we have
$$ \hat{X}_{u_9,u_6}\leq  -\frac{C^*}{2}r^{1/3};\qquad \hat{X}_{u_{10},u_{11}} \geq -4CL^{3/4}r^{1/3};\qquad \hat{X}_{u_6,u_{11}}\leq 4CL^{3/4}r^{1/3}.$$
Using these and arguments similar to Case 1.1. we see that (\ref{e:ghr12}) holds.

{\bf Case 2. Enter through $F$:} We need to consider three subcases.

{\bf Case 2.1. Exit with $\Gamma$:} This case is similar to Case $1.1$ and we omit the details.

{\bf Case 2.2. Exit through wall:} Define points $u_9$, $u_{10}$, $u_{11}$ as in Case $1.2$. Clearly it suffices to show
$$\hat{X}_{u_5,u_9}+ \hat{X}_{u_9,u_6}+ \hat{X}_{u_6,u_{11}}\leq \hat{X}_{u_5,u_{10}}^{F^{+}}+\hat{X}_{u_{10},u_{11}}-\delta \varepsilon^{-2}r^{1/3}.$$
This is proved in a similar manner to Case 1.2 and we omit the details.

\begin{figure*}[h]
\begin{center}
\includegraphics[width=0.8\textwidth]{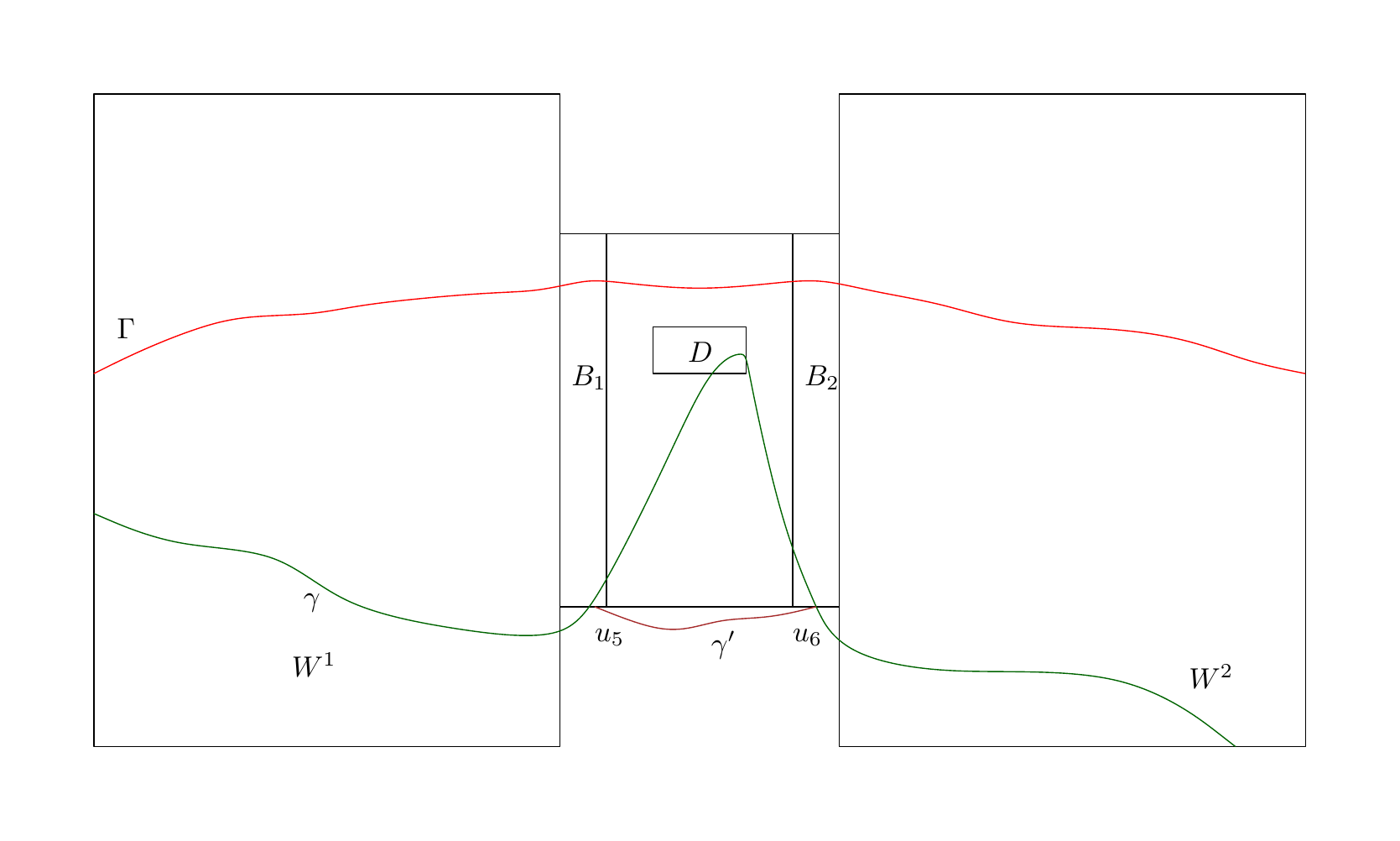}
\caption{Case 2.3: $\gamma$ denotes the path that enters and exits through $F$, it is compared with $\gamma'$}
\label{f:ghrff}
\end{center}
\end{figure*}

{\bf Case 2.3. Exit through $F$:}
In this case it suffices to show
$$\hat{X}_{u_5,u_6}^{\Gamma} \leq \hat{X}_{u_5,u_6}^{F^{+}}-\delta \varepsilon^{-2}r^{1/3}$$
which follows from the definition of $G_x$ and $H_x$ since $L$ is sufficiently large, see Figure \ref{f:ghrff}.

{\bf Case 3. Enter through wall:}
Again we need to consider threes subcases.

{\bf Case 3.1. Exit with $\Gamma$:} This case is similar to Case $1.2$, we omit the details.

\begin{figure*}[h]
\begin{center}
\includegraphics[width=0.8\textwidth]{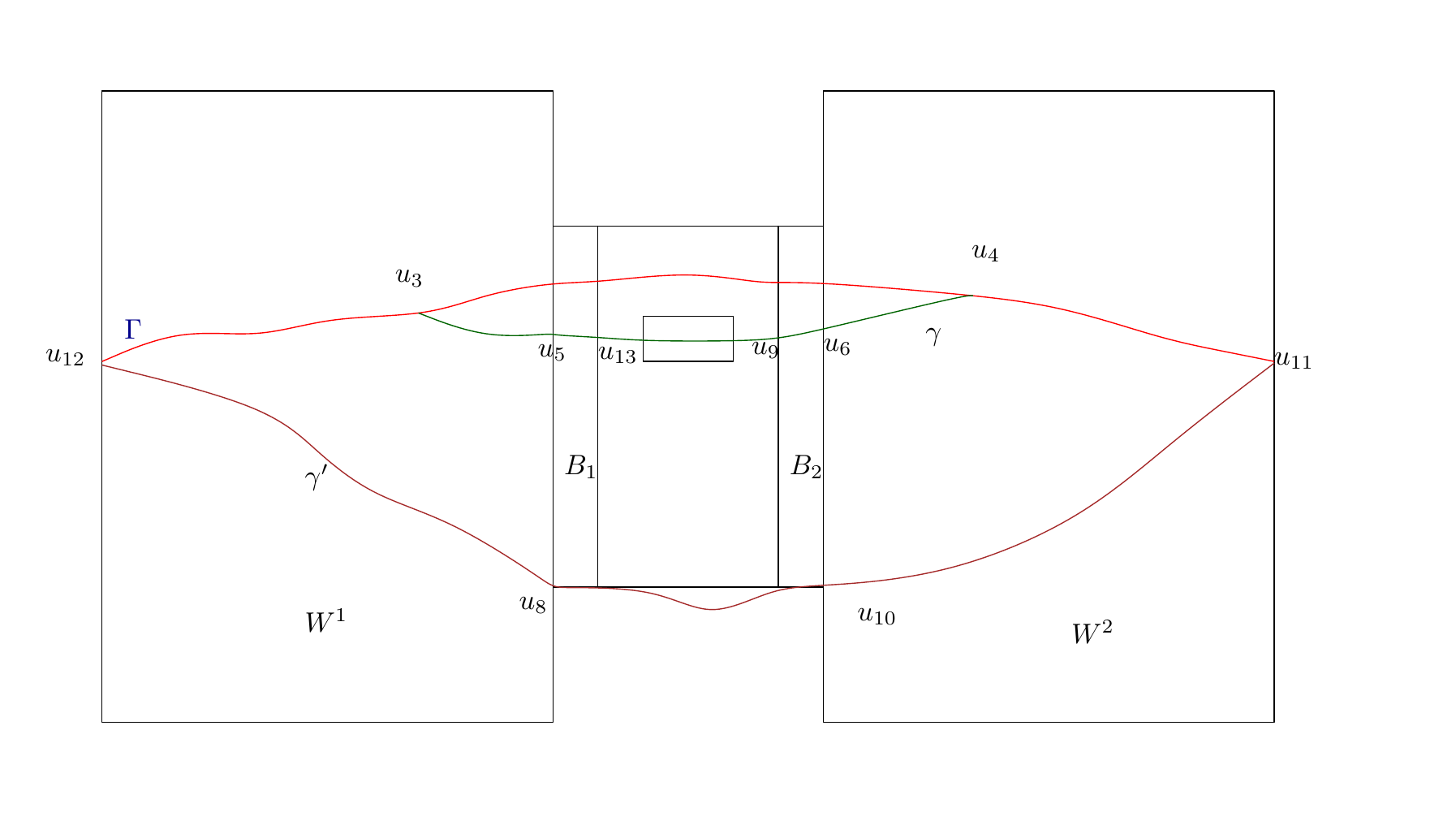}
\caption{Case 3.2: $\gamma$ denotes the path that enters and exits through the wall, compared with $\gamma'$}
\label{f:ghrww}
\end{center}
\end{figure*}

{\bf Case 3.2. Exit through wall:} Let $u_{12}$ denote the point where $\gamma$ first enters $W^1$. Let $u_{13}=(x_{13},y_{13})$ be the point on $\gamma$ such that $x_{13}=x-\frac{2r}{5}$. Clearly $u_{13}\in B_1$ and also without loss of generality we can assume $(u_{12},u_5), (u_{12},u_{8})\in \mathcal{S}(W^1)$, see Figure \ref{f:ghrww}. Clearly it suffices to show that
$$\hat{X}_{u_{12},u_5}+\hat{X}_{u_{5},u_{13}}+\hat{X}_{u_5,u_9}+\hat{X}_{u_9,u_6}+\hat{X}_{u_6,u_{11}}\leq \hat{X}_{u_{12},u_8}+\hat{X}_{u_8,u_{10}}^{F^{+}}+\hat{X}_{u_{10},u_{11}}-\delta \varepsilon^{-2}.$$
The proof can now be completed as in Case 1.2. See Figure \ref{f:ghrww}.

{\bf Case 3.3. Exit through $F$:} This case is analogous to Case 2.2.
\end{proof}

\subsection{The alternate path is not too steep}
The following lemma ensures that an alternative path through $\mathbb{B}(x,r)$ spends sufficiently long time in the region $(x+y(x,\Gamma)-\frac{3M}{2}r^{2/3}, x+y(x,\Gamma)-\frac{M}{2}r^{2/3})$.

\begin{lemma}
\label{l:spendstime}
Let $\Gamma$ be the topmost maximal increasing path from $\mathbf{0}$ to $\mathbf{n}$ in $\Pi$. Let $x\in \mathcal{X}_r$. Let $\partial^{+}(2D)$ denote the union of top, left and bottom boundary of $2D$ in the butterfly $\mathbb{B}(x,r)=\mathbb{B}(x,y(x,\Gamma),r)$. Fix a point $u_0=(x_0,y_0)\in \partial^{+}(2D)$. Let $\gamma$ be the path in $\Pi$ from $\mathbf{0}$ to $u$ of maximal length subject to the conditions
\begin{enumerate}
\item $\gamma$ does not intersect $D$,
\item $\{x':\Gamma_{x'}\neq \gamma_{x'}\}\subseteq [x-\frac{r}{2},x_0]$.
\end{enumerate}
Let $u_2=(x_2,y_2)\in \gamma$ be such that
$$x_2=\inf\left\{x': \gamma_{x''}\in \left[x''+y(x,\Gamma)-\frac{3M}{2}r^{2/3}, x''+y(x,\Gamma)-\frac{M}{2}r^{2/3}\right] \forall x''\in [x',x_0]\right\}.$$
Then on the event $G_{x}\cap H_{x}\cap R_{x}$, we have that $x_0 - x_2  \geq \alpha' r$.
\end{lemma}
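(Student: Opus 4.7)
The plan is to argue by contradiction: assume $x_0 - x_2 < \alpha' r$ and construct an alternative increasing path $\gamma^*$ from $\mathbf 0$ to $u_0$ satisfying conditions (1)--(2) but with $\ell_{\gamma^*} > \ell_\gamma$, contradicting maximality. Write $y := y(x, \Gamma)$ and let $S := \{(x'', y'') : y'' - x'' \in [y - \frac{3M}{2}r^{2/3}, y - \frac{M}{2}r^{2/3}]\}$ be the strip. Since $\gamma = \Gamma$ on $[0, x-r/2]$ and the fluctuation condition $G_x^f$ forces $\Gamma_{x-r/2} - (x-r/2) \in [y - \frac{M}{10}r^{2/3}, y + \frac{M}{10}r^{2/3}]$, the point $u_{\rm entry} := (x - r/2, \Gamma_{x - r/2})$ lies strictly above $S$, whereas $u_0 \in 2D \subset S$. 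The hypothesis therefore forces $\gamma$ to complete its final descent into $S$ over a horizontal stretch of length less than $\alpha' r$.

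Construct $\gamma^*$ by choosing a point $v^* \in \partial^+(2D)$ on the top edge of $2D$ adjacent to $u_0$ and concatenating (i) $\Gamma$ from $\mathbf 0$ to $u_{\rm entry}$, (ii) the longest increasing $\Pi$-path from $u_{\rm entry}$ to $v^*$ avoiding $D$, and (iii) a short increasing step through $2D \setminus D$ to $u_0$. This satisfies (1)--(2). The middle segment can be further decomposed so that its legs lie, respectively, in the wing $W^1$ and in (suitable thickenings of) $\mathfrak{C}$ and $\Lambda \setminus 2D$; the corresponding length bounds in $G_x^w$ and \eqref{e:gxloc2}--\eqref{e:gxloc4} combine to give $\ell_{\gamma^*} \geq \mathbb{E}[X_{u_{\rm entry}, v^*}] - O(CL^{3/4} r^{1/3})$. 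Taylor-expanding $\mathbb{E}[X_{u, u'}] = 2\sqrt{(x'-x_0)(y'-y_0)}$ (with $u = (x_0, y_0)$) for horizontal span $\sim r/2$ and tilted drop $\sim Mr^{2/3}$ gives
\[
\mathbb{E}[X_{u_{\rm entry}, v^*}] = d(u_{\rm entry}, v^*) - \tfrac{1}{2} M^2 r^{1/3}(1 + o_M(1)).
\]

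To bound $\ell_\gamma$ from above, set $w := (x_2, \gamma_{x_2})$, which lies on the upper boundary of $S$ by continuity. Decomposing $\gamma$ at $u_{\rm entry}$ and $w$, the first subpath $u_{\rm entry} \to w$ satisfies $\ell \leq d(u_{\rm entry}, w) - \frac{M^2 r^{1/3}}{8}(1 + o_M(1)) + O(CL^{3/4}r^{1/3})$ by the same $G_x^{\rm loc}$/$G_x^w$ bounds, while the second subpath $w \to u_0$ has horizontal span $\Delta x < \alpha' r$ and tilted drop $\geq \tfrac{M}{2}r^{2/3} - o_M(r^{2/3})$, so Taylor expansion yields
\[
\mathbb{E}[X_{w, u_0}] \leq d(w, u_0) - \tfrac{M^2 r^{1/3}}{16 \alpha'}(1 - o_M(1)).
\]
Telescoping $d(u_{\rm entry}, w) + d(w, u_0) = d(u_{\rm entry}, u_0) = d(u_{\rm entry}, v^*) + O(r^{2/3})$ and subtracting from the lower bound on $\ell_{\gamma^*}$,
\[
\ell_{\gamma^*} - \ell_\gamma \geq M^2 r^{1/3} \left( \tfrac{1}{16 \alpha'} - \tfrac{3}{8} \right)(1 - o_M(1)) - O(CL^{3/4} r^{1/3}).
\]
Since $\alpha'$ is chosen small depending on $M$ (and $C, L$ are fixed earlier in the parameter chain), the right-hand side is strictly positive for large $r$, contradicting $\ell_\gamma \geq \ell_{\gamma^*}$.

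The main obstacle is the bookkeeping required to verify that each chord whose expected length is estimated has endpoints and slope lying inside one of the parallelograms $U$, $\mathfrak{C}$, $\Lambda$, $\Lambda \setminus 2D$, $W^i$ appearing in $G_x^{\rm loc}$ and $G_x^w$, so that the $O(CL^{3/4}r^{1/3})$ fluctuation bound is valid uniformly along the constructed chain and genuinely dominated by the geometric deficit $M^2 r^{1/3}/(16\alpha')$. A symmetric sub-case treats $\gamma$ dipping \emph{below} $S$ on $(x_0 - \alpha' r, x_0)$; the parallel Taylor-expansion argument handles it, and the area condition $G_x^{\rm a}$ applied to any long subsegment inside $U$ rules out an anomalously long subpath that would otherwise compensate for the extra downward-then-upward excursion.
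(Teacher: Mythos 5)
Your geometric intuition is exactly the paper's: a descent of $\sim \tfrac{M}{2}r^{2/3}$ in tilted height over a horizontal span shorter than $\alpha' r$ carries a parabolic penalty $\gtrsim M^2 r^{1/3}/\alpha'$, which exceeds the cost $\sim M^2 r^{1/3}$ of the gentler descent through $\partial^+(2D)$, contradicting maximality. However, the ``bookkeeping'' you defer at the end is the actual content of the lemma, and the route you sketch would not close without substantial repair. Three concrete problems:

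\textbf{(a) Parameter ordering.} You invoke $G_x^w$ to get an $O(CL^{3/4}r^{1/3})$ fluctuation bound and then claim it is dominated by $M^2 r^{1/3}/(16\alpha')$ because ``$\alpha'$ is chosen small depending on $M$ and $C,L$ are fixed earlier.'' But in the paper's parameter chain, $L$ is chosen \emph{after} $\alpha'$ (the order is $\psi, \eta, \tilde C, M, C, \alpha', \rho, \delta, \varepsilon, L, C^*$). So an $L^{3/4}$ error term can never be absorbed by $1/\alpha'$. In fact the wing condition is irrelevant here: the path segment $u_{\rm entry}\to v^*$ has horizontal span $\sim r/2$ inside the body $T$, not the wings, and the paper only ever uses bounds at scale $Cr^{1/3}$ (with $C$ fixed before $\alpha'$). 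To make the comparison work you must route all chords through parallelograms whose $G_x^{\rm loc}$/$R_x$ bounds are at scale $Cr^{1/3}$, not $CL^{3/4}r^{1/3}$.

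\textbf{(b) Uncontrolled span on the final leg.} You apply the Taylor expansion $\mathbb E[X_{w,u_0}]=d(w,u_0)-\tfrac{h^2}{4}(\Delta x)^{1/3}+\cdots$ with $\Delta x = x_0 - x_2 < \alpha' r$, but $\Delta x$ can be arbitrarily small, in which case the rescaled height $h=\tfrac{M r^{2/3}/2}{(\Delta x)^{2/3}}$ violates the hypothesis $|h_1-h_0|\le (\Delta x)^{1/10}$ of Lemma~\ref{l:penalty}, and the quadratic expansion no longer holds. The paper sidesteps this cleanly: instead of estimating $\hat X_{u_2,u_0}$ directly, it writes $\hat X_{u_2,u_0}\le \hat X_{u_3,u_0}-\hat X_{u_3,u_2}$ with a \emph{fixed} intermediate point $u_3$ at horizontal distance $2\alpha' r$ to the left of $u_0$ on the top line of $S$; this keeps all chord spans bounded below by $\alpha' r$ so Lemma~\ref{l:penalty} applies.

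\textbf{(c) Missing case analysis.} You always re-root the alternative path at $u_{\rm entry}=(x-r/2,\Gamma_{x-r/2})$. But the chord $u_{\rm entry}\to v^*$ has an endpoint outside the central column $\mathfrak C$ (which only spans $[x-2r/5,x+2r/5]$), so none of the $G_{x,y}^{\rm loc}$ conditions directly bound its fluctuation, and neither endpoint is guaranteed on $\Gamma$ so the modified condition~1 does not apply either. The paper handles this by casing on where $\gamma$ first splits from $\Gamma$: if the split is inside the column ($x_1\ge x-2r/5$), the comparison is entirely within $\Lambda$/$\mathfrak C$; if it is outside, the path must cross the wall $B_1$, and the $R_x$ conditions are invoked to control the crossing. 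Your sketch has no analogue of this split-point dichotomy, and without it the fluctuation bound for the chord from $u_{\rm entry}$ is simply unavailable.

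In short: same strategy, but the proof you outline omits precisely the steps (fixed-span intermediate point, split-point case analysis, careful choice of which $G_x$/$R_x$ condition covers each chord) that make the estimate rigorous.
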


\begin{proof}
We prove by contradiction. Let $\gamma$ be a path given by the hypothesis of the Lemma. Suppose $x_2 > x_0-\alpha' r$. We shall prove that on $G_x\cap H_x\cap R_x$, there exists a path $\gamma'$ satisfying the two conditions given in the lemma such that $\ell_{\gamma'}>\ell_{\gamma}$.

Observe that without loss of generality we can assume that there exists $u_1=(x_1,y_1)\in \Gamma$ such that $\Gamma=\gamma$ on $[0,x_1]$ and $\Gamma_{x'}>\gamma_{x'}$ on $(x_1,x_0]$ with $x-\frac{r}{2}<x_1<x_2$.

\textbf{Case 1:} $y_2=x_2+ y(x,\Gamma)-\frac{M}{2}r^{2/3}$.
There are two subscases to consider.

\textbf{Case 1.1:} $x_1\geq x-\frac{2r}{5}$. Set the point $u_4=(x_4,y_4)=(x-\frac{2r}{5}, \Gamma_{x-2r/5})$.
It suffices to prove that $\hat{X}_{u_4,u_0}^{D}\geq \hat{X}_{u_4,u_2}+\hat{X}_{u_2,u_0}$, which will contradict the maximality of $\gamma$. This is what we prove next.

Notice that on $G_x$ we have
$$\tilde{X}_{u_4,u_0}^{D}\geq -2Cr^{1/3};~\hat{X}_{u_4,u_2} \leq 2Cr^{1/3};~\hat{X}_{u_2,u_0} \leq -5Cr^{1/3}.$$

To prove the third inequality, define $u_3=(x_0-2\alpha' r, x_0-2\alpha' r+y(x,\Gamma)-\frac{M}{2}r^{2/3})$, use $\hat{X}_{u_2,u_0} \leq \hat{X}_{u_3,u_0}-\hat{X}_{u_3,u_2}$. Notice that on $G_x$, $\hat{X}_{u_3,u_2}\geq -2Cr^{1/3}$ and $\hat{X}_{u_3,u_0}\leq -10Cr^{1/3}$ since $\alpha'$ is sufficiently small using Lemma \ref{l:penalty}. This completes the proof in this case.

\begin{figure*}[h]
\begin{center}
\includegraphics[height=9cm,width=15.1cm]{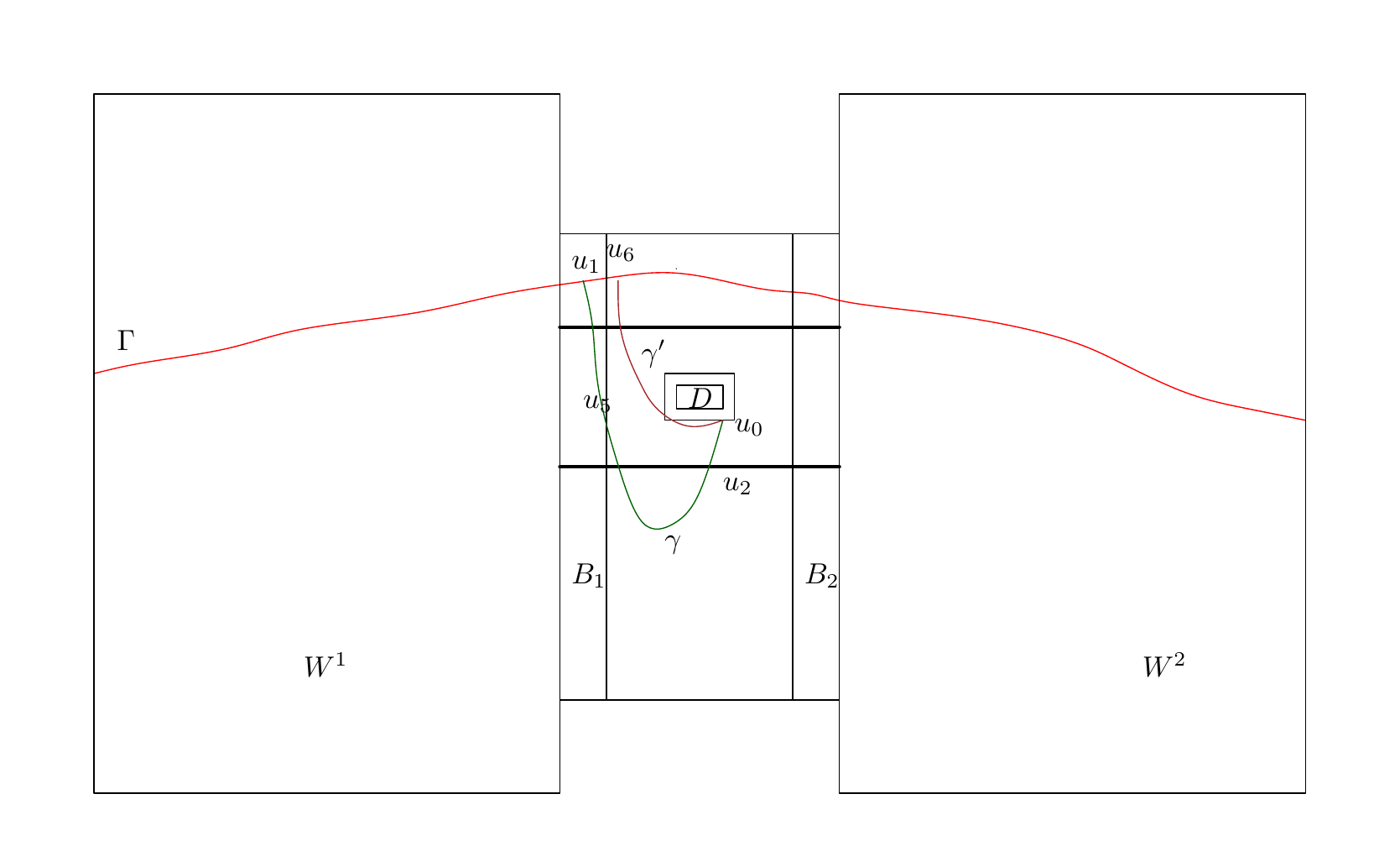}
\caption{Case 1.2: $\gamma'$ is a better path than $\gamma$}
\label{f:ghrsteep}
\end{center}
\end{figure*}

\textbf{Case 1.2:} $x_1< x-\frac{2r}{5}$.
Let $u_5=(x_5,y_5)\in \gamma$ be the first point on $\gamma$ which intersects the boundary of $B_1$, i.e., $x_5=\inf_{x'>x_1: (x',\gamma_{x'})\in \partial B_1}$. Also let $u_6=(x-\frac{r}{3}, \Gamma_{x-\frac{r}{3}})$, see Figure \ref{f:ghrsteep}. As before it suffices to prove,
$$\hat{X}_{u_1,u_5}+\hat{X}^{\Gamma}_{u_5,u_2}+\hat{X}_{u_2,u_0} \leq \hat{X}_{u_1,u_6}+\hat{X}_{u_6,u_0}^{D}.$$
Notice that on $G_x$ we have
$$ \hat{X}_{u_1,u_6}\geq -2Cr^{1/3};\qquad\tilde{X}_{u_6,u_0}^{D}\geq -Cr^{1/3}. $$
Also notice that as before on $G_x\cap H_x \cap R_x$ we further have
$$\hat{X}_{u_1,u_5} \leq Cr^{1/3};\qquad \hat{X}^{\Gamma}_{u_5,u_2}\leq Cr^{1/3};\qquad \hat{X}_{u_2,u_0}\leq -10Cr^{1/3}.$$
In this case also we have a contradiction.

\textbf{Case 2:} $y_2=x_2+y(x,\Gamma)-\frac{3M}{2}r^{2/3}$.
This can be dealt with in the same manner as above and we omit the details.
\end{proof}

\subsection{Sequential Resampling}
Recall our strategy of resampling to get a better path. As always, let $\Gamma$ denote the topmost maximal path from $\mathbf{0}$ to $\mathbf{n}$ in $\Pi$. For $r\in \mathcal{R}$, fix $x\in \mathcal{X}_{r}$ and consider the parallelogram $D$ in the butterfly $\mathbb{B}(x,y(x,\Gamma),r)$. Our first lemma states that on resampling the configuration on $D$ the length of the longest path increases with a chance bounded away from $0$.

\begin{lemma}
\label{l:success1}
Let $\tilde{\Pi}$ be the point configuration on $\R^2$ where $\left. \Pi\right|_{D}$ is replaced by $\left. \Pi^*\right|_D$. Let $\Gamma'$ denote a longest increasing path in $\tilde{\Pi}$. Then
$$\P[\ell_{\Gamma'}^{\tilde{\Pi}} > \ell_{\Gamma}^{\Pi} \mid \Pi, G_x, H_x] \geq \rho.$$
\end{lemma}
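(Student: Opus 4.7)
The plan is to reduce the desired bound to a statement about the fresh Poisson sample $\Pi^*|_D$, which is independent of $\Pi$ (and hence of $G_x \cap H_x$). Write $\ell_1$ for the length of the longest path from $\mathbf{0}$ to $\mathbf{n}$ in $\Pi$ avoiding $D$ entirely, and $\ell_2(\omega)$ for the length of the longest such path that passes through $D$ when $D$ carries configuration $\omega$. Then $\ell_\Gamma^\Pi = \max(\ell_1,\ell_2(\Pi|_D))$ and $\ell_{\Gamma'}^{\tilde\Pi} = \max(\ell_1,\ell_2(\Pi^*|_D))$, so the event in question is simply $\{\ell_2(\Pi^*|_D) > \ell_\Gamma^\Pi\}$.

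The first step is to bound $\ell_\Gamma^\Pi$ from above on $G_x\cap H_x$. Condition (i) of $H_x$ forces every path from $u,u'\in F$ through $\Lambda \supset D$ avoiding $\Gamma$ to be shorter than $\E X_{u,u'}$ by at least $Lr^{1/3}$, while $G_x^{\mathrm{loc}}$ pins $\Gamma$ in the local window around $x$ to within $O(Cr^{1/3})$ of its expectation. Combining these with condition (ii) of $H_x$ (which forbids an alternative path from entering $\Lambda$ from above or below without paying $Lr^{1/3}$), one obtains that $\ell_\Gamma^\Pi$ is at most the typical floor-to-floor length through $D$ plus a margin of order $Cr^{1/3}$, creating an effective deficit of order $(L-O(C))r^{1/3}$ which is large since $L$ is chosen much larger than $C$.

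The second step is to show that, for typical $\Pi|_{D^c}$, the resampled configuration $\Pi^*|_D$ supports an $F$-to-$F$ path through $D$ of length within $O(r^{1/3})$ of its typical mean with probability uniformly bounded below by some $\rho>0$. For this I would fix a pair of floor points $u,u'$ (chosen using $\Gamma$ and $G_x$) and apply the moderate-deviation lower tail bound of Theorem~\ref{t:moddevlowertail} to the fresh Poisson process on $D$. The area condition $G_x^{\mathrm{a}}$ is critical here: it guarantees that any sub-extremal path through $U \supset D$ has total gap area at least $\alpha'\eta r$, so that adding a typical Poisson configuration inside $D$ genuinely elongates some $F$-to-$F$ path rather than being wasted; the FKG inequality, applied to the (monotone) event that ``enough new Poisson points land in the relevant gaps,'' then converts this existence statement into the desired uniform lower bound.

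The main obstacle is the joint conditioning: the events $G_x$ and $H_x$ are functions of all of $\Pi$, so restricting to them simultaneously biases the laws of $\Pi|_D$ and $\Pi|_{D^c}$, and I must verify that the quenched environment $\Pi|_{D^c}$ remains typical enough for the moderate-deviation bounds to apply to the fresh Poisson sample $\Pi^*|_D$. Since the defining conditions of $G_x^{\mathrm{loc}}$ and $G_x^{\mathrm{a}}$ are one-sided ``nice-environment'' requirements holding with probability close to $1$, the conditional law of $\Pi|_{D^c}$ differs from the unconditional Poisson law only by a bounded Radon--Nikodym derivative on the relevant region, which cannot destroy the polynomial-order tail estimates; FKG then absorbs the residual correlation and yields the claimed $\rho$, independent of $r$.
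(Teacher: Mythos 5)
Your opening reduction is correct: conditioning on $\Pi$ already fixes $G_x$, $H_x$, and $\ell_\Gamma^\Pi$, and the event in question is exactly that the fresh sample $\Pi^*|_D$ produces a path through $D$ beating $\ell_\Gamma^\Pi$. However, the rest of the argument goes astray in several ways. First, the ``first step'' misreads the roles of $H_x$ and the parameter $L$. The $H_x$ conditions say that in the \emph{original} environment $\Pi$, paths forced through $\Lambda$ are shorter than typical by $Lr^{1/3}$ — they do not give an upper bound on $\ell_\Gamma^\Pi$ in terms of ``the typical floor-to-floor length through $D$ plus $O(Cr^{1/3})$,'' since $\Gamma$ does not pass anywhere near $\Lambda$. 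The comparison that actually matters is purely local, driven by $G_x^{\rm loc}$ condition 1 (which bounds $\Gamma$'s stretch across the window by its expectation plus $Cr^{1/3}$) against a constructed detour down to $D$ and back, whose cost is controlled by $G_x^{\rm loc}$ conditions 3 and 4 and the quadratic deviation penalty; the boost must therefore beat a deficit of order $Cr^{1/3}$ (with $C$ large versus $M$), not an $(L-O(C))r^{1/3}$ gap. Second, the area condition $G_x^{\rm a}$ and FKG are not relevant to this lemma: $G_x^{\rm a}$ is used later (in Lemma~\ref{l:spendstime} and in translating the alternative path's geometry into a gain from the reinforcing line), not here, and there is no monotone-event inequality needed. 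Third, the closing paragraph worries about a problem that does not exist: the conditioning is on the entire configuration $\Pi$, with $G_x\cap H_x$ a $\sigma(\Pi)$-measurable event, and $\Pi^*|_D$ is independent of $\Pi$ by construction, so the conditional law of the fresh sample is \emph{exactly} Poisson — no Radon--Nikodym argument is required.

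The paper's actual proof is much shorter and cleaner: it fixes $u_1,u_2$ to be the midpoints of the left and right boundaries of $D$, and reduces to the single event that the passage time in $\Pi^*$ from $u_1$ to $u_2$ constrained to $D$ exceeds its centering by $10Cr^{1/3}$. This event is then shown to have probability bounded below by a constant $\rho=\rho(C)$ uniformly in $r$, by Theorem~\ref{BDJ99} (the Tracy--Widom limit, which supplies a \emph{lower} bound on the upper-tail probability) together with Lemma~\ref{l:pathbox} (which shows confinement to $D$ barely affects the passage time). Note that your appeal to Theorem~\ref{t:moddevlowertail} cannot do this job: that theorem is a one-sided \emph{upper} bound on the lower-tail probability, and gives no lower bound on the probability of a large positive fluctuation. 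As written, the proposal would not close the argument; it would need to be reorganized around the concrete reduction to $\hat{Y}^{D}_{u_1,u_2}\geq 10Cr^{1/3}$ and to invoke the scaling limit rather than the moderate-deviation inequality.
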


\begin{proof}
Let $u_1$ and $u_2$ be the midpoint of the left boundary and the right boundary of $D$ respectively. For $u<u'$, let $Y_{u,u'}$ (resp. $\hat{Y}_{u,u'}$ etc.) denote the length of the longest increasing path from $u$ to $u'$ (resp. the other corresponding statistics) in the environment $\Pi^{*}$. It follows from the definition of $G_x$ and $H_x$ and that it suffices to prove

\begin{equation}
\label{e:constrainedlong}
\hat{Y}^{D}_{u_1,u_2}\geq 10Cr^{1/3}.
\end{equation}

This can be established by using Theorem \ref{BDJ99} arguing along the lines of the proof of Lemma \ref{l:pathbox} and we omit the details.
\end{proof}

Observe that this is not directly useful for us since we shall need to resample conditioning on the fact that $\Gamma$ is the topmost maximal path. To this end we use the parallelograms $D_i$ described in the definition of condition $G_{x,y}^{\rm rs}$.

For $i=1,2,\ldots, \frac{1}{100\varepsilon^{5/3}}$, we define the measure $\mu_x^{*,i}$ on point configurations on $\R^2$ inductively as follows. Let $\mu_{x}^{*,0}=\mu_x^*$. For now, let $\Pi$ denote a point configuration in $\R^2$ sampled from $\mu_x^*$. Let $\Gamma$ denote the topmost maximal path in $\Pi$. Consider the parallelograms $D_i$ in the butterfly $\mathbb{B}(x,r)$. For $i\geq 0$ sample a point configuration $\widehat{\Pi}^{*,i}$ sampled recursively as follows. Let $\widehat{\Pi}^{*,0}=\Pi$. Given $\widehat{\Pi}^{*,i-1}$, obtain $\widehat{\Pi}^{*,i}$ by resampling the point configuration on $D_i$ with law $\tilde{\mu}_{D_i}:=\mu_{D_i}(~\cdot \mid \widehat{\Pi}_{D_i^c}^{*,i-1},\mathscr{H}_{x,i})$ where $\mu_{D_i}$ is the measure $\mu$ restricted on $D_i$ and $\mathscr{H}_{x,i}$ denotes the event that $\Gamma$ is the topmost maximal path in the new point configuration (after resampling $D_i$). Let $\mu_{x}^{*,i}$ be the measure on the point configuration $\Pi^{*,i}$ in $\R^2$ obtained as described above.

\begin{lemma}
\label{l:couplingfails}
Let $x\in \mathcal{X}_r$ be fixed. Let $\Pi$ and $\widehat{\Pi}^{*,i}$ be defined as above. For $i\geq 0$, let $\mathscr{T}_{x,i}$ be the event that there is an increasing path $\Gamma'$ from $\mathbf{0}$ to $\mathbf{n}$ in $\widehat{\Pi}^{*,i}$ passing through $D$ such that $\ell_{\Gamma'}\geq \ell_{\Gamma}-\delta r^{1/3}$. Let $\tau=\inf_{i\geq 0}\{\mathscr{T}_{x,i}~\text{holds}\}$. Then we have
$$\P[\tau\leq \frac{1}{100\varepsilon^{5/3}} \mid \Pi, G_x,H_x]\geq \frac{\rho}{2} >0.$$
\end{lemma}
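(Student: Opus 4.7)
The plan is to couple the conditioned sequential resampling chain $\{\widehat{\Pi}^{*,i}\}_{i\le N}$, where $N=\frac{1}{100\varepsilon^{5/3}}$, with its unconditioned analogue $\{\widetilde{\Pi}^{i}\}$, in which at step $i$ one resamples $D_i$ from $\mu_{D_i}(\cdot\mid \widetilde{\Pi}^{i-1}_{D_i^c})$ without imposing $\mathscr{H}_{x,i}$. Write $M_i$ for the length of the longest $\mathbf{0}$-to-$\mathbf{n}$ path through $D$ in $\widetilde{\Pi}^{i}$ and let $\tau^{\rm unc}:=\inf\{i:M_i\ge \ell_\Gamma-\delta r^{1/3}\}$. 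Since $\widetilde{\Pi}^{N}$ agrees with $\Pi$ off $D$, any increasing path from $\mathbf 0$ to $\mathbf n$ in $\widetilde{\Pi}^{N}$ that is strictly longer than $\Gamma$ must use $D$; so Lemma~\ref{l:success1} gives $M_N>\ell_\Gamma$ with conditional probability at least $\rho$ given $\Pi, G_x, H_x$, and in particular $\tau^{\rm unc}\le N$ with that probability.

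The key quantitative ingredient is that on $G_x^{\rm rs}$ the per-step increments of $M_i$ are bounded by $\delta r^{1/3}/2$. Any path from $\mathbf 0$ to $\mathbf n$ through $D_i$ enters and exits $D_i$ on $\partial D_i$; extending the entry and exit points a distance $\varepsilon^{2/3}r^{2/3}$ into $\tilde D_i$ realises the $D_i$-portion as an $X_{u,u'}$ with $(u,u')\in\mathcal S(\tilde D_i)$ (the buffer defining $\tilde D_i$ is precisely there to keep the slope bounded away from $0$ and $\infty$), while the rest of the path is unaffected by the $i$-th resampling. This yields $|M_i-M_{i-1}|\le \Delta_i$, and on $G_x^{\rm rs}$ the bound $\max_i \Delta_i\le \delta r^{1/3}/2$ holds with conditional probability $\ge 1-e^{-C/\varepsilon^{1/4}}$. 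Moreover, on $G_x^f$ the path $\Gamma$ lies strictly above $D$ throughout $|x'-x|\le r$ (by the $M/10$ bound on its transversal fluctuation on that scale), so $\Gamma$ is untouched by every resampling, and hence $\mathscr{H}_{x,i}$ can fail only when $M_i\ge \ell_\Gamma$.

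Putting these together: for every $i<\tau^{\rm unc}$ the definition of $\tau^{\rm unc}$ gives $M_i<\ell_\Gamma-\delta r^{1/3}<\ell_\Gamma$, so $\mathscr{H}_{x,i}$ holds; and at $i=\tau^{\rm unc}$ the increment bound yields $M_{\tau^{\rm unc}}\le (\ell_\Gamma-\delta r^{1/3})+\delta r^{1/3}/2<\ell_\Gamma$, so $\mathscr{H}_{x,\tau^{\rm unc}}$ also holds. A standard rejection coupling between $\mu_{D_i}(\cdot\mid\widehat{\Pi}^{*,i-1}_{D_i^c})$ and $\tilde\mu_{D_i}$ is therefore exact up to and including step $\tau^{\rm unc}$, so in the conditioned chain $\tau=\tau^{\rm unc}\le N$ on the intersection of $G_x^{\rm rs}$ with $\{\tau^{\rm unc}\le N\}$. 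This event has conditional probability at least $\rho-e^{-C/\varepsilon^{1/4}}\ge \rho/2$, since $C$ was chosen large relative to $\rho$.

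The principal obstacle is the middle step, i.e.\ transferring the single-shot gain of Lemma~\ref{l:success1} to the sequential, conditional dynamics. The argument hinges on two delicate ingredients: the per-step regularity bound of $G_x^{\rm rs}$, which supplies exactly the $\delta/2$ margin needed so that $M_i$ cannot overshoot $\ell_\Gamma$ in a single step before $\tau^{\rm unc}$, and the fluctuation condition $G_x^f$, which keeps $\Gamma$ out of $D$ altogether so the only way the coupling can break is through a new path through $D$ becoming competitive with $\Gamma$.
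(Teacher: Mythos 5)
Your overall strategy matches the paper's: a rejection coupling between the conditioned chain and the unconditioned resampling chain, the use of $G_x^{\rm rs}$ to control per-step increments, the use of $G_x^f$ to keep $\Gamma$ strictly above $D$ so that $\mathscr{H}_{x,i}$ can only be violated by a new path through $D$, and finally Lemma~\ref{l:success1} to guarantee the unconditioned chain produces such a path by step $N=\frac{1}{100\varepsilon^{5/3}}$. The clean observation that $M_{\tau^{\rm unc}}<\ell_\Gamma$ under the increment bound, so the coupling is still intact at $\tau^{\rm unc}$ and hence $\tau=\tau^{\rm unc}$, is a nice reformulation of the paper's step where one bounds $\P[\tau'=i+1]$ and unions over $i$.

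However, there is a genuine gap in the justification of the per-step increment bound $|M_i-M_{i-1}|\le \Delta_i$. You assert that ``extending the entry and exit points a distance $\varepsilon^{2/3}r^{2/3}$ into $\tilde D_i$ realises the $D_i$-portion as an $X_{u,u'}$ with $(u,u')\in\mathcal S(\tilde D_i)$ (the buffer defining $\tilde D_i$ is precisely there to keep the slope bounded away from $0$ and $\infty$).'' This is false as stated: $\tilde D_i$ has the \emph{same} horizontal extent as $D_i$ (only its height is tripled), so it provides no horizontal buffer whatsoever. A competing path can enter $\tilde D_i$ through the bottom boundary and exit through the top boundary over a tiny horizontal interval; the resulting endpoint pair has arbitrarily large slope and hence lies \emph{outside} $\mathcal S(\tilde D_i)$, so $\Delta_i$ says nothing about the change in its length. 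The quantity $\Delta_i$ only controls passage times between slope-bounded pairs; it does not by itself bound $|M_i-M_{i-1}|$.

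The paper closes exactly this hole by introducing the steepness events $Q$ and $Q'$ (Definition~\ref{d:steep}, Theorem~\ref{t:steep}). The point is that $\varepsilon^{2/3}r^{2/3}$ exceeds the steepness threshold $\frac{n^{2/3}}{2\log^7 n}$ for $r\in\mathcal R$, so any path that crosses $\tilde D_i$ at a slope outside $(\frac{2}{\psi},\frac{\psi}{2})$ is steep in the sense of Definition~\ref{d:steep}, and on $Q\cap Q'$ every steep path (in every resampled environment $\Pi^{(*,i)}$) has length at most $2n-n^{2/5}$, far below $\ell_\Gamma$. Thus a steep path can never be the maximiser defining $M_i$, and for the non-steep maximiser the entry/exit pair into $\tilde D_i$ does land in $\mathcal S(\tilde D_i)$, at which point $\Delta_i$ applies. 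Your proof never invokes $Q$ or $Q'$ (nor Theorem~\ref{t:steep}), so the increment bound you rely on is not established and the argument does not close without this additional input.
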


Before we prove Lemma \ref{l:couplingfails}, let us first explain the idea behind the proof. For $i\geq 1$, we sample $\widehat{\Pi}^{*,i}$ using rejection sampling as follows. Let $\Pi^*$ be an independent sample from $\mu$. For each $i\geq 1$, we take the configuration $\Pi^*_{D_i}$ (whose law is $\mu_{D_i}$). If replacing $\Pi^{*,i-1}_{D_i}$ by $\Pi^*_{D_i}$ does not violate the condition $\mathscr{H}_{x,i}$ we take $\Pi^*_{D_i}$ as a realization of $\tilde{\mu}_{D_i}$ and use it to resample the configuration on $D_i$ and generate $\Pi^{*,i}$,  else generate a configuration according to $\tilde{\mu}_{D_i}$ using some external randomness. Let $\tau'$ be the first time this coupling fails; i.e.\ $\Pi^*_{D_i}$ is rejected as a sample from $\tilde{\mu}_{D_i}$. We shall show that on $G_{x}\cap H_{x}$, $\P[\tau'\leq \frac{1}{100\varepsilon^{5/3}}]$ can be made sufficiently small by taking $\varepsilon$ to $0$ and complete the proof by invoking Lemma \ref{l:success1}. To this end we shall want to make use of the resampling condition in the definition of $G_x$ and for that we need to define the following global event, which is a stronger variant of the steepness condition $Q$ defined earlier. 

\begin{definition}[Stronger Steepness Condition]
For $r\in \mathcal{R}$, $x\in \mathcal{X}_r$ and $y\in r^{2/3}\Z$ and for $i\geq 0$, let $\mathscr{S}_i=\mathscr{S}_i(x,y,r)$ denote the event that \emph{steepness condition} $Q$ as defined in Definition \ref{d:steep} holds for the point configuration $\Pi^{(*,i)}$. Let $Q'$ denote the event that 
$$\P[\cup_{i,x,y,r}  \mathscr{S}_i^c\mid \Pi]\leq e^{-n^{1/100}}.$$
\end{definition}

We have the following theorem showing both $Q$ and  $Q'$ are overwhelmingly likely.

\begin{theorem}
\label{t:steep}
For $n$ sufficiently large we have that $\P[Q\cap Q'] \geq 1-e^{-n^{0.01}}$.
\end{theorem}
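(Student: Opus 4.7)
The plan is to show $\P[Q^c] \leq \tfrac{1}{2} e^{-n^{0.01}}$ and $\P[(Q')^c] \leq \tfrac{1}{2} e^{-n^{0.01}}$ separately; the first is a direct application of the moderate deviation bounds, while the second will follow by noting that the marginal distribution of each resampled configuration $\Pi^{(*,i)}$ equals that of $\Pi$ and then applying Markov's inequality to the $\Pi$-conditional probability appearing in the definition of $Q'$.

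For $\P[Q^c]$, clause (i) follows immediately from Theorem~\ref{t:moddevlowertail} applied with $t = n^2$ and $s = n^{0.017}$, yielding $\P[X_{\mathbf{0},\mathbf{n}} \leq 2n - n^{0.35}] \leq e^{-C_1 n^{0.025}}$. For clause (ii), the key deterministic input is the identity $d(u,u') - 2\sqrt{A(u,u')} = (\sqrt{x_2-x_1} - \sqrt{y_2-y_1})^2$, which shows that when the slope $(y_2-y_1)/(x_2-x_1)$ lies outside $(20/\psi, \psi/20)$ this deficit is at least $c_\psi \cdot \max(x_2-x_1, y_2-y_1)$ for some constant $c_\psi > 0$ depending only on $\psi$. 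Given a steep path $\gamma$ with witnesses $u = (x_1,\gamma_{x_1})$ and $u' = (x_2,\gamma_{x_2})$, I would use the subadditive bound
\[ \ell_\gamma \leq X_{\mathbf{0}, u} + X_{u, u'} + X_{u', \mathbf{n}} \]
together with Theorem~\ref{t:moddevuppertail} applied to each piece with scale $s_i$ of order $n^{1/3}/\log^7 n$ and the AM-GM inequality $2\sqrt{A_i} \leq d_i$ on the outer two pieces, to conclude $\ell_\gamma \leq 2n - n^{2/5}$ on an event of probability at least $1 - 3 e^{-c n^{1/2}/\log^{10.5} n}$. Discretizing the witnesses by choosing integer lattice points $v \leq u \leq u' \leq v'$ at coordinate distance at most $1$ (so that steepness is preserved for $n$ large) and union bounding over the $O(n^4)$ resulting pairs, clause (ii) fails with probability at most $e^{-n^{0.4}}$. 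Hence $\P[Q^c] \leq e^{-n^{0.025}} + e^{-n^{0.4}} \leq \tfrac{1}{2} e^{-n^{0.01}}$ for $n$ large.

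For $\P[(Q')^c]$, note that for each fixed tuple $(i,x,y,r)$ the process $\Pi^{(*,i)}$ is obtained from $\Pi$ by replacing its restriction to $\cup_{h=1}^i D_h(x,y,r)$ by an independent Poisson sample, so $\Pi^{(*,i)}$ has the same law as $\Pi$; consequently $\P[\mathscr{S}_i(x,y,r)^c] = \P[Q^c] \leq 2e^{-n^{0.025}}$. Since the number of admissible tuples $(i,x,y,r)$ is polynomial in $n$, the union bound gives $\P[\bigcup_{i,x,y,r} \mathscr{S}_i^c] \leq e^{-n^{0.02}}$. Markov's inequality applied to the non-negative random variable $\P[\bigcup \mathscr{S}_i^c \mid \Pi]$ then yields
\[ \P\!\left[ \P\!\Bigl[ \bigcup_{i,x,y,r} \mathscr{S}_i^c \;\Big|\; \Pi \Bigr] \geq e^{-n^{1/100}} \right] \leq e^{-n^{0.02} + n^{1/100}} \leq \tfrac{1}{2} e^{-n^{0.01}} \]
for $n$ large, since $n^{1/100} = n^{0.01} \ll n^{0.02}$, completing the argument.

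The principal technical hurdle is verifying the deficit balancing for clause (ii) uniformly over all lattice witness pairs: the slope-deficit of order $n^{2/3}/\log^7 n$ on the middle segment must dominate the combined three-piece upper-tail fluctuations and still exceed $n^{2/5}$. This is essentially careful bookkeeping against polylogarithmic factors, and the strong $\exp(-c n^{1/2}/\log^{10.5} n)$ decay in the moderate-deviation estimate easily absorbs the $O(\log n)$ cost of union bounding over $O(n^4)$ grid pairs.
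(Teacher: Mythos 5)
Your overall strategy matches the paper's: bound $\P[Q^c]$ via subadditivity, moderate deviations, and a union bound over lattice witness pairs; then handle $Q'$ by noting each $\Pi^{(*,i)}$ has the same law as $\Pi$, union bounding over the polynomially many tuples $(i,x,y,r)$, and applying Markov's inequality to the conditional probability. Your argument for clause (i) of $Q$ and for $Q'$ is correct and essentially identical to the paper's.

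The gap is in your treatment of clause (ii). The paper does not bound all steep paths by one uniform argument; it first isolates paths that are \emph{steep at end} (Lemma~\ref{l:steepatend}) and disposes of them separately, and then, for the remaining steep paths, Lemma~\ref{l:steepdiscretize} produces a discretized witness pair $(u_1,u_2)\in\Z^2$ together with the companion points $u_1'=(x_1,y_1+1)$, $u_2'=(x_2,y_2-1)$ that satisfy three quantitative conditions the union-bound argument actually needs: the slopes to the endpoints $\frac{y_1}{x_1},\frac{n-y_2}{n-x_2}$ lie in a $\psi$-bounded window (condition~3), \emph{both} coordinate increments of the middle rectangle are at least $\frac{n^{2/3}}{\log^8 n}$ (condition~4), so every rectangle used has area well above $t_0$ and bounded aspect ratio, and the witness pair sandwiches $\gamma$ correctly. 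Your proposal replaces all of this with ``discretize and union bound'' and acknowledges the bookkeeping only in passing, but there are two concrete problems. First, your discretization ``$v \leq u \leq u' \leq v'$'' has the sandwiching backwards: to preserve the upper bound $\ell_\gamma \leq X_{\mathbf{0},u} + X_{u,u'} + X_{u',\mathbf{n}}$ after snapping to lattice points you need the outer endpoints rounded \emph{outward} for the first and third term and the middle endpoints rounded \emph{outward} for the middle term, which is why the paper uses the four points $u_1\leq u_1'$ and $u_2'\leq u_2$ rather than a single pair. Second, your choice $s_i$ ``of order $n^{1/3}/\log^7 n$'' does not close: the middle rectangle can have area up to $O(n^2)$, so each fluctuation term is of size up to $s_i t_i^{1/6}\asymp n^{2/3}/\log^7 n$, while the steepness deficit is only $\gtrsim n^{2/3}/(2\log^7 n)$; the three fluctuations together exceed the deficit unless you shrink $s_i$ by a further constant or power of $\log n$. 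Finally, without condition~4 you must separately handle pairs where the middle (or an outer) rectangle has area below $t_0$, where Theorem~\ref{t:moddevuppertail} does not apply as stated; this case is easy but you do not address it. None of these is a fatal flaw, but as written the argument does not go through.
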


We shall postpone the proof of Theorem \ref{t:steep} for now and continue with the proof of Lemma \ref{l:couplingfails}.

\begin{proof}[Proof of Lemma \ref{l:couplingfails}]
%

As explained above we shall show that, on $\{i<\tau \wedge \tau' \}\cap G_x\cap H_x \cap Q \cap Q'$, we have
\begin{equation}
\label{e:couplingfails}
\P[\tau'=i+1] \leq e^{-\frac{C}{2\varepsilon^{1/4}}}.
\end{equation}

To establish (\ref{e:couplingfails}) observe the following. Let $\tilde{\Pi}^{*, i+1}$ be the point configuration obtained from $\widehat{\Pi}^{*,i}$ by replacing $\widehat{\Pi}^{*,i}_{D_{i+1}}$ by $\Pi^*_{D_{i+1}}$. Notice that on $\{i<\tau \wedge \tau' \}\cap G_x\cap H_x \cap Q \cap Q' \cap \{\tau'=i+1\}$, there is an increasing path in $\tilde{\Pi}^{*, i+1}$ from $\mathbf{0}$ to $\mathbf{n}$ passing through $D$ with length more than $\ell_{\Gamma}$, and replacing $\widehat{\Pi}^{*,i}_{D_{i+1}}$ by $\Pi^*_{D_{i+1}}$ increases the length of $\gamma$ by at least $\delta r^{1/3}$. Observe that either $\gamma$ is \emph{steep} or $\Delta_{i+1}> \delta r^{1/3}$ (recall the definition of $\Delta_{i+1}$ from the resampling condition in $G_x$). The former case has probability at most $e^{-n^{0.01}}$ by definition of $Q\cap Q'$ and \eqref{e:couplingfails} follows from the resampling condition in $G_x$. It follows now from Lemma \ref{l:success1} and Theorem \ref{t:steep} that
$$\P[\tau\leq \frac{1}{100\varepsilon^{5/3}}\mid \Pi, G_x,H_x]\geq \rho- \varepsilon^{-2}e^{-\frac{C}{2\varepsilon^{1/4}}}-\P[Q^c \cup (Q')^c]\geq \rho/2$$
since $\varepsilon$ is small enough. This completes the proof of the lemma.
%
\end{proof}

\subsection{Local Success}
Let $\Pi$ be a point configuration on $\R^2$, not necessarily distributed according to $\mu$. Let $\Gamma$ be be the topmost maximal increasing path in $\Pi$ from $\mathbf{0}$ to $\mathbf{n}$. Fix $r\in \mathcal{R}$. For $x\in \mathcal{X}_r$ we define the event {\bf ``Success at $x$ in scale $r$ at cost $\delta$"}, denoted $S_{x,r,\delta}$ to be the event that the following conditions hold.

\begin{enumerate}
\item We have
\begin{enumerate}
\item $|\Gamma_{x'}-\Gamma_x|\leq \frac{Mr^{2/3}}{10}$ for all $x'\in [x-\frac{r}{2},x+\frac{r}{2}]$.
\item $|\Gamma_{x}-x|\leq Cn^{2/3}$.
\end{enumerate}
\item There exists an increasing path $\Gamma'$ in $\Pi$ from $\mathbf{0}$ to $\mathbf{n}$ such that
\begin{enumerate}
\item $\Gamma'$ passes through $D$,
\item $\{x':\Gamma'_{x'}\neq \Gamma_{x'}\}\subseteq [x-\frac{r}{2},x+\frac{r}{2}]$,
\item $\ell_{\Gamma'}\geq \ell_{\Gamma}-\delta r^{1/3}$.
\item There exists points $u_1=(x_1,\Gamma'_{x_1})$ and $u_2=(x_2,\Gamma'_{x_2})$ on $\Gamma'$ such that $x_1,x_2 \in [x-\frac{r}{2},x+\frac{r}{2}]$ and $O_{\Gamma'}(u_1,u_2)$ is contained in the region $$\{(x',y')\in \R^2: (\Gamma_{x}-x)- \frac{2M}{5}r^{2/3} \leq  y'-x'\leq (\Gamma_x-x) -\frac{8M}{5}r^{2/3}\}$$ and $A_{\Gamma'}(u_1,u_2)\geq \alpha ' \eta r$.
%
%
\end{enumerate}
\end{enumerate}

Our goal is to prove the following theorem.
\begin{theorem}
\label{t:successmu}
For $\mathcal{X}_r^*\subseteq \mathcal{X}_r$ with $|\mathcal{X}_r^*|\geq \frac{9}{10}|\mathcal{X}_r|$ given by Theorem \ref{p:gxhxconditional} and for each $x\in \mathcal{X}_r^*$, we have  $\mu(S_{x,r,\delta})>p>0$ where $p$ is a constant independent of $r$.
\end{theorem}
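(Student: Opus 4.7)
The plan is to work under the conditional measure $\mu_x^{*}$ and transfer back to $\mu$ via the density bound $d\mu_x^{*}/d\mu \leq 1/\beta$ from Lemma~\ref{l:mustarbasic}(ii), which gives $\mu(A) \geq \beta\mu_x^{*}(A)$ for every event $A$. It therefore suffices to produce a scale-independent constant $p_1 > 0$ with $\mu_x^{*}(S_{x,r,\delta}) \geq p_1$, since then $p := \beta p_1$ proves the theorem.

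Sampling $\Pi \sim \mu_x^{*}$, Theorem~\ref{p:gxhxconditional} furnishes $G_x \cap H_x$ with probability at least $9/10$ whenever $x \in \mathcal{X}_r^{*}$, while $R_x$ holds by construction and the steepness events $Q \cap Q'$ hold with overwhelming probability (Theorem~\ref{t:steep} combined with the density bound). The fluctuation condition $G_x^{f}$ in the definition of $G_x$ supplies item~(1) of $S_{x,r,\delta}$ directly. For item~(2) I would run the sequential Gibbs sweep of Lemma~\ref{l:couplingfails}: conditional on $\Pi$ with $G_x \cap H_x$, with probability at least $\rho/2$ there is a stopping time $\tau \leq K := 1/(100\varepsilon^{5/3})$ such that $\widehat{\Pi}^{*,\tau}$ contains an increasing path $\Gamma'$ through $D$ with $\ell_{\Gamma'} \geq \ell_\Gamma - \delta r^{1/3}$.

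This $\Gamma'$ is the candidate alternative path required by $S_{x,r,\delta}$. Items 2(a) and 2(c) are immediate from $\mathscr{T}_{x,\tau}$. For item 2(b), I invoke Lemma~\ref{l:onghr}: on $G_x \cap H_x \cap R_x \cap Q$ any path through $D$ that deviates from $\Gamma$ outside the slab $[x-r/2, x+r/2]$ falls short of $\ell_\Gamma$ by at least $\delta\varepsilon^{-2}r^{1/3} \gg \delta r^{1/3}$, so truncating $\Gamma'$ at its split and confluence produces a path that agrees with $\Gamma$ outside the slab. For item 2(d), Lemma~\ref{l:spendstime} yields a sub-segment of horizontal extent at least $\alpha' r$ lying between $y^{*} - (3M/2)r^{2/3}$ and $y^{*} - (M/2)r^{2/3}$, which is contained in the slightly wider strip prescribed by 2(d); the area condition $G_x^{\rm a}$ then upgrades this length bound into the required area bound $A_{\Gamma'}(u_1, u_2) \geq \alpha' \eta r$.

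Finally, since each step of the Gibbs sweep is a reversible local resampling conditional on $\mathscr{H}_{x,i}$, the measure $\mu_x^{*}$ is preserved and every $\widehat{\Pi}^{*,i}$ has marginal law $\mu_x^{*}$. Consequently, by a union bound over $i \in \{0,\ldots,K\}$,
\[
(K+1)\,\mu_x^{*}(S_{x,r,\delta}) \;\geq\; \mathbb{P}\bigl(\exists\, i \leq K : \widehat{\Pi}^{*,i} \in S_{x,r,\delta}\bigr) \;\geq\; \tfrac{9}{10}\cdot\tfrac{\rho}{2} - o(1),
\]
which yields the required constant $p_1 = 2\rho/(5(K+1))$. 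The main technical obstacle is ensuring that the hypotheses of Lemmas~\ref{l:onghr} and~\ref{l:spendstime} (namely $G_x \cap H_x \cap R_x \cap Q$) continue to hold for the resampled configuration $\widehat{\Pi}^{*,\tau}$ rather than only for $\Pi$: the wall event $R_x$ and the $\Gamma$-dependent fluctuation condition $G_x^{f}$ are preserved automatically since resampling is confined to $D \subset \mathfrak{C}$, and $Q \cap Q'$ is overwhelmingly likely under $\mu_x^{*}$, but robustness of the path-statistics conditions in $G_x \cap H_x$ against perturbations inside the small region $D$ along all $K$ sweep steps is where most of the technical work lies.
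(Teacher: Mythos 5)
Your proposal follows the paper's route almost step for step: work under $\mu_x^{*}$, run the conditioned Gibbs sweep, extract a good alternative path via Lemma~\ref{l:couplingfails}, verify the conditions of $S_{x,r,\delta}$ through Lemma~\ref{l:onghr}, Lemma~\ref{l:spendstime}, $G_x^{\rm a}$ and the global good events, and transfer to $\mu$ via the density bound. The one genuine gap is the claim that ``the measure $\mu_x^{*}$ is preserved and every $\widehat{\Pi}^{*,i}$ has marginal law $\mu_x^{*}$.'' This is too strong and I believe it is false. The $i$-th sweep step redraws $\Pi_{D_i}$ from the $\mu$-conditional given $(\Pi_{D_i^c},\Gamma)$; such a step preserves $\mu$, but it preserves $\mu_x^{*}$ only if $\frac{{\rm d}\mu_x^{*}}{{\rm d}\mu}$ is measurable with respect to $(\Pi_{D_i^c},\Gamma)$ alone, so that the $\mu$- and $\mu_x^{*}$-conditionals of $\Pi_{D_i}$ agree. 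That is not the case: the density carries the normalization $\mu\bigl(R_{x,\gamma}\mid \Gamma=\gamma,\Pi_A\bigr)$, and because the constraint $\{\Gamma=\gamma\}$ couples the wall configuration $\Pi_B$ to the points in $\Pi_{D_i}$ (a competing path can pass through both a wall and $D_i$), this factor genuinely depends on $\Pi_{D_i}$.

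The correct, and sufficient, statement -- which is exactly what the paper uses -- is that a $\mu$-stationary Markov kernel is an $L^{\infty}$-contraction on densities with respect to $\mu$, so $\sup\frac{{\rm d}\mu_x^{*,j}}{{\rm d}\mu}$ is nonincreasing in $j$ and hence bounded by $1/\beta$ for every $j$ (Lemma~\ref{l:mustarbasic}). Your union bound is then repaired verbatim:
\[
\tfrac{9}{10}\cdot\tfrac{\rho}{2}-o(1)\;\le\;\mathbb{P}\bigl(\exists\, i\le K : \widehat{\Pi}^{*,i}\in S_{x,r,\delta}\bigr)\;\le\;\sum_{i=0}^{K}\mu_x^{*,i}(S_{x,r,\delta})\;\le\;\tfrac{K+1}{\beta}\,\mu(S_{x,r,\delta}),
\]
with $K=1/(100\varepsilon^{5/3})$, giving $\mu(S_{x,r,\delta})\ge\tfrac{\beta}{K+1}\bigl(\tfrac{9\rho}{20}-o(1)\bigr)$; the paper reaches essentially the same bound via a pigeonhole over $i$ rather than a union bound, an equivalent bookkeeping. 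You should also make explicit that the $o(1)$ absorbs the failure of the rejection coupling before time $\tau$, the failure of the increment event $A_x$, and the failure of $Q\cap Q'\cap V$, and that verifying item 2(d) also requires the wing condition and the event $V$ as in the paper, not only Lemma~\ref{l:spendstime} and $G_x^{\rm a}$.
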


First we prove the following lemma.

\begin{lemma}
\label{l:successmustar}
Let $\mathcal{X}_r^*$ be as given by Theorem \ref{p:gxhxconditional}. For $x\in \mathcal{X}_r^*$, there exists $i\in \{0,1,\ldots, \frac{1}{100\varepsilon^{5/3}}\}$ such that $\mu_x^{*,i}(S_{x,r,\delta})\geq \frac{9\rho\varepsilon^2}{40}.$
\end{lemma}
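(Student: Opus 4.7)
The plan is to combine the existence result of Theorem~\ref{p:gxhxconditional} with the coupling success of Lemma~\ref{l:couplingfails}, extract a good deterministic stopping time $i^*$ by a pigeonhole argument, and then verify each clause of $S_{x,r,\delta}$ on the resulting event using Lemma~\ref{l:onghr} and Lemma~\ref{l:spendstime}.

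First I would set $N := \lfloor 1/(100\varepsilon^{5/3}) \rfloor$ and $\mathcal{E} := G_x \cap H_x \cap Q \cap Q'$. Theorem~\ref{p:gxhxconditional} gives $\mu_x^*(G_x \cap H_x) \geq 9/10$, while the density bound in Lemma~\ref{l:mustarbasic} together with Theorem~\ref{t:steep} yields $\mu_x^*\bigl((Q \cap Q')^c\bigr) \leq e^{-n^{0.01}}/\beta = o(1)$, so $\mu_x^*(\mathcal{E}) \geq 9/10 - o(1)$. Integrating the quenched bound $\P[\tau \leq N \mid \Pi,\,G_x,\,H_x] \geq \rho/2$ of Lemma~\ref{l:couplingfails} over $\mathcal{E}$ gives
\begin{equation*}
\P[\tau \leq N,\;\mathcal{E}] \;\geq\; \tfrac{\rho}{2}\bigl(\tfrac{9}{10} - o(1)\bigr) \;\geq\; \tfrac{9\rho}{20} - o(1).
\end{equation*}
Pigeonholing over the $N+1$ possible values of $\tau$ produces a deterministic $i^* \in \{0,\ldots,N\}$ with $\P[\tau = i^*,\;\mathcal{E}] \geq (9\rho/20 - o(1))/(N+1) \geq 9\rho\varepsilon^2/40$ for $\varepsilon$ small, using $N + 1 \leq 1/(50\varepsilon^{5/3})$. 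Since the marginal law of $\widehat{\Pi}^{*,i^*}$ is exactly $\mu_x^{*,i^*}$, the lemma will follow once I check that $S_{x,r,\delta}$ holds for $\widehat{\Pi}^{*,i^*}$ whenever $\{\tau = i^*\} \cap \mathcal{E}$ occurs.

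The conditioning $\mathscr{H}_{x,j}$ at each step keeps $\Gamma$ as the topmost maximal path, so items $1$(a) and $1$(b) of $S_{x,r,\delta}$ are inherited from $G_x^{f}$; moreover the walls $B_1, B_2$ lie outside the central column $\mathfrak{C} \supset D$ and hence are untouched by the resampling, so $R_x$ persists. By construction of $\tau$, $\mathscr{T}_{x,i^*}$ delivers an increasing path through $D$ of length at least $\ell_\Gamma - \delta r^{1/3}$; picking $\Gamma'$ to be a maximum-length such path in $\widehat{\Pi}^{*,i^*}$ immediately establishes $2$(a) and $2$(c). For $2$(b), applying Lemma~\ref{l:onghr} to $\widehat{\Pi}^{*,i^*}$ shows that if the split or the confluence of $\Gamma'$ with $\Gamma$ fell outside $[x-r/2,\,x+r/2]$ then $\ell_{\Gamma'} \leq \ell_\Gamma - \delta\varepsilon^{-2} r^{1/3}$, contradicting $2$(c) since $\delta\varepsilon^{-2} \gg \delta$. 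For $2$(d) I would replace the portion of $\Gamma'$ from $\mathbf{0}$ up to its first entry into $D$ by the maximum-length path subject to the constraints of Lemma~\ref{l:spendstime}: that modification can only increase $\ell_{\Gamma'}$ while producing a sub-path of horizontal span at least $\alpha' r$ inside the strip $\{y'-x' \in [y(x,\Gamma) - \tfrac{3M}{2}r^{2/3},\,y(x,\Gamma) - \tfrac{M}{2}r^{2/3}]\}$, which is contained in the strip demanded by $2$(d) because $0 \leq y(x,\Gamma) - (\Gamma_x - x) < r^{2/3}$ and $M \geq 10$. The area condition $G_x^{\mathrm{a}}$ then upgrades horizontal span to area at least $\alpha' \eta r$.

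The main obstacle I anticipate is that Lemmas~\ref{l:onghr} and~\ref{l:spendstime} are phrased for an unperturbed Poisson environment $\Pi$, whereas we have to apply them inside $\widehat{\Pi}^{*,i^*}$, which differs from $\Pi$ on $\bigcup_{j\leq i^*} D_j \subset D$. This is precisely what the resampling clause $G_x^{\mathrm{rs}}$ and the stronger steepness clause $Q'$ are tailored for: the total shift $\sum_{j=1}^{N} \Delta_j$ in any $\hat X$ statistic whose maximizer crosses $D$ is at most $N\delta r^{1/3}/2$ with high conditional probability, while $Q'$ propagates the hypothesis $Q$ through all resamplings. Absorbing those small shifts into the constants in the case analyses of Lemmas~\ref{l:onghr} and~\ref{l:spendstime} makes those geometric arguments go through unchanged on $\widehat{\Pi}^{*,i^*}$, completing the verification of $S_{x,r,\delta}$ and hence the lemma.
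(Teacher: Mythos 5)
Your proof follows the paper's argument closely: sample from $\mu_x^*$, restrict to $G_x\cap H_x$ (and the steepness events), use Lemma~\ref{l:couplingfails} to get coupling success with probability $\geq\rho/2$, pigeonhole to extract a deterministic index, and verify $S_{x,r,\delta}$ on the surviving event via Lemmas~\ref{l:onghr} and~\ref{l:spendstime}. One small improvement worth noting: you pigeonhole on the annealed quantity $\P[\tau=i^*,\mathcal{E}]$, which cleanly produces a deterministic $i^*$, whereas the paper's phrasing of the pigeonhole (``there exists $i$ such that $\P[\tau=i\mid\Pi,G_x\cap H_x]\geq\varepsilon^2\rho/2$'') is quenched and, taken literally, would give an $i$ depending on $\Pi$; your version is more careful.

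There are, however, two places where your verification of $S_{x,r,\delta}$ glosses over steps the paper handles explicitly. First, you do not introduce the event $A_x=\{\max_{j\leq i}\Delta_j\leq\delta r^{1/3}/2\}$. The paper retains this in the surviving event precisely because Lemmas~\ref{l:onghr} and~\ref{l:spendstime} are statements about $\Pi$, not about $\widehat\Pi^{*,i^*}$, and $A_x$ (rather than just $Q'$) is what quantitatively controls how the estimates in $G_x$, $H_x$ transfer to the resampled configuration; your final paragraph gestures at this but does not make the transfer precise. Second, and more substantively, for condition 2(d) you write that the area condition $G_x^{\mathrm{a}}$ ``upgrades horizontal span to area at least $\alpha'\eta r$,'' but $G_x^{\mathrm{a}}$ has the hypothesis $\alpha' r\leq\ell_\gamma^\Pi\leq 3r$ — that is, it requires the segment to hit between $\alpha' r$ and $3r$ points, not merely to have horizontal span $\geq\alpha' r$. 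Passing from span to point-count for the constrained maximal path is not automatic and is exactly where the wing condition in $G_x$ and the global density event $V$ (which you do not introduce) are used in the paper's argument. Both omissions are fixable, but as written the 2(d) step is incomplete.
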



\begin{proof}
Sample $\Pi$ from the measure $\mu_x^{*}$. Fix $x\in \mathcal{X}_r$ such that $G_x$ and $H_x$ hold. Consider the set-up of Lemma \ref{l:couplingfails}. It follows from Lemma \ref{l:couplingfails} that there exists $i\in \{0,1,\ldots, \frac{1}{100\varepsilon^{5/3}}\}$ such that
$\P[\tau=i\mid \Pi, G_x\cap H_x] \geq \varepsilon^{2}\rho/2$. Let $\Pi^*$ be an independent sample of $\mu$. Now generate a sample from $\mu_{x}^{*,i}$ in the manner described in the proof of Lemma \ref{l:couplingfails}. Recall that $\Pi^{(*,j)}$ denotes the point configuration obtained from $\Pi$ by changing $\Pi_{\cup_{k=1}^{j}D_k}$ to $\Pi^*_{\cup_{k=1}^{j}D_k}$. Also recall the definition of $\Delta_j$ from the resampling condition. Let $A_x$ denotes the event that $\max_{j,j\leq i} \Delta_j \leq \frac{\delta r^{1/3}}{2}$. It follows that
$$\P[\tau'>i, \tau=i, A_x \mid \Pi, G_x\cap H_x] \geq \frac{\varepsilon^{2}\rho}{2}- \varepsilon^{-2}e^{-\frac{C}{2\varepsilon^{1/4}}} \geq \frac{\varepsilon^{2}\rho}{4}$$
since $\varepsilon$ is sufficiently small. 
Consider another global good event $V$ defined as follows. Let $V$ denote the event that each square of side length at least $n^{1/2}$ contained in $[0,n]^2$ contains at least one point of $\Pi$. Clearly, by a union bound one has $\mu(V)\geq 1-e^{-n^{0.01}}$. Now define $Q_0:=Q\cap Q' \cap V$. It follows from Theorem \ref{t:steep} and Lemma \ref{l:mustarbasic} that $\mu^{*}_{x}(Q_0^c)\leq e^{-n^{0.001}}$ for $n$ sufficiently large.

Now observe that on $\{G_x\cap H_x \cap A_x \cap Q_0\}\cap \{\tau=i, \tau' >i\}$ there exists an increasing path $\Gamma'$ in $\Pi^{(*,i)}$ such that $\Gamma'$ satisfies all the conditions in the definition of $S_{x,r,\delta}$. To see this, observe that by construction $\Pi$ and $\Pi^{(*,i)}$ has the same topmost maximal path and hence Condition 1 holds by the definition of $G_x$. Condition $2(a)$ and $2(c)$ holds by the definition of $\tau$. That Condition $2(b)$ holds is a consequence of Lemma \ref{l:onghr}. Condition $2(d)$ is a consequence of Lemma \ref{l:spendstime}, the wing condition in the definition of $G_{x}$ and the global good event $V$. It follows that
$$\mu_x^{*,i}(S_{x,r,\delta}\mid \Pi, G_x\cap H_x)\geq  \frac{\varepsilon^{2}\rho}{4}.$$

It follows from Theorem\ref{p:gxhxconditional} that for $x\in \mathcal{X}_r^*$, we have
$$\mu_x^{*,i}(S_{x,r,\delta}) \geq \mu_x^{*}(G_x\cap H_x)\frac{\varepsilon^{2}\rho}{4}\geq \frac{9\varepsilon^{2}\rho}{40}.$$
\end{proof}

Now we are ready to prove Theorem \ref{t:successmu}.

\begin{proof}[Proof of Theorem \ref{t:successmu}]
For $x\in \mathcal{X}_r^{*}$, choose $i$ as in the previous lemma. Notice that the resampling of $D_i$ under the conditional measure can be interpreted as step of the Glauber dynamics and hence a smoothing operator which implies that $\sup \frac{{\rm d}\mu_x^{*,j}}{{\rm d}\mu}$ is decreasing in $j$, and hence
$$\sup \frac{{\rm d}\mu_x^{*,i}}{{\rm d}\mu}\leq \sup \frac{{\rm d}\mu_x^{*}}{{\rm d}\mu}\leq \frac{1}{\beta}<\infty$$
by Lemma \ref{l:mustarbasic} where $\beta$ is a constant independent of $r$.
The result now follows from Lemma~\ref{l:successmustar}.
\end{proof}

\section{Combining Success at Different Scales and Locations}
\label{s:finish}
Our goal in this section is to improve the length of the local almost optimal paths of the previous section using the extra points from the reinforced configuration, and then put together all these improvements to obtain a path longer than the optimal path in the unperturbed configuration.

\subsection{Reinforcing on different lines}
As explained in the introduction, our strategy is to consider, instead of only one reinforced configuration, a family of reinforced configurations, where the reinforcement is on different translates of the diagonal line $\{x=y\}$. Let $\lambda>0$ be fixed. For each $m\in [-2Cn^{2/3}, 2Cn^{2/3}]$, let $\Sigma_{\lambda}^{(m)}$ denote a one dimensional PPP with intensity $\lambda$ on the line $\mathbb{L}_m:\{y=x+m\}$. Let $\Pi_{\lambda}^{(m)}$ be the point process obtained by superimposing $\Pi$ and $\Sigma_{\lambda}^{(m)}$. Let $L_n^{(\lambda, m)}$ be the length of the maximal increasing path from $\mathbf{0}$ to $\mathbf{n}$ in $\Pi_{\lambda}^{(m)}$. As explained in \S~\ref{s:outline}, to prove Theorem \ref{t:maintheoremppp}, it suffices to show that $\E L_n^{(\lambda,m)}>2n$ for some $n$ and $m$.
For $m\in (-2Cn^{1/3}, 2Cn^{1/3})$, let $\Gamma(m)$ denote the longest path in $\Pi_{\lambda}^{(m)}$ from $(0,0)$ to $(n,n)$. 
For the next lemma we shall use the notation $\ell_{\Gamma(m)}=\ell_{\Gamma(m)}^{\Pi_{\lambda}^{(m)}}$ and $\ell_{\Gamma}=\ell_{\Gamma}^{\Pi}$.

\begin{lemma}
\label{l:difflines}
For some $n$ sufficiently large, there exists $m\in [-2Cn^{2/3},2Cn^{2/3}]$ such that we have $\E[\ell_{\Gamma(m)}-\ell_{\Gamma}]> 100n^{1/3}$.
\end{lemma}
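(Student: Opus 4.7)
The plan is to show that the average of $\E[\ell_{\Gamma(m)} - \ell_{\Gamma}]$ over $m$ in $[-2Cn^{2/3}, 2Cn^{2/3}]$ already exceeds $100 n^{1/3}$; then by pigeonhole some $m$ achieves the bound. By Theorem \ref{t:successmu}, for every $r \in \mathcal R$ and every $x \in \mathcal X_r^*$ the success event $S_{x,r,\delta}$ holds with probability at least $p>0$ (uniform in $r$), which produces an alternative path $\Gamma'$ that agrees with $\Gamma$ outside $[x-r/2, x+r/2]$, is at most $\delta r^{1/3}$ shorter than $\Gamma$, and whose region $O:=O_{\Gamma'}(u_1,u_2)$ has area at least $\alpha'\eta r$ and sits in a strip of width $\tfrac{6M}{5} r^{2/3}$ about the line $y - x = \Gamma_x - x - M r^{2/3}$.

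For any $m$ in that strip one may concatenate the points of $\Sigma_\lambda^{(m)} \cap O$ into $\Gamma'$, producing a valid path in $\Pi_\lambda^{(m)}$ of length at least $\ell_\Gamma - \delta r^{1/3} + |\Sigma_\lambda^{(m)} \cap O|$. Conditionally on $\Pi$ the expected number of captured points is $\lambda \cdot \mathrm{length}_h(\mathbb L_m \cap O)$, where $\mathbb L_m = \{y=x+m\}$. The change of variable $m = y - x$ (Jacobian $1$) gives
\begin{equation*}
\int \mathrm{length}_h(\mathbb L_m \cap O)\, dm \;=\; \mathrm{area}(O) \;\geq\; \alpha'\eta\, r,
\end{equation*}
while the total switching cost across the strip of admissible $m$ is at most $\tfrac{6M}{5}\delta\, r$. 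Since $\delta$ was chosen small compared to $\lambda\alpha'\eta/M$, integrating over $m$ and taking unconditional expectation yields
\begin{equation*}
\int \E\!\left[(\ell_{\Gamma(m)} - \ell_\Gamma)\mathbf{1}_{S_{x,r,\delta}}\right] dm \;\geq\; \tfrac{1}{2} p\lambda\alpha'\eta\, r.
\end{equation*}

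Next I would sum over $x \in \mathcal X_r^*$ (which has at least $\tfrac{9n}{30 r}$ elements). At a fixed scale the local alternatives live on disjoint horizontal slabs of width $r$ and hence can be deployed simultaneously, so the per-scale contribution to $\int \E[\ell_{\Gamma(m)}-\ell_\Gamma]\,dm$ is a constant multiple of $n$. Dividing by the length $4Cn^{2/3}$ of the $m$-interval gives an $m$-average improvement of order $n^{1/3}$ per scale. Finally, summing over the $\tfrac{1}{100}\log\log n$ scales in $\mathcal R$ --- where the geometric spacing of scales together with the butterfly fluctuation condition in $G_x^f$ ensures that alternative paths at different scales nest inside one another without destroying each other's reinforcement gains --- the aggregate $m$-average is of order $n^{1/3} \log\log n$, which exceeds $100 n^{1/3}$ for $n$ sufficiently large.

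The main difficulty is combining local alternatives across different scales into a single valid competitor path while preserving the reinforcement gains additively. Within a single scale this is automatic because the slabs $[x-r/2, x+r/2]$ are disjoint. Across scales one processes from the largest scale downward: the scale-$r$ alternative sits inside $O$ at transversal distance $\Theta(Mr^{2/3})$ below $\Gamma$, so smaller-scale substitutions, whose transversal footprint is only $O(Mr'^{2/3})$ with $r' \ll r$, touch neither the scale-$r$ region $O$ nor the wings connecting it to $\Gamma$, and both sets of reinforcement gains survive. This is the geometric incarnation of the self-similar heuristic in \S\ref{s:outline}, and it is where the choice of $\mathcal R$ as a geometric sequence, together with the transversal fluctuation control baked into $G_x^f$ and condition 2(b) of $S_{x,r,\delta}$, is crucial.
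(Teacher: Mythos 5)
Your local, per-$(x,r)$ analysis matches the paper's proof: you integrate the conditional gain over $m$, lower-bound it using the area condition $A_{\Gamma'}(u_1,u_2)\ge\alpha'\eta r$ of $S_{x,r,\delta}$, subtract the switching cost $O(M\delta r)$ which $\delta$ is chosen to dominate, sum over the $\Theta(n/r)$ locations of $\mathcal{X}_r^*$ to get a per-scale contribution $\Theta(n)$ to $\int\E[\ell_{\Gamma(m)}-\ell_\Gamma]\,dm$, and finish by pigeonhole after dividing by $4Cn^{2/3}$. The gap is in the cross-scale step. You assert that for a single reinforced line $\mathbb{L}_m$, the scale-$r_1$ alternatives at columns $x'\subset[x-r/2,x+r/2]$ ``nest'' inside the scale-$r$ alternative at $x$ and both reinforcement gains survive. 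This scenario does not occur: the admissible window of $m$-values at scale $r$, location $x$, is an interval of width $\Theta(Mr^{2/3})$ centred at depth $\Theta(Mr^{2/3})$ below $\Gamma_x-x$, while the window for $r_1\le r/10$ at a nearby $x'$ sits at depth $\Theta(Mr_1^{2/3})$; by the fluctuation bound built into $S_{x,r,\delta}$ (which controls $|(\Gamma_{x'}-x')-(\Gamma_x-x)|$) these windows are disjoint, so a fixed $m$ never activates two scales in overlapping columns. And even if it did, the path produced by $S_{x',r_1,\delta}$ is built relative to $\Gamma$, not to the already-modified $\Gamma'(x,r,\delta)$, so it cannot simply be grafted onto the larger-scale modification with both gains intact.

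What actually makes the per-scale contributions add is the opposite of nesting, and this is the crux of the paper's proof: for each fixed $m$, the set of $(x,r)$ pairs for which $m$ lies in the target strip \emph{and} $S_{x,r,\delta}$ holds form an antichain — the intervals $[x-r/2,x+r/2]$ are pairwise disjoint. This is the content of the paper's two ``notice that'' observations, derived from the disjointness of the $m$-strips across scales together with the fluctuation control. Since each local alternative alters $\Gamma$ only inside its own column (condition $2(b)$ of $S_{x,r,\delta}$), the active alternatives concatenate into a single valid competitor, giving the key inequality
\[
\E[\ell_{\Gamma(m)}\mid\Pi]-\ell_\Gamma \;\ge\; \sum_{r\in\mathcal{R}}\sum_{x\in\mathcal{X}_r}\mathbf{1}\{m\in\text{strip}(x,r)\}\,G^m_{x,r,\Gamma},
\]
on which the rest of the argument rests. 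A fixed $m$ thus picks up at most one scale per column; the factor $|\mathcal{R}|$ only appears after integrating over $m$, because different $m$'s activate different scales. You should replace the nesting heuristic with this disjointness argument.
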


\begin{proof}
Let $r\in \mathcal{R}$ and $x\in \mathcal{X}_r$ be fixed. For a given $\Pi$ with the topmost maximal path $\Gamma$, if $S_{x,r,\delta}$ holds, let $\Gamma'(x,r,\delta)$ denote the alternative path given by the definition of $S_{x,r,\delta}$. Let $u_1$ and $u_2$ be as in the definition of $S_{x,r,\delta}$. Let $\Gamma^*(x,r,\delta,m)$ denote the increasing path in $\Pi_{\lambda}^{(m)}$ which contains all points of $\Gamma'(x,r,\delta)$ that belong to $\Pi$ and also all points of $\Sigma_{\lambda}^{(m)}$ that are contained in $O_{\Gamma'}(u_1,u_2)$. Let us denote $\ell_{\Gamma^*}=\ell_{\Gamma^*(x,r,\delta,m)}^{\Pi_{\lambda}^{(m)}}$ and $\ell_{\Gamma'}=\ell_{\Gamma'(x,r,\delta)}^{\Pi}$. Also set
$$ G^{m}_{x,r,\Gamma}=(\E[\ell_{\Gamma^*}\mid \Pi]-\ell_{\Gamma})\mathbf{1}_{S_{x,r,\delta}}.$$
Observe that on $S_{x,r,\delta}$, we have
$$\int_{(\Gamma_x-x)-\frac{8Mr^{2/3}}{5}}^{(\Gamma_x-x)-\frac{2Mr^{2/3}}{5}} (\E[\ell_{\Gamma^*}\mid \Pi]-\ell_{\Gamma'}) ~dm \geq \frac{\lambda}{10}A_{\Gamma'}(u_1,u_2).$$
It follows from the definition of $S_{x,r,\delta}$ and since $\delta$ is sufficiently small depending on $\lambda$ that
$$\int_{(\Gamma_x-x)-\frac{8Mr^{2/3}}{5}}^{(\Gamma_x-x)-\frac{2Mr^{2/3}}{5}} G^{m}_{x,r,\Gamma}~dm\geq (\frac{\alpha ' \lambda r}{10}-\frac{6M\delta r}{5})\mathbf{1}_{S_{x,r,\delta}} \geq rc_{\lambda}\mathbf{1}_{S_{x,r,\delta}}$$
for some constant $c_{\lambda}>0$.

Now notice that for a fixed $m$, and for a fixed $x\in \cup_{r\in \mathcal{R}}\mathcal{X}_{r}$, we have that
$$\mathbf{1}_{\bigl\{m\in ((\Gamma_x-x)-\frac{8Mr^{2/3}}{5}, (\Gamma_{x}-x)-\frac{2Mr^{2/3}}{5})\bigr\}}$$
is nonzero for at most one value of $r\in \mathcal{R}$. Also notice that if
$$\mathbf{1}_{\bigl\{m\in ((\Gamma_x-x)-\frac{8Mr^{2/3}}{5}, (\Gamma_{x}-x)-\frac{2Mr^{2/3}}{5})\bigr\}}=1$$
and $S_{x,r,\delta}$ holds for some value of $r\in \mathcal{R}$ and $x\in \mathcal{X}_r$, then for any $r_1\in \mathcal{R}$ with $r_1<r$, and for any $x'\in [x-\frac{r}{2},x+\frac{r}{2}]\cap \mathcal{X}_{r_1}$ we have
$$\mathbf{1}_{\bigl\{m\in (\Gamma_{x'}-\frac{3Mr_1^{2/3}}{2}, \Gamma_{x'}-\frac{Mr_1^{2/3}}{2})\bigr\}}=0.$$
Finally notice that for a fixed $r$ and $x\in \mathcal{X}_r$, $\Gamma^*(x,r,\delta,m)$ and $\Gamma$ deviate from one another only in the interval $[x-\frac{r}{2},x+\frac{r}{2}]$ on $S_{x,r,\delta}$. All these together imply

$$\E[\ell_{\Gamma(m)}\mid \Pi]-\ell_{\Gamma}  \geq \sum_{r\in \mathcal{R}}\sum_{x\in \mathcal{X}_r}\mathbf{1}_{\bigl\{m\in (\Gamma_{x}-\frac{3Mr^{2/3}}{2}, \Gamma_{x}-\frac{Mr^{2/3}}{2})\bigr\}}G^{m}_{x,r,\Gamma}.$$

By a series of interchanges of summation, expectation and integration and using Lemma \ref{t:successmu}, it follows that
\begin{eqnarray*}
\int_{-2Cn^{2/3}}^{2Cn^{2/3}}\E[\ell_{\Gamma(m)}-\ell_{\Gamma}]~dm &\geq & \sum_{r}\sum_{x}rc_{\lambda}\mu(S_{x,r,\delta})\\
&\geq & \sum_{r} pc_{\lambda} r \frac{n}{10r}\geq pc_{\lambda}|\mathcal{R}|n.
\end{eqnarray*}

Hence it follows that there exists $m\in [-2Cn^{2/3}, 2Cn^{2/3}]$ such that
$$\E[\ell_{\Gamma(m)}-\ell_{\Gamma}]\geq \frac{pc_{\lambda}}{4C}|\mathcal{R}|n^{1/3}.$$
The lemma follows.
\end{proof}

\subsection{Proof of Theorem \ref{t:maintheoremppp}}
We can now complete the proof of Theorem \ref{t:maintheoremppp}.
\begin{proof}[Proof of Theorem \ref{t:maintheoremppp}]
Recall that $L_{N}^{(\lambda, m)}$ denotes the length of a maximal path in the environment $\Pi_{\lambda}^{(m)}$ from $(0,0)$ to $(N,N)$. Notice that for a fixed $m$, we have that $\E(L_{N}^{(\lambda, m)})$ is superadditive in $N$, i.e.,
$$\E(\ell_{\Gamma _{m,N_1}})+ \E(\ell_{\Gamma _{m,N_2}})\leq \E(\ell_{\Gamma _{m,N_1+N_2}})$$   
for all $N_1,N_2>0$. Now choosing $n$ and $m$ as given by Lemma \ref{l:difflines} and choosing $|\mathcal{R}|$ sufficiently large depending on $C, c_{\lambda}$ and $p$ as given in Lemma \ref{l:difflines} it follows that we have 
$\E(L_{N}^{(\lambda, m)})> 2n$ and by using superadditivity we get 
$$\lim_{N\rightarrow \infty}  \frac{\E(L_{N}^{(\lambda, m)})}{N} >2.$$

Now, notice by translation invariance we have that $\E L^{\lambda}_{N-m} \geq \E L_{N}^{(\lambda, m)} $ and as $m$ is fixed we get that 
$$\lim_{N\rightarrow \infty}  \frac{\E L^{\lambda}_N}{N} >2$$
thereby completing the proof of Theorem \ref{t:maintheoremppp}.
\end{proof}

\textbf{To complete the proof of Theorem \ref{t:maintheoremppp}, it remains to establish Theorem \ref{l:rdecreasing}, Theorem \ref{p:gxhxconditional} and Theorem \ref{t:steep}. The next three sections in the paper are devoted to proving these results.}

\section{Maximal paths behave nicely most of the time}
\label{s:maxpathnice}
In this section we shall prove that the paths of maximal length with large probability, behave regularly at a scale $r$ (i.e., have on scale fluctuations) around most locations $x$. Throughout this section we shall work at a fixed scale $r$. The general strategy for the results in this section is to use the following Peierls type argument: We consider a set of discretized paths, and show that along each such path show that it is exponentially unlikely (in  $\frac{n}{r}$, the number of locations in scale $r$) that the desired property is violated at more than a small fraction of locations. We complete by taking a union bound over a not too large set of discretized paths, to which we show that a path of maximal length belongs with high probability.

We start with defining a discretization for paths. For a fixed $r$ and for $i\in \{0,1,\ldots, n/r\}$, let  $\mathscr{I}_i$ denote the set of all line segments of the form
$$\{(ir,y'):\ell r^{2/3}\leq y'-ir\leq (\ell+1)r^{2/3}\},$$
which represents a discretization of the endpoints of the $i$-th segment of the path.
Let $\mathscr{I}$ denote the set of all sequences of the form $\mathcal{J}=\{J_i\}_{0\leq i \leq n/r}$ where $J_i\in \mathscr{I}_i$. Fix $\mathcal{J}\in \mathscr{I}$ where
\begin{equation}
\label{e:deltagamma}
J_i= \{(ir,y'):j_i r^{2/3}\leq y'-ir\leq (j_i+1)r^{2/3}\}.
\end{equation}
Define $\Delta_i(\mathcal{J})= j_{i+1}-j_{i}$.

Now let $\gamma$ be an increasing path from $\mathbf{0}$ to $\mathbf{n}$. Define $\mathcal{J}=\mathcal{J}(\gamma)\in \mathscr{I}$ as follows. Let $j_i=\lfloor \frac{\gamma_{ir}-ir}{r^{2/3}} \rfloor$. Define
$\mathcal{J}=\{J_i(\gamma)\}_i$ by (\ref{e:deltagamma}). See Figure \ref{f:discrete}. Note that this figure is not drawn in the tilted co-ordinates.

\begin{figure}[ht!]
\begin{center}
\includegraphics[width=0.6\textwidth]{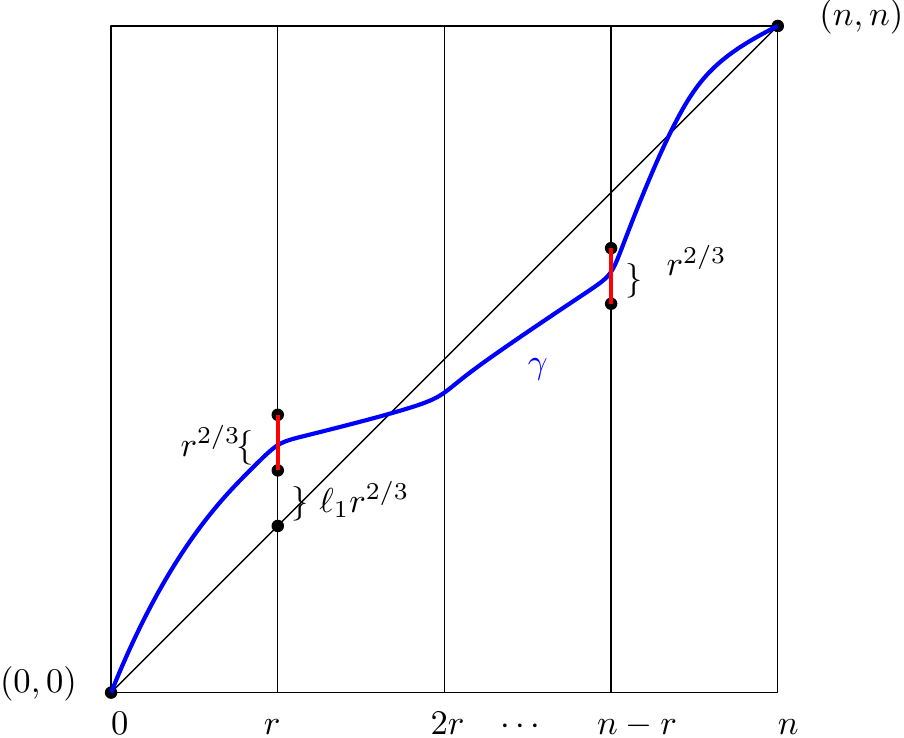}
\caption{Discretization of an increasing path $\gamma$. The line segments $J_{1}(\gamma)$ and $J_{n/r-1}(\gamma)$ are marked in red.}
\label{f:discrete}
\end{center}
\end{figure}

Set $\Delta^{\gamma}_i=\Delta_i(\mathcal{J}(\gamma))$. We define the {\bf total fluctuation} of a path $\gamma$ at scale $r$ to be equal to $\sum_{i} |\Delta^{\gamma}_i|$. We shall need the following easy counting lemma which gives a bound on the number of $J(\gamma)\in \mathscr{I}$ that correspond to an increasing path $\gamma$ of a given total fluctuation. We omit the proof.

\begin{lemma}
\label{l:countmaxpath}
Let $\mathscr{I}(T)=\{\mathcal{J}\in \mathscr{I}: \sum_{i}|\Delta^i(\mathcal{J})|\leq T\}$. Then $|\mathscr{I}(T)|\leq 4^{n/r+T}$. Further if $T\geq \ell\frac{n}{r}$, then $|\mathscr{I}(T)|\leq e^{c(\ell)(n/r+T)}$ where $c(\ell)\rightarrow 0$ as $\ell \rightarrow \infty$.
\end{lemma}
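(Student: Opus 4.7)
The plan is a standard stars-and-bars count. A sequence $\mathcal{J} = \{J_i\}_{0 \le i \le n/r}$ in $\mathscr{I}$ is encoded by integers $j_0, j_1, \ldots, j_{n/r}$ via \eqref{e:deltagamma}, and hence equivalently by the starting label $j_0$ together with the increment vector $(\Delta_0, \ldots, \Delta_{N-1})$, where $N := n/r$. Since the lemma is used to discretize increasing paths $\gamma$ from $\mathbf{0}$ to $\mathbf{n}$ (for which $\gamma_0 = 0$ forces $j_0 = 0$), I interpret the count as being over sequences with $j_0 = 0$ fixed; the problem then reduces to counting integer vectors $(\Delta_0, \ldots, \Delta_{N-1}) \in \mathbb{Z}^N$ satisfying $\sum_{i=0}^{N-1} |\Delta_i| \le T$.

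For the counting step, separate each $\Delta_i$ into its sign and its magnitude. The magnitude vector lies in $\mathbb{Z}_{\ge 0}^N$ with sum at most $T$, and stars-and-bars give at most $\binom{T+N}{N}$ such vectors; multiplying by the $2^N$ possible sign patterns yields
\begin{equation*}
|\mathscr{I}(T)| \le 2^N \binom{T+N}{N}.
\end{equation*}
Applying the crude estimate $\binom{T+N}{N} \le 2^{T+N}$ then gives $|\mathscr{I}(T)| \le 2^{2N+T} \le 4^{N+T}$, which is the first claim.

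For the refined bound in the regime $T \ge \ell N$, I would substitute the sharper inequality $\binom{T+N}{N} \le (e(T+N)/N)^N$ to obtain $|\mathscr{I}(T)| \le (2e(T+N)/N)^N$. Writing $x := (T+N)/N \ge \ell + 1$, taking logarithms, and normalizing by $N+T = xN$ produces $\log|\mathscr{I}(T)|/(N+T) \le \log(2ex)/x$. Since $\log(2ex)/x \to 0$ as $x \to \infty$, the choice $c(\ell) := \sup_{x \ge \ell+1} \log(2ex)/x$ gives a function that tends to $0$ with $\ell$ and satisfies $|\mathscr{I}(T)| \le e^{c(\ell)(N+T)}$, as required. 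No real obstacle is anticipated; the lemma is purely combinatorial and both estimates above are classical binomial inequalities. The only mild subtlety is the interpretation of $\mathscr{I}$ (which is formally infinite without a constraint on $j_0$), handled by the fixing $j_0 = 0$ inherited from the boundary condition $\gamma_0 = 0$.
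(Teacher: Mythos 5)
Your proof is correct. The paper omits the proof of this lemma, and your stars-and-bars count of increment vectors $(\Delta_0,\dots,\Delta_{N-1})\in\mathbb{Z}^N$ with $\sum|\Delta_i|\le T$ — giving $|\mathscr{I}(T)|\le 2^N\binom{T+N}{N}$, then applying $\binom{T+N}{N}\le 2^{T+N}$ for the crude bound and $\binom{T+N}{N}\le (e(T+N)/N)^N$ for the refined one — is exactly the standard argument one would supply; your observation that $j_0$ must be pinned (to $0$, as follows from the use of this lemma for paths starting at $\mathbf{0}$) to make the count finite is the right reading of the statement.
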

%
Now we show that the topmost maximal path w.h.p. has total fluctuation of the order of $\frac{n}{r}$.

\begin{lemma}
\label{l:goodgradient}
Let $\Gamma$ be the topmost maximal path from $\mathbf{0}$ to $\mathbf{n}$ in $\Pi$. We have w.h.p., $$\sum_i |\Delta^{\Gamma}_i|\leq \frac{\tilde{C}n}{r}.$$
\end{lemma}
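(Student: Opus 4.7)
The plan is a Peierls-type union bound combined with moderate deviation estimates. By Theorem~\ref{t:moddevlowertail}, $\ell_\Gamma \geq 2n - C_1 n^{1/3}$ with probability at least $1-e^{-c_0}$, so it suffices to rule out, with overwhelming probability, the existence of any $\mathcal{J} \in \mathscr{I}$ with $\sum_i |\Delta_i^{\mathcal{J}}| \geq \tilde C n/r$ supporting an increasing path $\gamma$ from $\mathbf{0}$ to $\mathbf{n}$ with $\mathcal{J}(\gamma) = \mathcal{J}$ and $\ell_\gamma \geq 2n - C_1 n^{1/3}$. The target per-$\mathcal{J}$ probability need only dominate the entropy $|\mathscr{I}(T)| \leq 4^{n/r+T}$ from Lemma~\ref{l:countmaxpath}, handled dyadically in $T \geq \tilde C n/r$.

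Fix such a $\mathcal{J}$ with total fluctuation $T$. For any admissible $\gamma$, let $v_i^\gamma = (ir, \gamma_{ir}) \in J_i$ denote its crossing point of the vertical line $x=ir$, and set $\delta_i^\gamma = (v_{i+1}^\gamma)_y - (v_i^\gamma)_y - r$. The endpoint conditions $v_0^\gamma = \mathbf{0}$ and $v_{n/r}^\gamma = \mathbf{n}$ give $\sum_i \delta_i^\gamma = 0$, while $v_i^\gamma \in J_i$ forces $|\delta_i^\gamma - \Delta_i^{\mathcal{J}} r^{2/3}| < r^{2/3}$. Using $\mathbb{E} X_{v,v'} \leq 2\sqrt{A(v,v')}$ and the identity $2\sqrt{r(r+\delta)} = (2r+\delta) - \delta^2/(2r+\delta+2\sqrt{r(r+\delta)})$, together with $\sum \delta_i^\gamma = 0$ and Jensen's inequality, I get $\sum_i \mathbb{E} X_{v_i^\gamma, v_{i+1}^\gamma} \leq 2n - c \sum_i (\delta_i^\gamma)^2/r$. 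Cauchy--Schwarz then yields $\sum_i (\delta_i^\gamma)^2 \geq r^{4/3}(T-n/r)^2/(n/r)$, so for $T \geq 2n/r$ (ensured by $\tilde C \geq 2$) the mean bound improves to $\sum_i \mathbb{E} X_{v_i^\gamma, v_{i+1}^\gamma} \leq 2n - c' T^2 r^{4/3}/n$. A Chernoff bound on the independent (disjoint-strip) variables $X_{v_i^\gamma, v_{i+1}^\gamma}$, using the stretched-exponential upper tail from Theorem~\ref{t:moddevuppertail} (which yields $\log \mathbb{E}[e^{\lambda W}] \lesssim \lambda^2 \vee \lambda^3$), then produces $\mathbb{P}[\sum_i X_{v_i, v_{i+1}} \geq 2n - C_1 n^{1/3}] \leq \exp(-c'' \tilde C^3 n/r)$ for $T = \tilde C n/r$, which dominates the entropy for $\tilde C$ sufficiently large.

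The principal obstacle is that $(v_i^\gamma)$ ranges over the continuum $\prod_i J_i$ rather than a finite grid. The naive corner bound $\ell_\gamma \leq \sum_i X_{u_i^-, u_{i+1}^+}$ inflates the expected sum by $O(nr^{-1/3})$, completely swamping the quadratic penalty $T^2 r^{4/3}/n$ and destroying the Peierls argument. I would handle this by discretizing each $J_i$ at a constant-sized net and bounding the resulting per-segment perturbation of $X_{v_i, v_{i+1}}$ via Theorem~\ref{t:moddevuppertail} applied to local increments of the landscape, at a combinatorial cost absorbable into the $\tilde{C}^3$ exponent; the crucial structural point is that $\sum_i \delta_i^\gamma = 0$ (the path must return to the diagonal) pins the mean of the constrained sum down to $2n - cT^2 r^{4/3}/n$ rather than the $2n + O(nr^{-1/3})$ produced by the corner bound.
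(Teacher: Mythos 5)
Your overall strategy — a Peierls-type union bound, a quadratic penalty for deviating segments, independence of disjoint vertical strips, and entropy beaten by energy for $\tilde{C}$ large — is essentially the same as the paper's, and you are right that it works in outline. You have also correctly identified the hard part: the crossing points $v_i^\gamma$ range over the continuum $J_i$, and a naive corner bound inflates the mean by $O(nr^{-1/3})$, which swamps the penalty. The trouble is that your proposed resolution of that obstacle is not actually an argument. Theorem~\ref{t:moddevuppertail} is a fixed-pair moderate-deviation estimate; it says nothing by itself about the \emph{supremum} of $\hat{X}_{u,u'}$ (or about "local increments of the landscape") as $u$, $u'$ range over the segments $J_i$, $J_{i+1}$. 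Controlling that supremum at the $r^{1/3}$ scale with an exponential tail is exactly what Proposition~\ref{t:treesup} does, and its proof is a non-trivial multi-scale chaining construction (the tree argument of Lemma~\ref{l:treeinfbasic} and Lemma~\ref{l:treesupbasic}) carried out in \S~\ref{s:para}. Without such a supremum bound, your per-$\mathcal{J}$ Chernoff estimate applies only to a fixed discretization of $\prod_i J_i$, and you have not justified why passing from the discretization to the continuum costs only a lower-order term; the fluctuation of $X_{v_i,v_{i+1}}$ within a segment is on precisely the same $r^{1/3}$ scale as the penalty you are trying to extract.

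A minor structural difference worth noting: you derive the quadratic penalty via $\sum_i \delta_i^\gamma = 0$ together with Cauchy–Schwarz, which gives a gain of order $T^2 r^{4/3}/n$. The paper instead applies Corollary~\ref{c:deviation} segment-by-segment and then \emph{linearizes} the quadratic: for $\tilde{C}$ large (depending on $\psi$) one has $\frac{\tilde{C}}{100}r^{1/3} - \frac{((|\Delta_i|-1)\vee 0)^2}{100\psi^{3/2}}r^{1/3} \le \frac{\tilde{C}}{50}r^{1/3} - 5|\Delta_i|r^{1/3}$ for every $i$, yielding a linear gain $5Tr^{1/3}$ directly, without needing the telescoping constraint or Cauchy–Schwarz. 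Both give the correct order $n r^{-2/3}$ at $T = \tilde{C}n/r$, but the paper's version slots more cleanly against the per-segment supremum tail and the entropy bound of Lemma~\ref{l:countmaxpath}. So: correct plan, correct identification of the bottleneck, but the bottleneck — the supremum-over-parallelogram estimate — is precisely what Proposition~\ref{t:treesup} supplies, and your sketch does not replace it.
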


\begin{proof}
Observe that by Theorem \ref{t:transversal} it suffices to restrict our attention to the case $\{\sup |\Gamma_x-x|\leq n^{3/4}\}$. Let $\mathscr{I}^*\subseteq \mathscr{I}$ be the set of all sequences $\mathcal{J}(\gamma)$ corresponding to all increasing paths $\gamma$ from $\mathbf{0}$ to $\mathbf{n}$ such that $\{\sup |\gamma_x-x|\leq n^{3/4}\}$. Denote the set of all such $\gamma$ by $\mathscr{G}_*$.

For an increasing path $\gamma$ in $\mathscr{G}_*$ set $\hat{X}_{\gamma}=\ell_{\gamma}^{\Pi}-2n$. It is clear that for each increasing such $\gamma$ we have

\begin{equation}
\label{e:goodgradient1}
\hat{X}_{\gamma}\leq \sum_{i=0}^{n/r} \sup_{u\in J_i(\gamma), u'\in J_{i+1}(\gamma)} \hat{X}_{u,u'}.
\end{equation}

First observe that for all $\gamma$ in $\mathscr{G}_*$ and all $i$, the slope of the line segment joining any $u\in J_i(\gamma)$ to any $u'\in J_{i+1}(\gamma)$ is in $(\frac{2}{\psi}, \frac{\psi}{2})$, by our choice of values taken by $r$. From Corollary \ref{c:deviation} it follows that for $u\in J_i(\gamma)$ and $u'\in J_{i+1}(\gamma)$, we have since $\tilde{C}$ is sufficiently large
$$\hat{X}_{u,u'}- \tilde{X}_{u,u'}\leq \frac{\tilde{C}}{100}r^{1/3}- ((|\Delta_i|-1)\vee 0)^2\frac{r^{1/3}}{100\psi^{3/2}}\leq \frac{\tilde{C}}{50}r^{1/3}- 5|\Delta_i|r^{1/3}.$$

Now let $\mathscr{G}_{T}\subseteq \mathscr{G}_{*}$ be the set of increasing paths from $\mathbf{0}$ to $\mathbf{n}$ such that $\sum_{i}|\Delta^{\gamma}_{i}|=T$. It follows that for each $\gamma\in \mathcal{G}_{T}$, we have

\begin{equation}
\label{e:goodgradient2}
\hat{X}_{\gamma}\leq \left(\frac{\tilde{C}n}{50r} -5T\right)r^{1/3}+\sum_{i=0}^{n/r} \sup_{u\in J_i(\gamma), u'\in J_{i+1}(\gamma)} \tilde{X}_{u,u'}.
\end{equation}

Fix $T\geq \frac{\tilde{C}n}{r}$. Using the exponential tails of $\tilde{X}$ established in Proposition \ref{t:treesup}, we conclude that for some absolute constant $c>0$ we have for each $\gamma \in \mathscr{G}_{T}$,
$$\P\left(\sum_{i=0}^{n/r} \sup_{u\in J_i({\gamma}), u'\in J_{i+1}({\gamma})} \hat{X}_{u,u'} \geq 4Tr^{1/3}\right)\leq K^{n/r} e^{-4cT}\leq e^{-2cT}$$
for some constant $K>0$ where the last inequality follows because $\tilde{C}$ is sufficiently large (depending on $K$). Now using Lemma~\ref{l:countmaxpath} and taking a union bound over all $\gamma\in \mathscr{G}_{T}$, since $\tilde{C}$ is sufficiently large, we get for $T\geq \frac{\tilde{C}n}{r}$,

$$\P(\Gamma\in \mathscr{G}_{T}, \hat{X}_{\Gamma}>-\frac{T}{100}r^{1/3})\leq e^{-cT}.$$

Summing over all $T$, and noticing that $\P(\hat{X}_{\Gamma}<-\ell n^{1/3})\rightarrow 0$ as $\ell \rightarrow \infty$ completes the proof of the lemma.
\end{proof}

\begin{lemma}
\label{l:fluccond1}
Let $r\in \mathcal{R}$ be fixed. For $x\in \mathcal{X}_r$, let $A_x$ denote the event that for all $x'$ with $|x'-x|\leq \frac{r}{2}$, we have $|\Gamma_x-\Gamma_{x'}-(x-x')|\leq \frac{M}{10}r^{2/3}$. Then we have
$$\P\left[\sum_{x\in \mathcal{X}_r}\mathbf{1}_{A_x^{c}}\geq \frac{n}{10000r}\right]\leq e^{-cn/r}+o(1)$$
for some constant $c>0$.
\end{lemma}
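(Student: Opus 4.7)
The plan is to split the event $A_x^c$ into two contributions — one from a large discretized slope $\Delta_k^\Gamma$ and one from large within-cell oscillation of $\Gamma_t-t$ on $[kr,(k+1)r]$ — and handle each by a Peierls-type argument modelled on the proof of Lemma~\ref{l:goodgradient}.

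First, I would restrict attention to the event $\mathcal E_1:=\{\sum_k|\Delta_k^\Gamma|\le\tilde Cn/r\}$, whose complement has probability $e^{-cn/r}+o(1)$ by the proof of Lemma~\ref{l:goodgradient} (which produces an exponential tail once the total fluctuation exceeds a large multiple of $n/r$). On $\mathcal E_1$ the number of \emph{slope-bad} cells, those with $|\Delta_k^\Gamma|\ge M/200$, is at most $(200\tilde C/M)(n/r)\le n/(20000r)$, using that $M$ is chosen sufficiently large compared with $\tilde C$ in the parameter ordering. For $x=(k+\tfrac12)r$ with $k$ not slope-bad, the discretized values of $\Gamma_t-t$ at $t=kr$ and $t=(k+1)r$ differ by at most $Mr^{2/3}/100$, so $A_x^c$ forces the within-cell oscillation $\sup_{x'\in[kr,(k+1)r]}|\Gamma_{x'}-x'-\Gamma_x+x|$ to exceed $Mr^{2/3}/20$.

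Second, I would bound the number of cells exhibiting such within-cell oscillation by a Peierls argument. For each discretized path $\mathcal J$ with $\sum_i|\Delta_i(\mathcal J)|\le 2\tilde Cn/r$ and each subset $S$ of size $\ell=n/(20000r)$ of cell indices, the event that $\Gamma$ is compatible with $\mathcal J$ and has within-cell oscillation $\ge Mr^{2/3}/20$ in every cell $k\in S$ implies, via the parabolic-curvature penalty of Corollary~\ref{c:deviation}, that the passage time in each such cell is below its unconstrained optimum by at least $cM^2 r^{1/3}$, yielding a total length deficit of order $\ell M^2 r^{1/3}=cM^2 n/r^{2/3}$. Plugging this into the lower-tail moderate deviation bound (Theorem~\ref{t:moddevlowertail}) for $L_n$ gives a probability at most $e^{-\hat cM^3(n/r)}$ per pair $(\mathcal J,S)$; the union bound over $\mathcal J$ (cardinality at most $e^{C'n/r}$ by Lemma~\ref{l:countmaxpath}) and over $S$ (at most $\binom{n/r}{\ell}\le e^{C''n/r}$ choices) then yields a total probability at most $e^{-\bar cn/r}$ provided $M$ is large enough compared with $\tilde C$, $C'$, $C''$.

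Combining the two steps, outside an event of probability $e^{-cn/r}+o(1)$ the total count of $x\in\mathcal X_r$ with $A_x^c$ is at most $n/(10000r)$, as claimed. The main obstacle is producing the quantitative length deficit $cM^2 r^{1/3}$ per bad cell in a form compatible with the discretization: one must show that the deficit originates from the cell-internal passage time alone, so that summing across disjoint cells yields a single large deviation bound on $L_n$ that can be inserted into Theorem~\ref{t:moddevlowertail}. This is exactly where Corollary~\ref{c:deviation} enters, in the same way as in the proof of Lemma~\ref{l:goodgradient}, with the transversal displacement $h=M/20$ replacing the longitudinal shift $|\Delta_i|$ used there.
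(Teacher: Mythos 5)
Your route differs genuinely from the paper's. The paper's proof invokes Corollary~\ref{c:transversaldiffslope} --- which already packages the transversal fluctuation bound for maximal paths between a fixed pair of points --- to conclude that the cell-local events $A_{i,\mathcal{J}}$ (all maximal paths between consecutive cell boundaries stay in a width-$O(Mr^{2/3})$ parallelogram) hold with probability $\geq 1-\epsilon(M)$ and are \emph{independent} in $i$, so a Chernoff bound plus a union over $\mathcal{J}$ (Lemmas~\ref{l:countmaxpath} and~\ref{l:goodgradient}) finishes. You instead try to push the Peierls argument of Lemma~\ref{l:goodgradient} one level down: penalize within-cell excursions via curvature, aggregate the shortfall into a global lower-tail event for $L_n$, and union over both $\mathcal{J}$ and the set $S$ of oscillating cells. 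That is a plausible alternative, but it pays an extra combinatorial factor $\binom{n/r}{n/(20000r)}$ that the paper's independence argument avoids, and it effectively re-derives a weak form of Corollary~\ref{c:transversaldiffslope} (which itself is built on the same penalty-plus-moderate-deviation machinery via Theorem~\ref{t:transversal}).

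There is, however, a genuine gap in the key step. You write that on the oscillation event ``the passage time in each such cell is below its unconstrained optimum by at least $cM^2 r^{1/3}$'' and then plug the resulting deterministic shortfall into Theorem~\ref{t:moddevlowertail}. This is not correct as stated: since $\Gamma$ is the global maximal path, its restriction between $(kr,\Gamma_{kr})$ and $((k+1)r,\Gamma_{(k+1)r})$ \emph{is} the cell-local maximum, so there is no realized deficit at all. What the oscillation event actually buys you is the identity $X_{u_k,u_{k+1}}={}^{\mathrm{far}}X_{u_k,u_{k+1}}$, where the right-hand side is the maximum over paths constrained to visit the far region, and this has \emph{expectation} at most $\E X_{u_k,u_{k+1}}-cM^2r^{1/3}$ --- by Lemma~\ref{l:penaltyproper}, not by Corollary~\ref{c:deviation}, which penalizes a slope mismatch of the endpoints rather than a go-far-and-return excursion. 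The realized ${}^{\mathrm{far}}X$ still carries order-$r^{1/3}$ fluctuations which must be controlled separately via Proposition~\ref{t:treesup}-type sup bounds, exactly as the $\sum_i\sup\tilde X_{u,u'}$ term is handled in the proof of Lemma~\ref{l:goodgradient}; only after that split (expectation deficit plus fluctuation term, the latter shown to be small with high probability) can you feed the shortfall into Theorem~\ref{t:moddevlowertail}. Your closing paragraph gestures at this as the main obstacle, but the mechanism you propose does not supply either the correct penalty lemma or the sup-fluctuation control; both are needed before the argument closes.
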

\begin{proof}
Let us fix $\mathcal{J}\in \mathscr{I}_*$ such that there exists an increasing path $\gamma \in \cup_{\ell \leq \tilde{C}n/r}\mathscr{G}_{\ell}$ with $\mathcal{J}(\gamma)=\mathcal{J}$ such that $\sup_x|\gamma_x-x|\leq n^{3/4}$. Choose $M$ sufficiently large so that $M\geq 10^6\tilde{C}$. From Markov's inequality it follows that $$\#\{i:|j_{i+1}-j_{i}|\geq \frac{M}{50}\}\leq \frac{n}{20000r}.$$
For $u\in J_i= ir\times (ir+j_ir^{2/3},ir+(j_i+1)r^{2/3})$ and $u'\in J_{i+1}= (i+1)r\times ((i+1)r+j_{i+1}r^{2/3}, i(r+1)+(j_{i+1}+1)r^{2/3})$ we say $F_{u,u'}$ holds if all the maximal paths between $u$ and $u'$ are contained in $\mathcal{P}((i+\frac{1}{2})r,r,(j_{i}-\frac{M}{50})r^{2/3}, (j_{i+1}+\frac{M}{50})r^{2/3})$. Define
$$A_{i,\mathcal{J}}=\bigcap_{u\in J_i,u'\in J_{i+1}}F_{u,u'}.$$
It follows from Corollary \ref{c:transversaldiffslope} that $\P[A_{i,\mathcal{J}}]\geq 1-\epsilon(M)$ where $\epsilon(M)$ can be made arbitrarily small by taking $M$ sufficiently large. Since these are independent events for different values of $i$ it follows that $\P[\sum_{i}\mathbf{1}_{A_{i,\mathcal{J}}^{c}}\geq \frac{n}{20000r}]\leq 10^{-\tilde{C}n/r}$ with $M$ sufficiently large.
Now notice that if $\sum_{i}\mathbf{1}_{A_{i,\mathcal{J}}^{c}}\leq \frac{n}{20000r}$, then we have $\sum_{x\in \mathcal{X}_r}\mathbf{1}_{A_x^{c}}\leq \frac{n}{10000r}$. The lemma now follows by taking $M$ sufficiently large, taking a union bound over $\mathcal{J}$ and using Lemma \ref{l:goodgradient} and Lemma \ref{l:countmaxpath}. 
\end{proof}

Next we want to prove similar results but instead of the central column of the butterfly at a location $x$, we are now concerned with the wings. Since the wings are not disjoint we need to adapt the arguments using some standard dependent percolation techniques. We want to prove the following.

\begin{lemma}
\label{l:fluccond2}
Let $r\in \mathcal{R}$ be fixed. For $x\in \mathcal{X}_r$, let $C_x$ denote the event that for all $x'$ with $|x'-x|\leq (1/2+L^{3/2})r$, we have $|\Gamma_x-\Gamma_{x'}-(x-x')|\leq \frac{1}{10}L^{11/10}r^{2/3}$. Then we have
$$\P\left[\sum_{x\in \mathcal{X}_r}\mathbf{1}_{C_x^{c}}\geq \frac{n}{10000r}\right]\leq e^{-cn/L^{3/2}r}+o(1)$$
for some constant $c>0$.
\end{lemma}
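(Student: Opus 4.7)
The plan is to adapt the strategy of Lemma \ref{l:fluccond1} to the wing-sized intervals, with the additional complication that the wings at neighbouring $x \in \mathcal{X}_r$ span $x$-intervals of length $L^{3/2} r$ and therefore overlap. This forces us to partition $\mathcal{X}_r$ into $K = O(L^{3/2})$ subfamilies with pairwise-disjoint wing neighbourhoods and to absorb a factor of $K$ in the exponential bound.

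First, by Theorem \ref{t:transversal} and Lemma \ref{l:goodgradient} we may restrict to the event that $\sup_x |\Gamma_x - x| \leq n^{3/4}$ and $\sum_i |\Delta_i^\Gamma| \leq \tilde{C} n/r$, and condition on the discretization $\mathcal{J} = \mathcal{J}(\Gamma) \in \mathscr{I}(\tilde C n/r)$. For a fixed such $\mathcal{J}$, writing $x = (k + \tfrac{1}{2}) r \in \mathcal{X}_r$ and $k_\pm = k \pm \lceil L^{3/2}/2 \rceil \pm 1$ so that $[k_- r, k_+ r] \supseteq [x - (\tfrac{1}{2} + L^{3/2}) r, x + (\tfrac{1}{2} + L^{3/2}) r]$, let $B_{x, \mathcal{J}}$ be the event that every maximal increasing path between any $u \in J_{k_-}$ and $u' \in J_{k_+}$ is contained in the tilted parallelogram of half-height $\tfrac{1}{40} L^{11/10} r^{2/3}$ about the line joining the midpoints of $J_{k_-}$ and $J_{k_+}$. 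If $\mathcal{J}(\Gamma) = \mathcal{J}$ and $B_{x, \mathcal{J}}$ holds, then the sub-segment of $\Gamma$ between $J_{k_-}$ and $J_{k_+}$ is itself a maximal path between its endpoints and hence lies inside that parallelogram; combined with the $r^{2/3}$-width of the discretization this implies $C_x$.

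Crucially, $B_{x, \mathcal{J}}$ depends only on the Poisson configuration in the strip $[k_- r, k_+ r] \times \R$, so if we partition $\mathcal{X}_r$ into $K = \lceil 2 L^{3/2} + 2 \rceil$ subfamilies whose elements are separated by at least $(k_+ - k_-) r$, the events $\{B_{x, \mathcal{J}}\}_{x \in \mathcal{X}_r^{(j)}}$ are independent within each subfamily. Applying Corollary \ref{c:transversaldiffslope} to any such pair $(u, u')$, whose slope lies in $(2/\psi, \psi/2)$ by the total-fluctuation bound on $\mathcal{J}$ and whose $x$-distance is $(k_+ - k_-) r \asymp L^{3/2} r$, and noting that the typical transversal fluctuation $(L^{3/2} r)^{2/3} = L r^{2/3}$ is a factor $L^{1/10}$ smaller than the demanded envelope $\tfrac{1}{40} L^{11/10} r^{2/3}$, we obtain $\P[B_{x, \mathcal{J}}^c] \leq \varepsilon(L)$ with $\varepsilon(L) \to 0$ as $L \to \infty$. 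A standard binomial tail bound inside each subfamily then yields
\[
\P\Bigl[\sum_{x \in \mathcal{X}_r^{(j)}} \mathbf{1}_{B_{x, \mathcal{J}}^c} \geq \tfrac{n}{10000 K r}\Bigr] \leq \exp\!\Bigl(-c \tfrac{n}{K r}\Bigr),
\]
and summing over $j = 1, \dots, K$ gives $\P[\sum_{x \in \mathcal{X}_r} \mathbf{1}_{B_{x, \mathcal{J}}^c} \geq \tfrac{n}{10000 r}] \leq K \exp(-c n/(K r)) \leq \exp(-c' n/(L^{3/2} r))$.

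Finally, by Lemma \ref{l:countmaxpath} there are at most $4^{(1 + \tilde C) n/r}$ discretizations in $\mathscr{I}(\tilde C n/r)$; taking a union bound over $\mathcal{J}$ and using the implication $B_{x, \mathcal{J}} \Rightarrow C_x$ on $\{\mathcal{J}(\Gamma) = \mathcal{J}\}$ yields
\[
\P\Bigl[\sum_{x \in \mathcal{X}_r} \mathbf{1}_{C_x^c} \geq \tfrac{n}{10000 r}\Bigr] \leq 4^{(1+\tilde C) n/r} \exp\!\Bigl(-c' \tfrac{n}{L^{3/2} r}\Bigr) + o(1) \leq e^{-c n/(L^{3/2} r)} + o(1),
\]
provided $L$ is taken large enough (depending on $\tilde C$) that $c'/L^{3/2}$ dominates $(1 + \tilde C) \log 4$, which is consistent with the parameter order in the paper (where $L$ is fixed after $\tilde C$ and $M$). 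The main obstacle is the three-way balance of scales: the tail $\varepsilon(L)$ must be small enough (i) to give a non-trivial binomial rate per subfamily, (ii) to survive the factor $K \sim L^{3/2}$ from the overlap-breaking partition, and (iii) to absorb the entropy $4^{(1+\tilde C) n/r}$ from the union bound over $\mathcal{J}$ — all three constraints reduce to choosing $L$ sufficiently large.
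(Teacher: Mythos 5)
Your strategy of blocking into $K=O(L^{3/2})$ subfamilies to restore independence of the wing-scale events is sound, and your use of Corollary~\ref{c:transversaldiffslope} with $k\asymp L^{1/10}$ to get $\varepsilon(L)=e^{-cL^{1/10}}$ is correct. But there is a genuine gap in the final union bound over $\mathcal{J}\in\mathscr{I}(\tilde C n/r)$, and the attempted repair (``take $L$ large enough that $c'/L^{3/2}$ dominates $(1+\tilde C)\log 4$'') goes in the wrong direction.

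Let us track the exponents carefully. The entropy of the fine (scale-$r$) discretization is $e^{(1+\tilde C)(\log 4)n/r}$, which is $\Theta(n/r)$ in the exponent with a constant that is \emph{fixed once $\tilde C$ is fixed} and does not depend on $L$. On the other hand, your per-discretization bound comes from a Chernoff bound within each of the $K\asymp L^{3/2}$ independent subfamilies: with $N\asymp n/(Kr)$ trials per subfamily, $m\asymp n/(Kr)$ demanded failures and failure probability $\varepsilon(L)$, the rate is $\asymp m\log(1/\varepsilon(L)) \asymp \frac{\log(1/\varepsilon(L))}{K}\cdot\frac{n}{r}$. Since $\log(1/\varepsilon(L))\asymp L^{1/10}$ from Corollary~\ref{c:transversaldiffslope} (a factor $L^{1/10}$ more than the natural $Lr^{2/3}$ transversal scale is all the geometry of the wings gives you), and $K\asymp L^{3/2}$, the per-discretization rate is $\asymp L^{-7/5}\cdot \frac{n}{r}$. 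This \emph{decreases} to zero as $L\to\infty$, so it cannot dominate the $\tilde C$-dependent entropy for $L$ large. Conversely, choosing $L$ small to make $L^{-7/5}$ large is not available because you also need $\varepsilon(L)$ small for the Chernoff bound to be nontrivial. The two constraints are incompatible.

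The paper escapes this trap by coarsening the discretization to the wing scale \emph{before} taking the union bound. Lemma~\ref{l:wingfluc1} introduces strips of width $L^{3/2}r$ (and height $Lr^{2/3}$), so the entropy becomes only $4^{(1+\tilde C)n/(L^{3/2}r)}$ --- smaller than your entropy by exactly the factor $L^{3/2}$ that killed the bound. Lemma~\ref{l:fluccondwing2} then demands a constant fraction of the $n/(L^{3/2}r)$ coarse blocks to fail, so the Chernoff rate is $\asymp \log(1/\varepsilon(L))\cdot \frac{n}{L^{3/2}r}$; since the entropy and the Chernoff rate are now both $\Theta(n/(L^{3/2}r))$ with the latter having a coefficient $\log(1/\varepsilon(L))\to\infty$ as $L\to\infty$, the union bound closes (and only $3$ subfamilies, not $L^{3/2}$, are needed since the coarse blocks already have the right width). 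Your argument can be salvaged by replacing the conditioning on $\mathcal{J}(\Gamma)\in\mathscr{I}(\tilde C n/r)$ with conditioning on the coarse $\tilde{\mathcal{J}}\in\tilde{\mathscr{I}}(\tilde C n/(L^{3/2}r))$ and defining your $B_{x,\cdot}$ events in terms of coarse endpoints $\tilde J_i,\tilde J_{i+3}$; the rest of your reasoning (the Chernoff bound and the implication $B\Rightarrow C_x$) then goes through essentially as you wrote it.
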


We first need the following lemma, where we are doing a different discretization of increasing paths into strips of width $L^{3/2}r$.

\begin{lemma}
\label{l:wingfluc1}
Let $\gamma$ be an increasing path from $\mathbf{0}$ to $\mathbf{n}$. For a fixed $r$ and for $i\in \{0,1,\ldots, \frac{n}{L^{3/2}r}\}$, let us define $Y^{\gamma}(i)=\lfloor \frac{\gamma_{irL^{3/2}}-irL^{3/2}}{Lr^{2/3}} \rfloor$. Let $\tilde{J}^{\gamma}_i$ denote the line segment
\[
\tilde{J}^{\gamma}_i=\{(iL^{3/2}r,y'):Y^{\gamma}(i)Lr^{2/3}\leq y'-irL^{3/2}\leq (Y(i)+1)Lr^{2/3}\}.
\]
We define $\tilde{\Delta}^{\gamma}_i= Y^{\gamma}(i+1)-Y^{\gamma}(i)$. Let $\tilde{\mathscr{I}}(T)$ denote the set of sequences of line segments $\{\tilde{J}_i\}_{0\leq i \leq \frac{n}{L^{3/2}r}}$ such that  $\sum_{i}|\tilde{\Delta} _i|\leq T\}$. Then $|\tilde{\mathscr{I}}(T)|\leq 4^{\frac{n}{L^{3/2}r}+T}$. Also let $\Gamma$ be the topmost maximal path from $\mathbf{0}$ to $\mathbf{n}$. Then with high probability, $\sum_i |\Delta^{\Gamma}_i|\leq \frac{\tilde{C}n}{L^{3/2}r}$.
\end{lemma}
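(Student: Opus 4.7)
The proposal is to repeat, mutatis mutandis, the two arguments already used for Lemma \ref{l:countmaxpath} and Lemma \ref{l:goodgradient}, but now at the coarser scale $L^{3/2}r$ with vertical discretization unit $Lr^{2/3}$ (note that $(L^{3/2}r)^{2/3}=Lr^{2/3}$, so the new scales are KPZ-consistent).

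For the combinatorial bound on $|\tilde{\mathscr{I}}(T)|$, let $N=n/(L^{3/2}r)$. A sequence in $\tilde{\mathscr{I}}$ is determined by the starting index $Y(0)$ (whose range is trivially controlled using that paths end at $\mathbf{n}$, so only $O(1)$ choices after accounting for the sum constraint) and the $N$ signed integer increments $\tilde{\Delta}_i$. The number of integer vectors $(\tilde{\Delta}_0,\ldots,\tilde{\Delta}_{N-1})$ with $\sum|\tilde{\Delta}_i|\le T$ is bounded by choosing signs ($2^N$) and then distributing absolute values ($\le 2^{N+T}$ compositions), giving the stated $4^{N+T}$ bound. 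This is the identical counting argument that would prove Lemma \ref{l:countmaxpath}.

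For the second claim, I will follow the scheme of Lemma \ref{l:goodgradient} step for step. First, by Theorem \ref{t:transversal} it suffices to restrict to $\gamma\in\mathscr{G}_*:=\{\sup_x|\gamma_x-x|\le n^{3/4}\}$, so in particular every pair $u\in\tilde{J}_i^\gamma,u'\in\tilde{J}_{i+1}^\gamma$ has slope in $(2/\psi,\psi/2)$. By superadditivity along the discretization,
\[
\hat{X}_\gamma\ \le\ \sum_{i=0}^{N-1}\sup_{u\in\tilde{J}_i,u'\in\tilde{J}_{i+1}}\hat{X}_{u,u'}.
\]
Applying Corollary \ref{c:deviation} at the scale $L^{3/2}r$ gives
\[
\hat{X}_{u,u'}-\tilde{X}_{u,u'}\ \le\ \tfrac{\tilde{C}}{100}(L^{3/2}r)^{1/3}-c\,((|\tilde{\Delta}_i|-1)\vee0)^2(L^{3/2}r)^{1/3},
\]
so over all $i$ the deterministic contribution to $\hat{X}_\gamma$ is at most $\tfrac{\tilde{C}N}{50}(L^{3/2}r)^{1/3}-5T(L^{3/2}r)^{1/3}$ for any $\gamma$ in the set $\mathscr{G}_T$ of admissible paths with total fluctuation exactly $T$.

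Finally, for fixed discretized sequence $\mathcal{J}\in\tilde{\mathscr{I}}(T)$, the sum $\sum_i\sup_{u,u'}\tilde{X}_{u,u'}$ is a sum of independent random variables (the bodies $\tilde{J}_i\times\tilde{J}_{i+1}$ are disjoint in the $x$-coordinate), each having a uniform exponential upper tail by Proposition \ref{t:treesup}; a Chernoff bound yields $\mathbb{P}[\sum\ge 4T(L^{3/2}r)^{1/3}]\le K^N e^{-4cT}\le e^{-2cT}$ when $T\ge\tilde{C}N$, since $\tilde{C}$ is chosen large enough to absorb $K^N$. A union bound over the $4^{N+T}$ sequences in $\tilde{\mathscr{I}}(T)$ gives $e^{-cT}$, and summing over $T\ge\tilde{C}N$ combined with the known lower-tail control $\mathbb{P}[\hat{X}_\Gamma<-\ell n^{1/3}]\to0$ closes the argument, exactly as in Lemma \ref{l:goodgradient}.

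The only real checkpoint is ensuring all the tail inputs (moderate deviations from \S\ref{s:moddev}, the quadratic slope penalty of Corollary \ref{c:deviation}, the sup bound of Proposition \ref{t:treesup}) remain applicable when the rectangle aspect ratios use $L^{3/2}r$ horizontally and $Lr^{2/3}$ vertically; since these are the KPZ-compatible scales and the slope ratio between adjacent segments stays in $(2/\psi,\psi/2)$ on $\mathscr{G}_*$, the same constants go through and no new obstruction arises.
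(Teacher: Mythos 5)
Your proposal is correct and is exactly the route the paper takes: the paper's proof of this lemma consists of a single sentence deferring to the proofs of Lemma~\ref{l:countmaxpath} and Lemma~\ref{l:goodgradient}, and you have simply unpacked that identical argument at the coarser, KPZ-consistent scales $L^{3/2}r$ horizontally and $Lr^{2/3}=(L^{3/2}r)^{2/3}$ vertically, with the counting, superadditivity, quadratic-penalty, Chernoff, and union-bound steps carrying over verbatim.
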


\begin{proof}
The proof of this lemma is identical to the proofs of Lemma \ref{l:countmaxpath} and Lemma \ref{l:goodgradient} and we omit the proof.
\end{proof}

\begin{lemma}
\label{l:fluccondwing2}
Assume the set-up of Lemma \ref{l:wingfluc1}. Fix $\tilde{\mathcal{J}}=\{\tilde{J}_i\}\in \tilde{\mathscr{I}}(\tilde{C})$ with $\tilde{J}_i= x_i\times (x_i+y_i, x_i+y_i+Lr^{2/3})$. For a fixed $i$, consider the parallelogram $U_i$ whose corners are $(x_i,x_i+y_i-L^{21/20}r^{2/3})$, $(x_i,x_i+y_i+L^{21/20}r^{2/3})$, $(x_{i+3},x_{i+3}+y_{i+3}-L^{21/20}r^{2/3})$, $(x_{i+3},x_{i+3}+y_{i+3}+L^{21/20}r^{2/3})$. Call $i$  `bad' if at least one of the  following two conditions fail to hold.
\begin{enumerate}
\item[(i)] $\tilde{\Delta}_i+\tilde{\Delta}_{i+1}+\tilde{\Delta}_{i+2}\leq 10^6\tilde{C}$.
\item[(ii)] For all $u\in \tilde{J}_i$ and for all $u'\in \tilde{J}_{i+3}$, all the maximal paths from $u$ to $u'$ is contained in $U_i$ (call this event $\mathcal{D}_i$).
\end{enumerate}
Then we have $$\P[\#\{i:i~\text{is bad}\}\geq \frac{n}{20000Lr^{3/2}}]\leq 10^{-\tilde{C}n/L^{3/2}r}e^{-cn/L^{3/2}r}$$ for some constant $c>0$.
\end{lemma}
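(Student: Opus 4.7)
The plan is to decouple the two parts of ``badness'': condition (i) is a deterministic constraint on the fixed sequence $\tilde{\mathcal{J}}$, while condition (ii) is a Poisson-process event that depends only on points inside a vertical strip of width $3L^{3/2}r$, hence decouples across disjoint such strips. For condition (i), using that $\sum_i |\tilde{\Delta}_i| \leq \tilde{C}n/(L^{3/2}r)$ (from Lemma \ref{l:wingfluc1} and the choice of $\tilde{\mathcal{J}}$), the triple-sum bound $\sum_i (|\tilde{\Delta}_i| + |\tilde{\Delta}_{i+1}| + |\tilde{\Delta}_{i+2}|) \leq 3 \tilde{C}n/(L^{3/2}r)$ together with a Markov count shows that the number of indices for which (i) fails is at most $3n/(10^6 L^{3/2}r)$, which is much smaller than $n/(40000 L^{3/2}r)$.

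For condition (ii), the key geometric observation is that any monotone path from $u \in \tilde{J}_i$ to $u' \in \tilde{J}_{i+3}$ is trapped in the vertical strip $[x_i, x_{i+3}] \times \R$, so $\mathcal{D}_i$ depends only on the restriction of $\Pi$ to this strip. Partitioning indices into residue classes modulo $3$ therefore makes $\{\mathcal{D}_i\}_{i \equiv j \pmod 3}$ a family of mutually independent events within each class $j \in \{0,1,2\}$. For each fixed $i$ at which condition (i) holds, the line from $\tilde{J}_i$ to $\tilde{J}_{i+3}$ has slope within a bounded range of $1$, so the on-scale transversal fluctuations between $\tilde{J}_i$ and $\tilde{J}_{i+3}$ are of order $(3L^{3/2}r)^{2/3} = O(Lr^{2/3})$; exiting $U_i$ requires transversal excursions of size $L^{21/20}r^{2/3}$, a factor $L^{1/20}$ larger than typical. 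By Corollary \ref{c:transversaldiffslope} this gives $\P[\mathcal{D}_i^c] \leq \kappa(L)$ with $\kappa(L) \to 0$ as $L \to \infty$.

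It remains to combine these ingredients. Let $I$ be the set of $i$ at which (i) holds, and set $I_j = \{i \in I : i \equiv j \pmod 3\}$ for $j \in \{0,1,2\}$; each $|I_j|$ is at most $n/(3L^{3/2}r)$. Within each $I_j$ the indicators $\mathbf{1}_{\mathcal{D}_i^c}$ are independent Bernoullis with parameter at most $\kappa(L)$, so a Chernoff bound yields
\[
\P\Bigl[\sum_{i \in I_j} \mathbf{1}_{\mathcal{D}_i^c} \geq \tfrac{n}{120000 L^{3/2}r}\Bigr] \leq \exp\bigl(-C_0 n/(L^{3/2}r)\bigr)
\]
for any prescribed constant $C_0$, provided $L$ is large enough to make $\kappa(L)$ correspondingly small. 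Summing over the three residue classes and adding the deterministic contribution of (i) shows that the number of bad $i$ stays below $n/(20000 L^{3/2}r)$ except with probability at most $3\exp(-C_0 n/(L^{3/2}r))$, and by choosing $L$ sufficiently large this dominates the claimed bound $10^{-\tilde{C}n/(L^{3/2}r)} e^{-cn/(L^{3/2}r)}$. The only nontrivial input is the single-index transversal-fluctuation tail from Corollary \ref{c:transversaldiffslope}; everything else is deterministic counting combined with independence-across-disjoint-strips and a Chernoff tail bound, and I expect no serious obstacle beyond booking the constants carefully.
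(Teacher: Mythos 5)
Your proof is correct and follows essentially the same route as the paper: Markov's inequality gives the deterministic bound on indices failing (i), Corollary~\ref{c:transversaldiffslope} bounds $\P[\mathcal{D}_i^c]$ by a quantity vanishing as $L\to\infty$, and the mod-3 residue-class decomposition restores independence so a Chernoff/large-deviation bound can be applied and then summed over the three classes. The one place you are slightly more careful than the paper's terse write-up is in explicitly restricting the transversal-fluctuation estimate to indices where (i) already holds (so the slope constraint in Corollary~\ref{c:transversaldiffslope} is satisfied); that is the right way to read the argument.
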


\begin{proof}
Since $\tilde{\mathcal{J}}\in \tilde{\mathscr{I}}(\tilde{C})$, by Markov's inequality it follows that deterministically  $$\#\{i:i~\text{is bad for failing $(i)$}\}\leq \frac{n}{50000L^{3/2}r}.$$ Also notice that it follows from Corollary \ref{c:transversaldiffslope} that  $\P[\mathcal{D}_i]\geq 1-\epsilon(L)$, where $\epsilon(L)$ can be made arbitrarily small by taking $L$ sufficiently large. Also notice that for each fixed $k\in \Z/3\Z$, the family of events $\{D_{3j+k}\}$ are independent. A large deviation bound followed by a union bound then shows that
$$\P\left[\#\{i:i~\text{is bad for failing $(ii)$}\}\geq \frac{n}{50000L^{3/2}r}\right] \leq 10^{-\tilde{C}n/L^{3/2}r}e^{-cn/L^{3/2}r}$$
since $L$ is sufficiently large, which completes the proof of the lemma.
\end{proof}

\begin{proof}[Proof of Lemma \ref{l:fluccond2}]
Assume the set up of Lemma \ref{l:wingfluc1} and Lemma \ref{l:fluccondwing2}. It is clear that if $\{\tilde{J}_i\}=\{J_i^{\Gamma}\}$ and $i$ is good (i.e., $i$ is not bad), then we have the following. for each $x\in \mathcal{X}_r$ with $x_{i+1}\leq x \leq x_{i+2}$, we have that $C_x$ holds. The lemma now follows from Lemma \ref{l:wingfluc1}, Lemma \ref{l:fluccondwing2} and a union bound over $\tilde{\mathscr{I}}(\tilde{C})$.
\end{proof}

\section{Probability bounds for $G_x$}
\label{s:gx}

Let $r\in \mathcal{R}$ be fixed. In this section, our task is to prove that for a large fraction of $x\in \mathcal{X}_r$, $\P(G_x)$ is close to $1$.
We shall prove the following theorem.

\begin{theorem}
\label{t:gxeverywhere}
For all $n$ sufficiently large we have
$$\P[\#\{x\in \mathcal{X}_r: G_x~\text{does not hold}\}\geq \frac{1}{1000}|\mathcal{X}_r|]\leq 10^{-3}.$$
\end{theorem}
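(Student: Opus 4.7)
The plan is to establish the stronger bound $\mathbb{E}[\#\{x \in \mathcal{X}_r : G_x^c\}] \leq 10^{-6}|\mathcal{X}_r|$ and then apply Markov's inequality to conclude the theorem. Decomposing $G_x = G_x^{\rm loc} \cap G_x^w \cap G_x^{\rm rs} \cap G_x^{\rm a} \cap G_x^{f}$, I would bound each sub-event's contribution separately, treating the dependence on $\Gamma$ via $y^* = y(x,\Gamma)$ by a union bound over the values $y^* \in r^{2/3}\mathbb{Z}$.

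The fluctuation condition $G_x^{f}$ is the easiest component: the bound $|\Gamma_x - x| \leq Cn^{2/3}$ follows uniformly in $x$ from the transversal fluctuation estimate (Theorem \ref{t:transversal}) with $C$ sufficiently large, while the two path-regularity conditions at scales $r$ and $(1/2+L^{3/2})r$ are precisely Lemmas \ref{l:fluccond1} and \ref{l:fluccond2}, which directly bound the number of bad $x$ with overwhelming probability. The modified Condition~1 of $G_x^{\rm loc}$, which is a length estimate along $\Gamma$, is handled by the same Peierls-type argument driving Section \ref{s:maxpathnice}: discretize $\Gamma$ at a finer scale, apply the moderate deviation bounds (Theorems \ref{t:moddevuppertail}, \ref{t:moddevlowertail}) segment by segment, and count discretized paths using Lemma \ref{l:countmaxpath} together with Lemma \ref{l:goodgradient}.

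For the remaining ``deterministic butterfly'' sub-events --- Conditions 2--6 of $G_{x,y}^{\rm loc}$, the wing condition $G_{x,y}^w$, the area condition $G_{x,y}^{\rm a}$ and the resampling condition $G_{x,y}^{\rm rs}$ --- for fixed $(x,y)$ these depend only on the Poisson configuration inside $\mathbb{B}(x,y,r)$. For each, I would discretize the relevant endpoint set $\mathcal{S}(U)$ into polynomially many boxes and apply the moderate deviation bounds to each; with $C$ chosen sufficiently large the failure probability is super-polynomially small in $r^{1/3}$, and this easily beats the discretization factor. Since $r \in \mathcal{R}$ forces $(n/r)^{2/3} \leq (\log n)^{20/3}$, an additional union bound over $y \in r^{2/3}\mathbb{Z}$ with $|y| \leq Cn^{2/3}$ (permitted on the event of $G_x^{f}$) contributes only a $\mathrm{polylog}(n)$ factor that the moderate-deviation tail absorbs. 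The wall modification $B_i \subseteq B_i^*$ in Condition~4 of $G_x^{\rm loc}$ only weakens an upper bound on $\hat X$, so it is harmless.

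The main obstacle is the resampling condition $G_{x,y}^{\rm rs}$, which is itself a conditional probability statement $\P[\max_i \Delta_i \leq \delta r^{1/3}/2 \mid \Pi] \geq 1 - e^{-C/\varepsilon^{1/4}}$. To verify it I would first bound the unconditional quantity $\P(\max_i \Delta_i > \delta r^{1/3}/2)$ and then invoke Markov's inequality in $\Pi$ to upgrade to the conditional bound. For each $i$, $\Delta_i$ is controlled by the longest-path statistics inside the $(\varepsilon r) \times (\varepsilon r)^{2/3}$ box $\tilde D_i$, and Theorems \ref{t:moddevuppertail}, \ref{t:moddevlowertail} give tails of order $e^{-c\varepsilon^{-1/2}}$; a union bound over the $O(\varepsilon^{-5/3})$ indices $i$ yields the required estimate because $\varepsilon$ is small. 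The area condition $G_{x,y}^{\rm a}$ is similar in spirit: any path of length $\geq \alpha' r$ with area below $\alpha' \eta r$ forces its Poisson points to concentrate inside a thin slab, and a direct discretization over admissible $(u,u')$ combined with Poisson concentration suppresses this uniformly. Summing all these contributions gives $\P(G_x^c) \leq 10^{-6}$ for every $x \in \mathcal{X}_r$, and Markov's inequality then completes the proof.
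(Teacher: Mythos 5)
Your approach of bounding $\E[\#\{x:G_x^c\}]$ via a per-$x$ estimate $\P(G_x^c)\leq 10^{-6}$ has a genuine gap in how you control the random shift $y^*=y(x,\Gamma)$. The single-location, single-$y$ events $G_{x,y}^{\rm loc}$, $G_{x,y}^{\rm a}$, $G_{x,y}^{\rm rs}$, $G_{x,y}^w$ hold with probability $1-\chi$ where $\chi$ is a \emph{fixed constant}: all the parameters $C$, $L$, $M$, $\varepsilon$ in the butterfly construction are chosen once and for all, independently of $n$ and $r$, so the moderate deviation bounds of Theorems~\ref{t:moddevuppertail}--\ref{t:moddevlowertail} are invoked at a fixed threshold $C$ and yield a constant tail $e^{-C_1 C^{3/2}}$, not one that decays with $r$ or $n$. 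Your proposed union bound over the $\Theta\bigl((n/r)^{2/3}\bigr)=\Theta\bigl((\log n)^{20/3}\bigr)$ admissible values of $y$ therefore gives a failure probability of order $\chi\,(\log n)^{20/3}\to\infty$, which is useless. There is no moderate-deviation tail to ``absorb'' the polylog factor, because nothing in these events improves as $n$ grows at the fixed scale $r$; your phrase ``super-polynomially small in $r^{1/3}$'' is not what Theorems~\ref{t:moddevuppertail}--\ref{t:moddevlowertail} give at threshold $C=O(1)$. (A related but secondary issue is that even conditional on $|\Gamma_x-x|\leq Cn^{2/3}$, the random index $y^*$ and the events $G_{x,y}$ are not independent, so you cannot replace the union bound by a simple averaging over $y$.)

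The paper's proof avoids a per-$x$ bound entirely. It uses a Peierls-type argument: for a fixed discretized path $\mathcal{J}$, the events $G_{x,y_{\mathcal{J}}(x)}^{\rm loc}\cap G_{x,y_{\mathcal{J}}(x)}^{\rm a}\cap G_{x,y_{\mathcal{J}}(x)}^{\rm rs}$ are \emph{independent over $x\in\mathcal{X}_r$} (each depends only on the Poisson configuration in a strip of width $r$ around $x$), so a Chernoff bound shows that the probability that more than a $10^{-4}$-fraction of the $\sim n/3r$ locations fail is at most $e^{-c(\chi)\,n/r}$, with $c(\chi)\to\infty$ as $\chi\to 0$. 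This exponential gain in $n/r$ is exactly what beats the $e^{c(\tilde C)\,n/r}$ union bound over discretized paths supplied by Lemma~\ref{l:countmaxpath} and Lemma~\ref{l:goodgradient}. The wings require the coarser $L^{3/2}r$-scale discretization and a $\Z/6\Z$ decomposition to restore independence (Proposition~\ref{p:gxgammawing}), and the fluctuation condition uses Lemmas~\ref{l:fluccond1}--\ref{l:fluccond2} — that part of your proposal is fine. The key idea you are missing is the independence-across-$x$/Chernoff/union-over-paths structure; without it, no per-$x$ bound of the kind you seek is available for constant parameters, which is also why the paper states the conclusion as a bound on the \emph{fraction} of bad $x$ rather than on each $\P(G_x^c)$ separately (Corollary~\ref{t:gxbound} is a consequence of the theorem, not an input to it).
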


We shall need the following corollary of Theorem \ref{t:gxeverywhere}.

\begin{corollary}
\label{t:gxbound}
There exist $\mathcal{X}_r^*\subseteq \mathcal{X}_r$ with $|\mathcal{X}_r^*|\geq \frac{9}{10} |\mathcal{X}_r|$ such that for all $x\in \mathcal{X}_r^{*}$ we have for all $n$ sufficiently large
$$\P(G_x)\geq \frac{95}{100}.$$
\end{corollary}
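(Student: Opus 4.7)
The plan is to derive Corollary \ref{t:gxbound} from Theorem \ref{t:gxeverywhere} by a straightforward averaging (Markov-type) argument. The theorem controls the random quantity $N_r := \#\{x \in \mathcal{X}_r : G_x \text{ fails}\}$: with probability at least $1 - 10^{-3}$ we have $N_r < \tfrac{1}{1000}|\mathcal{X}_r|$. Upper-bounding $N_r$ by $|\mathcal{X}_r|$ on the complementary event, this yields
\[
\E[N_r] \;\leq\; \tfrac{1}{1000}|\mathcal{X}_r| \;+\; 10^{-3}|\mathcal{X}_r| \;=\; \tfrac{2}{1000}|\mathcal{X}_r|.
\]
Since $\E[N_r] = \sum_{x \in \mathcal{X}_r}\P(G_x^c)$, this bounds the \emph{total} failure probability summed over locations.

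Next I would apply Markov's inequality to the deterministic sequence $\{\P(G_x^c)\}_{x \in \mathcal{X}_r}$. Setting the threshold at $5/100$ (so that the complementary set gives exactly the bound required by the corollary), the number of $x \in \mathcal{X}_r$ with $\P(G_x^c) \geq \tfrac{5}{100}$ is at most
\[
\frac{\sum_{x \in \mathcal{X}_r}\P(G_x^c)}{5/100} \;\leq\; \frac{20 \cdot 2}{1000}|\mathcal{X}_r| \;=\; \tfrac{4}{100}|\mathcal{X}_r|,
\]
which is comfortably smaller than $\tfrac{1}{10}|\mathcal{X}_r|$. Defining
\[
\mathcal{X}_r^* \;:=\; \bigl\{x \in \mathcal{X}_r : \P(G_x) \geq \tfrac{95}{100}\bigr\},
\]
we therefore get $|\mathcal{X}_r^*| \geq \tfrac{96}{100}|\mathcal{X}_r| \geq \tfrac{9}{10}|\mathcal{X}_r|$, which is what the corollary asserts.

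There is really no obstacle here: the content of the corollary is already entirely contained in Theorem \ref{t:gxeverywhere}, and the argument is just the standard observation that a high-probability bound on a sum forces all but a small fraction of the summands to be small individually. The only thing to watch is that the constants in the theorem ($\tfrac{1}{1000}$ failure fraction and $10^{-3}$ outer probability) are indeed small enough to beat both the $\tfrac{1}{20}$ threshold in $\P(G_x^c)$ and the $\tfrac{1}{10}$ allotment of exceptional locations; the computation above confirms this with room to spare, so no parameter tuning is needed.
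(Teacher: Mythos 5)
Your argument is correct and is essentially the paper's own proof: the paper derives $\sum_{x\in\mathcal{X}_r}\P(G_x)\geq\tfrac{995}{1000}|\mathcal{X}_r|$ from Theorem~\ref{t:gxeverywhere} and then applies Markov's inequality to the summands, exactly as you do. Your constants are a bit sharper ($\tfrac{2}{1000}$ versus the paper's $\tfrac{5}{1000}$ bound on $\E[N_r]$), but either slack is ample, so there is no substantive difference.
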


\begin{proof}
It follows from Theorem \ref{t:gxeverywhere} that
$$\sum_{x\in \mathcal{X}_r}\P(G_x)\geq \frac{995}{1000}|\mathcal{X}_r|.$$
Corollary \ref{t:gxbound} follows immediately.
\end{proof}

Since the condition $G_x$ has many components we will need a few steps to prove Theorem \ref{t:gxeverywhere}. The general strategy is the following. Since the conditions defining $G_{x,y}$ are all typical, we first show that for a fixed location $(x,y)$, with probability close to $1$, $G_{x,y}$ holds. Hence, by a large deviation estimate, for any increasing path $\gamma$, these events holds at most locations along $\gamma$ with exponentially small failure probability. Now by taking a union bound over all potential maximal paths (the size of this union bound is controlled by the results of the previous section) we get the result.  

For the rest of this section $\chi$ shall denote a small positive constant which can be made arbitrarily small by taking $C$ sufficiently large.

\subsection{Bounding Probabilities of $G_{x,y}$}
First we need to prove that for a fixed $x\in \mathcal{X}_r$, and $y\in r^{2/3}\Z$, $G_{x,y}$ holds with large probability. We have the following lemma.
%

\begin{lemma}
\label{l:gxylocbound}
For $x\in \mathcal{X}_r$, $y\in r^{2/3}\Z$, we have
\begin{equation}
\label{e:gxyprob1}
\P(G_{x,y}^{\rm loc})\geq 1-\chi.
\end{equation}
\end{lemma}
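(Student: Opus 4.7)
My plan is to verify each of the five conditions defining $G_{x,y}^{\rm loc}$ individually with probability at least $1-\chi/5$, and then take a union bound. Each condition asserts that some length statistic ($\tilde{X}_{u,u'}$, $\tilde{X}_{u,u'}^{\partial U}$, $\hat{X}_{u,u'}$, $\tilde{X}_{u,u'}^{F^{+}}$, etc.) stays within an $O(r^{1/3})$-window (or an $L^{1/2}r^{1/3}$-window for condition 2) of its natural centering, uniformly over pairs $(u,u')$ in a specified parallelogram region whose pairs have slope bounded away from $0$ and $\infty$. All such bounds follow from Theorems \ref{t:moddevuppertail} and \ref{t:moddevlowertail} via a discretization and union bound argument.

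Concretely, for any fixed pair $(u,u') \in \mathcal{S}(\mathfrak{C})$ with $A(u,u') \asymp r^2$, the moderate deviation estimates give $\P[|\tilde{X}_{u,u'}| \geq s r^{1/3}] \leq 2 e^{-C_1 s^{3/2}}$; choosing $s = C$ gives failure probability at most $e^{-c C^{3/2}}$. To pass to the supremum over the continuous family of pairs, I would lay down an $r^{1/3}$-net of grid points inside each parallelogram $U$, $\mathfrak{C}$, $\Lambda$, $B_i^* \cap \mathfrak{C}$, $F$; the resulting number of grid pairs is polynomial in $r$, so a union bound still leaves a failure probability of the form $\mathrm{poly}(r) \cdot e^{-c C^{3/2}} \leq \chi/10$ once $C$ is taken large enough. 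Monotonicity of $X_{u,u'}$ under enlargement of $\mathrm{Box}(u,u')$ together with the $r^{1/3}$ spacing of the net transfers each discrete bound to arbitrary pairs up to a universal additive constant, absorbable in $C$. The $L^{1/2}$-factor in condition 2 arises simply because the maximum-slope pairs in $\mathfrak{C}$ correspond to boxes of larger area, giving a correspondingly larger fluctuation scale; this is accommodated by enlarging the tolerance by the appropriate factor while keeping $s$ large.

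For the conditions involving $\hat{X}$ rather than $\tilde{X}$ (condition 4), the required upper bound $\hat{X}_{u',u} \leq C r^{1/3}$ is strictly easier: Theorem \ref{BDJ99} combined with superadditivity gives $\E X_{u,u'} \leq d(u,u')$, and the penalty estimate in Lemma \ref{l:penalty} sharpens this when the slope is away from $1$. Hence only the upper tail (Theorem \ref{t:moddevuppertail}) is needed, and the penalty in fact gives an extra margin that only improves the bound. For the boundary-avoiding statistics $\tilde{X}_{u,u'}^{\partial U}$ and $\tilde{X}_{u,u'}^D$ in conditions 1 and 3, the event that a near-maximal path touches $\partial U$ or $D$ is rare in the relevant sub-range of pairs (by transversal fluctuation and the locations of $\partial U$, $D$), so these statistics equal the unrestricted $\tilde{X}_{u,u'}$ except on an event absorbable in $\chi$. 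For the floor-avoiding statistic $\tilde{X}_{u,u'}^{F^{+}}$ in condition 5, the path from $u\in F$ to $u' \in F$ is forced to drop below $F$; decomposing it at its exit and re-entry points on $F$ and applying the standard moderate deviation estimate to each of the at most two subpaths yields the bound.

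The principal technical obstacle is keeping all the constants consistent across the five conditions and the net argument. The hierarchy of parameters set up in the preliminary section is essential: with $\psi, M, L, \alpha'$ all fixed before $C$, the exponent $C^{3/2}$ in the moderate deviation failure probability dominates every polynomial-in-$r$ factor from nets and every constant dependence introduced earlier, so a single choice of $C$ large enough makes the total failure probability at most $\chi$. No additional geometry beyond the net and the monotonicity argument is needed, since $G_{x,y}^{\rm loc}$ is defined by purely ``local, typical'' deviation bounds; the more delicate geometric events enter only through $G_{x,y}^{w}$, $G_{x,y}^{\rm a}$, $G_{x,y}^{\rm rs}$, handled separately.
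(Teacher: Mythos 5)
Your high-level plan — bound each of the five conditions defining $G_{x,y}^{\rm loc}$ separately and union-bound — matches the paper's structure, and you are right that everything ultimately reduces to the moderate deviation tails in Theorems~\ref{t:moddevuppertail} and \ref{t:moddevlowertail}. But there is a fatal gap in the way you propose to pass from a fixed pair $(u,u')$ to the supremum over a parallelogram. You place an $r^{1/3}$-spaced net in a parallelogram of dimensions roughly $r\times Mr^{2/3}$, so the number of net points is of order $r$ and the number of pairs is of order $r^2$. A single pair at deviation $Cr^{1/3}$ fails with probability about $e^{-cC^{3/2}}$, which is a \emph{constant independent of $r$}. Hence the union bound gives $r^2 e^{-cC^{3/2}}$, which is \emph{not} bounded by $\chi/10$ "once $C$ is taken large enough": for any fixed $C$, it tends to $\infty$ as $r\to\infty$, and $C$ must be chosen before $n$ (hence before $r\in\mathcal{R}$) in the parameter hierarchy of \S~\ref{s:prelim}. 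This is precisely the obstruction that the chaining/tree constructions of Propositions~\ref{t:treeinf}, \ref{t:treesup}, \ref{t:treebox} and Corollaries~\ref{c:treeinfwidth}, \ref{c:treesupwidth} are built to overcome: one uses a hierarchy of nets whose mesh shrinks geometrically toward the boundary of the parallelogram, so that the allowed deviation along each edge of the tree also shrinks geometrically, the number of edges at level $k$ grows only polynomially in the level index, and the union bound sums to a quantity $e^{-c\theta}$ that is uniform in $r$. A naive single-scale net cannot achieve this; your proof would have to re-derive something equivalent to those results, and you cannot simply absorb the $\mathrm{poly}(r)$ factor into $C$.

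A secondary issue: your treatment of the constrained statistics $\tilde{X}^{\partial U}$, $\tilde{X}^D$ and $\tilde{X}^{F^+}$ as "equal to the unrestricted $\tilde{X}_{u,u'}$ except on an event absorbable in $\chi$" is not correct for all pairs in the relevant sets. For pairs near the boundary of $U$ (Condition~1), or pairs on opposite sides of $D$ at comparable height (Condition~3), the constraint is binding, not a rare-event perturbation; the paper handles Condition~1 and Condition~5 via the constrained-path bound of Proposition~\ref{t:treebox}, and handles the $D$-avoiding part of Condition~3 by a deterministic comparison of the constrained path to a concatenation of unconstrained paths through intermediate points $u_1^*\in U_1\cap U_3$ and $u_2^*\in U_2\cap U_3$, controlled by Lemma~\ref{l:penaltysum}. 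Your observation about Condition~4 requiring only the upper tail is correct, but the paper still needs a union over the levels $j$ of line segments $L_j$ together with the parabolic-penalty gain from Lemma~\ref{l:penalty}, again to avoid a polynomial-in-$r$ loss.
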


\begin{proof}
It sufficies to prove that for a fixed $x\in \mathcal{X}_r$, $y\in r^{2/3}\Z$, each of the $5$ conditions defining $G_{x,y}^{\rm loc}$ holds with probability at least $1-\frac{\chi}{10}$. We analyse each of the conditions separately.

\textbf{Condition 1:} Let $U=\mathcal{P}(x,y-Mr^{2/3}/10,r,2Mr^{2/3}/10)$. Let $A_{x,y,1}$ denote the event that for all $(u',u'')\in \mathcal{S}(U)$ we have $|\tilde{X}_{u',u''}^{\partial U}|\leq Cr^{1/3}$. It follows from Proposition \ref{t:treesup} and Proposition \ref{t:treebox} that $\P[A_{x,y,1}^{c}]\leq \frac{\chi}{10}$ for $C$ sufficiently large.

\textbf{Condition 2:} Let $A_{x,y,2}$ denote the event that $\forall (u,u') \in \mathcal{S}(\mathfrak{C})$, we have $|\tilde{X}_{u,u'}|\leq CL^{1/2}r^{1/3}$. It follows from Corollary \ref{c:treeinfwidth} and Corollary \ref{c:treesupwidth} that $\P[A_{x,y,2}^{c}]\leq \frac{\chi}{10}$ since $C$ sufficiently large.

\textbf{Condition 3:} Let $A_{x,y,3}$ denote the event that for all $(u,u')\in \mathcal{S}(\Lambda)$ we have $|\tilde{X}_{u,u'}|\leq Cr^{1/3}$. It follows from Proposition \ref{t:treesup} and Proposition \ref{t:treeinf} that $\P[A_{x,y,3}^{c}]\leq \frac{\chi}{20}$ since $C$ sufficiently large.

Now define the following parallelograms. Let $U_1=\mathcal{P}(x-\frac{9r}{40}, y-2Mr^{2/3}, \frac{7r}{20}, 4Mr^{2/3})$, $U_2=\mathcal{P}(x+\frac{9r}{40}, y-2Mr^{2/3}, \frac{7r}{20}, 4Mr^{2/3})$, $U_3=\mathcal{P}(x, y-2Mr^{2/3}, \frac{4r}{5}, (M-\frac{1}{10})r^{2/3})$, $U_4=\mathcal{P}(x, y-Mr^{2/3}, \frac{4r}{5}, Mr^{2/3})$. Let us define the following events. For $i=1,2$, let
$$B_{x,y,i}=\{\forall (u,u')\in \mathcal{S}(U_i)~|\tilde{X}_{u,u'}|\leq \frac{Cr^{1/3}}{10}\}.$$
For $i=3,4$, let
$$B_{x,y,i}=\{\forall (u,u')\in \mathcal{S}(U_i)~|\tilde{X}^{D}_{u,u'}|\leq \frac{Cr^{1/3}}{10}\}.$$

Since $C$ sufficiently large it follows from Proposition \ref{t:treeinf}, Proposition \ref{t:treesup} and Proposition \ref{t:treebox} $\P[\cup_{i=1}^4 B_{x,y,i}^c]\leq \frac{\chi}{100}$.

Now we show that on $\cap_{i=1}^{4} B_{x,y,i}$, condition (\ref{e:gxloc4}) holds. This is illustrated in Figure \ref{f:davoiding}.

\begin{figure}[ht!]
\begin{center}
\includegraphics[width=0.3\textwidth]{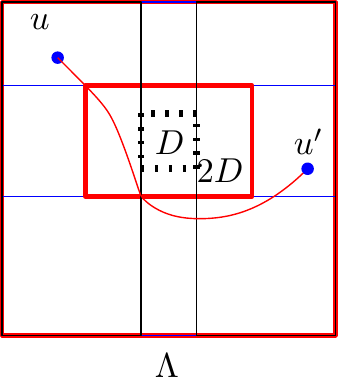}
\caption{A path from $u$ to $u'$ avoiding $D$ as in Condition 3 of the proof of Lemma \ref{l:gxylocbound}.}
\label{f:davoiding}
\end{center}
\end{figure}

To prove this let us fix $(u,u')\in \mathcal{S}(\Lambda\setminus 2D)$ satisfying the hypothesis of the condition. Without loss of generality let us assume $u\in U_1$. There are several cases depending on the position of $u$. If $u'\in U_1$, on $B_{x,y,1}$ it follows that $|\tilde{X}_{u,u'}|\leq \frac{Cr^{1/3}}{10}$. If $u'\in U_3$ (resp. $U_4$) and $u$ is also in $U_3$ (resp. $U_4$), then also it follows that on $\cap_{i=1}^{4} B_{x,y,i}$, $|\tilde{X}_{u,u'}|\leq Cr^{1/3}/10$. Next let us consider the case where $u'\in U_2$. Clearly there exists $u_1^*\in U_1\cap U_3$, $u_2^*\in U_2\cap U_3$ such that $(u,u_1^*)\in \mathcal{S}(U_1)$, $(u_1^*,u_2^*)\in \mathcal{S}(U_3)$ and $(u_2^*,u')\in \mathcal{S}(U_2)$ such that it follows using Lemma \ref{l:penaltysum} that

$$|\tilde{X}_{u,u'}^{D}-\tilde{X}_{u',u_1^*}-\tilde{X}_{u_1^*,u_2^*}^{D}-\tilde{X}_{u_2^*,u}|\leq \frac{C}{2}r^{1/3}$$
since $C$ is sufficiently large. It follows that $|\tilde{X}_{u,u'}|\leq Cr^{1/3}$ on $\cap_{i=1}^{4} B_{x,y,i}$. All other cases can be dealt with similarly and it follows that condition (\ref{e:gxloc4}) holds with probability at least $1-\frac{\chi}{20}$.

\textbf{Condition 4:} Let $A_{x,y,4}$ denote the event that $\forall u\in U=\mathcal{P}(x,y-2Mr^{2/3}, \frac{r}{5}, 2Mr^{2/3})$, $u'\in  B_2^*\cap \mathfrak{C}$ we have $\hat{X}_{u,u'}\leq Cr^{1/3}$. Clearly it suffices to show that $\P[A_{x,y,4}]\geq 1-\frac{\chi}{100}$.



Let $C_{x,y}$ denote the event that for all $u\in U,u'\in \Lambda\cap B_2^{*}$ we have $\hat{X}_{u,u'}\leq \frac{C}{2}r^{1/3}$. Clearly since $C$ is sufficiently large we have $\P[C_{x,y}^c]\leq \frac{\chi}{1000}$.  Now let us define the points $u_j=(x+2r/5, x+2r/5+y-2Mr^{2/3}-jMr^{2/3})$ for $\frac{L} {M} j\geq 0$. Let $C_{x,y,j}$ denote the event that for all $u\in 2D$ and for all $u'$ on the line segment $L_j$ joining $u_j$ and $u_{j+1}$, we have $\hat{X}_{u',u_*}\leq \frac{C}{2}r^{1/3}$. Notice that it follows from Lemma \ref{l:penalty} that since $M$ is sufficiently large we have that for all $u\in U$, and for all $u'\in L_j$, $\hat{X}_{u,u'}\leq \tilde{X}-jr^{1/3}$. Hence it follows from Proposition \ref{t:treesup} that for some constant $c>0$, we have $\P[C_{x,y,j}^{c}]\leq e^{-c(C+j)}$ since $C$ is sufficiently large. Hence it follows that $\P[(\cap_j C_{x,y,j})^c]\leq \frac{\chi}{1000}$. It now follows that $\P[A_{x,y,4}^c]\leq \frac{\chi}{100}$.

\textbf{Condition 5:} Let $A_{x,y,5}$ denote the event that $\forall u,u'\in F$ we have $|\tilde{X}_{u,u'}^{F^{+}}|\leq Cr^{1/3}$.
Using Proposition \ref{t:treebox} it follows that $\P[A_{x,y,5}^c]\leq \frac{\chi}{10}$.


Putting together all the steps above it follows that $\P(G_{x,y}^{\rm loc})\geq 1-\chi$ which completes the proof of the lemma.
\end{proof}

\begin{lemma}
\label{l:gxyareabound}
For $x\in \mathcal{X}_r$, $y\in r^{2/3}\Z$, we have for all $n$ sufficiently large
\begin{equation}
\label{e:gxyprob1area}
\P(G_{x,y}^{\rm a})\geq 1-\chi.
\end{equation}
\end{lemma}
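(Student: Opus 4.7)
The plan is to bound $\P(G_{x,y}^{\rm a} \text{ fails})$ via a Peierls-type discretization combined with the moderate deviation upper tail (Theorem~\ref{t:moddevuppertail}). The bad event requires the existence of an increasing chain of Poisson points $v_1<\cdots<v_\ell$ on a path from some $u\in U$ to some $u'\in U$ with $\ell\ge \alpha'r-1$, horizontal extent $\ge\alpha'r$, and $\sum_{j=0}^\ell \Delta x_j\Delta y_j\le \alpha'\eta r$, where $\Delta x_j,\Delta y_j$ are the coordinate increments between consecutive Poisson points on the chain (with $v_0:=u,v_{\ell+1}:=u'$).

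The key geometric observation I would use is that such a chain is forced to be \emph{almost horizontal}. With $X=\sum_k \Delta x_k$ and $A=\sum_k \Delta x_k \Delta y_k$, the weighted-average identity $\sum_j (\Delta x_j/X)\Delta y_j = A/X\le \eta$ combined with Markov's inequality implies that the ``slow'' pairs $\{j:\Delta y_j\le 2\eta\}$ cover horizontal extent at least $\alpha'r/2$. In particular, the slow Poisson points fit into a union of axis-aligned rectangular slabs of height $O(\eta)$ within the bounding box $\mathrm{Box}(u,u')$, and these slabs can be discretized efficiently on a unit grid.

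First I would carry out the probability bound for the slow part: for each candidate slab $R$ (axis-aligned, corners on a unit grid in the bounding region of $U$, with width at least $\alpha'r/2$ and height at most a small multiple of $\eta r$), Theorem~\ref{t:moddevuppertail} gives $\P[\mathrm{LIS}(R)\ge 2(1+\varepsilon_0)\sqrt{|R|}]\le e^{-Cs^{3/2}}$ with $s=\varepsilon_0|R|^{1/3}$; choosing $\varepsilon_0$ so that $2(1+\varepsilon_0)\sqrt{|R|}$ is well below $\alpha'r/2$ (possible since $|R|=O(\eta r^2)$ and $\eta$ is absolutely small), this is at most $\exp(-cr^{1/4})$. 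There are only $r^{O(1)}$ candidate slabs, so the union bound yields an exponentially small contribution. For the complementary ``fast'' portion of the chain (which covers horizontal extent at most $\alpha'r/2$), I would use a dyadic partition by $\Delta y_j$: pairs with $\Delta y_j\in(2^k\eta,2^{k+1}\eta]$ form class $k$, and the area bound forces $\sum_{\text{class }k}\Delta x_j\le \alpha'r/2^k$. This gives a class-specific rectangle to which Theorem~\ref{t:moddevuppertail} applies, and summing the resulting bounds over the $O(\log r)$ dyadic classes keeps the total contribution polynomial in $r$ times $\exp(-cr^{1/4})$, hence at most $\chi$ for $n$ large.

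The main obstacle will be the bookkeeping required to combine the slow-chain and fast-chain estimates, since slow and fast pairs may alternate along the chain and neither estimate alone directly gives $\ell<\alpha'r$. My plan is to argue by contradiction: if $G_{x,y}^{\rm a}$ fails, then either the slow Poisson points exhibit a long increasing sub-chain inside one of the discretized slow slabs, or some dyadic class of fast pairs exhibits a long increasing sub-chain inside its class-specific rectangle. In either case, one of at most $r^{O(1)}$ candidate rectangles must witness an LIS exceeding the Theorem~\ref{t:moddevuppertail} threshold, an event with total probability at most $r^{O(1)}\exp(-cr^{1/4})\le\chi$ for $n$ sufficiently large (using $r\ge n/\log^{10}n\to\infty$).
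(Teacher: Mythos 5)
Your proposal takes a genuinely different route than the paper's, but the discretization at its core has a gap. After the Markov step you know that the slow pairs ($\Delta y_j \le 2\eta$) cover horizontal extent at least $\alpha' r/2$, and each individual slow pair fits in a height-$O(\eta)$ slab; however, consecutive slow \emph{runs} of the chain can be separated by fast jumps of arbitrary vertical size, so the slow points are not contained in any single axis-aligned box of width $\ge \alpha' r/2$ and height $O(\eta r)$. They sit in a number of small slabs whose heights can range over the full bounding box, and the number of such slabs is bounded only by the total number of pairs $\ell$, which can be as large as $3r$. The union bound over such slab \emph{configurations} is therefore exponentially large in $r$ and is not beaten by the $\exp(-cr^{1/4})$ tail from Theorem~\ref{t:moddevuppertail}; moreover many of the slabs have $O(1)$ area and fall below the validity threshold $t_0$ of that theorem. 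The same objection applies to the ``class-specific rectangle'' for each dyadic fast class: those pairs also occur at scattered positions along the chain and do not lie in a fixed rectangle of small area. So the claimed dichotomy, ``either a long chain inside one discretized slow slab, or a long chain inside a class rectangle,'' does not follow from the geometry.

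The paper's proof (Lemma~\ref{l:areabound}) stays local and never invokes the LIS moderate-deviation bound. It tiles the region by a fixed grid of $10\times 10$ boxes $D_{i,j}$, shows combinatorially that a chain with at least $h$ points must place at least $h/2$ consecutive pairs in a common grid cell, and observes that if the total area is $\le \eta h$ then at most $h/10$ of those pairs can have bounding-box area $>10\eta$, so at least $2h/5$ Poisson points on $\gamma$ admit a near-collinear partner within their own cell. The count $U_{i,j}$ of Poisson points in $D_{i,j}$ admitting such a partner has exponential moment tending to $1$ as $\eta\to 0$ and is independent across cells; Markov's inequality combined with the $\le 20^h$ union bound over lattice paths $H$ and point counts $\{N_{i,j}\}$ gives $\P[A(\eta)]\le e^{-h}$, and Lemma~\ref{l:gxyareabound} then follows by a union bound over endpoint pairs. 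If you want to salvage an LIS-based route you would need a way to localize the slow and the dyadic fast pairs into a genuinely small family of rectangles, and the geometry of the chain does not provide that.
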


We first need the following lemma.

\begin{lemma}
\label{l:areabound}
Consider the rectangle $U_{h}$ whose opposite corners are $(0,0)$ and $(h, mh)$ where $0.99 \leq m \leq 1.01$. Let  $A(\eta)$ denote the event that there exists an increasing path $\gamma$ from $(0,0)$ to $(h, mh)$ such that $\ell_{\gamma}\in [h,3h]$ and $A_{\gamma}\leq \eta h$. For a fixed absolute constant $\eta>0$ and for $h$ sufficiently large we have $\P(A(\eta))\leq e^{-h}$.
\end{lemma}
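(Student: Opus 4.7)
The plan is a first--moment argument on the number of \emph{witnesses} of the event. For each $k \in [h, 3h]$, let $N_k$ denote the number of increasing $k$--tuples of points $v_1 < \cdots < v_k$ of $\Pi$ lying in $U_h$ such that, writing $v_j = (x_j, y_j)$ and setting $v_0 = (0,0)$, $v_{k+1} = (h, mh)$, the staircase area $\sum_{j=0}^{k} (x_{j+1}-x_j)(y_{j+1}-y_j) \leq \eta h$. On $A(\eta)$ the Poisson vertices of the claimed path produce such a witness, so $\P(A(\eta)) \leq \sum_{k=h}^{3h} \E[N_k]$. I would aim to show $\E[N_k] \leq e^{-2k}$ for each $k$ in the range; summing then yields $\P(A(\eta)) \leq 2e^{-2h} \leq e^{-h}$ for $h$ large.

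By the Mecke--Campbell formula for $\Pi$ and the change of variables to the gaps $a_j = x_{j+1}-x_j,\ b_j = y_{j+1}-y_j$, one obtains
\begin{equation*}
\E[N_k] \;=\; \int \mathbf{1}\bigl\{\textstyle\sum_{j=0}^{k} a_j b_j \leq \eta h\bigr\}\, d\vec a\, d\vec b \;=\; V_k \cdot \mathbb{Q}_k\bigl[\textstyle\sum a_j b_j \leq \eta h\bigr],
\end{equation*}
where the integration is over $\{a_j, b_j > 0,\ \sum a_j = h,\ \sum b_j = mh\}$, $V_k = (mh^2)^k/(k!)^2$ is the unconstrained volume, and $\mathbb{Q}_k$ is the uniform measure on the product of simplices. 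Stirling gives $V_k \leq (me^2 h^2/k^2)^k$, which is maximised at $k \approx \sqrt{m}\,h$ and is bounded by $(me^2)^h \leq e^{3h}$ throughout $k \in [h, 3h]$. It therefore suffices to prove $\mathbb{Q}_k[\sum a_j b_j \leq \eta h] \leq e^{-5k}$, which comfortably dominates $V_k$.

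The key tool is the Poisson--Dirichlet representation: under $\mathbb{Q}_k$, one may write $a_j = h E_j/S_E$ and $b_j = mh F_j/S_F$ with $E_j, F_j$ i.i.d. $\mathrm{Exp}(1)$, $S_E = \sum_{j=0}^{k} E_j$, and $S_F$ analogous. Then $\sum a_j b_j = (mh^2/(S_E S_F))\sum E_j F_j$, whose expectation is $mh^2/(k+1) \asymp h$, so the event $\{\sum a_j b_j \leq \eta h\}$ is a large deviation down by a factor $\eta$. Conditioning on the high--probability event $\{S_E, S_F \leq 10(k+1)\}$ (whose complement has Chernoff probability at most $2 e^{-c_0 k}$ with $c_0 = 10 - 1 - \log 10 > 6$) the event forces $\sum E_j F_j \leq 100\eta(k+1)^2/(mh) \leq 300 \eta (k+1)$ for $k \in [h, 3h]$. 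Bounding the MGF $\phi(t) = \E[e^{-t E_0 F_0}] = \int_0^\infty e^{-y}/(1+ty)\,dy \leq 2(1+\log t)/t$ for $t \geq 1$, a Markov estimate optimised at $t \asymp 1/\eta$ yields $\P[\sum E_j F_j \leq 300\eta(k+1)] \leq (C\eta \log(1/\eta))^{k+1}$, which is at most $e^{-5k}$ once $\eta$ is chosen small enough. Combining gives $\E[N_k] \leq e^{3h}(2 e^{-c_0 k} + e^{-5k}) \leq e^{-2k}$ for all $k \geq h$, as required.

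The main technical obstacle will be calibrating the constants so that the exponential penalty $V_k \asymp e^{2k}$ is overwhelmed by the other factors. The naive Chernoff threshold $S_E \leq 2(k+1)$ gives rate only $\approx 0.3$, which is insufficient; using the larger threshold $10(k+1)$ boosts the rate to $\approx 6.7$, and this trick together with the subsequent MGF--based large deviation on $\sum E_j F_j$ leaves enough margin. Once this calibration is in place, the proof is an elementary computation.
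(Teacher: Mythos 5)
Your argument is correct, and it is genuinely different from the one in the paper, so a comparison is worthwhile. The paper discretizes $U_h$ into $O(1)\times O(1)$ boxes $D_{i,j}$, observes that any witnessing path of length $h$ must have at least $h/2$ consecutive pairs of Poisson points falling in a common box, and then controls the number of points that have a close neighbour in the same box via a Chernoff bound over the independent box counts, finishing with a union bound over box-level lattice paths and occupation sequences. Your route instead hits $\E[N_k]$ head-on: Mecke reduces the expected number of witnessing $k$-tuples to a volume on the product of two simplices, the unconstrained volume $V_k=(mh^2)^k/(k!)^2$ captures the entropy $\asymp e^{2k}$ of the choice of $k$ ordered points, and the Dirichlet representation $a_j = hE_j/S_E$, $b_j = mhF_j/S_F$ converts the area constraint into the left large deviation $\P[\sum E_jF_j \leq C\eta(k+1)]\leq (C'\eta\log(1/\eta))^{k+1}$, which dominates the entropy once $\eta$ is small. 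The paper's argument is more combinatorial/percolation-flavoured and its constants are more transparent; yours makes the entropy--versus--area tension completely explicit and needs no spatial discretization, at the price of the slightly delicate calibration (widening the threshold to $S_E,S_F\leq 10(k+1)$, then optimizing the MGF bound $\phi(t)\leq 2(1+\log t)/t$) that you correctly flagged. Two cosmetic points: the maximum of $V_k$ over $k\in[h,3h]$ is $e^{2\sqrt{m}h}$ rather than $(me^2)^h$, but since $m\leq 1.01$ this is still $\leq e^{2.01h}\leq e^{3h}$ so nothing breaks; and, as in the paper's own proof, the statement should be read as "there exists a sufficiently small absolute constant $\eta>0$" -- both proofs choose $\eta$ small, which is exactly how the lemma is invoked later.
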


\begin{proof}
Notice that it suffices to take $\ell_{\gamma}=\ell$ fixed in the statement of the lemma, since then we can take a union bound over different $\ell \in [h,3h]$. Without loss of generality let us take $\ell_{\gamma}=h$ in the statement of the lemma. Let us first divide $U_h$ into the following subrectangles.
For $i,j\in \{0,1,\ldots, \frac{h}{10}-1\}$, we define $D_{i,j}$ to be the rectangle whose opposite corners are given by $(10i, 10mj)$ and $(10(i+1), 10m(j+1))$.

Let $\mathbb{H}$ denote the set of all oriented paths in $\Z^2$ from $(0,0)$ to $(\frac{h}{10}-1,\frac{h}{10}-1)$. Clearly $|\mathbb{H}|\leq 2^{h}$. For $H\in \mathbb{H}$, let $\mathcal{N}_{H}$ denote the set of all nonnegative integer valued sequences $\{N_{i,j}\}_{(i,j)\in H}$ with $\sum_{(i,j)\in H}N_{i,j}=h$. It is clear that $|\cup_{H\in \mathbb{H}}\mathcal{N}_{H}|\leq 20^{h}$. Now fix $H\in \mathbb{H}$ and $\{N_{i,j}\}\in \mathcal{N}_{H}$.

Let $A(\eta, \{N_{i,j}\})$ denote the event that there exists an increasing path $\gamma$ in $U_{h}$ from $(0,0)$ to $(h,mh)$ with  $\ell_{\gamma}=h$ and $A_{\gamma}\leq \eta h$, and such that $\gamma$ contains exactly $N_{i,j}$ many points in $D_{i,j}$. Observe that, on $A(\eta, \{N_{i,j}\})$, there must exist $\sum_{(i,j)\in H} (N_{i,j}-1)\geq \frac{h}{2}$ points on $\gamma$, such that the point (say $u$) and the next point on $\gamma$ (say $u'$) belong to the same subrectangle $D_{i,j}$ for some $(i,j)\in
H$.

%
%

Now for $D_{i,j}$, let $U_{i,j}$ denote the number of points $u\in D_{i,j}$ such that there is a point $u'\neq u$ in $D_{i,j}$ such $A(u,u')\leq 10\eta$. It follows that on $A(\eta, \{N_{i,j}\})$
$$A(\gamma)\geq 10\eta \times \left(\frac{h}{2}-\sum_{(i,j)\in H, } U_{i,j} \right).$$
Hence it suffices to show that
\begin{equation}
\label{e:area3}
\P\left[\sum_{(i,j)\in H} U_{i,j}\geq \frac{2h}{5}\right]\leq (20e)^{-h}.
\end{equation}

First observe that $\{U_{i,j}\}, (i,j)\in H$ is an independent sequence of random variables. Also observe that
$$\E(e^{20U_{i,j}})\rightarrow 1$$
as $\eta\rightarrow 0$ by the DCT. Since $\eta$ is chosen sufficiently small we have
$$\E(e^{20U_{i,j}})\leq  2.$$
The independence of $U_{i,j}$'s and Markov's inequality then establishes (\ref{e:area3}). This completes the proof of the lemma.
\end{proof}

\begin{proof}[Proof of Lemma \ref{l:gxyareabound}]
It follows from Lemma \ref{l:areabound} and taking a union bound over different pairs of points $(u,u')$ that $\P[G_{x,y}^{{\rm a}}] \geq 1-\frac{\chi}{2}$. 
The lemma follows.
\end{proof}

\begin{lemma}
\label{l:gxyrs}
For each $x\in \mathcal{X}_r$, $y\in r^{2/3}\Z$, we have $\P(G_{x,y}^{\rm rs})\geq 1-\chi$.
\end{lemma}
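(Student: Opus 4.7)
The plan is to reduce the statement to an unconditional tail bound via Markov's inequality, then apply the uniform moderate-deviation estimates for passage times to handle the sup over pairs for each fixed $i$, and finally union-bound over the $1/(100\varepsilon^{5/3})$ parallelograms $D_i$. Fix $x \in \mathcal{X}_r$ and $y \in r^{2/3}\Z$. The event $G_{x,y}^{\rm rs}$ fails precisely when the conditional probability (over the resampling $\Pi^{*}$) $\P[\max_i \Delta_i > \delta r^{1/3}/2 \mid \Pi]$ exceeds $e^{-C/\varepsilon^{1/4}}$, so by Markov's inequality applied to that conditional probability it suffices to establish the unconditional bound
\begin{equation*}
\P\!\left(\max_i \Delta_i > \tfrac{\delta}{2} r^{1/3}\right) \leq \chi\, e^{-C/\varepsilon^{1/4}}.
\end{equation*}

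Next I would handle a single index $i$. The crucial observation is that \emph{unconditionally} both $\Pi^{(*,i-1)}$ and $\Pi^{(*,i)}$ are genuine rate-$1$ Poisson processes on $\R^2$: they differ only on the single parallelogram $D_i$, which has been resampled independently. Consequently the uniform moderate-deviation estimates (Proposition \ref{t:treesup} together with its lower-tail analogue, themselves consequences of Theorems \ref{t:moddevuppertail}--\ref{t:moddevlowertail}) apply to both $X^{(i-1)}_{u,u'}$ and $X^{(i)}_{u,u'}$. Since the enlarged parallelogram $\tilde{D}_i$ has dimensions of order $\varepsilon r \times (\varepsilon r)^{2/3}$, the natural on-scale fluctuation of $X_{u,u'}$ for $(u,u')\in \mathcal{S}(\tilde{D}_i)$ is $(\varepsilon r)^{1/3}$. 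Feeding the deviation level $\delta r^{1/3}/4 \asymp (\delta/\varepsilon^{1/3})\cdot(\varepsilon r)^{1/3}$ into the sup tail bound yields
\begin{equation*}
\P\!\left(\sup_{(u,u')\in \mathcal{S}(\tilde{D}_i)} \bigl|X^{(i-1)}_{u,u'}-\E X_{u,u'}\bigr| > \tfrac{\delta}{4} r^{1/3}\right) \leq e^{-c\,\delta^{3/2}\varepsilon^{-1/2}},
\end{equation*}
with the identical bound for $X^{(i)}$. The triangle inequality then gives $\P(\Delta_i > \delta r^{1/3}/2) \leq 2 e^{-c\,\delta^{3/2}\varepsilon^{-1/2}}$.

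A union bound over the $1/(100\varepsilon^{5/3})$ values of $i$ yields
\begin{equation*}
\P\!\left(\max_i \Delta_i > \tfrac{\delta}{2} r^{1/3}\right) \leq \tfrac{1}{50}\varepsilon^{-5/3}\, e^{-c\,\delta^{3/2}\varepsilon^{-1/2}}.
\end{equation*}
Because $\varepsilon$ is chosen sufficiently small after $C$, $M$, $\delta$, $\rho$, and because $\varepsilon^{-1/2}$ eventually dominates $\varepsilon^{-1/4}$, $\log(1/\varepsilon)$, and $\log(1/\chi)$, this is at most $\chi\, e^{-C/\varepsilon^{1/4}}$ for $\varepsilon$ small enough, which combined with the Markov step finishes the proof. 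The only real obstacle is that the supremum is taken over the uncountable family $\mathcal{S}(\tilde{D}_i)$; but this is handled by the standard discretization/chaining argument already baked into the uniform sup-of-passage-time propositions of \S\ref{s:moddev}, so no new ideas are required beyond a careful accounting of the exponents in $\varepsilon$.
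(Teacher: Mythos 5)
Your argument is correct and essentially identical to the paper's: bound each $\Delta_i$ unconditionally by applying the uniform sup tail estimate to both $\Pi^{(*,i-1)}$ and $\Pi^{(*,i)}$ (each of which is a rate-$1$ Poisson process), union-bound over $i$, then use Markov's inequality to pass from the unconditional tail bound to the stated bound on the conditional probability given $\Pi$. One minor slip: Proposition \ref{t:treesup} has a tail exponent that is linear in $\theta$, not $\theta^{3/2}$, so the per-$i$ bound should read $e^{-c\,\delta\,\varepsilon^{-1/3}}$ rather than $e^{-c\,\delta^{3/2}\varepsilon^{-1/2}}$; this is immaterial, since $\varepsilon^{-1/3}$ still dominates $\varepsilon^{-1/4}$ and $\log(1/\varepsilon)$ as $\varepsilon\to 0$, so the comparison with $\chi\, e^{-C/\varepsilon^{1/4}}$ goes through unchanged.
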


\begin{proof}
For $i=0,1,2,\ldots \frac{1}{100}\varepsilon^{-5/3}$, let
$$A_{i}=\{\forall (u,u')\in \mathcal{S}(\tilde{D}_i) |\tilde{X}^{(i)}_{u,u'}| \leq \frac{\delta}{8}r^{1/3}\};$$

$$B_{i}=\{\forall (u,u')\in \mathcal{S}(\tilde{D}_i) |\tilde{X}^{(i-1)}_{u,u'}| \leq \frac{\delta}{8}r^{1/3}\}.$$

It is clear from Proposition \ref{t:treesup} that by taking $\varepsilon$ sufficiently small depending on $\delta$ and $C$, we have that $\P[A_i^{c}\cup B_i^{c}]\leq e^{-2C/\varepsilon^{1/4}}$.

Clearly, on $A_i\cap B_i$, we have
$$\Delta_i \leq \sup_{u,u'\in \mathcal{S}(\tilde{D}_i)} (|\tilde{X}^{(i)}_{u,u'}|+|\tilde{X}^{(i-1)}_{u,u'}|)\leq  \frac{\delta}{4}r^{1/3}.$$

It follows by taking a union bound over all $i$ we get
$$\P(\max_i \Delta_i \geq \frac{\delta}{2}r^{1/3})\leq \varepsilon^{-2}e^{-2C/\varepsilon^{1/4}}\leq e^{-C/\varepsilon^{1/4}}\chi^{-1}.$$
by choosing $\varepsilon$ small enough. It follows now from Markov's inequality that

$$\P\left[\P[\max_i \Delta_i \geq \frac{\delta}{2}r^{1/3}\mid \Pi]\geq e^{-C/\varepsilon^{1/4}}\right]\leq \chi.$$
This completes the proof of the lemma.
\end{proof}

For $x\in \mathcal{X}_r$, let $y(\Gamma, x)=\inf_{y\in r^{2/3}\Z}\{y+x\geq \Gamma_x\}$. We have the following proposition.

\begin{proposition}
\label{p:gxyunion}
For all $n$ sufficiently large we have,
$$\P[\#\{x\in \mathcal{X}_r: G_{x,y(\Gamma,x)}^{{\rm loc}}\cap G_{x,y(\Gamma,x)}^{{\rm a}}\cap G_{x,y(\Gamma,x)}^{{\rm rs}} ~\text{does not hold}\}\geq \frac{1}{10000}|\mathcal{X}_r|]\leq 10^{-4}.$$
\end{proposition}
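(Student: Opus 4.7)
The plan is to combine the three preceding lemmas with a discretization of $\Gamma$ and a Chernoff-type large deviations bound exploiting approximate independence across columns. Write $G_{x,y}^{\mathrm{good}}:=G_{x,y}^{\rm loc}\cap G_{x,y}^{\rm a}\cap G_{x,y}^{\rm rs}$; by the three lemmas, $\P(G_{x_k,y}^{\mathrm{good}})\ge 1-3\chi$ for any fixed $(x_k,y)$, and $\chi$ can be pushed as close to $0$ as we wish by enlarging $C$.

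First I would refine the discretization of Section~\ref{s:maxpathnice} by recording, in addition to the segments $J_i$ at $x=ir$, a segment $J_k^{\sharp}$ at $x=x_k=(k+\tfrac{1}{2})r$, via $j_k^{\sharp}=\lfloor(\Gamma_{x_k}-x_k)/r^{2/3}\rfloor$. The arguments of Lemmas~\ref{l:countmaxpath} and~\ref{l:goodgradient} go through verbatim and show that the refined total fluctuation is $O(n/r)$ with high probability, and that the number of refined sequences with this fluctuation is $4^{O(n/r)}$. The key gain is that knowing the refined sequence pins down $y(\Gamma,x_k)$ to one of the two candidates $j_k^{\sharp}$ or $j_k^{\sharp}+1$. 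Next, for each pair $(x_k,y)$ I would introduce a strip-local version $\widetilde G_{x_k,y}^{\mathrm{good}}$ by replacing every length statistic in $G_{x_k,y}^{\mathrm{good}}$ (which is of the form $\tilde X,\hat X,\tilde X^{\partial U},\tilde X^D$, or $\tilde X^{F^+}$ between endpoints in the butterfly body) by its analogue restricted to paths staying in the strip $[kr,(k+1)r]\times\mathbb{R}$. By Theorem~\ref{t:transversal} and a union bound over the finitely many pairs of endpoints involved, the localized and unlocalized events agree with probability at least $1-e^{-\Omega(r^{1/3})}$, so $\P(\widetilde G_{x_k,y}^{\mathrm{good}})\ge 1-4\chi$; moreover, the events $\widetilde G_{x_k,y_k}^{\mathrm{good}}$ depend on disjoint strips and are therefore mutually independent across $k$.

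Fixing any refined $\mathcal{J}^{\sharp}$ with total fluctuation $O(n/r)$ and any compatible choice $\mathbf{y}=(y_k)_k$ (at most $2^{|\mathcal{X}_r|}$ per $\mathcal{J}^{\sharp}$), the Chernoff bound gives
\[
\P\Big(\#\{k:\widetilde G_{x_k,y_k}^{\mathrm{good}}\text{ fails}\}\ge \tfrac{1}{20000}|\mathcal{X}_r|\Big)\le e^{-c(\chi)|\mathcal{X}_r|}
\]
with $c(\chi)\to\infty$ as $\chi\to 0$. A union bound over $(\mathcal{J}^{\sharp},\mathbf{y})$ adds at most a factor $e^{O(n/r)}$, and the contributions from the bounded-fluctuation event (Lemma~\ref{l:goodgradient}) and from the localization error are both negligible (each much better than $10^{-4}$). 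Choosing $C$ large enough that $c(\chi)$ dominates the combinatorial exponent yields the claimed bound for all sufficiently large $n$. The main obstacle, and the step requiring genuine care, is the localization: since $\tilde X_{u,u'}$ is defined with respect to the global Poisson process, one must verify that the long geodesics appearing in the definition of $G_{x_k,y}^{\mathrm{good}}$ stay inside an $O(r)$-strip with probability $1-e^{-\Omega(r^{1/3})}$, so that the events $\widetilde G_{x_k,y_k}^{\mathrm{good}}$ truly decouple; without this the Chernoff step above could not beat the combinatorial $e^{O(n/r)}$ factor.
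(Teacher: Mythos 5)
Your plan matches the paper's (very terse) proof: combine Lemmas~\ref{l:gxylocbound}, \ref{l:gxyareabound}, \ref{l:gxyrs}, exploit independence of $G_{x,y}^{\rm loc}\cap G_{x,y}^{\rm a}\cap G_{x,y}^{\rm rs}$ across $x\in\mathcal{X}_r$ for deterministic $y$, apply a Chernoff bound, and union bound over discretizations of $\Gamma$ via Lemmas~\ref{l:goodgradient} and \ref{l:countmaxpath}; your refinement that additionally records $\lfloor(\Gamma_{x_k}-x_k)/r^{2/3}\rfloor$ at $x_k=(k+\tfrac12)r$ so as to pin down $y(\Gamma,x_k)$ is a reasonable way to fill in a detail the paper elides. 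However, you flag ``localization'' as the main obstacle, and this is a misconception: the events $G_{x,y}^{\rm loc}$, $G_{x,y}^{\rm a}$, $G_{x,y}^{\rm rs}$ (for fixed $y$) are \emph{already} measurable with respect to the point configuration in $[x-\tfrac{r}{2},x+\tfrac{r}{2}]\times\R$, because every pair $(u,u')$ appearing in them lies in parallelograms contained in that strip, and any increasing path from $u$ to $u'$ is confined to the bounding rectangle $\mathrm{Box}(u,u')$, hence to the strip — there are no ``long geodesics'' to worry about, and the paper even states this exact locality right after defining $G^{\rm rs}_{x,y}$. Your proposed workaround via Theorem~\ref{t:transversal} would also not quite work as written (the conditions take suprema over continua of pairs, not ``finitely many,'' and constraining paths lowers $\tilde X$ in one direction only, so two-sided bounds $|\tilde X_{u,u'}|\le Cr^{1/3}$ do not transfer automatically), but fortunately none of that is needed.
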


\begin{proof}
The proposition follows from Lemma \ref{l:gxylocbound}, Lemma \ref{l:gxyareabound}, Lemma \ref{l:gxyrs}, the fact that $G_{x,y}^{{\rm loc}}\cap G_{x,y}^{{\rm a}}\cap G_{x,y}^{{\rm rs}}$ are independent events for different values of $x$, a Chernoff bound using $\chi$ sufficiently small and a union bound using Lemma \ref{l:goodgradient} and Lemma \ref{l:countmaxpath}.
\end{proof}

\subsection{Proof of Theorem \ref{t:gxeverywhere}}
To prove Theorem \ref{t:gxeverywhere} we still need to estimate the probabilities of the {\bf wing condition} and the {\bf fluctuation condition}.

\begin{proposition}
\label{p:gxgammawing}
Let $\Gamma$ be the topmost maximal path in $\Pi$ from $\mathbf{0}$ to $\mathbf{n}$. For $x\in \mathcal{X}_r$, let $y(\Gamma,x)=\inf_{y}\{y\in r^{2/3}\Z: y\geq \Gamma_x\}$. Then
$$\P[\#\{x\in \mathcal{X}_r: G_{x,y(\Gamma,x)}^{w}~\text{holds}\}<\frac{9999}{10000}|\mathcal{X}_r|]\leq 10^{-4}.$$
\end{proposition}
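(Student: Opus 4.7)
The plan is to mirror the Peierls-type argument used for Lemma~\ref{l:fluccond2}, discretizing $\Gamma$ at the wing scale $L^{3/2}r$ and combining a single-location moderate deviation bound with a Chernoff bound over blocks whose wings are disjoint, followed by a union bound over discretizations.

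\textbf{Single-location bound.} Fix $x \in \mathcal{X}_r$ and $y \in r^{2/3}\mathbb{Z}$. Each wing $W^i_{x,y,r}$ is a parallelogram of horizontal width $L^{3/2}r$ and offset height $2L^{11/10}r^{2/3}$, with slopes in $\mathcal{S}$ bounded away from $0$ and $\infty$. The natural fluctuation scale in such a parallelogram is $(L^{3/2}r)^{1/3}=L^{1/2}r^{1/3}$, and $G_{x,y}^w$ asks only that $|\tilde{X}_{u,u'}|\leq CL^{3/4}r^{1/3}=CL^{1/4}\cdot L^{1/2}r^{1/3}$. Applying the uniform moderate deviation estimate (Proposition~\ref{t:treesup}) with tail parameter $s \sim L^{1/4}$ gives $\mathbb{P}(G_{x,y}^w \text{ fails}) \leq \chi_1 := e^{-cL^{3/8}}$, which can be made arbitrarily small by taking $L$ large.

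\textbf{Path-level localisation via discretization.} By Theorem~\ref{t:transversal} we have $\sup_x|\Gamma_x-x|\leq n^{3/4}$ w.h.p., and by Lemma~\ref{l:wingfluc1} the $L^{3/2}r$-scale discretization $\tilde{\mathcal{J}}(\Gamma)=\{\tilde{J}_i^\Gamma\}$ satisfies $\sum_i|\tilde{\Delta}_i^\Gamma|\leq \tilde{C}n/(L^{3/2}r)$ with high probability; the number of such admissible discretizations is at most $e^{c'(\tilde{C})n/(L^{3/2}r)}$ where $c'(\tilde{C})\to 0$ as $\tilde{C}\to\infty$. Fix such a $\tilde{\mathcal{J}}$. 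Using Corollary~\ref{c:transversaldiffslope} in the manner of Lemma~\ref{l:fluccondwing2}, except on an event of probability $\leq e^{-cn/(L^{3/2}r)}$, for each $x \in \mathcal{X}_r$ in the $i$-th block $[iL^{3/2}r,(i+1)L^{3/2}r)$ the value $\Gamma_x$ is localized to within $O(L^{21/20}r^{2/3})$ of the line through $\tilde{J}_i^\Gamma$ and $\tilde{J}_{i+1}^\Gamma$. Hence $y(\Gamma,x)$ takes one of at most $O(L^{21/20}+|\tilde{\Delta}_i^\Gamma|L)$ values in $r^{2/3}\mathbb{Z}$, and combining with the $O(L^{3/2})$ columns per block gives a candidate list $\mathcal{L}_i$ of size $O(L^{27/10}+L^{5/2}|\tilde{\Delta}_i^\Gamma|)$.

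\textbf{Block-level Chernoff and final union bound.} Call block $i$ \emph{bad} if $G_{x,y}^w$ fails for some $(x,y)\in \mathcal{L}_i$; by the union bound and Step 1, $\mathbb{P}(\text{block }i\text{ bad}) \leq |\mathcal{L}_i|\chi_1$, which after absorbing $\sum_i|\tilde{\Delta}_i^\Gamma|$ gives an average per-block failure probability $\chi_2$ arbitrarily small in $L$. Two butterflies' wings are disjoint whenever the corresponding blocks are separated by at least three block widths, so after partitioning blocks into a constant number of colour classes the bad-block indicators within each class are independent. A Chernoff bound then shows that the fraction of bad blocks in each class exceeds $10^{-5}$ with probability $\leq e^{-c(\chi_2)n/(L^{3/2}r)}$, where $c(\chi_2)\to\infty$ as $\chi_2\to 0$. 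Choosing $L$ large enough so that $c(\chi_2)>2c'(\tilde{C})$, the union bound over admissible discretizations defeats the Chernoff factor, and since each good block has $G_{x,y(\Gamma,x)}^w$ holding for all $x\in \mathcal{X}_r$ in that block, the fraction of failures exceeds $1/10000$ with probability $< 10^{-4}$ for $n$ large.

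\textbf{Expected main obstacle.} The delicate part is Step~2: one must leverage the coarse $L^{3/2}r$-scale discretization together with a uniform transversal fluctuation estimate on pairs of points in $\tilde{J}_i^\Gamma$, $\tilde{J}_{i+1}^\Gamma$ to confine $\Gamma_x$ within each block to a window of width $O(L^{21/20}r^{2/3})$. Without this localisation the naive enumeration of $y \in r^{2/3}\mathbb{Z}$ is polynomial in $n$ while $\chi_1$ depends only on the fixed constant $L$, and the single-location bound cannot absorb a $n^{\Omega(1)}$ union bound. The entire proof is engineered so that the candidate list $\mathcal{L}_i$ is only polynomial in $L$, which is exactly what makes the Chernoff bound beat the Peierls count over discretizations.
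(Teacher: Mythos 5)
Your proposal follows the same high-level architecture as the paper's own proof: (1) a single-location bound showing $G_{x,y}^w$ fails with probability small in $L$, (2) an $L^{3/2}r$-scale discretization of $\Gamma$ with a Peierls count and localisation of $\Gamma_x$ within each block, and (3) a Chernoff bound over colour classes of disjoint wings followed by a union bound over admissible discretizations. Where you enumerate a candidate list $\mathcal{L}_i$ of $(x,y)$ pairs per block and union-bound over them, the paper instead uses a single enlarged parallelogram $W^*_{1,i}\cup W^*_{2,i}$ per block (Lemmas \ref{l:wingfluc2} and \ref{l:wingfluc3}), and the observation that the fluctuation condition on this larger parallelogram automatically implies $G^w_{x,y(\Gamma,x)}$ for every $x$ in the block whose wings fit inside $W^*$; this is the same idea packaged more cleanly, since it avoids the bookkeeping over a candidate list whose size depends on $|\tilde{\Delta}_i^\Gamma|$.

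Two quantitative slips worth flagging, neither fatal. First, your single-location bound mishandles the aspect ratio and the tail exponent. The wing $W^i_{x,y,r}$ has width $L^{3/2}r$ and height $2L^{11/10}r^{2/3}$, so in the normalisation of $U_{r,m,\ell}$ its width parameter is $\ell=L^{1/10}$, not $O(1)$; Proposition~\ref{t:treesup} alone does not apply and one must use Corollary~\ref{c:treesupwidth} together with Corollary~\ref{c:treeinfwidth} for both tails. The resulting on-scale fluctuation is $L^{11/20}r^{1/3}$, so to reach $CL^{3/4}r^{1/3}$ the tail parameter is $\theta\asymp L^{1/5}$, and since the propositions give $e^{-c\theta}$ (not $e^{-c\theta^{3/2}}$), the correct single-location bound is $e^{-cL^{1/5}}$ rather than your $e^{-cL^{3/8}}$. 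The conclusion (arbitrarily small in $L$) survives. Second, the step "absorbing $\sum_i|\tilde{\Delta}_i^\Gamma|$" into a per-block failure probability $\chi_2$ is a little loose: you should split blocks into those with $|\tilde{\Delta}_i^\Gamma|$ below a fixed threshold and those above, controlling the latter deterministically via Markov's inequality using the constraint $\sum_i|\tilde{\Delta}_i^\Gamma|\leq \tilde{C}n/(L^{3/2}r)$, exactly as the paper does in Lemma~\ref{l:fluccondwing2}. With those repairs the argument is sound and matches the paper's.
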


The proof of Proposition \ref{p:gxgammawing} is similar to the proof of Proposition \ref{p:gxyunion} but we need to work harder as the \emph{Wings} are not disjoint for diffirent values of $x\in \mathcal{X}_r$. We first need the following lemma.

\begin{lemma}
\label{l:wingfluc2}
Assume the set-up of Lemma \ref{l:wingfluc1}. For $\tilde{\mathcal{J}}=\{\tilde{J}_j\}\in \tilde{\mathscr{I}}=\tilde{\mathscr{I}}(\infty)$ with $\tilde{J}_j= x_j\times (x_j+y_j, x_j+y_j+Lr^{2/3})$, let $W^*_{1,j}=\mathcal{P}(x_j-\frac{3L^{3/2}r}{2},y_j-10L^{11/10}r^{2/3}, 3L^{3/2}r, 20L^{11/10}r^{2/3})$ and $W^*_{2,j}=\mathcal{P}(x_j+\frac{3L^{3/2}r}{2},y_j-10L^{11/10}r^{2/3}, 3L^{3/2}r, 20L^{11/10}r^{2/3})$. Let $\mathcal{X}_{r,j}=\{x\in \mathcal{X}_r: x_j-L^{3/2}r \leq x \leq  x_j+L^{3/2}r\}$. Let $\gamma$ be an increasing path from $\mathbf{0}$ to $\mathbf{n}$ such that $\tilde{J}_j^{\gamma}=\tilde{J}_j$
 For $x\in \mathcal{X}_{r,i}$, let $A_x^{\gamma}$ denote the event that $W^1(\mathbb{B}(x,y(x,\gamma), r))\cup W^1(\mathbb{B}(x,y(x,\gamma), r))\subseteq W^*_{1,i}\cup W^*_{2,i}$. Call $i$ 'good for $\gamma$' if $\cap_{x\in \mathcal{X}_{r,i}} A_x^{\gamma}$ holds. Then for $L$ sufficiently large, $$\P[\#\{i:i~\text{`good' for}~\Gamma\}\leq (1-10^{-5})\frac{n}{L^{3/2}r}]\leq e^{-cn/L^{3/2}r}+o(1).$$
\end{lemma}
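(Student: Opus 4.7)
The plan is to mirror the structure of Lemma~\ref{l:fluccondwing2}, enlarging the block size from three strips to roughly four in order to accommodate the fact that $\mathcal{X}_{r,j}$ straddles the boundary $x=x_j$ between adjacent strips and the wings of a butterfly at $x \in \mathcal{X}_{r,j}$ sweep an extra $r/2+L^{3/2}r$ on each side. By Lemma~\ref{l:wingfluc1}, the topmost maximal path $\Gamma$ belongs to some $\tilde{\mathcal{J}} \in \tilde{\mathscr{I}}(\tilde{C})$ with probability $1-o(1)$, so we fix such a $\tilde{\mathcal{J}} = \{\tilde{J}_j\}$ and work conditionally on $\tilde{\mathcal{J}}^\Gamma = \tilde{\mathcal{J}}$.

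For each $i$, declare $i$ \emph{bad of type (a)} if $|\tilde{\Delta}_{i-2}|+|\tilde{\Delta}_{i-1}|+|\tilde{\Delta}_i|+|\tilde{\Delta}_{i+1}| > K$, for some large constant $K=K(L)$ to be chosen, and \emph{bad of type (b)} if (a) fails but there exist $u \in \tilde{J}_{i-2}$, $u' \in \tilde{J}_{i+1}$ and a maximal path from $u$ to $u'$ whose transversal fluctuation from the line joining $u$ and $u'$ exceeds $5 L^{11/10} r^{2/3}$. Since $\sum_j |\tilde{\Delta}_j| \le \tilde{C} n / (L^{3/2} r)$, Markov's inequality gives at most $4\tilde{C} n/(K L^{3/2} r) \leq \frac{n}{2\cdot 10^5 L^{3/2} r}$ indices of type (a) deterministically, once $K$ is chosen large enough.

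For type (b), Corollary~\ref{c:transversaldiffslope} applied to pairs $(u,u')$ whose slope lies in $(2/\psi, \psi/2)$ (automatic for $\tilde{\mathcal{J}} \in \tilde{\mathscr{I}}(\tilde{C})$ after restricting to the ``type (a) is fine'' window) shows that the probability of the exceptional event per block is at most $\epsilon(L) \to 0$ as $L\to\infty$. By partitioning the indices $i$ into four classes according to $i \bmod 4$, the relevant events become independent within each class, and a standard Chernoff bound yields
\begin{equation*}
\P\Big[\#\{i : i \text{ bad of type (b)}\} \ge \tfrac{n}{2\cdot 10^5 L^{3/2} r}\Big]
\le 10^{-\tilde{C} n/(L^{3/2} r)} e^{-c n/(L^{3/2} r)}
\end{equation*}
for $L$ large. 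The factor $10^{-\tilde{C} n/(L^{3/2} r)}$ is inserted deliberately to dominate, via $\epsilon(L)$ small, the union bound of size $|\tilde{\mathscr{I}}(\tilde{C})|\le 4^{(1+\tilde{C})n/(L^{3/2}r)}$ obtained from Lemma~\ref{l:wingfluc1} when summing over $\tilde{\mathcal{J}}$.

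The remaining, and main, step is the geometric implication: if $i$ is bad of neither type, then $i$ is good for $\Gamma$. Concretely, the control of $\tilde{\Delta}_{i-2},\ldots,\tilde{\Delta}_{i+1}$ forces the endpoints in $\tilde{J}_{i-2}$ and $\tilde{J}_{i+1}$ to lie on a line of slope within $O(K/L)$ of $1$, so the transversal-fluctuation bound pins $\Gamma_x - x$ to within $O(L^{11/10} r^{2/3})$ of $y_i$ throughout $\mathcal{X}_{r,i}$. Since $y(x,\Gamma) \in [\Gamma_x - x, \Gamma_x - x + r^{2/3}]$, one checks (separately in the horizontal and tilted-vertical directions, using $|x-x_j|\le L^{3/2}r$) that each of $W^1_{x,y(x,\Gamma),r}, W^2_{x,y(x,\Gamma),r}$ is contained in $W^*_{1,i} \cup W^*_{2,i}$, the generous factors $3$ (width) and $10$ (height) in the definition of $W^*$ providing comfortable slack. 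The hard part will be this bookkeeping: one must verify the containment across all four possible positions of $x$ relative to the wing endpoints, and correctly align the interpolating line from $\tilde{J}_{i-2}$ to $\tilde{J}_{i+1}$ with the offset $y_i$. Once this is done, combining the type~(a) and type~(b) counts with the union bound over $\tilde{\mathcal{J}}$ and Lemma~\ref{l:wingfluc1} yields the stated estimate.
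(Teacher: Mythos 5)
Your proof follows precisely the strategy the paper attributes to Lemma~\ref{l:fluccond2} (via Lemma~\ref{l:fluccondwing2}): Markov's inequality on the discretized gradient for the deterministic type-(a) count, Corollary~\ref{c:transversaldiffslope} plus independence along residue classes and a Chernoff bound for type-(b), a union bound over $\tilde{\mathscr{I}}(\tilde{C})$ via Lemma~\ref{l:wingfluc1}, and the geometric containment as the final step. Since the paper's own proof is just the remark ``essentially similar to the proof of Lemma~\ref{l:fluccond2},'' your write-up is a correct and more explicit realization of the intended argument, and the small quantitative imprecisions (the slope deviation is $O(KL^{-1/2}r^{-1/3})$ rather than $O(K/L)$, and $K$ may be taken independent of $L$) do not affect its validity.
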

\begin{proof}
The proof is essentially similar to the proof of Lemma \ref{l:fluccond2} and we omit the details.
\end{proof}

\begin{lemma}
\label{l:wingfluc3}
Fix $\tilde{\mathcal{J}}=\{\tilde{J}_j\}\in \tilde{\mathscr{I}}$ and define $W^*_{1,i}$ and $W^*_{2,i}$ as in Lemma \ref{l:wingfluc2}. Let $A_i$ denote the event that for all $(u,u')\in \mathcal{S}(W^*_{1,i}\cup W^*_{2,i})$, we have $|\tilde{X}_{u,u'}|\leq \frac{C}{2}L^{3/4}r^{1/3}$. Then $$\P[\#\{i:A_i~\text{does not hold}\}\geq 10^{-6}\frac{n}{L^{3/2}r}]\leq 10^{-\tilde{C}n/L^{3/2}r}e^{-cn/L^{3/2}r}.$$
\end{lemma}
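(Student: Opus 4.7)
The plan is to first bound $\P[A_i^c]$ for a single index $i$ by a quantity $p(L)$ that is very small when $L$ is large, and then exploit near-independence across $i$ via a Chernoff argument.

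For the single-event bound, observe that each $W^*_{1,i}\cup W^*_{2,i}$ is the union of two parallelograms of $x$-extent $3L^{3/2}r$ and $y$-height $20L^{11/10}r^{2/3}$, so any pair $(u,u')\in \mathcal{S}(W^*_{1,i}\cup W^*_{2,i})$ spans an $\ell_1$-distance of order $L^{3/2}r$. The natural on-scale fluctuation at this scale is $(L^{3/2}r)^{1/3}=L^{1/2}r^{1/3}$, and the requested bound $\frac{C}{2}L^{3/4}r^{1/3}$ equals $\frac{C}{2}L^{1/4}$ times the natural scale. I would invoke the sup-over-pairs fluctuation estimates collected in \S~\ref{s:para} (in particular Proposition~\ref{t:treesup} and Corollaries~\ref{c:treeinfwidth},~\ref{c:treesupwidth}, which are built on the moderate deviation Theorems~\ref{t:moddevuppertail} and~\ref{t:moddevlowertail}) applied with scale parameter $L^{3/2}r$ and deviation parameter $s\asymp L^{1/4}$. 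This yields $\P[A_i^c]\leq p(L)$ with $p(L)\to 0$ as $L\to\infty$ (quantitatively $p(L)\leq e^{-c_0 L^{3/8}}$ from the $s^{3/2}$ tail, up to polynomial losses from the net argument).

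For the chaining step, notice that $A_i$ is measurable with respect to the Poisson points in $W^*_{1,i}\cup W^*_{2,i}$, whose $x$-extent is $[x_i-3L^{3/2}r,\,x_i+3L^{3/2}r]$. Since $x_{i+1}-x_i=L^{3/2}r$, the events $A_i$ and $A_{i'}$ are independent whenever $|i-i'|\geq 6$. Partition the indices $\{0,1,\ldots,n/L^{3/2}r\}$ into six residue classes modulo $6$. Within each class, the indicators $\mathbf{1}_{A_i^c}$ form an i.i.d.\ Bernoulli sequence with parameter at most $p(L)$, so a standard Chernoff bound gives
\[
\P\Bigl[\#\{i\equiv k\!\!\!\pmod 6:\ A_i^c\text{ holds}\}\geq \tfrac{1}{6\cdot 10^{6}}\cdot\tfrac{n}{L^{3/2}r}\Bigr]\leq e^{-c'(L)\,n/L^{3/2}r},
\]
with $c'(L)\to\infty$ as $L\to\infty$. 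Summing the failure probabilities over the six residue classes and choosing $L$ sufficiently large that $c'(L)\geq 2(\tilde{C}\log 10+c)$ yields the claimed bound $10^{-\tilde{C}n/L^{3/2}r}e^{-cn/L^{3/2}r}$.

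The main obstacle is producing a prefactor of the shape $10^{-\tilde{C}n/L^{3/2}r}$ rather than a plain exponential: this forces $p(L)$ to be small enough that the Chernoff exponent dominates $\tilde{C}\log 10$, which is exactly where the hypothesis that $L$ is chosen sufficiently large (after $\tilde{C}$) is consumed. A secondary technical point is that $\mathcal{S}(W^*_{1,i}\cup W^*_{2,i})$ is a continuous family of pairs, so the single-event estimate is actually a supremum; this is handled by a net argument at scale $(L^{3/2}r)^{2/3}$ exactly as in the proofs of the analogous conditions in Lemma~\ref{l:gxylocbound}, and the resulting polynomial-in-$L$ union-bound factor is absorbed by the stretched-exponential decay $e^{-c_0 L^{3/8}}$.
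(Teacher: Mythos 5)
Your proposal follows essentially the same route as the paper: observe that $W^*_{1,i}\cup W^*_{2,i}$ and $W^*_{1,i'}\cup W^*_{2,i'}$ are disjoint (hence $A_i$, $A_{i'}$ independent) when $|i-i'|\geq 6$, split the indices into six residue classes modulo $6$, bound each $\P[A_i^c]$ uniformly by the sup-over-pairs estimates, and combine a Chernoff bound within each class with a union bound over the six classes. Two minor remarks on the single-event step. First, the paper makes $\P[A_i^c]$ small by taking $C$ large, while you take $L$ large; both are legitimate since $C$ and $L$ are fixed after $\tilde{C}$ in the parameter ordering, so either can absorb the $10^{-\tilde{C}}$ target. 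Second, your quantitative claim $p(L)\le e^{-c_0 L^{3/8}}$ ``from the $s^{3/2}$ tail'' is not right: Corollaries \ref{c:treesupwidth} and \ref{c:treeinfwidth} only give the linear tail $e^{-c_1\theta}$ (the chaining in \S~\ref{s:para} degrades the $s^{3/2}$ point-to-point rate), and since $W^*_{1,i}\cup W^*_{2,i}$ is a single parallelogram of width $6L^{3/2}r$ with aspect parameter $\ell\asymp L^{1/10}$, the on-scale fluctuation is $\sqrt{\ell}\,(L^{3/2}r)^{1/3}\asymp L^{11/20}r^{1/3}$ and the effective deviation parameter is $\theta\asymp C L^{3/4-11/20}=CL^{1/5}$, not $L^{1/4}$. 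The resulting bound $e^{-c_1 c_2 C L^{1/5}}$ is weaker than you stated but still tends to $0$, so the argument goes through as you intended.
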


\begin{proof}
Notice that $A_{i_1}$ and $A_{i_2}$ are independent if $i_1-i_2\geq 6$. more generally, we also have $\{A_{6i+k}\}_{i\geq 0}$ is independent for each $k=0,1,2,3,4,5$. By Corollary \ref{c:treesupwidth} and Corollary \ref{c:treeinfwidth} it follows that for each $i$, $\P[A_i]\geq 1-\chi$, where $\chi$ can be made arbitrarily small by choosing $C$ sufficiently large. It follows that for a fixed $k\in \Z/6\Z$ we have with exponentially high probability, $\#\{i:A_{6i+k}~\text{does not holds}\}\leq \frac{1}{10^6}\frac{n}{L^{3/2}r}$. The lemma follows by taking a union bound over $k\in \Z/6\Z$.
\end{proof}

\begin{proof}[Proof of Proposition \ref{p:gxgammawing}]
Observe the following. If $i$ is `good' for $\Gamma$ and $A_i$ holds for $\tilde{J}=\{\tilde{J}_j^{\Gamma}\}$ then $G_{x,y(\Gamma,x)}^{w}$ holds for all $x\in \mathcal{X}_{r,i}$. The proposition now follows similarly to Proposition \ref{p:gxyunion} by taking a union bound over $\tilde{J}$ and using Lemma \ref{l:wingfluc1}, Lemma \ref{l:wingfluc2} and Lemma \ref{l:wingfluc3}.
\end{proof}
%

\begin{proposition}
\label{p:gxgammafluctuation}
We have for all $n$ sufficiently large
$$\P[\#\{x\in \mathcal{X}_r: G_{x}^{f}~\text{holds}\}<\frac{9998}{10000}|\mathcal{X}_r|] \leq 10^{-4}$$
\end{proposition}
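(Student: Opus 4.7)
The event $G_x^f$ has three requirements: the global transversal bound $|\Gamma_x - x| \leq Cn^{2/3}$, the scale-$r$ local fluctuation bound, and the scale-$L^{3/2}r$ local fluctuation bound from \eqref{e:gxmaxnbhd1}. My plan is to bound the number of failures of each separately and combine via a union bound.

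For the global bound, I would appeal to the refined transversal fluctuation estimate (Theorem \ref{t:transversal}), itself a consequence of the moderate-deviation estimates Theorems \ref{t:moddevuppertail} and \ref{t:moddevlowertail}: choosing $C$ sufficiently large gives $\sup_x |\Gamma_x - x| \leq Cn^{2/3}$ simultaneously with probability $1 - o(1)$, so this condition holds at every $x \in \mathcal{X}_r$ off an event of probability $o(1)$.

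For the scale-$r$ condition, the requirement $|\Gamma_{x'} - \Gamma_x - (x'-x)| \leq (M/10)\,r^{2/3}$ for $|x'-x| \leq r$ is controlled by the event $A_x$ of Lemma \ref{l:fluccond1} together with $A_{x \pm r}$: since that lemma's window has half-width $r/2$, a triangle-inequality chain through the adjacent lattice point $x \pm r \in \mathcal{X}_r$ handles the annulus $r/2 < |x'-x| \leq r$ at the cost of at most tripling the constant, which is absorbed by taking $M$ slightly larger. Lemma \ref{l:fluccond1} then yields at most $\frac{n}{10000r}$ failures with exponentially high probability. For the scale-$L^{3/2}r$ condition, the event $C_x$ of Lemma \ref{l:fluccond2} is already strictly stronger than required (it bounds the deviation by $\frac{1}{10}L^{11/10}r^{2/3}$ rather than $L^{11/10}r^{2/3}$) and similarly produces at most $\frac{n}{10000r}$ failures.

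Since $|\mathcal{X}_r| = n/(3r)$, after slightly tightening the constants in Lemmas \ref{l:fluccond1} and \ref{l:fluccond2} (possible because their proofs have slack via the vanishing $\epsilon(M), \epsilon(L)$ coming from Corollary \ref{c:transversaldiffslope}) each violation count can be pushed below $|\mathcal{X}_r|/10^4$, so a union bound gives at most $2|\mathcal{X}_r|/10^4$ total failures of $G_x^f$ with probability $\geq 1 - 10^{-4}$ for $n$ large. The only mildly nontrivial step is the chaining argument extending $A_x$'s half-column control to the full width $r$ demanded by \eqref{e:gxmaxnbhd1}; everything else is a direct application of the two already-established fluctuation lemmas plus the global transversal bound.
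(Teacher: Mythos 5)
Your decomposition into three sub-events (global transversal bound via Theorem~\ref{t:transversal}, scale-$r$ fluctuation via $A_x$ from Lemma~\ref{l:fluccond1}, scale-$L^{3/2}r$ fluctuation via $C_x$ from Lemma~\ref{l:fluccond2}) is exactly the paper's decomposition, and your union-bound assembly matches the paper's one-line conclusion. What you add, and what the paper omits, is the observation that $A_x$ only controls $|x'-x|\leq r/2$ whereas the first branch of~\eqref{e:gxmaxnbhd1} needs $|x'-x|\leq r$, so a chain through $A_{x\pm r}$ (pivoting at $x\pm r/2$) is required. That step is correct and does cost a factor of $3$ in the constant — but note your phrasing \emph{``absorbed by taking $M$ slightly larger''} is off, since $M$ appears with the same coefficient $M/10$ both in the definition of $A_x$ and in~\eqref{e:gxmaxnbhd1}, so raising $M$ moves both sides. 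What you actually need is that the proof of Lemma~\ref{l:fluccond1} can be run to give $A_x$ with constant $\leq M/30$ (e.g.\ replace $M/50$ by $M/500$ throughout and correspondingly strengthen the requirement $M\geq 10^6\tilde C$; the slack is supplied by $\epsilon(M)\to 0$ from Corollary~\ref{c:transversaldiffslope}, exactly as you point out), after which the tripling brings you back to $M/10$, with an extra $r^{2/3}$ of room to absorb the gap between $y^*$ and $\Gamma_x-x$ from the $r^{2/3}\Z$-discretization. With that correction your argument is complete and, aside from making the chaining explicit, is the same as the paper's.
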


\begin{proof}
Since $C$ is sufficiently large, it follows from Theorem \ref{t:moddevlowertail}, Proposition \ref{t:treesup} and Theorem \ref{t:transversal} that with probability at least $1-10^{-5}$ the first condition in the definition of $G_x^{f}$ holds for all $x$ . It follows from Lemma \ref{l:fluccond1} and Lemma \ref{l:fluccond2} that
$$\P[\#\{x\in \mathcal{X}_r: G_{x}^{f}~\text{ does not hold hold for failing }~(\ref{e:gxmaxnbhd1})\}>\frac{2n}{10000r}]\leq 10^{-5}.$$
The lemma follows.
\end{proof}

Now we are ready to prove Theorem \ref{t:gxeverywhere}.

\begin{proof}[Proof of Theorem \ref{t:gxeverywhere}]
Observe that for $x\in \mathcal{X}_r$ if $G_{x,y(x,\Gamma)}^{{\rm loc}}$ holds and $G_x^{f}$ holds then $G_x^{{\rm loc}}$ holds. The theorem now follows from Proposition \ref{p:gxyunion}, Proposition \ref{p:gxgammawing} and Proposition \ref{p:gxgammafluctuation}.
\end{proof}


\section{Probability bounds on $R_{x}$, $H_x$, $Q$ and the conditional measure}
\label{s:rxcond}
In this section, we work out estimates of probabilities of $R_x$ and prove Theorem \ref{l:rdecreasing}, and also estimates for probabilities of $H_x$ conditional on $R_x$ and ultimately prove Theorem \ref{p:gxhxconditional}. We shall also prove Theorem \ref{t:steep}.

\subsection{Bounds on $R_x$}
First we prove Theorem \ref{l:rdecreasing}. We start with the following proposition.

\begin{proposition}
\label{p:rxy}
For each $x\in \mathcal{X}_r$ and for each $y\in r^{2/3}\Z$ we have $\P[R_{x,y}]>\beta>0$ where $\beta$ is a constant independent of $r$.
\end{proposition}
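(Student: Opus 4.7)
The plan is to reduce the claim to verifying each of the two conditions defining $R_{x,y}$ separately via an FKG decomposition. Since $B_1^*(x,y,r)$ and $B_2^*(x,y,r)$ are disjoint, the events on the two walls depend on independent portions of $\Pi$, so it suffices to lower bound the probability of the $B_1^*$-part uniformly (the $B_2^*$-part is symmetric). Crucially, both the inequalities $\hat X^{(B_1^*)^c}_{u,u'}\leq Cr^{1/3}$ of (i) and $\tilde X^{(B_1^*)^c}_{u,u'}\leq -C^*r^{1/3}$ of (ii) are \emph{decreasing} events in $\Pi\cap B_1^*$: removing points can only shorten increasing paths confined to $B_1^*$. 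Hence by the FKG inequality $\mathbb{P}[\text{(i)}\cap\text{(ii)}]\geq \mathbb{P}[\text{(i)}]\cdot\mathbb{P}[\text{(ii)}]$, reducing the task to lower bounding each factor.

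For condition (ii), the wall $B_1^*$ is a thin parallelogram of $x$-width $r/10$ and $(y'-x')$-extent $(L+M)r^{2/3}$, with total area $(L+M)r^{5/3}/10$. By Theorem \ref{BDJ99} combined with Theorem \ref{t:moddevuppertail}, with probability $1-o(1)$ the supremum of $X^{(B_1^*)^c}_{u,u'}$ over all admissible pairs is at most $2\sqrt{(L+M)/10}\,r^{5/6}(1+o(1))$. On the other hand, for $u$ on the left boundary of $B_1^*$ (at $x'=x-r/2$) and $u'$ on the right boundary (at $x'=x-2r/5$), the horizontal separation is $r/10$ while the vertical separation is at least $r/10-3\max(L,M,L^{11/10})r^{2/3}\geq r/20$ for $r$ large. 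Therefore $A(u,u')\geq r^{2}/200$ and $\mathbb{E}X_{u,u'}\geq r/\sqrt{50}=\Theta(r)$. Subtracting, $\tilde X^{(B_1^*)^c}_{u,u'}\leq O(r^{5/6})-\Omega(r)\leq -r/10$ uniformly in $(u,u')$, which is far smaller than the required $-C^*r^{1/3}$, so $\mathbb{P}[\text{(ii)}]\geq 1-o(1)$.

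For condition (i), the trivial monotonicity $X^{(B_1^*)^c}_{u,u'}\leq X_{u,u'}$ combined with $\mathbb{E}X_{u,u'}\leq d(u,u')$ (noted in the preliminaries) gives $\hat X^{(B_1^*)^c}_{u,u'}\leq\tilde X_{u,u'}$. Theorem \ref{t:moddevuppertail} then yields $\mathbb{P}[\tilde X_{u,u'}>Cr^{1/3}]\leq e^{-cC^{3/2}}$ for a single pair. A standard discretization of the admissible positions of $(u,u')$ on a grid of spacing $o(r^{1/3})$ in the $(y'-x')$-coordinate, together with a union bound using a supremum moderate deviation estimate along the lines of Proposition \ref{t:treesup}, makes this uniform over admissible pairs. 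Choosing $C$ large enough gives $\mathbb{P}[\text{(i)}]\geq 1-\chi$ for any prescribed $\chi>0$.

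Combining via FKG on $B_1^*$ and multiplying by the independent symmetric bound on $B_2^*$ yields $\mathbb{P}[R_{x,y}]\geq\beta>0$ uniformly in $r\in\mathcal{R}$, $x\in\mathcal{X}_r$, and $y\in r^{2/3}\Z$. The one mildly technical step is the uniform control in condition (i), which I would dispatch using the box and supremum moderate deviation estimates collected in \S\ref{s:moddev}--\S\ref{s:para} rather than developing any new ingredient.
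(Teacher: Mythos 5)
Your overall architecture — split $R_{x,y}$ into the $B_1^*$ and $B_2^*$ parts (which are independent since the barriers are disjoint), and within each wall use FKG to multiply the probabilities of conditions (i) and (ii), both of which are decreasing in $\Pi$ — is exactly what the paper does. The treatment of condition (i) is also essentially correct: the monotonicity $\hat X^{(B_1^*)^c}_{u,u'}\leq\tilde X_{u,u'}$ plus moderate deviations and a discretized union bound (with a penalty term \`{a} la Lemma \ref{l:penalty} for pairs far apart in the antidiagonal coordinate, which your sketch glosses over but which is routine) is what Lemma \ref{l:rxycond1} carries out.

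The argument for condition (ii), however, is wrong in a way that matters. You claim $X^{(B_1^*)^c}_{u,u'}\leq 2\sqrt{\text{area}(B_1^*)}(1+o(1))=O(r^{5/6})$. The bound ``LIS $\leq 2\sqrt{N}$'' for $N$ points is a feature of the uniform distribution on a \emph{rectangle} (or any affine copy thereof): there the partial order on the point set is that of a uniform random permutation. The barrier $B_1^*$ is a parallelogram oriented along the diagonal, with width $\sim r$ in the $(1,1)$-direction and thickness only $(L+M)r^{2/3}$ in the antidiagonal direction; for such a set, almost all pairs of Poisson points are comparable, and the longest increasing chain is far larger than $2\sqrt{N}$. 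In fact, since the thickness $(L+M)r^{2/3}$ exceeds the natural transversal fluctuation scale $\sim r^{2/3}$ of an unconstrained geodesic of horizontal span $r/10$, the constraint to $B_1^*$ costs only $O(r^{1/3})$: the constrained length from the bottom-left to the top-right of $B_1^*$ is still $\sim 2\sqrt{A(u,u')}\sim r/5$. Hence $\tilde X^{(B_1^*)^c}_{u,u'}$ is typically of order $r^{1/3}$, and the event $\tilde X^{(B_1^*)^c}_{u,u'}\leq -C^*r^{1/3}$ is a genuine moderate deviation, \emph{not} a near-certain event. Your conclusion $\P[\text{(ii)}]\geq 1-o(1)$ is therefore false; the correct statement (Lemma \ref{l:rxycond2}) is only $\P[\text{(ii)}]\geq\beta'>0$ with $\beta'$ potentially tiny.

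The paper's actual route for (ii) is to discretize the two vertical boundaries of $B_1^*$ into $O((L+M)^2)$ pairs of short segments, show via the Tracy-Widom limit (Theorem \ref{BDJ99}) and moderate deviations (Proposition \ref{t:treeinf}) that for each pair the event $\{\sup\tilde X_{u,u'}\leq -C^*r^{1/3}\}$ has probability at least some $\kappa>0$ independent of $r$, and then combine over all pairs with FKG — all of these are decreasing events — to get $\P[\text{(ii)}]\geq\kappa^{(L+2M)^2}>0$. That FKG combination over a growing (with $L,M$, but not with $r$) family of rare decreasing events is the one genuinely nontrivial idea in the proof of Proposition \ref{p:rxy}, and it is absent from your proposal.
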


This proposition will follow from the next two lemmas.

\begin{lemma}
\label{l:rxycond1}
Let $x\in \mathcal{X}_r$ and $y\in r^{2/3}\Z$ be fixed. Let $R_{1,xy}$ denote the event that $\forall u=(x',y')\in B_1^{*}$ with $y'\geq y-Mr^{2/3}$ and $\forall u'\in \partial^{+}(B_1^{*})$ we have $\hat{X}_{u,u'}^{(B_1^{*})^{c}}\leq Cr^{1/3}$. Then we have $\P[R_{1,xy}]\geq 99/100$ for $C$ sufficiently large.
\end{lemma}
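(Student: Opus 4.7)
The plan is to reduce the event to a supremum estimate on $\tilde{X}_{u,u'}$ over pairs inside the parallelogram $B_1^*$, and then invoke the moderate-deviation toolkit developed in Section~\ref{s:para}.

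First, I would use two monotonicity observations. Restricting the admissible paths to those contained in $B_1^*$ can only shorten the longest one, so $X_{u,u'}^{(B_1^*)^c}\le X_{u,u'}$ and hence $\hat X_{u,u'}^{(B_1^*)^c}\le \hat X_{u,u'}$. Moreover, by the AM-GM consequence of Theorem~\ref{BDJ99} recorded after its statement, $\E X_{u,u'}\le d(u,u')$, which gives $\hat X_{u,u'}\le \tilde X_{u,u'}$. Dropping the further restrictions $y'-x'\ge y-Mr^{2/3}$ and $u'\in\partial^+(B_1^*)$ (which only shrink the set of admissible pairs), it therefore suffices to show
\[
\P\!\left[\sup_{u<u',\,u,u'\in B_1^*}\tilde X_{u,u'}\;>\;C r^{1/3}\right]\;\le\;\tfrac{1}{100}.
\]

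Next, I would bound this supremum via a covering argument. The parallelogram $B_1^*$ has dimensions $\tfrac{r}{10}\times(L+M)r^{2/3}$, so every box $\mathrm{Box}(u,u')$ with $u,u'\in B_1^*$ has area at most $r^2/100$, and the natural fluctuation scale of $\tilde X_{u,u'}$ is therefore $O(r^{1/3})$. I would tile $B_1^*$ by a bounded (depending on $L,M$ only) number of sub-parallelograms of roughly diagonal slope and diameter comparable to $r$, and apply Proposition~\ref{t:treesup} — together with the width corollaries such as Corollary~\ref{c:treesupwidth} for pairs realizing slopes far from $1$ — on each pair of sub-parallelograms. A union bound over this bounded collection yields the displayed estimate, provided $C$ is chosen large enough to absorb the $L,M$-dependent constants.

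The main technical point is handling the wide range of slopes that pairs $(u,u')\in B_1^*$ can realize: as the slope departs from $1$ the box becomes thin and the centering $d(u,u')$ ceases to match $\E X_{u,u'}$, so the different corollaries of the tree estimate must be used for different aspect ratios. The argument is directly analogous to the per-piece bounds organized in the proof of Lemma~\ref{l:gxylocbound}, and the width-type estimates of Section~\ref{s:para} are exactly what is needed to carry it out.
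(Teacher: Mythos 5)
Your first two monotonicity observations ($X^{(B_1^*)^c}_{u,u'}\le X_{u,u'}$ and $\hat X_{u,u'}\le\tilde X_{u,u'}$) are correct. But the reduction that follows throws away exactly the mechanism that makes the lemma true, and the resulting claim is unprovable with the allowed constants. Recall that $B_1^*$ has height $(L+M)r^{2/3}$, while $C$ is fixed at stage 5 of the parameter hierarchy and therefore depends only on $M$ and earlier parameters, \emph{not} on $L$ (which is chosen later, at stage 10). A pair $(u,u')$ with $u$ in the top $M$-band of $B_1^*$ and $u'$ near the bottom boundary has $y$-offset separation of order $(L-M)r^{2/3}$; the only tool available for the supremum of $\tilde X$ over such pairs is Corollary~\ref{c:treesupwidth}, and it yields a bound of the form $\theta\sqrt{L+M}\,r^{1/3}$ with probability $1-e^{-c\theta}$. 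You therefore cannot deduce $\P\bigl[\sup_{u<u',\,u,u'\in B_1^*}\tilde X_{u,u'}>Cr^{1/3}\bigr]\le 1/100$ with $C$ independent of $L$: absorbing the $L$-dependent constant is precisely what the parameter ordering forbids, and later arguments (e.g.\ Lemma~\ref{l:onghr}) rely on $L$ dominating $C$. Your own remark, ``provided $C$ is chosen large enough to absorb the $L,M$-dependent constants,'' is the place where the proof breaks.

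The reason the lemma nevertheless holds is that the bound $\hat X_{u,u'}\le\tilde X_{u,u'}$ is far from tight for the problematic pairs. By Lemma~\ref{l:penalty}, when the tilted-coordinate offset between $u$ and $u'$ is $\ell r^{2/3}$ with $\ell$ large, one has $\E X_{u,u'}\le d(u,u')-c'\ell^2 r^{1/3}$, so $\hat X_{u,u'}\le\tilde X_{u,u'}-c'\ell^2 r^{1/3}$. This quadratic ``curvature penalty'' is exactly what beats the linear-in-$\ell$ (or $\sqrt{\ell}$) growth of the $\tilde X$-supremum. The paper's proof exploits this by introducing, for each offset level $\ell\le L-M$, an event $A_\ell$ that bounds $\tilde X$ by the $\ell$-\emph{dependent} threshold $\tfrac{C+\ell}{10}r^{1/3}$ with failure probability $e^{-c(C+\ell)}$, which is summable in $\ell$ uniformly in $L$; it then combines this with the penalty to recover $\hat X\le Cr^{1/3}$. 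The general $u$ is reduced to this setup via superadditivity, $\hat X_{u,u'}\le\hat X_{u_1,u_2}-\hat X_{u_1,u}-\hat X_{u',u_2}$, with $u_1,u_2$ chosen so each of the three constituent pairs is handled by $A^1$, $A^2$ or $A_\ell$. Discarding the penalty by passing from $\hat X$ to $\tilde X$ at the outset is thus the essential gap; a secondary issue is that your tiling invokes Proposition~\ref{t:treesup} and Corollary~\ref{c:treesupwidth} for ``pairs realizing slopes far from $1$'' when those results, as stated, only cover $\mathcal S$-pairs with bounded slope.
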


\begin{proof}
Let $U=\mathcal{P}=(x-\frac{9r}{20}, y-Mr^{2/3}, \frac{r}{10}, 3Mr^{2/3})$. Let $A^{1}_{x,y}$ denote the event that for all $(u,u')\in \mathcal{S}(U)$ we have $|\tilde{X}_{u,u'}|\leq \frac{C}{10}r^{1/3}$. Let $A^2_{x,y}$ denote the event that for all $u,u'$ in the line segment joining $(x-\frac{r}{2}, x-\frac{r}{2}+y-Lr^{2/3})$ and  $(x-\frac{4r}{5}, x-\frac{4r}{5}+y-Lr^{2/3})$ (i.e., the bottom boundary of $B_1^*$) we have $|\tilde{X}_{u,u'}|\leq \frac{C}{10}r^{1/3}$. It follows from Proposition \ref{t:treeinf} and Proposition \ref{t:treesup} that $\P[A^{1}_{x,y}\cap A^{2}_{x,y}] \geq 1-10^{-3}$ since $C$ is sufficiently large.

Let $L_{U}$ denote the left boundary of $U$. For $0 \leq \ell \leq L-M$ let $L_{\ell}$ denote the vertical line segment joining $(x-\frac{4r}{5}, x-\frac{4r}{5}+y-Mr^{2/3}-\ell r^{2/3})$ and $(x-\frac{4r}{5}, x-\frac{4r}{5}+y-Mr^{2/3}-(\ell+1)r^{2/3})$. Let $A_{\ell}$ denote the event that for all $u\in L_{U},u'\in L_{\ell}$ we have $\tilde{X}_{u,u'}\leq \frac{C+\ell}{10} r^{1/3}$. It follows from Proposition \ref{t:treesup} that $\P[A_{\ell}^{c}]\leq e^{-c(C+\ell)}$ for some absolute constant $c>0$. It follows by taking a union bound over all $\ell$ that $\P[\cap_{\ell} A_{\ell}]\geq \frac{999}{1000}$ since $C$ is large enough.

It suffices now to show that $$A^1_{x,y}\cap A^2_{x,y} \cap \left(\bigcap_{\ell}A_{\ell}\right) \subseteq R_{1,xy}.$$

To show this observe that if $u\in L_U$ and $u'$ is on the right boundary of $B_1^*$, this follows from Lemma \ref{l:penalty} since $C$ is sufficiently large. Otherwise, set $u_1\in L_U$ such that the line joining $u_1$ and $u$ has slope 1. Set $u_2=u'$ if $u'$ is on the right boundary of $B_1^*$, otherwise set $u_2=(x-\frac{4r}{5}, x-\frac{4r}{5}+y-Lr^{2/3})$. Then observe that
$$ \hat{X}_{u,u'} \leq \hat{X}_{u_1,u_2}-\hat{X}_{u_1,u}-\hat{X}_{u',u_2}. $$

The lemma now follows from the definition of $A^1_{x,y}$ and $A^2_{x,y}$ and Lemma \ref{l:penalty}.
%
%
%
\end{proof}

\begin{lemma}
\label{l:rxycond2}
Let $x\in \mathcal{X}_r$ and $y\in r^{2/3}\Z$ be fixed. Let $R_{2,xy}$ denote the event that for all $u\in B_1^{*}\cap W^{1}$, $\forall u'\in B_1\cap \mathfrak{C}$ we have $\tilde{X}_{u,u'}^{(B_1^{*})^{c}}\leq -C^*r^{1/3}$. Then we have $\P[R_{2,xy}]\geq \beta'>0$ where $\beta'$ is independent of $r$.
\end{lemma}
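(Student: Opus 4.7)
The goal is to exhibit a Poisson configuration in the barrier $B_1^*$ that, with probability bounded below uniformly in $r$, forces every increasing path from the left boundary $B_1^*\cap W^1$ to the right boundary $B_1^*\cap\mathfrak{C}$ (constrained to $B_1^*$) to undershoot its unconstrained expected length by at least $C^*r^{1/3}$. Since $C^*$ is a large constant chosen last, this is a genuine moderate-deviation event for the LPP landscape; the main tension is that we want the event uniformly over a two-parameter family of pairs $(u,u')$ whose cardinality is infinite.

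The plan is to proceed in three steps. First I would reduce the uniform condition to a finite family of extremal events by discretizing each of the two boundary segments (which have length $(L+M)r^{2/3}$) into $\mathcal{O}(L+M)$ subintervals of length $r^{2/3}$. For each pair $(I_i,J_j)$ of subintervals, monotonicity of $X_{u,u'}^{(B_1^*)^c}$ in the endpoints implies $X_{u,u'}^{(B_1^*)^c}\le X_{u_{i,-},u_{j,+}}^{(B_1^*)^c}$ for all $u\in I_i$, $u'\in J_j$, where $u_{i,-}$ is the bottom of $I_i$ and $u_{j,+}$ is the top of $J_j$.

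Second, for each such extremal pair, $X_{u_{i,-},u_{j,+}}^{(B_1^*)^c}$ is a single LPP-type random variable: its geometric constraint to $B_1^*$ (whose transverse extent $(L+M)r^{2/3}$ comfortably exceeds the natural transversal fluctuation of scale $r^{2/3}$) affects its mean by at most a constant multiple of $r^{1/3}$, and its fluctuations are of order $r^{1/3}$. Using the Tracy--Widom lower tail of the form $\P[F_{TW}\le -s]\ge c\,e^{-c's^{3}}$ that follows from Theorem~\ref{BDJ99} together with quantitative convergence, I would conclude
\[
\P\bigl[\tilde{X}_{u_{i,-},u_{j,+}}^{(B_1^*)^c}\le -(C^*+K)r^{1/3}\bigr]\ge q(C^*,K)>0
\]
for any fixed slack constant $K$, uniformly in $r$.

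Third, I would combine the $\mathcal{O}((L+M)^2)$ extremal events using the FKG inequality. Each is a decreasing functional of the Poisson configuration restricted to $B_1^*$ (adding points can only lengthen paths), so Harris--FKG gives that the simultaneous event has probability at least the product of the individual probabilities, which is a constant $\beta'>0$ independent of $r$. Converting back to the uniform statement uses the deterministic estimate that $\E X_{u,u'}$ varies by $\mathcal{O}(r^{2/3}/D)$ over a cell of size $r^{2/3}/D$; this is the reason for the slack $K$ in Step~2 and for choosing the discretization fine enough (but still $\mathcal{O}(1)$ many cells once $L,M,C^*$ are fixed) that the accumulated deterministic variation is dominated by $C^*r^{1/3}$.

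The main obstacle is exactly the tension in Step~3: a priori, $\E X_{u,u'}$ varies on the scale $r^{2/3}$ across the pair of boundaries while the tolerance is only $r^{1/3}$, so any discretization fine enough to absorb $\E X$-variation naively produces a growing number of events whose FKG product collapses. The resolution is to separate the \emph{deterministic} variation of $\E X_{u,u'}$ (absorbable into a cell-by-cell adjustment of the threshold) from the \emph{stochastic} variation of $X_{u,u'}^{(B_1^*)^c}$ (controlled via Hölder regularity of the LPP field at its natural scale), so that only a constant-in-$r$ number of moderate-deviation events need to be combined.
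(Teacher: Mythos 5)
Your skeleton (discretize the two boundary segments into $O(1)$ cells, prove a lower-bounded probability for each cell's moderate-deviation event, combine via FKG for decreasing events) is the right shape and matches the paper's at the coarse level. You also correctly identify the one genuinely delicate point: the expectation $\E X_{u,u'}$ varies by order $r^{2/3}$ over the boundary segments, while the tolerance is $r^{1/3}$. But the resolution you sketch does not resolve it, and Step~2 as written is where the argument breaks.

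The monotonicity inequality $X_{u,u'}^{(B_1^*)^c}\le X_{u_{i,-},u_{j,+}}^{(B_1^*)^c}$ compares \emph{uncentered} lengths, and $\tilde X$ is centered by the endpoint-dependent mean $\E X_{u,u'}$. So
$\tilde{X}_{u,u'}^{(B_1^*)^c}\le \tilde{X}_{u_{i,-},u_{j,+}}^{(B_1^*)^c}+\bigl(\E X_{u_{i,-},u_{j,+}}-\E X_{u,u'}\bigr)$,
and the discrepancy in the mean is first-order in the vertical displacement: over a cell of height $r^{2/3}/D$ it is of order $r^{2/3}/D$, not $(r^{2/3}/D)^2/r$. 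There is no quadratic cancellation here, because $u_{i,-}$ and $u$ sit on the \emph{same} vertical line while the downstream endpoint stays the same — you are moving one endpoint of the polymer, not moving the whole polymer rigidly along the diagonal. To make $r^{2/3}/D\lesssim r^{1/3}$ one needs $D\gtrsim r^{1/3}$, at which point the number of cells, hence the number of events to intersect via FKG, grows like $r^{2/3}$, and a product of that many constants in $(0,1)$ vanishes as $r\to\infty$. The ``slack $K$'' is a constant and cannot absorb a divergent deterministic term; the final paragraph of your plan essentially restates the obstacle rather than circumventing it.

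The paper's actual mechanism is different in exactly this step and is the idea you are missing. Instead of dominating $X^{(B_1^*)^c}_{u,u'}$ by an extremal-endpoint path, it introduces auxiliary anchor points $v(s)$ and $v'(t)$ placed a macroscopic distance $\theta r$ to the left and right of the segments $L_1^s,L_2^t$, chosen so that $v(s)<u<u'<v'(t)$ are approximately collinear on a slope-$1$ line. Then superadditivity $X_{v(s),v'(t)}\ge X_{v(s),u}+X_{u,u'}+X_{u',v'(t)}$ gives, after centering,
$\tilde{X}_{u,u'}\le \tilde{X}_{v(s),v'(t)}-\tilde{X}_{v(s),u}-\tilde{X}_{u',v'(t)}+O(r^{1/3})$,
where the $O(r^{1/3})$ comes from Lemma~\ref{l:penaltysum}: because all four points are nearly collinear, the first-order terms in $\E X$ cancel and only the quadratic penalty survives. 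The event is then implied by $\{\tilde{X}_{v(s),v'(t)}\le -2C^*r^{1/3}\}$ (a single moderate-deviation event with probability $\ge\kappa>0$ by the Tracy--Widom limit) intersected with $\{\inf_{u\in L_1^s}\tilde{X}_{v(s),u}\ge -C^*r^{1/3}/5\}$ and $\{\inf_{u'\in L_2^t}\tilde{X}_{u',v'(t)}\ge -C^*r^{1/3}/5\}$, and these two are \emph{high-probability} events by Proposition~\ref{t:treeinf}, not additional moderate deviations. The discretization stays at $O((L+2M)^2)$ cells of height $r^{2/3}$ each, and FKG over those decreasing events closes the argument with an $r$-independent constant. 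The essential ingredient your proposal lacks is precisely this anchor-point superadditivity that converts the uniform statement over a cell into a single point-to-point moderate deviation plus a Hölder-type control obtained for free with overwhelming probability.

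Two smaller points. First, the constrained quantity $\tilde{X}^{(B_1^*)^c}_{u,u'}$ is pointwise at most the unconstrained $\tilde{X}_{u,u'}$, so it suffices to bound the latter (this is what the paper does), and one does not actually need any estimate on how the $B_1^*$-constraint shifts the mean. Second, you claim a Tracy--Widom lower-tail bound of the form $\P[F_{TW}\le -s]\ge ce^{-c's^3}$; this is true but unnecessary here — $C^*$ is a fixed constant once $L,M$ are fixed, so it is enough to invoke weak convergence to $F_{TW}$ at a fixed argument, which is exactly what the paper does.
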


\begin{proof}
For $s,t=0,1,\ldots (L+2M)$, define points $u_{s}=(x-r/2,x-r/2+y-Lr^{2/3}+sr^{2/3})$, and $u'_{t}=(x-2r/5, x-2r/5 + y-Lr^{2/3}+tr^{2/3})$. Let $L_{1}^{s}$ denote the line segment joining $u_{s}$ and $u_{s+1}$ and $L_2^{t}$ denote the line segment joining $u'_t$ and $u'_{t+1}$. Let $A_{s,t}$ denote the following event.

$$A_{s,t}=\{\sup_{u\in L_1^{s}, u'\in L_2^{t}}\tilde{X}_{u,u'}\leq -C^*r^{1/3}\}.$$

\begin{figure}[ht!]
  \centering
  \begin{tabular}{cp{.05\textwidth}c}
  \hspace{-1cm}
     \includegraphics[width=.3\linewidth]{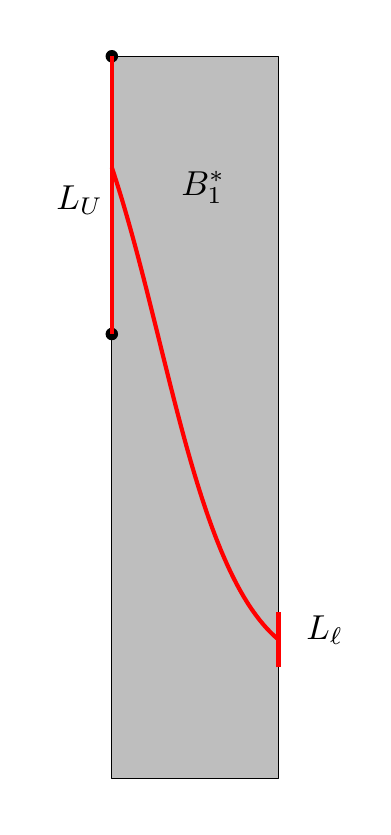} 
     \hspace{-2.75cm}(a)
     \hspace{1cm}
     &&
     \includegraphics[width=.3\linewidth]{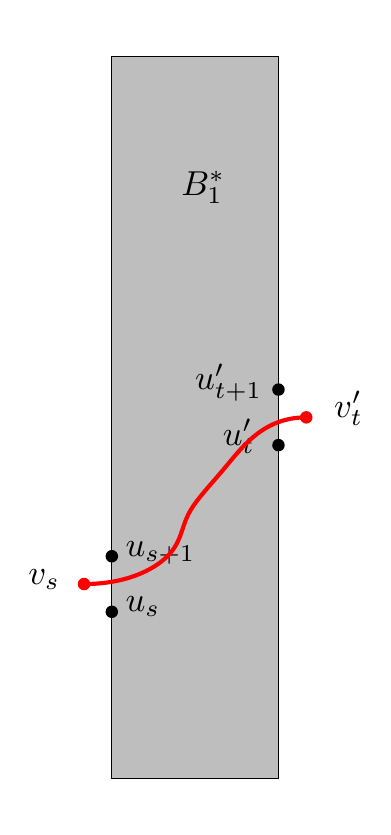}
      \hspace{-2.75cm}(b)
      \\
  \end{tabular}
\caption{Constructions in the proofs of Lemma \ref{l:rxycond1} and Lemma \ref{l:rxycond2}. In Lemma \ref{l:rxycond1} (see (a)) we ask for each $\ell$, a maximal path from any point in $L_{U}$ to any point in $L_{\ell}$ is not too much longer than it's typicall length. For Lemma \ref{l:rxycond2} (see (b)), for each pair $s,t$, we ask that the maximal path between $v_{s}$ and $v'_{t}$ has much smaller than typical length which implies that the maximal path from any point between $u_{s}$ and $u_{s+1}$ to any point between $u'_{t}$ and $u'_{t+1}$ is also likely to have a shorter than typical length.}
\label{f:rcond}
\end{figure}

%
%

First we prove that $\P[A_{s,t}]$ is bounded away from $0$ uniformly in $r$. Fix $\theta>0$. Define the points $v(s)=(x-r/2-\theta r, x-r/2-\theta r+y-Lr^{2/3}+(s+\frac{1}{2})r^{1/3})$ and $v'(t)=(x-2r/5+\theta r, x-2r/5+\theta r+ y-Lr^{2/3}+(t+\frac{1}{2})r^{1/3})$. Observe that for $C^*$ sufficiently large, for all $u\in L_1^{s}$ and all $u'\in L_2^{t}$ we have
$$\tilde{X}_{v(s),u}+\tilde{X}_{u,u'}+\tilde{X}_{u',v'(t)}\leq \tilde{X}_{v(s),v'(t)}-\frac{C^*r^{1/3}}{10}.$$

It clearly follows that
\begin{eqnarray*}
\label{e:r2cond}
\P[\sup_{u\in L_1^{s},u'\in L_2^{t}}\tilde{X}_{u,u'} &\leq & -C^*r^{1/3}]\geq \P[\tilde{X}_{v(s),v'(t)}\leq -2C^*r^{1/3}]-\P[\inf_{u\in L_1^{s}}\tilde{X}_{v(s),u}\leq -C^*r^{1/3}/5]\\
&-&\P[\inf_{u'\in L_2^{s}}\tilde{X}_{u',v'(t)}\leq -C^*r^{1/3}/5].
\end{eqnarray*}

Now observe that by Theorem \ref{BDJ99}, it follows that there exists a constant $\kappa$ (depending on $C^*$) such that for all sufficiently large $r$, we have $\P[\tilde{X}_{v(s),v'(t)}\leq -2C^*r^{1/3}] > 10\kappa$. By choosing $\theta$ sufficiently small and using Proposition \ref{t:treeinf} we get that $\P[A_{s,t}]\geq \kappa >0$. Now notice that since $A_{s,t}$ is a decreasing event for all $s$ and $t$, by the FKG inequality it follows that
$$\P[R_{2,xy}]=P[\cap_{s,t}A_{s,t}]\geq \kappa^{(L+2M)^2}\geq \beta'>0$$
where $\beta'$ is independent of $r$. This completes the proof of the lemma.
\end{proof}

\begin{proof}[Proof of Proposition \ref{p:rxy}]
Observe that since $R_{1,xy}$ and $R_{2,xy}$ are both decreasing events, it follows by the FKG inequality that that $\P[R_{1,xy}\cap R_{2,xy}]\geq \frac{99 \beta'}{100}$. By symmetry we establish the same bounds for the right barrier $B_2^{*}$ and since the two barriers are independent it follows that $\P[R_{xy}]\geq \frac{9(\beta') ^2}{10}\geq \beta >0$, which completes the proof of the Proposition.
\end{proof}

Now we are ready to prove Theorem \ref{l:rdecreasing}.

\begin{proof}[Proof of Theorem \ref{l:rdecreasing}]
The first inequality follows from FKG inequality by noting that both $\{\Gamma=\gamma\}$ and $R_{x,\gamma}$ are decreasing events on the configuration on $\R^2\setminus \{\gamma\}$. The second inequality is trivial and the last inequality was already proved in Proposition~\ref{p:rxy}.
\end{proof}

\subsection{Probability bounds for $H_x$}
\begin{theorem}
\label{t:hxmustar}
For each $x\in \mathcal{X}_r$, we have $\mu_{x}^{*}(H_x)\geq \frac{98}{100}$.
\end{theorem}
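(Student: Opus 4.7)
The plan is to leverage the density bound from Lemma~\ref{l:mustarbasic}(ii), namely $\mathrm{d}\mu_x^*/\mathrm{d}\mu \leq 1/\beta$, which reduces the task to proving $\mu(H_x^c) \leq 2\beta/100$. Since $H_x$ asks that certain path lengths be much \emph{shorter} than their typical values, and the relevant paths are forced to deviate substantially from the diagonal direction, the parabolic penalty will overwhelm the constant $\beta^{-1}$ provided $L$ is sufficiently large (which is consistent with the parameter ordering, since $L$ is chosen later than everything determining $\beta$).

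The first step is to replace the $\Gamma$-dependent event $H_x = H_{x,\Gamma}$ by the configuration-only event $H_{x, y(x,\Gamma)}$. Indeed, $H_{x,y} \subseteq H_{x,\gamma}$ for $y = y(x,\gamma)$: the $\gamma$-avoidance constraint only shrinks $\tilde{X}^\gamma$ and $\hat{X}^\gamma$, and removing the ``$u'$ below $\gamma$'' restriction in condition~(ii) only enlarges the class of points to verify. Thus
\[
\mu(H_x^c) \;\leq\; \mu\bigl(H_{x, y(x,\Gamma)}^c\bigr) \;\leq\; \sum_{y_0 \in r^{2/3}\Z} \mu\bigl(H_{x, y_0}^c \cap \{y(x,\Gamma) = y_0\}\bigr),
\]
and by Theorem~\ref{t:transversal} (transversal fluctuations), the sum is effectively restricted to $|y_0| \leq K n^{2/3}$, contributing only a polynomial-in-$n$ number of relevant terms.

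Next I would bound $\mu(H_{x,y_0}^c)$ uniformly in $y_0$ using a parabolic-penalty plus moderate-deviation argument. The floor $F$ sits at diagonal offset $y_0 - Lr^{2/3}$, while $\Lambda$ and the parallelogram in condition~(ii) lie in the offset range $[y_0 - 2Mr^{2/3},\, y_0 + Mr^{2/3}]$. Since $L \gg M$, any path from $u \in F$ to $u' \in F$ forced through a midpoint $v \in \Lambda$ must make an upward excursion of at least $(L-2M)r^{2/3}$, and decomposing $^\Lambda X_{u,u'} = \max_{v \in \Lambda}(X_{u,v} + X_{v,u'})$ and applying the two-point parabolic calculation behind Lemma~\ref{l:penalty} yields
\[
\E[^{\Lambda} X_{u,u'}] \;\leq\; \E X_{u,u'} \;-\; c(L-2M)^2 r^{1/3},
\]
uniformly over $u, u' \in F$ and $v \in \Lambda$. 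Since $(L-2M)^2 \gg L$, the event $^{\Lambda}\tilde{X}_{u,u'} > -L r^{1/3}$ demands an upper-tail deviation of order $L^2 r^{1/3}$ from the mean, which by Theorem~\ref{t:moddevuppertail} has probability at most $\exp(-c L^3)$. A union bound at the $r^{2/3}$-scale discretization of the triple $(u,v,u')$ introduces only a polynomial-in-$r$ factor, still negligible compared to $\exp(-cL^3)$. Condition~(ii) is handled identically: for $u \in F$ and $u'$ in the larger parallelogram, the slope between them is forced away from $1$ by an $\Omega(Lr^{-1/3})$ amount, giving $\E\hat{X}_{u,u'} \lesssim -cL^2 r^{1/3}$ and the same $\exp(-cL^3)$ upper-tail control.

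The main obstacle is verifying cleanly that the mean of $^{\Lambda} X_{u,u'}$ is depressed by $\Omega(L^2 r^{1/3})$ uniformly over the midpoint choice $v \in \Lambda$, rather than only for the ``best'' such $v$. This is a direct two-point computation analogous to the parabolic penalty in Lemma~\ref{l:penalty}, extended to maxima over a two-dimensional region via an $r^{2/3}$-scale discretization. Once this is in hand, the superexponential decay $\exp(-cL^3)$ swamps both the polynomial union-bound factors and the factor $\beta^{-1}$ (which from the proof of Lemma~\ref{l:rxycond2} is at worst $\kappa^{-(L+2M)^2}$), yielding $\mu(H_x^c) \leq 2\beta/100$ and hence $\mu_x^*(H_x) \geq 98/100$.
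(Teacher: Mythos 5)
Your proposal takes a fundamentally different route from the paper and has several genuine gaps. The paper's proof of Theorem~\ref{t:hxmustar} does \emph{not} use the density bound $\mathrm{d}\mu_x^*/\mathrm{d}\mu \leq 1/\beta$ from Lemma~\ref{l:mustarbasic}(ii); it uses FKG/stochastic domination (Lemma~\ref{l:mustarbasic}(i)). Conditionally on $\{\Gamma=\gamma\}$, the events $\{\Gamma=\gamma\}$, $R_{x,\gamma}$ and $H_{x,\gamma}$ are all decreasing in the configuration on $\R^2\setminus\gamma$, so FKG gives $\mu_x^*(H_x\mid\Gamma=\gamma)\geq\mu(H_{x,\gamma})\geq\min_y\P[H_{x,y}]$, and averaging over $\gamma$ reduces the theorem to Proposition~\ref{p:hxybound}, i.e.\ to $\P[H_{x,y}]\geq 98/100$. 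This is a far weaker requirement than the $\mu(H_x^c)\leq 2\beta/100$ demanded by your density-bound reduction.

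Your density route fails for two compounding reasons. First, you assert that $L$ is chosen later than everything determining $\beta$, but this is not so: $\beta$ itself depends on $L$. From Lemma~\ref{l:rxycond2}, $\P[R_{2,xy}]\geq\kappa^{(L+2M)^2}$, so $\beta^{-1}$ is of order $\exp(cL^2)$; you acknowledge this but then need to beat it. Second, your claimed $\exp(-cL^3)$ bound on $\mu(H_{x,y_0}^c)$ is not achievable by the stated argument. The pointwise moderate deviation bound of Theorem~\ref{t:moddevuppertail} (exponent $\theta^{3/2}$) does indeed give $\exp(-cL^3)$ for a \emph{fixed} triple $(u,v,u')$, but a naive union bound over a $r^{2/3}$-scale discretization of $F\times\Lambda\times F$ multiplies this by a factor polynomial in $r$, which grows unboundedly for fixed $L$. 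The paper's uniform-over-parallelogram tools, Proposition~\ref{t:treesup} and Corollary~\ref{c:treesupwidth}, give tails of the form $\exp(-c\theta)$ at fluctuation scale $\sqrt{L}\,r^{1/3}$; since the parabolic depression of the mean is of order $L^2 r^{1/3}$, this amounts to $\theta\sim L^{3/2}$ and hence only $\mu(H_{x,y_0}^c)\leq\exp(-cL^{3/2})$. That does \emph{not} dominate $\beta^{-1}\sim\exp(cL^2)$. Finally, your reduction $\mu(H_x^c)\leq\sum_{y_0}\mu(H_{x,y_0}^c\cap\{y(x,\Gamma)=y_0\})$, bounded term-by-term by $\mu(H_{x,y_0}^c)$ and restricted via transversal fluctuations to $O((n/r)^{2/3})$ values of $y_0$, introduces an $n$-dependent factor; since $L$ and $\beta$ are fixed before $n$, this factor cannot be absorbed for $n$ large. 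The FKG argument is precisely what lets the paper sidestep both the $L$-versus-$\beta$ race and the union bound over the random offset $y(x,\Gamma)$.
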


To prove Theorem \ref{t:hxmustar} we need the following Proposition.

\begin{proposition}
\label{p:hxybound}
For $x\in \mathcal{X}_r$, $y\in r^{2/3}\Z$, we have
\begin{equation}
\label{e:hxyprob1}
\P(H_{x,y})\geq 98/100.
\end{equation}
\end{proposition}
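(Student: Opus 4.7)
The plan is to prove the two defining conditions of $H_{x,y}$ separately, each with failure probability at most $1/200$, and then union bound. Both instantiate the same mechanism: an increasing path whose endpoints force a large deviation from slope~$1$ is much shorter than typical, via the parabolic penalty captured by Lemma~\ref{l:penalty}.

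For condition~(ii), which is the cleaner of the two, the pair $u\in F$ and $u'\in\mathcal{P}(x,y-2Mr^{2/3},r,3Mr^{2/3})$ has offsets from the diagonal differing by at least $(L-2M)r^{2/3}$ over horizontal extent at most $r$, so the slope of the line through $u$ and $u'$ differs from $1$ by $\Omega(Lr^{-1/3})$. Lemma~\ref{l:penalty} then gives
\[
\hat{X}_{u,u'}\;\leq\;\tilde{X}_{u,u'}\;-\;c\,L^{2}r^{1/3}
\]
(and the analogous bound for $\hat{X}_{u',u}$ in the reversed ordering), uniformly in $u,u'$ once we replace them with the corners of a grid of side~$r^{2/3}$ covering the two parallelograms, using monotonicity of $X_{\cdot,\cdot}$ in its endpoints to pass from continuum to grid. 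Proposition~\ref{t:treesup} controls the upper tail of each $\tilde{X}$ term, and choosing $L$ large enough that $c L^{2}r^{1/3}$ dominates both $Lr^{1/3}$ and the logarithm of the polynomial-in-$r$ discretization count yields the claim.

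For condition~(i) the same idea is applied twice. Any increasing path from $u\in F$ to $u'\in F$ intersecting $\Lambda$ must pass through some via-point $v\in\Lambda$, so $^{\Lambda}X_{u,u'}\leq X_{u,v}+X_{v,u'}$. Since $F$ sits at offset $y-Lr^{2/3}$ while $\Lambda$ reaches down only to $y-2Mr^{2/3}$, both slope deviations $(u\to v)$ and $(v\to u')$ are $\Omega(Lr^{-1/3})$, so Lemma~\ref{l:penalty} gives
\[
\hat{X}_{u,v}+\hat{X}_{v,u'}\;\leq\;\tilde{X}_{u,v}+\tilde{X}_{v,u'}\;-\;c\,L^{2}r^{1/3}.
\]
Discretize $\Lambda$ by cells of side~$r^{2/3}$ (producing $O(Lr^{1/3})$ cells), and discretize each of the two copies of $F$ into $O(r^{1/3})$ cells; the supremum over $v$ in a cell is bounded, using monotonicity, by evaluating $X_{u,\cdot}$ at the top-right corner and $X_{\cdot,u'}$ at the bottom-left corner of the cell. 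Proposition~\ref{t:treesup} and a union bound over the $O(L r)$ triples of cells give failure probability at most~$1/200$.

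The main obstacle is the continuum-to-discretization reduction, since a path may cross $\Lambda$ without any configuration point lying inside $\Lambda$, and $u,u'$ range over the continuum. Monotonicity of the LPP statistic $X_{\cdot,\cdot}$ in its endpoints handles all three reductions to a polynomial-size grid at the cost of absorbing on-scale fluctuations into the $\tilde{X}$ terms; the polynomial discretization cost is then negligible against the $\Omega(L^{2}r^{1/3})$ parabolic gain once $L$ is chosen large.
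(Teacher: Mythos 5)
Your core mechanism --- that the endpoints are forced to deviate from slope~$1$ by $\Theta(Lr^{-1/3})$ at the center of the parallelogram, so the parabolic penalty of Lemma~\ref{l:penalty} buys a factor $\sim L^2 r^{1/3}$ that swamps the on-scale fluctuations --- is exactly the right idea, and it is the mechanism behind the paper's Lemmas~\ref{l:hxycond1} and~\ref{l:hxycond2}. But as written there is a genuine gap: Lemma~\ref{l:penalty} (and likewise Proposition~\ref{t:treesup} and Corollary~\ref{c:treesupwidth}) only applies to pairs whose connecting slope lies in $(\frac{1}{\psi},\psi)$ and whose scaled offset $|h_1-h_0|$ is at most $r^{1/10}$, and these constraints fail for a substantial range of the pairs you consider. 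For condition~(ii), $u\in F$ and $u'\in U$ satisfy $|x_u-x_{u'}|>(L-2M)r^{2/3}$ in the worst case but not $|x_u-x_{u'}|\gtrsim r$; as $|x_u-x_{u'}|$ shrinks toward its minimum, the slope of the pair goes to $0$ or $\infty$ and the offset in the local $r^{2/3}$-units diverges, so the quadratic penalty lemma simply does not apply there. For condition~(i), the via-point $v\in\Lambda$ with $u<v<u'$ can be arbitrarily close to $u$ horizontally (so $(u,v)$ is arbitrarily steep), and $(v,u')$ can have slope arbitrarily close to $0$. In these steep/flat regimes the conclusion still holds, but by a much cruder mechanism (a very thin box has a Poisson count far below its $\ell_1$ diameter, so $\hat X$ is automatically $\lesssim -Lr^{2/3}$), and that case analysis needs to appear somewhere.

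Your union bound also doesn't close as described: $O(Lr)$ triples each with failure probability $e^{-cL^{3/2}}$ (or even $e^{-cL^3}$) leaves $r\cdot e^{-cL^{3/2}}$, which is not small for the fixed constant $L$ chosen before $r$. The paper sidesteps both problems at once. For condition~(i) it uses superadditivity to anchor to the two fixed endpoints $u_1,u_2$ of $F$, via $X_{u_1,u}+{}^{\Lambda}X_{u,u'}+X_{u',u_2}\le {}^{\Lambda}X_{u_1,u_2}$: one only needs a lower bound on the unconstrained $\hat X$ between floor points (Proposition~\ref{t:treeinf}) and an upper bound on the $\Lambda$-constrained passage time between $u_1$ and $u_2$. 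The latter is handled by a \emph{single} continuum supremum over $v\in\Lambda$ via Corollary~\ref{c:treesupwidth}, applied to a parallelogram of aspect ratio $\ell\sim L$, which has the chaining built in and incurs no polynomial-in-$r$ union-bound cost; and since $u_1,u_2$ are at the far corners of $F$, the pairs $(u_1,v)$ and $(v,u_2)$ always have horizontal separation $\gtrsim r$, so the slope constraint is automatically satisfied and Lemma~\ref{l:penalty} applies cleanly. If you rework your argument to anchor at the corners of $F$ (and of $U$) rather than discretizing over all triples, and add the short separate treatment of the degenerate steep/flat boxes, it would become correct.
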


This proposition will follow from the next two lemmas.

\begin{lemma}
\label{l:hxycond1}
For $x\in \mathcal{X}_r$, $y\in r^{2/3}\Z$ and $\mathbb{B}(x,y,r)$, let $A_1$ denote the event that for all $u,u'\in F$, we have $^{\Lambda}\tilde{X}_{u,u'}\leq -Lr^{1/3}$. Then we have $\P[A_1]\geq 999/1000$.
\end{lemma}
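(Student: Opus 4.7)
The plan is to exploit that $L$ is chosen much larger than $M$ in the parameter ordering, so that the floor $F$, which lies at vertical offset $y-Lr^{2/3}$, is well separated in the transversal direction from $\Lambda$, whose points lie at vertical offset at least $y-2Mr^{2/3}$. Consequently any increasing path from $u\in F$ to $u'\in F$ that visits $\Lambda$ must make a transversal excursion of at least $\Delta_0:=(L-2M)r^{2/3}$ from the slope-$1$ line through $u$ and $u'$, and must therefore pay an AM--GM penalty of order $(L-2M)^{2}r^{1/3}$, which dwarfs $Lr^{1/3}$.

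First I would fix $u,u'\in F$ with $u<u'$ and $v\in\Lambda$ with $u<v<u'$. Writing $s_1=x_v-x_u$, $s_2=x_{u'}-x_v$, and $\Delta:=(y_v-x_v)-(y-Lr^{2/3})\geq \Delta_0$, the identity $2\sqrt{s(s+\Delta')}=\sqrt{(2s+\Delta')^{2}-\Delta'^{\,2}}$ together with $\tfrac{1}{a}+\tfrac{1}{b}\geq \tfrac{4}{a+b}$ yields the deterministic inequality
\begin{equation*}
2\sqrt{A(u,v)}+2\sqrt{A(v,u')} \;\leq\; d(u,u') - \frac{2\Delta^{2}}{d(u,u')} \;\leq\; d(u,u') - (L-2M)^{2}r^{1/3},
\end{equation*}
since $d(u,u')\leq 2r$.

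Next I would upgrade this to a bound on $X_{u,v}+X_{v,u'}$ uniformly over the triples $(u,u',v)$. Partition $\Lambda$ into $O(M)$ horizontal slabs $\Lambda_j$ of vertical-offset height $r^{2/3}$. For pairs with $u\in F$, $v\in\Lambda_j$ whose slope lies in $(2/\psi,\psi/2)$, Proposition~\ref{t:treesup} applied to a parallelogram enclosing $F\cup\Lambda_j$ gives $\sup\tilde{X}_{u,v}\leq r^{1/3}$ except with exponentially small probability. Pairs of extreme slope correspond to $s_1$ or $s_2$ being much smaller than $r^{1/3}$; there $A(u,v)=O(Lr)$ and the crude bound $X_{u,v}\leq 2\sqrt{A(u,v)}+O(r^{1/3})$ still holds with overwhelming probability. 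Combining both regimes with $\E X_{u,v}\leq 2\sqrt{A(u,v)}+O(r^{1/3})$ (from Theorem~\ref{BDJ99} and Theorem~\ref{t:moddevuppertail}) and taking a union bound over the $O(M)$ slabs, with probability at least $1-\tfrac{1}{2000}$ one obtains
\begin{equation*}
\sup_{u,u'\in F,\; v\in\Lambda,\; u<v<u'}\bigl(X_{u,v}+X_{v,u'}\bigr)\;\leq\; d(u,u') - (L-2M)^{2}r^{1/3} + O(r^{1/3}).
\end{equation*}

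Finally, since $\E X_{u,u'}=d(u,u')-O(r^{1/3})$ uniformly for $u,u'\in F$ (same ingredients applied to slope-$1$ rectangles), on this good event
\begin{equation*}
{}^{\Lambda}\tilde{X}_{u,u'}\;\leq\;-\bigl((L-2M)^{2}-O(1)\bigr)r^{1/3}\;\leq\;-L r^{1/3},
\end{equation*}
because $L$ is chosen sufficiently large after $M$, giving $\P[A_1]\geq 999/1000$. The main obstacle is uniform control over the continuum $F\times F\times \Lambda$: this is precisely where supremum-type moderate-deviation bounds such as Proposition~\ref{t:treesup} are indispensable, and the $O(M)$ slab partition of $\Lambda$ keeps the union bound constant-sized. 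The degenerate near-vertical regime, where such supremum bounds do not directly apply, is harmless because the deterministic AM--GM penalty is already overwhelmingly large and is handily absorbed by the crude $X\leq 2\sqrt{A}+O(r^{1/3})$ estimate.
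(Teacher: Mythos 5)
Your outline captures the right mechanism — the detour through $\Lambda$ incurs a second-order (AM--GM / convexity) penalty of order $(L-2M)^2 r^{1/3}$, which dominates $Lr^{1/3}$ once $L$ is large compared with $M$ — but the route you take is genuinely different from the paper's, and as written it has a real gap in the near-vertical regime. The paper does not try to bound the triple supremum $\sup_{u,u'\in F,\,v\in\Lambda}[X_{u,v}+X_{v,u'}]$ directly. Instead it extends any $\Lambda$-crossing path $u\to u'$ (with $u,u'\in F$) to a $\Lambda$-crossing path between the \emph{fixed} corners $u_1,u_2$ of $F$, and decomposes the event into $B=\{\tilde X_{u,u'}\ge -Cr^{1/3}\ \forall u,u'\in F\}$ (handled by Proposition~\ref{t:treeinf}) and $G=\{{}^{\Lambda}\tilde X_{u_1,u_2}\le -2Lr^{1/3}\}$, where $G$ is in turn controlled by $\sup_{v\in\Lambda}\tilde X_{u_1,v}$ and $\sup_{v\in\Lambda}\tilde X_{v,u_2}$ via Corollary~\ref{c:treesupwidth}. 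Because $u_1,u_2$ sit at $x=x\mp r/2$ while $\Lambda$ has $x$-range $[x-2r/5,\,x+2r/5]$, the horizontal gaps $x_v-x_{u_1}$ and $x_{u_2}-x_v$ are automatically at least $r/10$, so every pair that has to be controlled lies in $\mathcal S$ of a suitable parallelogram; the degenerate steep-slope pairs simply never arise.

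In your direct approach they do arise, and your treatment of them is both imprecise and incomplete. The slope of $(u,v)$ with $u\in F$, $v\in\Lambda$ leaves $(2/\psi,\psi/2)$ as soon as $s_1=x_v-x_u\lesssim (L-2M)r^{2/3}$, not when $s_1\lesssim r^{1/3}$; and for $(v,u')$ the slope approaches $0$ as $s_2\downarrow (L-2M)r^{2/3}$, so the near-degenerate regime occurs on both sides. For such pairs neither Proposition~\ref{t:treesup} nor Corollary~\ref{c:treesupwidth} applies, and the statement ``the crude bound $X_{u,v}\le 2\sqrt{A(u,v)}+O(r^{1/3})$ still holds with overwhelming probability'' is only a pointwise consequence of Theorem~\ref{t:moddevuppertail}; you still need a supremum over a two-parameter continuum, which is precisely what the tree/chaining machinery of \S~\ref{s:para} was built to deliver, and that machinery is only proved for bounded-slope pairs. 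So there is a genuine hole: you are asserting uniform control in a regime where the paper supplies no tool. A smaller but separate inaccuracy is that for bounded-slope pairs in a parallelogram of transversal height $\Theta(Lr^{2/3})$ the correct bound from Corollary~\ref{c:treesupwidth} is $O(\sqrt L\,r^{1/3})$, not $O(r^{1/3})$; this does not break the argument since $(L-2M)^2\gg\sqrt L$, but it should be stated. The paper's fixed-endpoint reduction is exactly the device that eliminates both of these problems at once: it converts the triple supremum into suprema with one endpoint pinned at a corner of $F$, which forces all slopes into $\mathcal S$ and makes Corollary~\ref{c:treesupwidth} applicable as is.
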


\begin{proof}
Let $u_1=(x-r/2,x-r/2+y-Lr^{2/3})$ and $u_2=(x+r/2, x+r/2+y-Lr^{2/3})$. Let $B$ denote the event that for all $u,u'\in F$ (note that $F$ is the line segment joining $u_1$ and $u_2$), $\tilde{X}_{u,u'}\geq -Cr^{1/3}$. Let $G$ denote the event that $^{\Lambda}\tilde{X}_{u_1,u_2}\leq -2Lr^{1/3}$. It is then clear that $A\supseteq B\cap G$ since $L$ is sufficiently large.

Now it follows from Proposition \ref{t:treeinf} that $\P[B]\geq 1-10^{-5}$ since $C$ is sufficiently large. Now let $G_1$ (resp. $G_2$) denote the event that for all $u\in \Lambda$, we have $\tilde{X}_{u_1,u}\leq C\sqrt{L}r^{1/3}$  (resp. $\tilde{X}_{u,u_2}\leq C\sqrt{L}r^{1/3}$). Observe that since $C$ and $L$ are sufficiently large we have the
$$\tilde{X}_{u_1,u_2}\leq \tilde{X}_{u_1,u}+ \tilde{X}_{u,u_2}-10Lr^{2/3}. $$
Using the above fact and Corollary \ref{c:treeinfwidth} it follows that
$\P[G]\geq P[G_1\cap G_2]\geq 1-10^{-5}$ which completes the proof of the lemma.
\end{proof}

\begin{lemma}
\label{l:hxycond2}
For $x\in \mathcal{X}_r$, $y\in r^{2/3}\Z$ and $\mathbb{B}(x,y,r)$, let $A_2$ denote the event that for all $u\in F$, $u'\in U=\mathcal{P}(x,y-2Mr^{2/3},r,3Mr^{2/3})$  we have $\hat{X}_{u,u'}\leq -Lr^{1/3}$ if $u<u'$ and $\hat{X}_{u',u}^{\gamma}\leq -Lr^{1/3}$ if $u'<u$. Then $\P[A_2]\geq 99/100$.
\end{lemma}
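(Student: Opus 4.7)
The plan mirrors that of Lemma \ref{l:hxycond1}. The floor $F$ sits at offset $y-Lr^{2/3}$ relative to the diagonal while every point of $U=\mathcal{P}(x,y-2Mr^{2/3},r,3Mr^{2/3})$ has offset in $[y-2Mr^{2/3},y+Mr^{2/3}]$, so any segment joining $u\in F$ to $u'\in U$ has offset difference of magnitude at least $(L-2M)r^{2/3}$. Since $M$ was chosen much smaller than $L$ in the parameter hierarchy of \S \ref{s:prelim}, this forces a large parabolic deviation penalty in the expected passage time, which will dominate the typical $r^{1/3}$-scale fluctuations.

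I would first make the penalty bound quantitative in the case $u<u'$. Write $a=u'_x-u_x\in(0,r]$ and $b=u'_y-u_y=a+c$, where $c\geq (L-2M)r^{2/3}\geq \tfrac{L}{2}r^{2/3}$ is the offset gap. Using Theorem \ref{BDJ99} together with the rescaling invariance of Poissonian DLPP (so that $\E X_{u,u'}$ depends only on the area $ab$), one obtains
\[
d(u,u')-\E X_{u,u'}\ \geq\ (\sqrt{a+c}-\sqrt{a})^2-O\!\bigl((a(a+c))^{1/6}\bigr)\ \geq\ \min\!\Bigl(\tfrac{c}{4},\tfrac{c^{2}}{16a}\Bigr)-O(r^{1/3}).
\]
Since $a\leq r$ and $c\geq Lr^{2/3}/2$, the first branch is at least $Lr^{2/3}/8$ and the second is at least $L^{2}r^{1/3}/64$; taking $L$ sufficiently large makes both exceed $100Lr^{1/3}$. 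The degenerate cases are cheap: for $a=0$ the two points share an abscissa and nothing is to check, while for $a$ so small that $ab\ll 1$ one has $X_{u,u'}\leq 1$ with overwhelming probability and $d(u,u')\geq c\geq Lr^{2/3}/2$, giving the bound directly.

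Next I would establish a uniform fluctuation bound $\sup|\tilde X_{u,u'}|\leq Lr^{1/3}$ over $(u,u')\in F\times U$, $u<u'$, with probability $\geq 99/100$, by splitting on the slope. For pairs with slope in $[1+\tfrac{1}{\psi},1+\psi]$ (i.e.\ $a$ comparable to $c$), after a small enlargement the pair lies in $\mathcal{S}(\Lambda\cup F^{+})$ and Proposition \ref{t:treesup} applies exactly as in Lemma \ref{l:gxylocbound}. For extreme-slope pairs the area is small, $ab\leq O(Lr^{5/3})$, so the natural fluctuation scale $(ab)^{1/6}\ll r^{1/3}$; a discretization of $F$ and $U$ into parallelograms of size $r^{2/3}\times r^{2/3}$ (so $O(r)$ pairs of cells), combined with monotonicity of $X_{u,u'}$ in its endpoints to interpolate inside cells, and the slope-free moderate-deviation bound of Theorem \ref{t:moddevuppertail} applied with $s\sim L$, yields a union-bound tail of order $re^{-C_{1}L^{3/2}}$; a chaining refinement (in the spirit of the tree-type supremum bounds such as Proposition \ref{t:treesup}) removes the polynomial factor entirely. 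Combining penalty and fluctuation gives $\hat X_{u,u'}\leq Lr^{1/3}-100Lr^{1/3}\leq -Lr^{1/3}$.

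The case $u'<u$ is handled symmetrically: forcing $u_y>u'_y$ now demands $u_x-u'_x\geq (L-2M)r^{2/3}$, so in the analogous notation $a=u_x-u'_x\geq c/2$ and $b=a-c$, giving penalty at least $c^{2}/(16a)\geq L^{2}r^{1/3}/64$ and an identical fluctuation argument. The main technical obstacle is the uniform control of $\tilde X$ outside the bounded-slope regime $\mathcal{S}$, where Proposition \ref{t:treesup} does not directly apply; I address this by exploiting the smallness of the area in that regime (so that each individual moderate-deviation estimate has exponent scaling as $L^{3/2}$ rather than a fixed constant) and discretizing coarsely.
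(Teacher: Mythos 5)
Your proposal is correct in structure but proceeds by a genuinely different route than the paper, and there are a few quantitative slips worth flagging.

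The paper's own proof uses an \emph{anchoring} argument in the spirit of Lemma~\ref{l:rxycond1}: one fixes $u_1$ (the leftmost point of $F$) and, for each $u'\in U$, a companion point $u_2$ on the right boundary $R_U$ with the line $u'\to u_2$ of slope $1$; the superadditivity bound $\hat X_{u,u'}\leq \hat X_{u_1,u_2}-\hat X_{u_1,u}-\hat X_{u',u_2}$ then reduces the whole two-parameter supremum to three controllable pieces: a fluctuation lower bound inside $F$ ($G_1$), a fluctuation bound inside $U$ ($G_2$), and a penalty upper bound for the single ``extreme'' family $(u_1,u_2)$ on a vertical line ($G_3$). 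The key gain is that all three events involve only pairs of bounded slope, so Proposition~\ref{t:treesup}/\ref{t:treeinf} (and their width-corollaries) apply directly, and no steep-slope regime ever has to be addressed.

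You instead estimate $\hat X_{u,u'}$ pair by pair, splitting on slope, bounding the parabolic penalty from $d(u,u')-\E X_{u,u'}$ below and the fluctuation $\tilde X_{u,u'}$ uniformly above. This is sound and gives the result, but note three things. (i) The ``moderate-slope'' window you name, $[1+\tfrac1\psi,\,1+\psi]$, is not the relevant one: for $u<u'$ the slope is $1+c/a$, and the bounded-slope regime covered by the $\mathcal{S}$-machinery is slope $\in(\tfrac2\psi,\tfrac\psi2)$, which includes slopes arbitrarily close to $1$ (large $a$); your stated interval would wrongly exclude these. (ii) Over the bounded-slope regime the relevant parallelogram has vertical thickness of order $Lr^{2/3}$, so Proposition~\ref{t:treesup} does not apply directly; you need Corollary~\ref{c:treesupwidth}, which gives a supremum bound of order $\sqrt{L}\,r^{1/3}$ rather than $O(r^{1/3})$. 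That is still dominated by your penalty $\gtrsim L^2 r^{1/3}/64$ for $L$ large, so the conclusion survives, but the quoted bound $Lr^{1/3}$ for the fluctuation is not what the cited result provides. (iii) In the steep regime the area bound should read $ab\lesssim L^2 r^{4/3}/\psi$ (not $Lr^{5/3}$); again $(ab)^{1/6}\ll r^{1/3}$ so the argument goes through, but the exponent you wrote is off.

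The most substantive structural difference is that your steep-slope step leans on the fact that in the Poissonian model the moderate-deviation estimates (Theorems~\ref{t:moddevuppertail},~\ref{t:moddevlowertail}) are slope-free, depending only on area. The paper deliberately avoids this: its anchoring decomposition keeps every pair in the bounded-slope class $\mathcal{S}$, precisely so that the same argument transfers to the Exponential model (\S~\ref{s:discrete}), where moderate deviations are only available for bounded aspect ratios. Your proof would therefore need a replacement argument for the steep regime to adapt to the Exponential case, whereas the paper's would not. Apart from these points the plan is correct.
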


\begin{proof}
Let $A_2^{*}$ denote the event that for all $u\in F$, all $u'\in U$, we have $\hat{X}_{u,u'}\leq -Lr^{1/3}$. Let $u_1$ be the leftmost point on $F$ as in the proof of Lemma \ref{l:hxycond1}. Let $R_{U}$ be the right boundary of $U$. Consider the following events
$$G_1=\{\forall u,u' \in F: \hat{X}_{u,u'}\geq -Cr^{1/3}\}.$$
$$G_2=\{\forall (u,u')\in \mathcal{S}(U): |\tilde{X}_{u,u'}|\leq Cr^{1/3}\}.$$
$$G_3=\{\forall u\in R_U: \hat{X}_{u_1,u} \leq -2Lr^{1/3}\}.$$
It can then be proved along the lines of Lemma \ref{l:rxycond1} that $\P[A_2^{*}]\geq 1-10^{-3}$. The other cases can be dealt with similarly and the lemma follows.
\end{proof}

%


\begin{proof}[Proof of Theorem \ref{t:hxmustar}]
Let $\Gamma$ be the topmost maximal path in $\Pi$ from $\mathbf{0}$ to $\mathbf{n}$ in $\Pi$. Fix $x\in \mathcal{X}_r$. Observe that, for an increasing path $\gamma$ from $\mathbf{0}$ to $\mathbf{n}$, the event $\{\Gamma=\gamma\}$, $R_{x,\gamma}$  $H_{x,\gamma}$ are all decreasing in the configuration on $\R^2\setminus \{\gamma\}$, Hence it follows from the FKG inequality and Lemma \ref{l:mustarbasic} that

$$\mu_x^{*}(H_x\mid \Gamma=\gamma)\geq \mu(H_x\mid \Gamma=\gamma)\geq \mu[H_{x,\gamma}]\geq  \min_{y} \P[H_{x,y}].$$
The theorem follows by averaging over $\gamma$ and using Proposition \ref{p:hxybound}.
\end{proof}

\subsection{Bound on $Q$ and $Q'$}
In this section we prove Theorem \ref{t:steep}.
We start with the following lemma.

\begin{lemma}
\label{l:steepatend}
An increasing path $\gamma$ from $\mathbf{0}$ to $\mathbf{n}$ is called to be \emph{steep at end} if either $\frac{\gamma_{n/10}}{n/10} \notin (\frac{15}{\psi}, \frac{\psi}{15})$ or  $\frac{n-\gamma_{9n/10}}{n/10} \notin (\frac{5}{\psi}, \frac{\psi}{5})$. Let $A$ denote the event that there exists a steep at end path $\gamma$  from $\mathbf{0}$ to $\mathbf{n}$ with $\ell_{\gamma}> 2n-n^{2/5}$. Then $\P[A]\leq e^{-n^{0.1}}$.
\end{lemma}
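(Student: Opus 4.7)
The plan is to use a union bound over a discretization of $\gamma_{n/10}$ (and, by symmetry, $\gamma_{9n/10}$) together with the upper-tail moderate deviation estimate in Theorem \ref{t:moddevuppertail}. The key analytic input will be that the function
\[
f(s) = \sqrt{s} + 3\sqrt{10-s}, \qquad s \in [0,10],
\]
is strictly concave and attains its unique maximum $f(1) = 10$ at $s=1$. If an increasing path $\gamma$ passes through $(n/10, y)$ and we write $s = y/(n/10)$, the natural upper bound on the expected sum of the longest-path lengths over the two sub-rectangles is $2(n/10)\,f(s)$; when $s$ is forced away from $1$, this is strictly below $2n$ by a positive fraction of $n$.

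Fix $m = n/10$. I will handle the two conditions defining ``steep at end'' separately; the arguments are structurally identical, so let me describe the first one (the second is analogous with $s' = (n-\gamma_{9m})/m$ and the interval $(5/\psi,\psi/5)$ in place of $(15/\psi,\psi/15)$). For each integer $k \in \{0,\ldots,n-1\}$, let $A_k$ be the event that there exists an increasing path $\gamma$ from $\mathbf{0}$ to $\mathbf{n}$ with $\gamma_m \in [k,k+1)$ and $\ell_\gamma > 2n - n^{2/5}$. By monotonicity of longest-path length in the endpoints,
\[
A_k \subseteq \bigl\{ X_{\mathbf{0},(m,k+1)} + X_{(m,k),\mathbf{n}} > 2n - n^{2/5}\bigr\}.
\]
Combining super-additivity of $\E[X_\cdot]$ with Theorem \ref{BDJ99} gives $\E[X_{u,u'}] \leq 2\sqrt{A(u,u')}$; hence with $s = k/m \in [0,10]$,
\[
\E\bigl[X_{\mathbf{0},(m,k+1)} + X_{(m,k),\mathbf{n}}\bigr] \leq 2m\,f(s) + O(\sqrt{n}).
\]

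For $\psi$ sufficiently large, the strict concavity of $f$ combined with the fact that the set $\mathcal{K}^\circ := [0,15/\psi]\cup[\psi/15,10]$ is bounded away from the unique maximizer $s=1$ yields an absolute constant $\eta = \eta(\psi) > 0$ with $\sup_{s \in \mathcal{K}^\circ} f(s) \leq 10 - 2\eta$. Hence for each $k$ with $k/m \in \mathcal{K}^\circ$, the mean above is at most $2n - 4\eta m + O(\sqrt{n}) \leq 2n - \eta n/5$, so on $A_k$ at least one of the two summands must exceed its mean by $\eta n/20$. Since both areas are at most $n^2$, this corresponds in Theorem \ref{t:moddevuppertail} to $s^* \geq (\eta/20)\,n^{2/3}$, which exceeds $s_0$ for $n$ large, giving $\P[A_k] \leq 2\exp(-C_1 (s^*)^{3/2}) \leq \exp(-C_3 n)$. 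A union bound over the $O(n)$ relevant values of $k$, doubled to account for the symmetric second-end condition, yields $\P[A] \leq O(n)\exp(-C_3 n) \leq \exp(-n^{0.1})$ for $n$ large, completing the argument. The only mildly delicate point is verifying the moderate-deviation hypotheses $s^* > s_0$ and $t > t_0$ uniformly over the range of $k$; both are automatic, since $t_i \geq m = n/10$ for $k \in [0, n-1]$ and $s^* = \Omega(n^{2/3})$.
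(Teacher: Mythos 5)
Your proof is correct and takes essentially the same approach as the paper, which reduces the lemma to showing the expected two-piece path length is a macroscopic amount below $2n$ (an "elementary computation"), then applies the upper-tail moderate deviation bound of Theorem \ref{t:moddevuppertail} and a union bound over a discretization. Your contribution is to make the elementary computation explicit via the concave function $f(s) = \sqrt{s}+3\sqrt{10-s}$ with unique maximum $f(1)=10$; the only small gap to tidy is that $k = \lfloor \gamma_m\rfloor$ lands in a $1/m$-enlargement of $\mathcal{K}^\circ$ rather than exactly in $\mathcal{K}^\circ$, but continuity of $f$ absorbs the $O(1/m)$ shift, so the bound $\sup f \le 10-2\eta$ persists and the rest of the argument is unaffected.
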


\begin{proof}
This lemma is proved by showing that since $\psi$ is large enough the expected length of an increasing path which is steep at end is much smaller than  the maximal increasing path and using Theorem \ref{t:moddevuppertail}. The proof is similar to Lemma \ref{l:nosteep} and we omit the details here.
\end{proof}

\begin{lemma}
\label{l:steepdiscretize}
Suppose $\gamma$ is a steep increasing path from $\mathbf{0}$ to $\mathbf{n}$ that is not steep at end. Then there exists $u_1=(x_1,y_1)$ and $u_2=(x_2,y_2)$ in $\Z^2 \cap [0,n]^2$ satisfying the following conditions.
\begin{enumerate}
\item $\frac{n}{10} \leq x_1 < x_2 \leq \frac{9n}{10}$.
\item Either $\frac{y_2-y_1}{x_2-x_1}\in (\frac{\psi}{10}, \frac{\psi}{2})$ or $\frac{y_2-y_1}{x_2-x_1}\in (\frac{2}{\psi}, \frac{10}{\psi})$.
\item $\frac{y_1}{x_1}, \frac{n-y_2}{n-x_2} \in (\frac{1.01}{\psi}, 0.99 \psi)$
\item $(x_2-x_1)\wedge (y_2-y_1) \geq \frac{n^{2/3}}{\log^8 n}$.
\item $\gamma_{x_1}\in [y_1, y_1+1)$ and $\gamma_{x_2}\in (y_2-1,y_2]$.
\end{enumerate}
\end{lemma}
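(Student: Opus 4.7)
The approach is to first locate a continuous sub-arc $(\tilde x_1,\tilde x_2)\subseteq [n/10,9n/10]$ along $\gamma$ whose slope lies in one of the admissible ranges of Condition~2, and then round to integer lattice points while preserving Conditions 1, 3, 4 and realizing Condition 5 by construction.

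By the symmetry of the steepness definition under reflection about the diagonal $\{x=y\}$, I may assume without loss of generality that the steepness witness $(x_1',x_2')$ satisfies $s_0:=(\gamma_{x_2'}-\gamma_{x_1'})/(x_2'-x_1')\geq \psi/20$; the complementary case $s_0\leq 20/\psi$ is handled identically and yields Condition~2 via the shallow range $(2/\psi,10/\psi)$. The not-steep-at-end hypothesis gives $\gamma_{n/10}<(\psi/15)(n/10)$ and $\gamma_{9n/10}<n-(5/\psi)(n/10)$, so the average slope across the middle region satisfies
\[
\frac{\gamma_{9n/10}-\gamma_{n/10}}{8n/10}<\frac{5}{4}<\frac{\psi}{10}
\]
for $\psi$ sufficiently large. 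Since $s(a,b):=(\gamma_b-\gamma_a)/(b-a)$ is continuous on $\{(a,b):n/10\leq a<b\leq 9n/10\}$ and takes values $\geq \psi/20$ at $(x_1',x_2')$ and $<\psi/10$ at $(n/10,9n/10)$, continuously deforming the endpoints between these two configurations and invoking the intermediate value theorem yields a real pair $(\tilde x_1,\tilde x_2)$ with slope in the target range $(\psi/10,\psi/2)$.

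With $(\tilde x_1,\tilde x_2)$ in hand, Condition 4 follows from the slope bound $\in(\psi/10,\psi/2)$ combined with the original witness bound $(\gamma_{x_2'}-\gamma_{x_1'})\vee (x_2'-x_1')\geq n^{2/3}/(2\log^7 n)$: both $\tilde x_2-\tilde x_1$ and $\gamma_{\tilde x_2}-\gamma_{\tilde x_1}$ then exceed $n^{2/3}/\log^8 n$ for $n$ large. Condition 3 follows from monotonicity of $\gamma$ together with the not-steep-at-end bounds, which imply $\gamma_x/x\in(5/(3\psi),10-5/\psi)\subset (1.01/\psi,0.99\psi)$ for all $x\in[n/10,9n/10]$ and $\psi$ large, with a symmetric estimate for $(n-\gamma_x)/(n-x)$. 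Setting $x_1=\lceil \tilde x_1\rceil$, $x_2=\lfloor \tilde x_2\rfloor$, $y_1=\lfloor \gamma_{x_1}\rfloor$, and $y_2=\lceil \gamma_{x_2}\rceil$ realizes Condition 5 by construction, while Conditions 1 and 2 are preserved because the $O(1)$ lattice perturbations are negligible compared to the $n^{\Theta(1)}$ sizes and $\Theta(1)$ slopes involved.

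The main obstacle is the intermediate value step in the regime $s_0\in[\psi/20,\psi/10)$: in this range, extending the interval can only decrease $s$, and so the slope target $\psi/10$ need not be reachable via a monotone extension of $(x_1',x_2')$. Resolving this requires exploiting the piecewise-linear structure of $\gamma$ to locate a line segment of $\gamma$ with slope in the target range inside (or near) the witness region, or alternatively a two-parameter deformation in $(a,b)$ that jointly shifts and rescales the interval to sweep through the full target range while remaining in $[n/10,9n/10]$.
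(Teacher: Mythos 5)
Your diagnosis of the obstruction in the regime $s_0\in[\psi/20,\psi/10)$ is correct, and it in fact points to what appears to be a constant mismatch in the statement of the lemma itself rather than a gap that a cleverer deformation of $(a,b)$ could close. The steepness witness only guarantees a slope outside $(20/\psi,\psi/20)$, whereas Condition~2 demands a slope in $(\psi/10,\psi/2)\cup(2/\psi,10/\psi)$, and the intervals $[\psi/20,\psi/10]$ and $[10/\psi,20/\psi]$ sit squarely inside the near-diagonal band $(10/\psi,\psi/10)$ that Condition~2 excludes. A piecewise-linear $\gamma$ whose segments on $[n/10,9n/10]$ consist of one segment of slope exactly $\psi/20$ (of macroscopic $x$-length, hence a valid witness) followed by a segment of slope, say, $5/8$ has every pair $(a,b)\subseteq[n/10,9n/10]$ with slope in $[5/8,\psi/20]\subset(10/\psi,\psi/10)$, so no pair satisfies Condition~2; one readily arranges the endpoint values $\gamma_{n/10},\gamma_{9n/10}$ so that such a $\gamma$ is not steep at end. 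The intended condition is presumably slope $\in[\psi/20,\psi/2]\cup[2/\psi,20/\psi]$ (or the steepness threshold should use $(10/\psi,\psi/10)$ rather than $(20/\psi,\psi/20)$), and with either correction your argument does close: either $s_0$ lies in the window already, or it exceeds the window and a monotone extension toward $(n/10,9n/10)$ (where the slope is $<5/4<\psi/10$) sweeps through it by IVT. Incidentally, your aside that ``extending the interval can only decrease $s$'' is not literally true — a monotone extension can increase $s$ if $\gamma$ is steeper outside the witness — but the underlying point, that IVT cannot force $s$ through a window above its starting value, is what matters.

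There is also a second, unflagged gap in your Condition~3 verification. The bound $\gamma_x/x\in(5/(3\psi),\,10-5/\psi)\subset(1.01/\psi,0.99\psi)$ for $x\in[n/10,9n/10]$ is fine, but the claimed ``symmetric estimate'' for $(n-\gamma_x)/(n-x)$ is not symmetric: not-steep-at-end gives only $n-\gamma_x\geq n-\gamma_{9n/10}>n/(2\psi)$, and since $n-x$ can be as large as $9n/10$ the lower bound one obtains is $(n-\gamma_x)/(n-x)>5/(9\psi)$, which is strictly smaller than $1.01/\psi$. The constraint $\frac{n-y_2}{n-x_2}>\frac{1.01}{\psi}$ can genuinely fail when $x_2$ is near $n/10$ and $\gamma_{x_2}$ is already close to $\gamma_{9n/10}$; one must either ensure $x_2\gtrsim n/2$ (where the same computation improves to $>1/(0.99\psi)>1.01/\psi$) or handle separately the regime in which $\gamma$ rises almost to $\gamma_{9n/10}$ immediately after $n/10$ by choosing a pair farther to the right.
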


\begin{proof}
This lemma follows from the definition of steep path and steepness at ends.
\end{proof}

A pair of points $u_1$ and $u_2$ in $\Z^2 \cap [0,n]^2$ satisfying the first 4 conditions in Lemma \ref{l:steepdiscretize} is called \emph{inadmissible}. We have the following lemma.

\begin{lemma}
\label{l:nosteep}
For a pair of inadmissible points $u_1=(x_1,y_1)$ and $u_2=(x_2,y_2)$, let $u'_1=(x_1,y_1+1)$ and $u'_2=(x_2,y_2-1)$.  Let $\mathcal{A}$ denote the event that there exists a pair $(u_1,u_2)$ of inadmissible points such that
$$X_{\mathbf{0},u'_1}+ X_{u_1,u_2}+X_{u'_2,\mathbf{n}}\geq 2n-n^{0.49}.$$
Then $\P[\mathcal{A}]\leq e^{-n^{0.1}}$.
\end{lemma}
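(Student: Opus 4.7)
The plan is to show that for each fixed inadmissible pair $(u_1,u_2)$, the expected value of the sum $X_{\mathbf{0},u'_1}+X_{u_1,u_2}+X_{u'_2,\mathbf{n}}$ falls short of $2n$ by a quantity which is polynomial in $n$ (much larger than $n^{0.49}$), and then apply the moderate-deviation upper tail Theorem~\ref{t:moddevuppertail} on each of the three independent segments, followed by a crude union bound over all $\le (n+1)^4$ candidate pairs.

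First I would invoke the inequality $\E[X_{u,u'}]\le 2\sqrt{A(u,u')}$ (a consequence of Theorem~\ref{BDJ99} and superadditivity, noted in \S\ref{s:prelim}) on each of the three rectangles determined by $(\mathbf{0},u'_1)$, $(u_1,u_2)$ and $(u'_2,\mathbf{n})$. Using the identity $2\sqrt{pq}=p+q-(\sqrt{p}-\sqrt{q})^2$ and summing the three dimensions to $2n+O(1)$, one obtains
\[
\E[X_{\mathbf{0},u'_1}]+\E[X_{u_1,u_2}]+\E[X_{u'_2,\mathbf{n}}]\le 2n+O(1)-D,
\]
where $D=\sum_{i=1}^3(\sqrt{a_i}-\sqrt{b_i})^2$ is the AM--GM deficit collected over the three rectangles ($a_i,b_i$ being their widths and heights).

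Next I would use inadmissibility to lower-bound $D$. The middle segment has, WLOG, $b/a\ge \psi/10$, and then $(\sqrt{b}-\sqrt{a})^2=a(\sqrt{b/a}-1)^2\ge (\psi/20)\,a$ once $\psi$ is sufficiently large. Condition (iv) gives $a\wedge b\ge n^{2/3}/\log^8 n$, so
\[
D\ \ge\ \frac{\psi\, n^{2/3}}{20\log^8 n}\ \gg\ n^{0.49}.
\]
Consequently the event inside $\mathcal{A}$ forces the random sum to exceed its mean by at least $D/2$, and hence at least one of the three \emph{independent} pieces (the pieces live in rectangles whose interiors are disjoint) must exceed its mean by at least $D/6$.

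Now I would apply Theorem~\ref{t:moddevuppertail} to each piece. Each segment has area $A_i\le n^2$, so $A_i^{1/6}\le n^{1/3}$, and the required scaled deviation $s\ge (D/6)/n^{1/3}\gtrsim \psi\, n^{1/3}/\log^8 n$ lies well inside the moderate-deviation regime (and $A_i$ exceeds $t_0$ for $n$ large). The theorem yields
\[
\P\bigl[X_i\ge \E X_i + D/6\bigr]\ \le\ \exp\!\Bigl(-c\,\psi^{3/2}\,n^{1/2}/\log^{12}n\Bigr),
\]
and a union bound over the three pieces and over the $\le(n+1)^4$ inadmissible pairs yields
\[
\P[\mathcal{A}]\ \le\ 3(n+1)^4\exp\!\Bigl(-c\,\psi^{3/2}\,n^{1/2}/\log^{12}n\Bigr)\ \le\ e^{-n^{0.1}}
\]
for all sufficiently large $n$. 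The only place one must be careful is the lower bound on the deficit $D$; after that the enormous exponent from moderate deviations easily beats the crude polynomial union bound, so this step is the main (and essentially only) obstacle.
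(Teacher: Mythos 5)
Your proof is correct and follows the same approach as the paper, which states only that an ``elementary computation'' gives $\E X_{\mathbf{0},u'_1}+\E X_{u_1,u_2}+\E X_{u'_2,\mathbf{n}}<2n-n^{0.5}$ and then invokes Theorem~\ref{t:moddevuppertail} with a union bound; your AM--GM-deficit calculation is exactly that elementary computation spelled out. One small cleanup: decompose the excess relative to $2\sqrt{A_i}$ rather than $\E X_i$ (the deficit already enters via $\sum 2\sqrt{A_i}\le 2n+2-D$, and Theorem~\ref{t:moddevuppertail} is centered at $2\sqrt{t}$), though since $2\sqrt{A_i}-\E X_i=O(A_i^{1/6})=O(n^{1/3})\ll D$ the discrepancy is immaterial.
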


\begin{proof}
Fix a pair $(u,u')$ of inadmissible points. Since $\psi$ is sufficiently large it follows from an elementary computation that
$$\E X_{\mathbf{0},u'_1}+ \E X_{u_1,u_2}+ \E X_{u'_2,\mathbf{n}} <2n-n^{0.5}.$$
The lemma now follows from using Theorem \ref{t:moddevuppertail} and taking a union bound over all pairs of inadmissible points.
\end{proof}

Now we are ready to prove Theorem \ref{t:steep}.

\begin{proof}[Proof of Theorem \ref{t:steep}]
From Lemma \ref{l:steepatend}, Lemma \ref{l:steepdiscretize} and Lemma \ref{l:nosteep} it follows that
\begin{equation}
\label{e:steep}
\P[Q^c] \leq e^{-n^{0.1}}
\end{equation}

Since resampling at a fixed location does not change the law of the point configuration it follows by using (\ref{e:steep}) and taking a union bound over $r\in \mathcal{R}$, $x\in \mathcal{X}_{r}$, $y\in r^{2/3}\Z$ and $i\in [\frac{1}{100\varepsilon^{5/3}}]$ that for $n$ sufficiently large
$$\P[\cup_{r,x,yi} \mathscr{S}_i^c] \leq e^{-n^{0.05}}.$$
It follows from Markov's inequality that
$$\P[\P[\cup_{r,x,y,i} \mathscr{S}_i^c\mid \Pi] \geq e^{-n^{1/100}}] \leq e^{-n^{0.02}}.$$

The theorem follows by a union bound.
\end{proof}

\subsection{Proof of Theorem \ref{p:gxhxconditional}}

Finally we are ready to prove Theorem \ref{p:gxhxconditional}.

\begin{proof}[Proof of Theorem \ref{p:gxhxconditional}]
Notice that by definition of $G_x$ does not depend on the configuration in the interior of the walls of  $\mathbb{B}(x,r)$. Hence it follows that $\mu_x^*(G_x)=\mu(G_x)$. The theorem now follows from Corollary \ref{t:gxbound}, Theorem \ref{t:hxmustar} and Theorem \ref{t:steep}.
\end{proof}

\section{First order and second order approximation of $X_{u,u'}$}
\label{s:moddev}

In this section we establish useful probability bounds on $X_{u,u'}$ for certain pairs of points $(u,u')$ using the moderate deviation estimates Theorem \ref{t:moddevlowertail} and Theorem \ref{t:moddevuppertail}. We shall mostly have to deal with pairs of points $u,u'\in \R^2$ with $u<u'$ such that the slope of the line joining $u$ and $u'$ is neither too large nor too small. We shall work with first and second order approximations of $\E[X_{u,u'}]$ in this case. We start with the following easy corollary of Theorem \ref{t:moddevuppertail} and Theorem \ref{t:moddevlowertail}.

\begin{corollary}
\label{c:moddevtilde}
Let $\psi>0$ be fixed. There exist constants $C_1=C_1(\psi)$, $r_0=r_0(\psi)$, $\theta_0=\theta_0(\psi)>0$ such that
for points  $u=(x,y)$ and $u'=(x',y')$  in $\R^2$ such that $x'-x=r \geq r_0$, and
$$\frac{1}{\psi} \leq \frac{y'-y}{x'-x} \leq \psi,$$
we have that
$$\E[X_{u,u'}]=2\sqrt{r(y-y')}+O(r^{1/3}).$$
Further, for $\theta>\theta_0$ we have
\begin{equation}
\label{e:moddevlowertailnew}
\P[|\tilde{X}_{u,u'}| > \theta r^{1/3}]\leq e^{-C_1\theta}.
\end{equation}
\end{corollary}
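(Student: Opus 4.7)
The plan is to reduce the general rectangular case to the diagonal (square) case via the scale invariance of the Poisson point process, and then to read off both claims from Theorem \ref{BDJ99} and the moderate deviation estimates.

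First I would note that the area-preserving map $(X,Y)\mapsto(\alpha X, Y/\alpha)$ with $\alpha^{2}=(y'-y)/(x'-x)$ sends the rate-$1$ Poisson process on $\R^{2}$ to itself in law and sends the translated rectangle $\mathrm{Box}(u,u')$ to a square of side $\sqrt{r(y'-y)}$. Since maximal increasing paths are preserved under this map, $X_{u,u'}$ has the same distribution as $X_{t}$ with $t=r(y'-y)$. The slope hypothesis gives $r^{2}/\psi\le t\le \psi r^{2}$, so $r^{1/3}$ and $t^{1/6}$ are comparable up to constants that depend only on $\psi$.

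Next, for the mean estimate, Theorem \ref{BDJ99} yields that $(X_{t}-2\sqrt{t})/t^{1/6}$ converges in distribution to the Tracy-Widom law. To promote this to convergence in $L^{1}$ I would invoke Theorems \ref{t:moddevuppertail} and \ref{t:moddevlowertail}, whose stretched-exponential tails immediately give uniform integrability of the family $\{(X_{t}-2\sqrt{t})/t^{1/6}\}_{t\ge t_{0}}$. Hence $\E[X_{t}]=2\sqrt{t}+O(t^{1/6})$, and pushing this back through the scaling produces $\E[X_{u,u'}]=2\sqrt{r(y'-y)}+O(r^{1/3})$, with the implicit constant depending only on $\psi$.

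For the concentration bound, I would decompose
\[
\tilde X_{u,u'}=(X_{t}-2\sqrt{t})+(2\sqrt{t}-\E X_{t}),
\]
where the second summand is $O(r^{1/3})$ by the mean estimate just proved. Thus on $\{|\tilde X_{u,u'}|>\theta r^{1/3}\}$ one has $|X_{t}-2\sqrt{t}|\ge (\theta-C)r^{1/3}\ge c(\psi)(\theta-C)\,t^{1/6}$ for constants $C=C(\psi)$ and $c(\psi)>0$. Choosing $\theta_{0}=\theta_{0}(\psi)$ large enough that $c(\psi)(\theta-C)>s_{0}$ for all $\theta>\theta_{0}$, the two-sided moderate deviation bounds of Theorems \ref{t:moddevuppertail} and \ref{t:moddevlowertail} control this probability by $2\exp\bigl(-C_{1}(c(\psi)(\theta-C))^{3/2}\bigr)$, which is dominated by $e^{-C_{1}'\theta}$ for $\theta>\theta_{0}$ and an appropriate $C_{1}'=C_{1}'(\psi)$.

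The only mildly delicate step is the uniform integrability argument that upgrades Tracy-Widom convergence to a first-moment estimate with the correct $O(r^{1/3})$ error; once that is in place, the tail bound is immediate from the moderate deviation inputs, and the $e^{-C_{1}\theta}$ form (rather than $e^{-C_{1}\theta^{3/2}}$) is just a convenient weakening valid for $\theta\ge 1$.
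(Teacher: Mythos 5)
Your argument is correct and follows the route the paper clearly intends (the paper treats this as an "easy corollary" with no written proof, relying on the scaling remark after Theorem \ref{t:moddevlowertail}): reduce to a square via the area-preserving shear $(X,Y)\mapsto(\alpha X, Y/\alpha)$ which fixes the law of the rate-$1$ Poisson process and identifies $X_{u,u'}\stackrel{d}{=}X_{t}$ with $t=r(y'-y)$, then read off both claims from Theorems \ref{t:moddevuppertail} and \ref{t:moddevlowertail}, using the slope bounds to make $t^{1/6}$ and $r^{1/3}$ comparable and to weaken the exponent $\theta^{3/2}$ to $\theta$. One small simplification: the invocation of Theorem \ref{BDJ99} is superfluous for the mean estimate, since integrating the moderate-deviation tails directly bounds $\E\bigl[|X_{t}-2\sqrt{t}|\bigr]$ by $O(t^{1/6})$ without appealing to convergence in distribution.
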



The following expression for $\E[X_{u,u'}]$ will be useful.

\begin{lemma}
\label{l:penalty}
Let $u=(x,y)<u'=(x',y')\in \R^2$ be such that $|x'-x|=r$ and $\frac{y'-y}{x'-x}=m$ where $m\in (\frac{1}{\psi}, \psi)$. Suppose $u_0=(x,y+h_0r^{2/3})$ and $u_1=(x',y'+h_1r^{2/3})$ be such that the slope of the line joining $u_0$ and $u_1$ is in  $(\frac{1}{\psi}, \psi)$, and $|h_1-h_0|\leq r^{1/10}$.  Then for $r$ sufficiently large  $$\E[X_{u_0,u_1}]=2\sqrt{m}r+ \frac{h_1-h_0}{\sqrt{m}}r^{2/3} +O(r^{1/3})-\frac{(h_{1}-h_0)^2}{4m^{3/2}}r^{1/3}.$$
\end{lemma}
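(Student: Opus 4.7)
\textbf{Proof proposal for Lemma \ref{l:penalty}.}

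The plan is to reduce this to a Taylor expansion of the first-order limit provided by Corollary \ref{c:moddevtilde}. Since the slope of the segment joining $u_0$ and $u_1$ lies in $(\frac{1}{\psi},\psi)$ and the horizontal separation equals $r$, Corollary \ref{c:moddevtilde} applies directly and gives
\[
\E[X_{u_0,u_1}] \;=\; 2\sqrt{r\,\bigl((y'+h_1 r^{2/3}) - (y+h_0 r^{2/3})\bigr)} \;+\; O(r^{1/3}).
\]
The area under the square root is $A := mr^2 + (h_1-h_0)r^{5/3}$, so I would factor out $r\sqrt{m}$ to write
\[
2\sqrt{A} \;=\; 2r\sqrt{m}\,\sqrt{1 + \epsilon}, \qquad \epsilon \;:=\; \frac{h_1-h_0}{m\, r^{1/3}}.
\]
The hypothesis $|h_1-h_0|\le r^{1/10}$ combined with $m \in (1/\psi,\psi)$ yields $|\epsilon| \le \psi r^{-7/30}$, which tends to $0$ as $r\to\infty$, so the Taylor series for $\sqrt{1+\epsilon}$ converges.

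Next I would apply the expansion $\sqrt{1+\epsilon} = 1 + \tfrac{\epsilon}{2} - \tfrac{\epsilon^2}{8} + O(\epsilon^3)$ and multiply through by $2r\sqrt{m}$. The linear term in $\epsilon$ produces $r\sqrt{m}\cdot \frac{h_1-h_0}{m\,r^{1/3}} = \frac{h_1-h_0}{\sqrt{m}}\,r^{2/3}$, and the quadratic term produces $-\tfrac{r\sqrt{m}}{4}\cdot \frac{(h_1-h_0)^2}{m^2 r^{2/3}} = -\frac{(h_1-h_0)^2}{4 m^{3/2}}\, r^{1/3}$, matching the main terms in the claimed expression. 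For the cubic remainder, $r\sqrt{m}\,|\epsilon|^3 \le \psi^{5/2}\,|h_1-h_0|^3 / r \le \psi^{5/2}\, r^{3/10 - 1} \cdot r = \psi^{5/2}\,r^{3/10}$, and since $3/10 < 1/3$ this is absorbed into the $O(r^{1/3})$ error already present from Corollary \ref{c:moddevtilde}.

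There is no real obstacle here; the only point that requires a moment's care is checking that the perturbation parameter $\epsilon$ is genuinely small (which uses the assumption $|h_1-h_0|\le r^{1/10}$) and that the cubic tail of the Taylor expansion is dominated by $r^{1/3}$. Assembling the three terms from the Taylor expansion together with the $O(r^{1/3})$ error from the first-order estimate yields
\[
\E[X_{u_0,u_1}] \;=\; 2\sqrt{m}\,r \;+\; \frac{h_1-h_0}{\sqrt{m}}\,r^{2/3} \;-\; \frac{(h_1-h_0)^2}{4m^{3/2}}\,r^{1/3} \;+\; O(r^{1/3}),
\]
as required.
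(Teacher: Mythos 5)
Your proof is correct and takes essentially the same approach as the paper's one-line proof, which simply cites Corollary \ref{c:moddevtilde} and the Taylor expansion $(1+x)^{1/2}=1+\tfrac{x}{2}-\tfrac{x^2}{8}+O(x^{3})$; you have just written out the substitution and the error-term bookkeeping in full detail.
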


\begin{proof}
Follows from Corollary \ref{c:moddevtilde} and observing that for $x\in (-1,1)$ we have $(1+x)^{1/2}=1+\frac{x}{2}-\frac{x^2}{8}+O(x^{3})$.
\end{proof}

The following corollary is a special case of Lemma \ref{l:penalty} which will be useful to us and hence we state it separately.

\begin{corollary}
\label{c:deviation}
In the set-up of Lemma \ref{l:penalty} with $m=1$ we have
$$\hat{X}_{u_0,u_1}\leq \tilde{X}_{u,u'}+ O(r^{1/3})-\frac{(h_{1}-h_0)^2}{8}r^{1/3} .$$
\end{corollary}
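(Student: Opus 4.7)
The claim follows directly from the asymptotic expansion provided by Lemma~\ref{l:penalty}, specialised to $m=1$. The strategy is pure substitution and Taylor expansion; no new probability estimates are needed beyond what is already packaged into Lemma~\ref{l:penalty}.

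First I would compute the $\ell_1$ distance appearing in $\hat{X}_{u_0,u_1}$. With $m=1$ the hypothesis gives $y'-y = x'-x = r$, so
$$d(u_0,u_1) = (x'-x) + (y' + h_1 r^{2/3} - y - h_0 r^{2/3}) = 2r + (h_1 - h_0)\,r^{2/3}.$$
Next I would substitute $m=1$ into the expansion of Lemma~\ref{l:penalty}, which yields
$$\E X_{u_0,u_1} = 2r + (h_1 - h_0)\,r^{2/3} + O(r^{1/3}) - \frac{(h_1-h_0)^2}{4}\,r^{1/3}.$$
Subtracting produces the purely deterministic identity
$$\E X_{u_0,u_1} - d(u_0,u_1) = O(r^{1/3}) - \frac{(h_1-h_0)^2}{4}\,r^{1/3}.$$

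To assemble the corollary, I would split
$$\hat{X}_{u_0,u_1} = X_{u_0,u_1} - d(u_0,u_1) = \bigl[X_{u_0,u_1} - \E X_{u_0,u_1}\bigr] + \bigl[\E X_{u_0,u_1} - d(u_0,u_1)\bigr],$$
where the first bracket is the centred statistic at the shifted endpoints and the second is the deterministic expression just computed. This immediately gives
$$\hat{X}_{u_0,u_1} = \tilde{X}_{u_0,u_1} + O(r^{1/3}) - \frac{(h_1-h_0)^2}{4}\,r^{1/3}.$$
Keeping only half the quadratic term (since $(h_1-h_0)^2/4 \geq (h_1-h_0)^2/8$) yields the stated bound, with the remaining half of the quadratic correction absorbed into the $O(r^{1/3})$ slack together with the notational identification of $\tilde{X}_{u,u'}$ and $\tilde{X}_{u_0,u_1}$ in the setup of Lemma~\ref{l:penalty} (both referring to the centred passage time at the endpoints of the path being bounded).

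The entire argument is a clerical assembly of the Taylor expansion of $2\sqrt{1 + (h_1-h_0)r^{-1/3}}$ that already powers Lemma~\ref{l:penalty}, and the main (trivial) obstacle is simply bookkeeping: one must verify that the linear term $(h_1-h_0)r^{2/3}$ in $\E X_{u_0,u_1}$ cancels exactly against the corresponding term in $d(u_0,u_1)$, leaving only the negative quadratic second-order correction $-(h_1-h_0)^2 r^{1/3}/4$ (after which dropping to $1/8$ is costless). No pathwise percolation argument is required; the negative sign, which is the only substantive content of the corollary, is furnished directly by the concavity of $\sqrt{\cdot}$ through the expansion in Lemma~\ref{l:penalty}.
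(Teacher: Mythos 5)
Your proposal is correct and matches the paper's intended derivation: the paper states the corollary without proof as an immediate special case of Lemma~\ref{l:penalty}, and your specialisation to $m=1$, the computation $d(u_0,u_1)=2r+(h_1-h_0)r^{2/3}$ cancelling the linear term, and the harmless weakening of the coefficient $\tfrac14$ to $\tfrac18$ are exactly that specialisation. Your reading of $\tilde{X}_{u,u'}$ as the centred statistic of the same pair $(u_0,u_1)$ is the right one — this is precisely how the bound is applied (e.g.\ as $\hat{X}_{u,u'}-\tilde{X}_{u,u'}\leq\cdots$ in Lemma~\ref{l:goodgradient}), whereas a literal cross-pair reading would compare fluctuations of two different passage times and could not hold pointwise.
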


The quadratic term above may be viewed as a penalty term which is incurred for deviating from the straight line path as illustrated in the next lemma.

\begin{lemma}
\label{l:penaltyproper}
Let $u=(x,y)<u'=(x',y')\in \R^2$ be such that $|x'-x|=r$ and $\frac{y'-y}{x'-x}=m$ where $m\in (\frac{2}{\psi}, \frac{\psi}{2})$. Let $u_0=(x_0,y_0)=(x+\frac{r}{2},y+\frac{mr}{2}+hr^{2/3})$ be such that slope of the lines joining $u_0$ to $u$ and $u'$ are in  $(\frac{1}{\psi}, \psi)$. Then
$$ \E[X_{u,u_0}]+\E[X_{u_0,u'}]-\E[X_{u,u'}] \leq O(r^{1/3})-\frac{h^2}{8(m\vee 1)^{3/2}}r^{1/3}.$$
\end{lemma}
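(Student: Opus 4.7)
The plan is to apply Corollary~\ref{c:moddevtilde} to each of the three segments and reduce the statement to a short algebraic calculation expressing the concavity loss of $\sqrt{\cdot}$. Setting $\delta = 2h/r^{1/3}$, the slope hypothesis on the lines from $u_0$ to $u$ and to $u'$ reads exactly $m\pm\delta \in (1/\psi,\psi)$, so in particular $|\delta|<m-1/\psi<m$ and $\sqrt{m^2-\delta^2}$ is well-defined and positive. Corollary~\ref{c:moddevtilde} applied on horizontal distance $r/2$ gives
\[
\E[X_{u,u_0}] = r\sqrt{m+\delta} + O(r^{1/3}), \qquad \E[X_{u_0,u'}] = r\sqrt{m-\delta} + O(r^{1/3}),
\]
and applied on horizontal distance $r$ gives $\E[X_{u,u'}]=2r\sqrt{m}+O(r^{1/3})$; all error constants depend only on $\psi$. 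Thus it suffices to bound $r\,g(\delta)$ from above, where $g(\delta) := \sqrt{m+\delta}+\sqrt{m-\delta}-2\sqrt{m}$.

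Next I compute $g(\delta)$ via a difference-of-squares trick rather than Taylor expansion, since the slope bound only forces $|\delta|$ to be less than $m$, not $o(m)$. Squaring gives $(\sqrt{m+\delta}+\sqrt{m-\delta})^2 = 2m+2\sqrt{m^2-\delta^2}$, and multiplying $g(\delta)$ by the conjugate $\sqrt{m+\delta}+\sqrt{m-\delta}+2\sqrt{m}$ yields
\[
g(\delta)\cdot\bigl(\sqrt{m+\delta}+\sqrt{m-\delta}+2\sqrt{m}\bigr) = 2\sqrt{m^2-\delta^2}-2m = -\frac{2\delta^2}{m+\sqrt{m^2-\delta^2}}.
\]
Concavity of $\sqrt{\cdot}$ gives $\sqrt{m+\delta}+\sqrt{m-\delta}\leq 2\sqrt{m}$, so the conjugate factor is at most $4\sqrt{m}$, while $m+\sqrt{m^2-\delta^2}\leq 2m$; both are positive. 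Replacing them by these upper bounds only makes the (already negative) right-hand side more negative, so
\[
g(\delta) \leq -\frac{2\delta^2}{4\sqrt{m}\cdot 2m} = -\frac{\delta^2}{4m^{3/2}},
\]
and substituting $\delta = 2h/r^{1/3}$ gives $r\,g(\delta)\leq -h^2 r^{1/3}/m^{3/2}$.

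Finally, I exchange $m^{3/2}$ for $8(m\vee 1)^{3/2}$ by a two-case check: if $m\geq 1$, then $m^{3/2} = (m\vee 1)^{3/2} \leq 8(m\vee 1)^{3/2}$; if $m<1$, then $m^{3/2}<1 = (m\vee 1)^{3/2} \leq 8(m\vee 1)^{3/2}$. Either way $1/m^{3/2}\geq 1/(8(m\vee 1)^{3/2})$, which combined with the $O(r^{1/3})$ errors from the three applications of Corollary~\ref{c:moddevtilde} proves the lemma. No genuine obstacle is expected; the only mildly delicate point is not to Taylor-expand $g(\delta)$ around $0$, since the slope hypothesis permits $|\delta|$ to be of the same order as $m$, so that a naive remainder bound would degenerate. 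The algebraic identity used above sidesteps this completely.
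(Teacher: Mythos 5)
Your proof is correct. The decomposition into slopes $m\pm\delta$ and the application of Corollary~\ref{c:moddevtilde} to each of the three segments is exactly the reduction the paper intends (the paper just says ``similar to Lemma~\ref{l:penalty}''), and your slope computation and the final exchange $1/m^{3/2}\geq 1/(8(m\vee 1)^{3/2})$ are both right. Where you genuinely diverge is in how you estimate $g(\delta)=\sqrt{m+\delta}+\sqrt{m-\delta}-2\sqrt{m}$: the paper's cited route (Taylor-expanding $\sqrt{1+x}$ to second order, as in Lemma~\ref{l:penalty}) is clean in that lemma precisely because the hypothesis $|h_1-h_0|\le r^{1/10}$ forces the expansion variable $\to 0$ as $r\to\infty$; here there is no such hypothesis and $|\delta|$ can be comparable to $m$, so your replacement of Taylor by the exact conjugate identity $g(\delta)(\sqrt{m+\delta}+\sqrt{m-\delta}+2\sqrt m)=-2\delta^2/(m+\sqrt{m^2-\delta^2})$ is a real improvement in rigor: it gives a bound uniform over the whole allowed range of $\delta$ with no remainder term to control. (As it happens the Taylor route could also be salvaged, since $\sqrt{1+x}+\sqrt{1-x}-2=2\sum_{k\ge 1}\binom{1/2}{2k}x^{2k}$ has every coefficient negative, so the tail only helps; but a naive $O(x^3)$ remainder bound does not reveal this, and your argument sidesteps the issue entirely.) One cosmetic remark: your final bound $r\,g(\delta)\le -h^2 r^{1/3}/m^{3/2}$ is in fact stronger than what the lemma asks for; the factor $8$ in the statement is slack not actually used for $m\ge 1$ and absorbs only the passage from $m^{3/2}$ to $1$ when $m<1$.
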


\begin{proof}
Proof is similar to that of Lemma \ref{l:penalty} and we omit the details.
\end{proof}

We also need the following similar lemma.
\begin{lemma}
\label{l:penaltysum}
Let $u=(x,y)<u'=(x',y')\in \R^2$ be such that $|x'-x|=r$ and $\frac{y'-y}{x'-x}=m$ where $m\in (\frac{2}{\psi}, \frac{\psi}{2})$. Consider points $u=u_0<u_1<u_2<\cdots <u_{\ell}=u'$ such that $u_{i}=(x_i,y_i)$, $y_i=(y_0+m(x_i-x_0)+h_i)$ where $|h_i|\leq h (|x_{i}-x_{i-1}|^{2/3}\wedge |x_{i+1}-x_{i}|^{2/3})$. Then there exists $r_0=r_0(\psi,h)>0$ and $\theta=\theta(\psi, h)>0$ such that if $\min_{i}|x_{i+1}-x_i|\geq r_0$, then
\[
\left|\left(\sum_{i} \E[X_{u_i,u_{i+1}}]\right )- \E[X_{u,u'}] \right|\leq \theta (r^{1/3}+\sum_{i} |x_{i+1}-x_{i}|^{1/3}).
\]
\end{lemma}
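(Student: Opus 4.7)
The plan is to apply Lemma \ref{l:penalty} segment-by-segment along the sequence $u_0<u_1<\cdots<u_\ell$, sum the resulting expansions, telescope the linear correction, and bound the quadratic penalty using the hypothesis on the $h_i$'s. First I would verify that Lemma \ref{l:penalty} applies to each pair $(u_i,u_{i+1})$: writing $r_i=x_{i+1}-x_i$ and $\tilde h_i^{(i)}=h_i/r_i^{2/3}$, $\tilde h_{i+1}^{(i)}=h_{i+1}/r_i^{2/3}$, the hypothesis $|h_j|\le h(|x_j-x_{j-1}|^{2/3}\wedge|x_{j+1}-x_j|^{2/3})$ gives $|\tilde h_i^{(i)}|,|\tilde h_{i+1}^{(i)}|\le h$, hence $|\tilde h_{i+1}^{(i)}-\tilde h_i^{(i)}|\le 2h\le r_i^{1/10}$ once $r_0=r_0(\psi,h)$ is large enough; the slope of the segment is $m+(h_{i+1}-h_i)/r_i$ which lies in $(1/\psi,\psi)$ since the offset from $m\in(2/\psi,\psi/2)$ is at most $2h/r_i^{1/3}$, again small for $r_0$ large.

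Applying Lemma \ref{l:penalty} then yields
\[
\E[X_{u_i,u_{i+1}}]=2\sqrt{m}\,r_i+\frac{h_{i+1}-h_i}{\sqrt{m}}+O(r_i^{1/3})-\frac{(h_{i+1}-h_i)^2}{4m^{3/2}\,r_i}.
\]
Summing over $i=0,\dots,\ell-1$ the first terms give $2\sqrt{m}\,r$, the linear correction terms telescope to $(h_\ell-h_0)/\sqrt{m}=0$, the error terms aggregate to $O\!\bigl(\sum_i r_i^{1/3}\bigr)$, and the penalty terms are controlled using $|h_{i+1}-h_i|\le|h_i|+|h_{i+1}|\le 2h\,r_i^{2/3}$ (the point being that both offsets are bounded by $h\,r_i^{2/3}$ thanks to the minimum in the hypothesis). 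Hence
\[
\sum_i\frac{(h_{i+1}-h_i)^2}{4m^{3/2}\,r_i}\le \frac{h^2}{m^{3/2}}\sum_i r_i^{1/3},
\]
so that $\sum_i\E[X_{u_i,u_{i+1}}]=2\sqrt{m}\,r+O\!\bigl(\sum_i r_i^{1/3}\bigr)$.

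On the other hand, Lemma \ref{l:penalty} applied to the pair $(u,u')$ itself with $h_0=h_\ell=0$ (or equivalently Corollary \ref{c:moddevtilde}) gives $\E[X_{u,u'}]=2\sqrt{m}\,r+O(r^{1/3})$. Subtracting the two expressions yields
\[
\bigl|\sum_i\E[X_{u_i,u_{i+1}}]-\E[X_{u,u'}]\bigr|\le \theta\bigl(r^{1/3}+\sum_i r_i^{1/3}\bigr),
\]
for a constant $\theta=\theta(\psi,h)$, as required.

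The main (minor) obstacle is bookkeeping: one must ensure that the implicit constants in the $O(\cdot)$ terms of Lemma \ref{l:penalty} depend only on $\psi$ and $h$ and not on the individual segment lengths, and that $r_0$ can be chosen uniformly so the hypothesis about the slope lying in $(1/\psi,\psi)$ and $|\tilde h_{i+1}^{(i)}-\tilde h_i^{(i)}|\le r_i^{1/10}$ hold simultaneously for every segment. Both reduce to picking $r_0=r_0(\psi,h)$ large enough at the outset.
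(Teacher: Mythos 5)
Your proof is correct and is exactly the argument the paper has in mind (the paper's proof is the one-liner "Follows from Corollary~\ref{c:moddevtilde} and Lemma~\ref{l:penalty}"). The segment-by-segment application of Lemma~\ref{l:penalty}, the telescoping of the linear terms using $h_0=h_\ell=0$, and the bound $|h_{i+1}-h_i|\le 2h\,r_i^{2/3}$ (both offsets controlled by the common factor $r_i^{2/3}$ thanks to the minimum in the hypothesis) to tame the quadratic penalty are precisely the intended bookkeeping.
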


\begin{proof}
Follows from Corollary \ref{c:moddevtilde} and Lemma \ref{l:penalty}.
\end{proof}

\section{Bounds on path lengths between points in a parallelogram}
\label{s:para}
Observe that Theorem \ref{t:moddevlowertail} and Theorem \ref{t:moddevuppertail} provide us with nice tail bounds for point to point distances in the Poissonian last passage percolation environment. However, for our purposes we shall need to obtain similar estimates for 
$\sup_{u,u'} \tilde{X}_{u,u'}$ and $\inf_{u,u'} \tilde{X}_{u,u'}$ where $u$ and $u'$ are varied over points in a parallelogram of suitable length and height (such that the slope of the line joining $u$ and $u'$ is neither too small nor too large).

\subsection{Shorter paths are unlikely in a parallelogram}
We need the following notations to make a precise statement. Consider the parallelogram $U=U_{r,m,\ell}$ whose four corners are $(0, -\ell r^{2/3})$, $(0,\ell r^{2/3})$, $(r,mr-\ell r^{2/3})$, $(r, mr+\ell r^{2/3})$. Recall the definition of $\mathcal{S}(U)\subseteq U^2$. For $u=(x,y)$ and $u'=(x',y')\in U$, $(u,u')\in \mathcal{S}(U)$ iff $\frac{2}{\psi}< \frac{y'-y}{x'-x}\leq \frac{\psi}{2}$. We have the following proposition.

\begin{proposition}
\label{t:treeinf}
Consider the parallelogram $U=U_{r,1,1}$. There exists an absolute constant $c_1>0$, $r_0=r_0(\psi)>0$ and $\theta_0=\theta_0(\psi)>0$ such that we have for all $r>r_0$ and $\theta> \theta _0$
\begin{equation}
\label{e:treeinfgeneral}
\P\left(\inf_{(u,u')\in \mathcal{S}(U)} \tilde{X}_{u,u'}\leq -\theta r^{1/3}\right)\leq e^{-c_1\theta}.
\end{equation}
\end{proposition}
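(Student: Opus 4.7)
The plan is to reduce the continuous infimum over $\mathcal{S}(U)$ to a union bound over a finite grid, using monotonicity of $X_{u,u'}$ under shrinking the box together with the pointwise moderate deviation bound from Theorem~\ref{t:moddevlowertail}.

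First I would handle the trivial small-scale pairs. If $(u,u') \in \mathcal{S}(U)$ with $x'-x \leq c_0 \theta r^{1/3}$ for some $c_0 = c_0(\psi)$ small, then by Corollary~\ref{c:moddevtilde}
\[
\E X_{u,u'} \leq 2\sqrt{\psi/2}\,(x'-x) + O\bigl((x'-x)^{1/3}\bigr) \leq \tfrac{1}{2}\theta r^{1/3},
\]
and since $X_{u,u'}\geq 0$ this forces $\tilde X_{u,u'} \geq -\tfrac{1}{2}\theta r^{1/3}$, so such pairs never realize the bad event. Hence it suffices to control the infimum over the restricted set where $x'-x \geq c_0\theta r^{1/3}$.

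Next I would discretize. Place a grid $\mathcal{G}$ on $U$ of spacing $\delta = r^{1/3}/K$ (for a large constant $K=K(\psi)$) in both coordinates. For any admissible $(u,u')$ with $x'-x \gg \delta$, pick $\bar u, \bar u' \in \mathcal{G}$ with $u \leq \bar u < \bar u' \leq u'$ and $\|\bar u - u\|_\infty, \|\bar u' - u'\|_\infty \leq \delta$. By the path-extension argument $X_{u,u'}\geq X_{\bar u,\bar u'}$, and by Lemma~\ref{l:penalty} the expectation distortion satisfies $\E X_{u,u'} - \E X_{\bar u, \bar u'} = O(\delta)+O(r^{1/3}) \leq \tfrac{1}{4}\theta r^{1/3}$ for $\theta$ above a constant. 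Consequently the bad event forces $\tilde X_{\bar u,\bar u'} \leq -\tfrac{1}{2}\theta r^{1/3}$ at some grid pair. A multiscale union bound, partitioning grid pairs by the dyadic scale $x'-x \in [2^k,2^{k+1}]$, controls the contribution at scale $k$ using the tail $\exp\!\bigl(-c\,\theta^{3/2}(r/2^k)^{1/2}\bigr)$ from Theorem~\ref{t:moddevlowertail}; summing the resulting geometric series over $k$ produces a total bound of the form $\mathrm{poly}(r,\theta)\cdot e^{-c\theta^{3/2}}$.

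The main obstacle is ensuring the bound is uniform in $r$ across the full range $\theta > \theta_0$, since a naive union bound yields polynomial-in-$r$ factors that cannot be absorbed into the weaker exponential $e^{-c_1\theta}$. The resolution is the dyadic chaining itself: grid pairs at the largest scale $2^k\sim r$ have the weakest per-pair tail $e^{-c\theta^{3/2}}$ but are relatively few, while smaller-scale pairs are more numerous but enjoy a much stronger per-pair tail with the $(r/2^k)^{1/2}$ enhancement. The trade-off yields the claimed uniform exponential bound $e^{-c_1\theta}$, where the weakening of the tail exponent from $\theta^{3/2}$ to $\theta$ precisely accommodates the logarithmic loss from the union bound.
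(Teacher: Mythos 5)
Your reduction to small-scale and large-scale pairs, and the monotonicity observation $X_{u,u'}\geq X_{\bar u,\bar u'}$, are both sound. However, the final step --- absorbing the $\mathrm{poly}(r)$ loss from the union bound into $e^{-c_1\theta}$ --- does not go through, and the claimed ``trade-off'' does not fix it.

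Here is where it breaks. Your grid has spacing $\delta\sim r^{1/3}$, so it contains $\sim r^{2/3}\cdot r^{1/3}= r$ points and $\sim r^2$ ordered pairs. At the coarsest dyadic scale, $x'-x\asymp r$, there are still $\Theta(r^2)$ grid pairs (the left endpoint ranges over roughly half the grid and likewise the right endpoint), and each such pair has tail $e^{-c\theta^{3/2}(r/2^k)^{1/2}}\big|_{2^k\asymp r}=e^{-c\theta^{3/2}}$, which is \emph{independent} of $r$. So the contribution from the coarsest scale alone is $\asymp r^2 e^{-c\theta^{3/2}}$, and for any fixed $\theta>\theta_0$ this blows up as $r\to\infty$; no choice of $c_1$ gives a bound $e^{-c_1\theta}$ uniform in $r$. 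The $(r/2^k)^{1/2}$ enhancement you invoke only helps at scales strictly smaller than $r$; it does nothing at the top scale, which is precisely where the union bound is most expensive. The loss from the union bound is polynomial in $r$, not logarithmic, so the ``weakening from $\theta^{3/2}$ to $\theta$'' cannot absorb it.

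What is missing is a genuine multi-scale \emph{chaining} construction, not merely a dyadic partition of a single fixed grid of pairs. The paper's proof (Lemma~\ref{l:treeinfbasic} and Lemma~\ref{l:ek}) builds, from the center point $u_*$, a tree whose vertex set is refined near the left boundary of $U$: at level $k$ the horizontal spacing is $\sim 8^k a$ and the vertical spacing $\sim 4^k a^{2/3}$, so there are $\sim 32^{K-k}$ vertices at level $k$. Crucially, the deviation budget is also geometric in $k$, namely $\frac{\theta r^{1/3}}{100}(1.5)^{k-K}$, so in units of the local fluctuation scale the demanded deviation at level $k$ is $\asymp \theta(4/3)^{K-k}$, which \emph{grows} as the grid becomes finer. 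The per-edge tail $e^{-c\theta(4/3)^{K-k}}$ then dominates the polynomial vertex count $32^{K-k}$ once $\theta$ exceeds a constant, and the geometric sum over $k$ gives $e^{-c_1\theta}$ with no stray power of $r$. Proposition~\ref{t:treeinf} is then deduced by covering $U$ with $O(8^k)$ sub-parallelograms at $\log r$-many dyadic levels and applying Lemma~\ref{l:treeinfbasic} to each; again the per-parallelogram tail $e^{-c\theta 2^{k/12}}$ beats the count $8^{k+1}$ for $\theta$ above a constant. In short, a uniform grid of spacing $r^{1/3}$ with a flat deviation budget is not enough; you need the adaptive spacing and the geometrically decaying deviation budget to keep the union bound summable.

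Your small-scale observation ($x'-x\leq c_0\theta r^{1/3}$ never realizes the bad event because $X\geq0$) is correct and is a clean way of discarding the very shortest pairs, but it does not alleviate the problem above, which sits entirely at the largest pair separations.
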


The proof of Proposition \ref{t:treeinf} is done in two steps. The first step is to prove the following easier lemma which asserts the statement of Proposition \ref{t:treeinf}, but only for pairs of points such that one is `close' to the left boundary of $U$ and the other is `close' to the right boundary of $U$. 

\begin{lemma}
\label{l:treeinfbasic}
Consider the parallelogram $U=U_{r,m,1}$ where $m\in (\frac{4}{\psi}, \frac{\psi}{4})$. Define $L(U)=U\cap \{x\leq r/8\}$ and $R(U)=U\cap \{x\geq 7r/8\}$. There exist constants $r_1>0$, $\theta_1>0, c_2>0$ such that for all $r>r_1$ and $\theta>\theta_1$ we have
\begin{equation}
\label{e:treeinfbasic2}
\P\left(\inf_{u\in L(U), u'\in R(U)} \tilde{X}_{u,u'}\leq -\theta r^{1/3}\right)\leq e^{-c_2\theta}.
\end{equation}
\end{lemma}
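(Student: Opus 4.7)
My plan is to discretize $L(U)$ and $R(U)$ at the longitudinal KPZ scale $r^{1/3}$, apply the pointwise moderate deviation estimate of Corollary~\ref{c:moddevtilde} at each grid pair, and then extend the bound to all $(u,u')\in L(U)\times R(U)$ using the coordinate-wise monotonicity of $X_{u,u'}$. The main source of difficulty is that although both strips have $y$-width only of order $r^{2/3}$, the horizontal gap $u'_x-u_x$ ranges over $[3r/4,r]$, so $\E[X_{u,u'}]$ varies by $\Theta(r)$ over $L(U)\times R(U)$; a naive single-anchor monotonicity bound such as $X_{u,u'}\ge X_{A_4,A_5}$ (with $A_4,A_5$ the northeast corner of $L(U)$ and southwest corner of $R(U)$) would lose by a factor of $r^{2/3}$.

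Concretely, I would partition $L(U)$ into sub-parallelograms $\{L_i\}$ of diameter at most $\delta r^{1/3}$ (with $\delta=\delta(\psi)>0$ a small constant) and let $\bar v_i$ denote the componentwise maximum of $L_i$; $R(U)$ is partitioned analogously into $\{R_j\}$ with componentwise minima $\underline v'_j$. For $(u,u')\in L_i\times R_j$ one has $u\le\bar v_i<\underline v'_j\le u'$, so prepending a $u\to\bar v_i$ segment and appending a $\underline v'_j\to u'$ segment to the longest $\bar v_i\to\underline v'_j$ path gives $X_{u,u'}\ge X_{\bar v_i,\underline v'_j}$. Lemma~\ref{l:penalty} controls the corresponding mean gap: $\E[X_{u,u'}]-\E[X_{\bar v_i,\underline v'_j}]\le K(\psi)(\delta+1)\,r^{1/3}$. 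Choosing $\delta$ small and $\theta_1=\theta_1(\psi)$ large enough then yields
\[
\tilde X_{u,u'}\ \ge\ \tilde X_{\bar v_i,\underline v'_j}\ -\ \tfrac{\theta}{2}\,r^{1/3}\qquad\text{for every }\theta>\theta_1.
\]

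Since $\underline v'_j-\bar v_i$ has slope in $(1/\psi,\psi)$ by the choice $m\in(4/\psi,\psi/4)$, Theorem~\ref{t:moddevlowertail} bounds each grid-pair failure probability by $e^{-c\theta^{3/2}}$. A union bound over the $O(r^2)$ grid pairs, combined with the super-linear growth of $\theta^{3/2}$ relative to $\theta+\log r$, absorbs the polynomial factor and yields the tail estimate $e^{-c_2\theta}$ claimed in \eqref{e:treeinfbasic2}. The hard part will be precisely this absorption: the $\Theta(r)$ variation of $\E[X_{u,u'}]$ forces a grid at the fine scale $r^{1/3}$, producing a polynomial union-bound factor that can only be tamed by the sharper $\theta^{3/2}$ lower-tail exponent of Theorem~\ref{t:moddevlowertail}---the plain $e^{-C_1\theta}$ tail of Corollary~\ref{c:moddevtilde} alone would be insufficient. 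Should the absorption need to be made fully uniform in $r$ even at small $\theta$, a standard dyadic chaining refinement of the grid argument over logarithmically many scales enlarges the range of admissible $\theta$.
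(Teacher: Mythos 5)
There is a genuine gap, and it is exactly where you wave your hands at the end. Your primary argument discretizes $L(U)\times R(U)$ at the longitudinal scale $\delta r^{1/3}$ and takes a union bound over $O(r^2/\delta^4)$ anchor pairs, each controlled by the $e^{-C_1 s^{3/2}}$ lower-tail estimate with $s\asymp\theta$. This gives a bound of order $r^2 e^{-c\theta^{3/2}}$. The lemma, however, asks for a bound of the form $e^{-c_2\theta}$ that holds for \emph{all} $r>r_1$ and \emph{all} $\theta>\theta_1$ with $r_1,\theta_1,c_2$ constants (depending only on $\psi$). For any fixed $\theta>\theta_1$, your union bound $r^2 e^{-c\theta^{3/2}}$ tends to infinity as $r\to\infty$, so the ``super-linear growth of $\theta^{3/2}$ relative to $\theta+\log r$'' does not materialize in the regime the lemma is actually about: $\theta$ bounded and $r$ arbitrarily large. (The super-linear tail would save you only if one were allowed to take $\theta_1$ growing like a power of $\log r$, which the statement does not permit.) The cost of the mean variation of size $\Theta(r)$ across $L(U)\times R(U)$, which forces a grid at scale $r^{1/3}$, is real, and you identified it correctly, but the single-scale grid then produces a polynomial-in-$r$ entropy term that the $s\mapsto e^{-C_1 s^{3/2}}$ tail, which is uniform in $r$, cannot absorb.

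The chaining you mention at the very end is not a ``refinement'' to be added on a rainy day; it is the whole point and is what the paper's proof does. The paper first anchors at the midpoint $u_*$ to get a one-sided statement, and then builds a tree of grid points at geometrically growing horizontal scales $8^k a$ (and matching transversal scales $4^k a^{2/3}$) marching from $u_*$ toward $L(U)$. Crucially, the deviation budget at level $k$ is not $\theta r^{1/3}$ but the much smaller $\theta r^{1/3}(3/2)^{k-K}$, which is $\theta$ times the on-scale fluctuation $ (8^k a)^{1/3}$ up to constants. Consequently the per-edge failure probability at level $k$ is $e^{-c\theta(4/3)^{K-k}}$, which decays doubly (geometrically in the exponent) as one moves to finer scales, while the number of edges at level $k$ grows only polynomially in $8^{K-k}$; this is what makes the union bound converge to $e^{-c\theta}$ uniformly in $r$. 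Your single-scale argument asks for the same deviation $\theta r^{1/3}/2$ at every grid point and therefore has nothing with which to pay for the $O(r^2)$ entropy. To repair the proposal one would have to carry out the multi-scale construction in full, with scale-dependent deviation budgets, at which point one recovers the paper's argument rather than a different one.

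Two smaller remarks. First, your mean-gap estimate via Lemma~\ref{l:penalty} should also account for the horizontal shift between $u$ and $\bar v_i$ (Lemma~\ref{l:penalty} as stated keeps the $x$-coordinates fixed); this is routine but should be said. Second, it is worth noting that in the transversal direction a grid at scale $r^{2/3}$ (rather than $r^{1/3}$) already controls the mean to $O(r^{1/3})$, so the entropy is really $O(r^{4/3})$ rather than $O(r^2)$; but this does not save the argument, since any polynomial-in-$r$ factor is fatal for fixed $\theta$.
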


%
%

\begin{proof}
We shall restrict to the case $m=1$ without loss of generality, the same argument works for other values of $m$. Let $u_{*}=(\frac{r}{2},\frac{r}{2})$ denote the center of $U$. Observe using Lemma \ref{l:penaltysum} it follows that for all $u\in L(U)$, $u'\in R(U)$ we have
$$\left |\E X_{u,u_*} +\E X_{u_*, u'} -\E X_{u,u'}  \right|\leq \frac{\theta r^{1/3}}{3}$$
for $\theta$ sufficiently large. Hence it follows that  
$$\tilde{X}_{u,u'}\geq \tilde{X}_{u,u_{*}}+\tilde{X}_{u_*,u'} -\frac{\theta r^{1/3}}{3}$$
and by symmetry it suffices to prove that for $r$ sufficiently large 
\begin{equation}
\label{e:treeinfbasic1}
\P\left(\inf_{u\in L(U)} \tilde{X}_{u,u_*}\leq -\frac{\theta r^{1/3}}{3}\right)\leq e^{-c\theta}
\end{equation}
for some absolute constant $c>0$. This is what we shall establish.

Before proceeding with the proof of \eqref{e:treeinfbasic1}, let us first explain informally the idea of the argument. Suppose, for the moment, we are only interested in paths from the left boundary of $U$ to $u_{*}$. We shall define a sequence of points in the left half of $U$, which will form a planar tree rooted at $u_{*}$ and have a large number of leaves on the left boundary of $U$. See Figure \ref{f:tree} (For convenience, we draw it in the tilted co-ordinates so the parallelogram $U$ becomes a rectangle in the figure). We shall ask that along each edge $(u_1,u_2)$ of the tree, $\tilde{X}_{u_1,u_2}$ is not too small. We shall show that we can choose the points in this tree in such a way that (a) the above event holds with large probability and (b) on these events, $\inf_{u} \tilde{X}_{u,u_*}$ is not too small. The idea is to make the edges of the tree shorter and shorter as the points get closer to the left boundary of $U$. Since larger deviations become much more unlikely with decreasing edge length, it is possible to take a union bound over a larger set of points. Formally we do the following.   

\begin{figure}[ht!]
\begin{center}
\includegraphics[width=\textwidth]{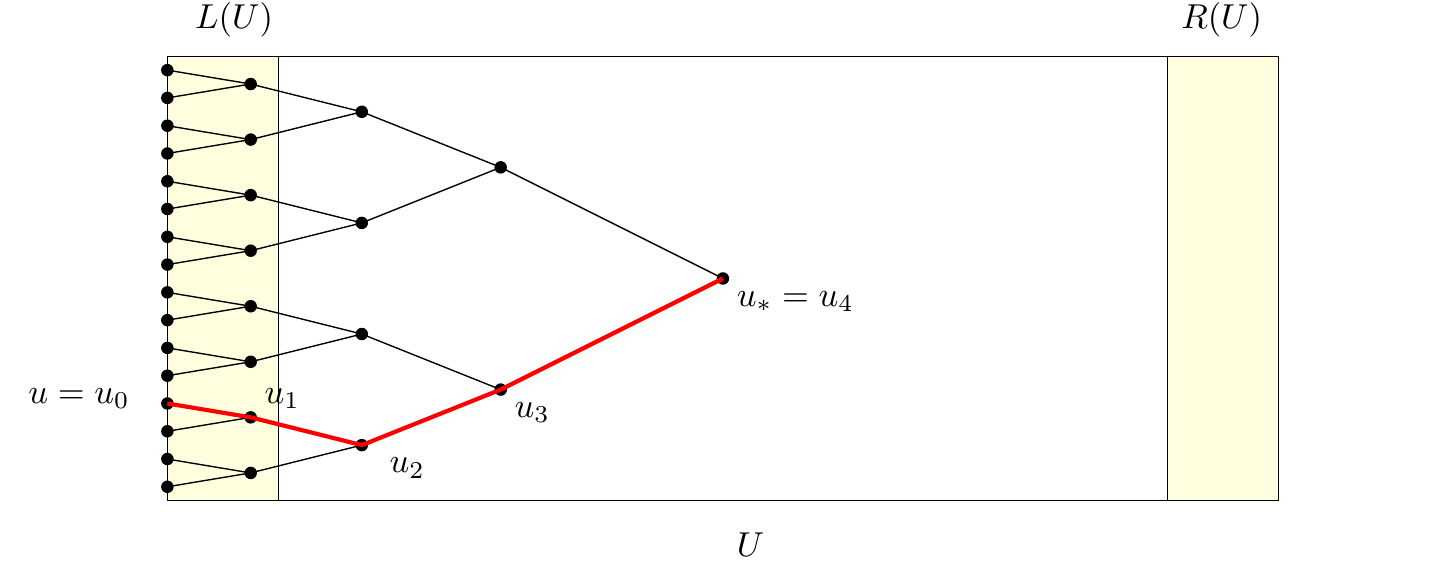}
\caption{A path from $u$ to $u_*$ along the tree, $X_{u,u_*}\geq \sum_{i=0}^{3} X_{u_i,u_{i+1}}$}
\label{f:tree}
\end{center}
\end{figure}

For $r$ sufficiently large and $r_0$ given by Corollary \ref{c:moddevtilde}, fix $a$ such that $r\gg a \gg r_0$ and $\frac{r}{2a}=8^{K}$ for some integer $K>0$. For each $k\in \{0,\ldots, K \}$ we define the following sets of points. Let $S_k = \{a\ell 8^{k}:\ell=0,1,\ldots, 8^{K-k}\}$ and $T_k =  \{a^{2/3}\ell 4^k: \ell=-2\times 4^{K-k},\ldots, 2\times 4^{K-k}\}$. Define $V_{k}$ to be the set of all points $(x,y)\in U$ such that $x\in S_{k}$ and $y-x\in T_{k}$.

At level $k$, define a graph $\mathcal{T}_k$ with the vertex set $V_k$ where $(x,y), (x',y')\in V_k$ is connected by an edge if $x\neq x'$, $|x-x'|\leq 20\cdot 8^ka$ and $|(y-y')-(x-x')|\leq 30\cdot 4^ka^{2/3}$. That is, $\mathcal{T}_{k}$ connects pairs of points in $V_k$ that are close by.

Let $\mathcal{E}_k$ denote the following event.
$$\mathcal{E}_k:=\biggl\{\tilde{X}_{v,v'}\geq -\frac{\theta r^{1/3}}{100}(1.5)^{k-K}~\forall (v,v')\in \mathcal{T}_k \biggr\}.$$

It follows from Lemma \ref{l:ek} below that for $r$ and $\theta$ sufficiently large we have
\begin{equation}
\label{e:ekclaim}
\biggl\{\inf_{u\in L(U)} \tilde{X}_{u,u_*}\geq -\frac{\theta r^{1/3}}{3}\biggr\} \supseteq \bigcap_{k=1}^{K} \mathcal{E}_k.
\end{equation}

To complete the proof of the lemma it remains to obtain a lower bound for $\P[\cap_k \mathcal{E}_k]$. From Corollary \ref{c:moddevtilde} we have for $(v,v')\in \mathcal{T}_k$ and for $\theta$ and $r$ sufficiently large
$$\P\left[\tilde{X}_{v,v'}\leq -\frac{\theta r^{1/3}}{100}(1.5)^{(k-K)}\right]\leq e^{-c\theta (4/3)^{K-k}}$$
for some absolute constant $c>0$. Now the number of edges $(v,v')\in \mathcal{T}_k$ is polynomial in $8^{K-k}$, so taking a union bound over all $(v,v')\in \mathcal{T}_{k}$ and over $k$ we get that $\P[\cap_{k}\mathcal{E}_k]\geq 1-e^{-c_3\theta}$ provided that $\theta$ and $r$ are sufficiently large. This establishes \eqref{e:treeinfbasic1}.  Proof of Lemma \ref{l:treeinfbasic} can then be completed as discussed above.
\end{proof}

It remains to establish \eqref{e:ekclaim}, which we do in the next lemma.
\begin{lemma}
\label{l:ek}
In the set-up of the above proof, for $r$ and $\theta$ sufficiently large we have 
$$\biggl\{\inf_{u\in L(U)} \tilde{X}_{u,u_*}\geq -\theta r^{1/3}\biggr\} \supseteq \bigcap_{k=1}^{K} \mathcal{E}_k.$$
\end{lemma}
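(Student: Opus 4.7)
The plan is to realize, for each $u \in L(U)$, an explicit telescoping chain $u \to v = u_0 \to u_1 \to \cdots \to u_K = u_*$ with $u_k \in V_k$ and $(u_{k-1}, u_k) \in \mathcal{T}_{k-1}$, so that superadditivity of $X$, Lemma~\ref{l:penaltysum}, and the per-edge bounds of $\bigcap_k \mathcal{E}_k$ combine to give the desired bound $\tilde{X}_{u, u_*} \geq -\theta r^{1/3}$.

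First, reduce to a grid point. Since $V_0$ has spacing $(a, a^{2/3})$ in the $(x, y-x)$ coordinates and $a$ is constant with respect to $r$, pick $v \in V_0$ componentwise with $u \leq v$ and $v - u = O(a) = O(1)$. Prepending the edge $u \to v$ to any increasing path from $v$ to $u_*$ yields $X_{u, u_*} \geq X_{v, u_*}$, and Lemma~\ref{l:penalty} gives $|\E X_{u, u_*} - \E X_{v, u_*}| = O(1)$, so $\tilde{X}_{u, u_*} \geq \tilde{X}_{v, u_*} - O(1)$.

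Second, construct the chain. The grids $S_k = a \cdot 8^k \Z$ and $T_k = a^{2/3} \cdot 4^k \Z$ are nested ($V_k \subset V_{k-1}$), so starting from $u_K = u_*$ and moving backwards, one can select $u_k \in V_k$ inductively with $x_k - x_{k-1} = \Theta(8^{k-1} a)$ and transverse deviation $(y_k - x_k) - (y_{k-1} - x_{k-1}) = O(4^{k-1} a^{2/3})$. The geometric sums $\sum_{k=1}^K 8^{k-1} a \asymp r$ and $\sum_{k=1}^K 4^{k-1} a^{2/3} \asymp r^{2/3}$ match $x_{u_*} - x_v$ and the transverse extent of $U$, so the chain can close up at $u_0 = v$. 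Both endpoints of the $k$-th step lie in $V_{k-1}$, and the construction keeps them within the $\mathcal{T}_{k-1}$ edge bounds $|x_k - x_{k-1}| \leq 20 \cdot 8^{k-1} a$ and $|(y_k - y_{k-1}) - (x_k - x_{k-1})| \leq 30 \cdot 4^{k-1} a^{2/3}$.

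Third, combine. Superadditivity yields $X_{v, u_*} \geq \sum_{k=1}^K X_{u_{k-1}, u_k}$, and Lemma~\ref{l:penaltysum} gives
$$\sum_{k=1}^K \E X_{u_{k-1}, u_k} \geq \E X_{v, u_*} - C_\psi \Bigl(r^{1/3} + \sum_{k=1}^K (8^{k-1} a)^{1/3}\Bigr),$$
where the geometric series on the right is $O(r^{1/3})$. On $\bigcap_k \mathcal{E}_k$ each $\tilde{X}_{u_{k-1}, u_k} \geq -\frac{\theta r^{1/3}}{100}(1.5)^{(k-1) - K}$, and $\sum_{k=1}^K (1.5)^{(k-1) - K} < 2$, so $\sum_k \tilde{X}_{u_{k-1}, u_k} \geq -\theta r^{1/3} / 50$. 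Adding contributions, for $\theta$ and $r$ large enough depending on $\psi$,
$$\tilde{X}_{u, u_*} \geq -\frac{\theta r^{1/3}}{50} - C_1 r^{1/3} - O(1) \geq -\theta r^{1/3}.$$
The main obstacle is the second step: verifying the transverse $\mathcal{T}_{k-1}$ bound at every scale while simultaneously pinning $u_0 = v$ and $u_K = u_*$ requires a careful geometric (rather than linear) interpolation whose longitudinal and transverse increments both match the doubling scales of $S_k$ and $T_k$.
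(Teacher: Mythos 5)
Your plan — telescoping $X_{u,u_*} \ge \sum_k X_{u_{k-1},u_k}$ over a multiscale chain, controlling each $\tilde X_{u_{k-1},u_k}$ by $\mathcal{E}_{k-1}$, and absorbing the centering error via Lemma~\ref{l:penaltysum} and a geometric series — is exactly the structure of the paper's argument, and your bookkeeping in step three (the $\sum_k (1.5)^{(k-1)-K}<2$ bound, the $O(r^{1/3})$ error from $\sum_k(8^{k-1}a)^{1/3}$) is correct. The genuine gap is the one you flag yourself in the last sentence: you never actually exhibit the chain, and the backward construction from $u_*$ that you sketch is awkward precisely because $v$ is a generic point of the finest grid $V_0$ and there is no canonical way to walk down to it from $u_*$. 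The paper avoids this by building the chain \emph{forward}: it sets $x_i$ to be the smallest multiple of $a8^i$ strictly greater than $x_{i-1}$ (so each longitudinal increment is automatically a multiple of $a8^{i-1}$ lying in $[a8^{i-1},8\cdot a8^{i-1}]$), and sets $y_i-x_i$ to be the \emph{linear} interpolant of the transverse coordinate along the straight line from $u$ to $u_*$, rounded to the $T_i$ grid. With this recursion, $x_K$ is forced to equal $r/2$ because $r/2=a8^K$ is the only nonzero element of $S_K$, and then the interpolation coefficient $(r/2-x_K)/(r/2-x_{-1})$ is $0$, so $y_K=x_K$; thus $u_K=u_*$ automatically, with no pinning problem. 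Your suspicion that one needs a ``geometric rather than linear'' interpolation is misplaced: the transverse interpolation \emph{is} linear (one follows the segment $\mathcal{L}$), and the geometric scaling of the transverse increments comes for free because the sample points $x_i$ are geometrically spaced and the grid $T_i$ has mesh $a^{2/3}4^i$. So the idea is right, but you still owe the explicit recursion and the verification that its increments lie inside the $\mathcal{T}_{i}$ edge constraints; that computation is the content of the paper's proof.
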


We start with an informal sketch of the proof. Fix $u=u_{-1}=(x_{-1},y_{-1})\in L(U)$. Our objective is to find a sequence of points $\{u_i=(x_i,y_i)\}_{i=1}^{i_0}$ (see Figure \ref{f:tree}) such that 
\begin{enumerate}
\item[\rm i.] $u_0$ is very close to $u_{-1}$: the distance between $u_{-1}$ and $u_0$ is $O(a)$. 
\item[\rm ii.] For each $i\geq 0$, we have $u_i\in V_i$ and there is an edge in $\mathcal{T}_i$ between $u_i$ and $u_{i+1}$.  
\item[\rm iii.] $u_{i_0}=u_{*}$.
\end{enumerate}
The informal idea to construct such a sequence is as follows. Consider the line segment $\mathcal{L}$ joining $u$ and $u_{*}$. We construct the points recursively going from left to right. Suppose we have constructed up to point $u_i$. Then we look at the next vertical line to the right of $u_i$ on which points of $V_{i+1}$ lie. We look where $\mathcal{L}$ intersects this line and find a close by point on $V_{i+1}$. Since both the points $u_i$ and $u_{i+1}$ are not too far from $\mathcal{L}$, it can be shown that there exists an edge between $u_i$ and $u_{i+1}$ in $\mathcal{T}_{i}$. Also since the distance from the left boundary of $U$ to $u_i$ keeps increasing exponentially, eventually (say at step $i_0$) this becomes $\frac{r}{2}=a8^{K}$, so at this point we hit $u_*$, and set $u_{i_0}=u_{*}$. More formally we do the following.

\begin{proof}[Proof of Lemma \ref{l:ek}]
We assume $\bigcap_{k=1}^{K} \mathcal{E}_k$ holds and show that $\biggl\{\inf_{u\in L(U)} \tilde{X}_{u,u_*}\geq -\theta r^{1/3}\biggr\}$ holds.

Let $\mbox{Int}(z) = \lfloor z\rfloor$ if $z > 0$ and
$\lceil z \rceil$ if $z < 0$. Define points $u_i=(x_i,y_i)$ for $i\geq 0$ recursively as follows.

$$x_i=a\biggl(\lfloor \frac{x_{i-1} 8^{-i}}{a}\rfloor+ 1\biggr)8^i;\qquad y_i =x_i+  \mbox{Int}\left((y_{-1}-mx_{-1})\left(\frac{r/2-x_i}{r/2-x_{-1}}\right)a^{-2/3}4^{-i}\right)a^{2/3}4^i.$$

Observe that by the above definition $0<x_{i+1}-x_i<8^{i+1}a$ and 
$$|(y_{i+1}-y_i)-(x_{i+1}-x_i)|\leq 5\cdot 4^ia^{2/3}+ 20\frac{8^ia}{r^{1/3}}\leq 25 \cdot 4^ia^{2/3}$$
and hence there exists an edge in $\mathcal{T}_i$ between $u_i$ and $u_{i+1}$ and the points $\{u_i\}$ satisfy the conditions i.-iii. described above. 

Notice that since the distance between $u_{-1}$ and $u_0$ is $O(a)$ it follows that if $r\gg a$ we have
$$\tilde{X}_{u_{-1},u_0}\geq -O(a)\geq -\frac{\theta}{100}r^{1/3}.$$
We can lower bound $X_{u,u_{*}}$ by,
$$X_{u,u_{*}}\geq \sum_{i=0}^{i_0} X_{u_{i-1},u_i}.$$
So to obtain a lower bound on $\tilde{X}_{u,u_{*}}$ we need to obtain a bound on 
$$\left|\sum_{i=0}^{i_0}\E X_{u_{i-1},u_i}-\E X_{u,u_{*}}\right|.$$ 
To this end we apply Lemma \ref{l:penaltysum} and using $x_{i+1}-x_i< 8^{i+1}a$ get that for $\theta$ sufficiently large, on $\bigcap_{k=1}^{K} \mathcal{E}_k$ we have
$$\tilde{X}_{u,u*}- \sum_{i=0}^{i_{0}}\tilde{X}_{u_{i-1},u_{i}}\geq -\frac{\theta}{100}r^{1/3}-\frac{\theta}{100}\sum_{i=0}^{i_{0}}2^ia^{1/3} \geq -\frac{\theta}{3}r^{1/3}.$$
This completes the proof.
\end{proof}


Finally we are ready to give the proof of Proposition \ref{t:treeinf}. The idea is to cover the parallelogram $U$ with a number of smaller parallelograms $U^*$ such that for any pair of points $(u,u')\in S(U)$, there exists a $U^*$ such that $u\in L(U^*)$ and $u'\in R(U^*)$, and then use Lemma \ref{l:treeinfbasic} for the parallelograms $U^*$.

\begin{proof}[Proof of Proposition \ref{t:treeinf}]
Pick $r$ sufficiently large such that $r^{1/6}\gg r_1$ where $r_1$ is given by Lemma \ref{l:treeinfbasic}. Let $U=U_{r,1,1}$ be as in the statement of the proposition and let us define the following sets of points in $U$. For $k\geq 0$, define
$$S_{k}=\{\ell r2^{-3}2^{-k/4}: \ell=0,1,\ldots , 8\times2^{k/4}\}$$
and
$$T_{k}=\{\ell r^{2/3}2^{-k/6}: \ell=-2^{k/6}, \ldots , 2^{k/6}\}.$$

Consider the set of points $V_{k} \subseteq U$ such that $(x,y)\in V_{k}$ iff $x\in S_k$ and $y-x\in T_k$. Now consider the following parallelograms with vertices in $V_{k}$ having width $2^{-k/4}r$ and height $2^{-k/6}r^{2/3}$. For $\ell \in [8\times 2^{k/4}]$, $s_1,s_2\in \{-2^{k/6},\ldots ,2^{k/6}\}$ with $s_1<s_2$ and 
$$\frac{4}{3\psi} <1+\frac{s_2-s_1}{2^{-k/12}r^{1/3}} < \frac{3\psi}{4}$$
define $U^*_{k,\ell, s_1,s_2}$ as the parallelogram with vertices 
$(\ell 2^{-3}2^{-k/4}r, s_1 2^{-k/6}r^{2/3})$, $(\ell 2^{-3}2^{-k/4}r, (s_1+1)2^{-k/6}r^{2/3})$, $((\ell+8)2^{-3}2^{-k/4}r, s_2 2^{-k/6}r^{2/3})$ and $((\ell+8)2^{-3}2^{-k/4}r, (s_2+1)2^{-k/6}r^{2/3})$. See Figure \ref{f:paracover}, which again we have drawn in the tilted co-ordinates.

\begin{figure}[ht!]
\begin{center}
\includegraphics[width=\textwidth]{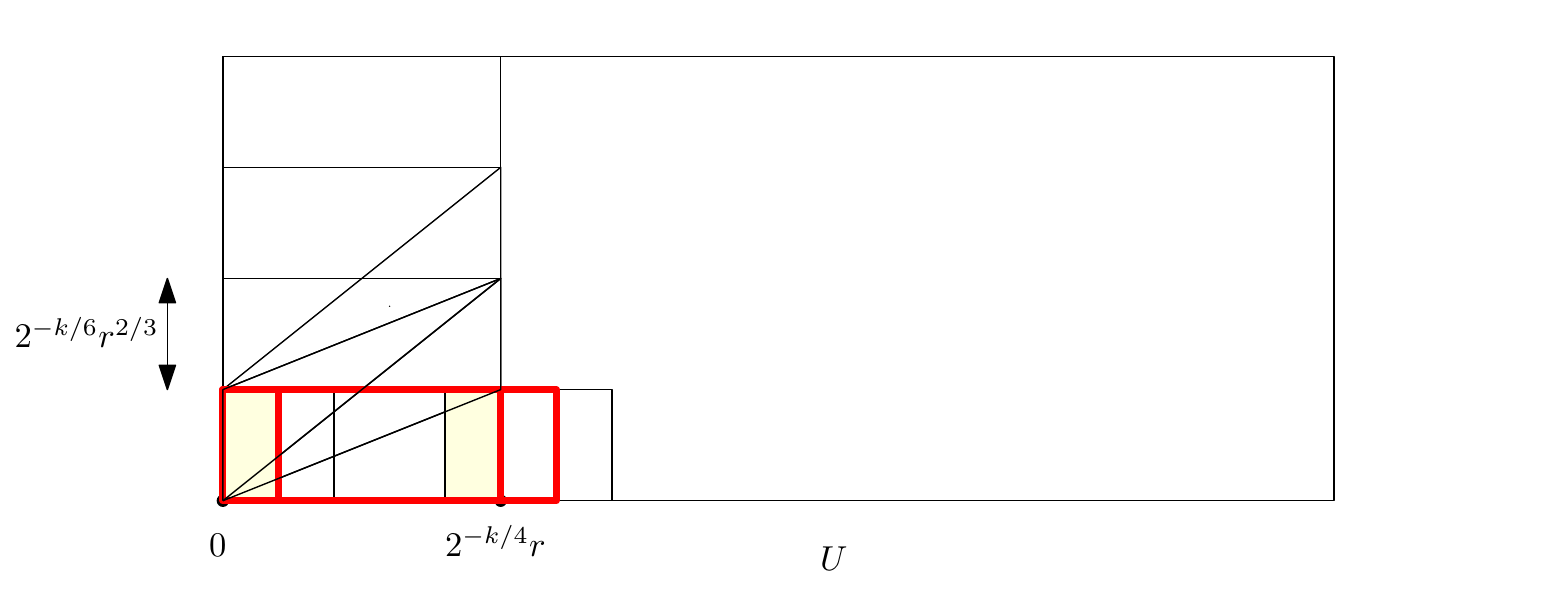}
\caption{Parallelograms constructed in the proof of Proposition \ref{t:treeinf}: Boundaries of $\tilde{U}=U^*_{k,0,-2^{k/6}, -2^{k/6}}$ and $U^*_{k,1,-2^{k/6}, -2^{k/6}}$ are marked in red; $L(\tilde{U})$ and $R(\tilde{U})$ are also marked}
\label{f:paracover}
\end{center}
\end{figure}
Denote the family of such parallelograms at level $k$ by $\mathcal{U}_{k}$. Note that for $r$ sufficiently large, any pair of points $u=(x,y)$ and $u'=(x',y')$ with $u<u'$ such that $(u,u')\in \mathcal{S}(U)$ and $|x-x'|\in [0.75\times 2^{-k/4}r, 2^{-k/4}r]$ there exists $U^*\in \mathcal{U}_{k}$ such that $u\in L(U^*)$ and $u'\in R(U^*)$. 

Let $\mathcal{G}_k$ denote the following event.
$$\mathcal{G}_{k}=:\left\{\forall U^*\in \mathcal{U}_k, \inf_{u\in L(U^*), u'\in R(U^*)} \tilde{X}_{u,u'}\geq -\theta r^{1/3}\right\}.$$

Observe that, if $(u,u')\in \mathcal{S}(U)$ is such that $|x-x'|\leq r^{1/6}$ then we must have $\tilde{X}_{u,u'}\geq -\theta r^{1/3}$ for $r$ sufficiently large. Hence it follows that  
$$\bigcap_{k=0}^{\lceil \frac{20\log r}{6\log 2} \rceil}\mathcal{G}_{k} \subseteq \left\{\inf_{(u,u')\in \mathcal{S}(U)}\tilde{X}_{u,u'}\geq -\theta r^{1/3}\right\}.$$

It remains to estimate $\P[\mathcal{G}_{k}]$.
Notice that $|\mathcal{U}_k|\leq 8^{k+1}$. Using Lemma \ref{l:treeinfbasic} and a union bound it follows that for $r$ sufficiently large (with $r^{1/6}\gg r_1$) and for all $\theta$ sufficiently large we have for all $k$
$$\P[\mathcal{G}_k^{c}]\leq 8^{k+1}e^{-c_2\theta2^{k/12}}.$$

Taking a union bound over $k\in \{0,1,\ldots , \frac{20\log r}{6\log 2}\}$ we get the assertion of the proposition.
\end{proof}

Proposition \ref{t:treeinf} has the following immediate corollary.

\begin{corollary}
\label{c:treeinfwidth}
Consider $U=U_{r,1,\ell}$ with $\ell >1$. There exists an absolute constant $c_1>0$, $h_0>0$ and $\theta_0=\theta_0(\psi)>0$ such that we have for all $h>h_0$ and $\theta> \theta _0$
\begin{equation}
\label{e:treeinfgeneralwidth}
\P\left(\inf_{(u,u')\in \mathcal{S}(U)} \tilde{X}_{u,u'}\leq -\theta\sqrt{\ell} r^{1/3}\right)\leq e^{-c_1\theta}.
\end{equation}
\end{corollary}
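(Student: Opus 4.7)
The plan is to mimic the multi-scale covering argument from the proof of Proposition \ref{t:treeinf}, adapted to the larger vertical extent of $U_{r,1,\ell}$. Concretely, at each scale $k\in\{0,1,\ldots,\lceil \frac{20\log r}{6\log 2}\rceil\}$ I cover $U_{r,1,\ell}$ by a family $\mathcal{U}_k$ of sub-parallelograms of width $r_k=2^{-k/4}r$ and tilted-coordinate height $h_k=2^{-k/6}r^{2/3}$ having top/bottom edges of slope $m\in(4/(3\psi),3\psi/4)$, constructed as in the original proof so that any pair $(u,u')\in\mathcal{S}(U)$ with $|x'-x|\in[0.75\,r_k,\,r_k]$ satisfies $u\in L(U^*)$ and $u'\in R(U^*)$ for some $U^*\in\mathcal{U}_k$. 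The only change from the $\ell=1$ case is that the $y$-discretization index $s_1$ now ranges over $O(\ell\cdot 2^{k/6})$ values (instead of $O(2^{k/6})$), and $|s_2-s_1|$ is still bounded by the slope constraint, yielding the coarse bound
$$|\mathcal{U}_k|\leq \ell^{2}\cdot 8^{k+1}.$$

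For each $U^*\in\mathcal{U}_k$, note that $U^*$ is itself of type $U_{r_k,m,1}$ for some admissible $m$. Applying Lemma \ref{l:treeinfbasic} at scale $r_k$ with the effective threshold $\tilde\theta=\theta\sqrt{\ell}\,(r/r_k)^{1/3}=\theta\sqrt{\ell}\,2^{k/12}$ gives
$$\P\left[\inf_{u\in L(U^*),\,u'\in R(U^*)}\tilde{X}_{u,u'}\leq -\theta\sqrt{\ell}\,r^{1/3}\right]\leq e^{-c_2\theta\sqrt{\ell}\,2^{k/12}}.$$
Combined with the observation from the original proof that pairs with $|x'-x|\leq r^{1/6}$ automatically satisfy $\tilde X_{u,u'}\geq -\theta\sqrt{\ell}\,r^{1/3}$ for $r$ large, a union bound over $\mathcal{U}_k$ and a sum over $k$ reduces the problem to controlling
$$\sum_{k}\ell^{2}\cdot 8^{k+1}\cdot e^{-c_2\theta\sqrt{\ell}\,2^{k/12}}.$$

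The single elementary estimate powering this sum is that
$$\ell^{2}\,e^{-c_2\theta\sqrt{\ell}\,2^{k/12}/2}\leq K(\theta_0)$$
uniformly in $\ell\geq 1$ and $k\geq 0$ for all $\theta\geq\theta_0$ sufficiently large (depending on $\psi$ only through $c_2$), because $\sqrt{\ell}$ overwhelms $\log\ell^{2}$; splitting the exponent as $\tfrac12+\tfrac12$ and using $\sqrt{\ell}\geq 1$ then gives the bound $K(\theta_0)\cdot 8^{k+1} e^{-c_2\theta\,2^{k/12}/2}$ per scale, which sums geometrically to $\leq e^{-c_1\theta}$ exactly as in Proposition \ref{t:treeinf}.

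The main (mild) obstacle is this bookkeeping step: one must verify that the polynomial-in-$\ell$ growth of $|\mathcal{U}_k|$ is compensated by the $\sqrt{\ell}$ boost in the exponent coming from the enlarged threshold $-\theta\sqrt{\ell}\,r^{1/3}$, uniformly across scales $k$. Beyond that, the geometric construction and the application of Lemma \ref{l:treeinfbasic} are direct analogues of the $\ell=1$ case, and the choice $h_0=1$ (and $\theta_0,r_0$ depending only on $\psi$) suffices.
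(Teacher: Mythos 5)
Your argument is correct, but it is considerably heavier than what the paper has in mind when it calls this an ``immediate corollary'' of Proposition~\ref{t:treeinf}. The intended route is a one-line embedding: in tilted coordinates the parallelogram $U_{r,1,\ell}$ is the rectangle $[0,r]\times[-\ell r^{2/3},\ell r^{2/3}]$, which is a subset of the tilted form of $U_{\ell^{3/2}r,1,1}$, namely $[0,\ell^{3/2}r]\times[-\ell r^{2/3},\ell r^{2/3}]$, because $(\ell^{3/2}r)^{2/3}=\ell r^{2/3}$ and $\ell>1$. Since the infimum over $\mathcal{S}(U_{r,1,\ell})$ is taken over a subset of the pairs in $\mathcal{S}(U_{\ell^{3/2}r,1,1})$, applying Proposition~\ref{t:treeinf} with width parameter $\ell^{3/2}r$ gives
\[
\P\Big(\inf_{(u,u')\in\mathcal{S}(U_{r,1,\ell})}\tilde X_{u,u'}\leq -\theta(\ell^{3/2}r)^{1/3}\Big)\leq e^{-c_1\theta},
\]
and $(\ell^{3/2}r)^{1/3}=\sqrt{\ell}\,r^{1/3}$, which is exactly \eqref{e:treeinfgeneralwidth}, with the same absolute constants $c_1,\theta_0$ and with $r_0$ unchanged. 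This also transparently explains the otherwise mysterious $\sqrt{\ell}$ factor: it is just the $1/3$-power scaling of the enlarged width.

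By contrast, you re-run the entire multi-scale covering argument from the proof of Proposition~\ref{t:treeinf} with $\ell$-dependent parallelogram counts and then absorb the resulting polynomial-in-$\ell$ prefactor into the $\sqrt{\ell}$ gain in the exponent. That bookkeeping is carried out correctly --- the key bound $\ell^{2}e^{-c_2\theta\sqrt{\ell}2^{k/12}/2}\leq K(\theta_0)$ uniformly in $\ell\geq 1$, $k\geq 0$ does hold for $\theta_0$ large, and the geometric sum over $k$ then matches the original argument --- so the proof is valid. What the embedding approach buys, besides brevity, is that it requires no reverification of the covering construction, no re-counting of $|\mathcal{U}_k|$, and no delicate balancing of $\ell^{2}$ against $e^{-c\sqrt{\ell}}$: the $\sqrt{\ell}$ gain is absorbed at the level of a single application of the already-proved proposition rather than scale by scale. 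Your re-derivation would be the right tool if one wanted a \emph{better} $\ell$-dependence in the probability bound (e.g.\ decay in $\ell$ on the right-hand side), but for the stated result it is overkill.
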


\subsection{Longer paths are unlikely too}
In this subsection we prove results analogous to the the previous subsection concerning upper tails of $\sup_{u,u'} \tilde{X}_{u,u'}$ where the supremum is taken over `most' points in appropriate parallelograms. Recall the notation $U_{r,m,\ell}$ from the previous subsection. We have the following proposition.

\begin{proposition}
\label{t:treesup}
Consider the parallelogram $U=U_{r,m,1}$ where $\frac{4}{\psi}<m<\frac{\psi}{4}$. There exists an absolute constant $c_1>0$, $r_0>0$ and $\theta_0>0$ such that we have for all $r>h_0$ and $\theta> \theta _0$
\begin{equation}
\label{e:treesupgeneral}
\P\left(\sup_{(u,u')\in \mathcal{S}(U)} \tilde{X}_{u,u'}\geq \theta r^{1/3}\right)\leq e^{-c_1\theta}.
\end{equation}
\end{proposition}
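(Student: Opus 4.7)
The plan is to follow the same two-stage strategy as the proof of Proposition~\ref{t:treeinf}. I will first establish the upper-tail analogue of Lemma~\ref{l:treeinfbasic}, controlling $\sup \tilde X_{u,u'}$ from above for pairs with $u$ near the left boundary and $u'$ near the right boundary of $U$. The full statement of Proposition~\ref{t:treesup} then follows exactly as in the proof of Proposition~\ref{t:treeinf}, by covering $U$ with the same family of sub-parallelograms $\mathcal{U}_k$ of widths $2^{-k/4}r$ and heights $2^{-k/6}r^{2/3}$ and summing a geometric series in $k$ after a union bound.

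The new ingredient needed for the restricted (``basic'') upper-tail statement is the following. A naive grid-plus-union-bound argument fails: making the expectation shift $\E X_{\underline u, \overline u'} - \E X_{u, u'}$ smaller than $r^{1/3}$ forces a grid of spacing $O(r^{1/3})$ with $O(r^2)$ pairs, too many to yield an $r$-uniform bound of the form $e^{-c\theta}$. Instead I will use the reverse concatenation inequality: for any $\underline u \leq u \leq u' \leq \overline u'$, superadditivity of $X$ gives
\[
X_{\underline u, \overline u'} \;\geq\; X_{\underline u, u} + X_{u, u'} + X_{u', \overline u'},
\]
so centering and using Lemma~\ref{l:penaltysum} to control the expectation residuals yields
\[
\tilde X_{u, u'} \;\leq\; \tilde X_{\underline u, \overline u'} \;-\; \tilde X_{\underline u, u} \;-\; \tilde X_{u', \overline u'} \;+\; K_0 r^{1/3},
\]
with $K_0 = K_0(\psi)$. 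This reduces the required \emph{upper} bound on $\tilde X_{u,u'}$ to an upper bound on $\tilde X_{\underline u, \overline u'}$ (a single anchor pair) together with \emph{lower} bounds on $\tilde X_{\underline u, u}$ and $\tilde X_{u', \overline u'}$, and the latter are already in hand via Proposition~\ref{t:treeinf}.

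Concretely, I cover $U$ by $O(1)$ parallelogram cells of dimensions $\delta r \times \delta r^{2/3}$ (tilted along slope $m$) for a small fixed $\delta = \delta(\psi) > 0$, and attach to each cell $C$ an anchor $\underline u(C)$ placed slightly outside $C$ on the lower-left so that $\underline u(C) \leq u$ for every $u \in C$, together with a symmetric anchor $\overline u'(C)$ on the upper-right. For $(u, u') \in \mathcal{S}(U)$ with $u$ near the left boundary and $u'$ near the right boundary, I take $\underline u = \underline u(C_u)$ and $\overline u' = \overline u'(C_{u'})$ and apply the decomposition above. The three terms are then bounded: (i) $\tilde X_{\underline u, \overline u'} \leq \theta r^{1/3}/3$ simultaneously over all $O(1)$ anchor pairs holds with probability $\geq 1 - e^{-c\theta^{3/2}}$ by Theorem~\ref{t:moddevuppertail} and a union bound; (ii) $\inf_{u \in C}\tilde X_{\underline u(C), u} \geq -\theta r^{1/3}/3$ for each cell $C$ follows from applying Proposition~\ref{t:treeinf} to an enclosing parallelogram at the cell scale $\delta r$, with parameter $\theta/(3\delta^{1/3})$ and failure probability $e^{-c_1 \theta/\delta^{1/3}}$; (iii) symmetrically for $\tilde X_{u', \overline u'(C_{u'})}$. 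Absorbing the error $K_0 r^{1/3}$ into the main term (requiring $\theta_0 \geq 4K_0$) yields the target bound $e^{-c_1\theta}$.

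The main obstacle is the geometric bookkeeping around slope constraints: each pair $(\underline u(C), u)$ with $u \in C$ must have slope in $(2/\psi, \psi/2)$ for Proposition~\ref{t:treeinf} to apply. This is arranged by offsetting each anchor from $C$ by a distance comparable to $\delta r$ in both coordinates, so the slope from anchor to any point of $C$ stays close to $m \in (4/\psi, \psi/4)$. The small neighbourhood of each anchor in which the slope condition nevertheless fails lies in a region of deterministically bounded mean Poisson mass, and the associated contribution is controlled by a direct Chernoff estimate on the number of Poisson points, which is $O(1) \ll r^{1/3}$.
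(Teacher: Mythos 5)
Your proof is essentially correct, and at the conceptual level it is the same as the paper's: both use the two-stage strategy (a ``basic'' lemma for pairs with $u\in L(U)$, $u'\in R(U)$, followed by the identical covering argument from the proof of Proposition~\ref{t:treeinf}), and both hinge on the reversed concatenation $X_{\underline u, \overline u'} \geq X_{\underline u, u} + X_{u,u'} + X_{u',\overline u'}$ that converts an upper tail on $\sup \tilde X_{u,u'}$ into lower tails on two infima (controlled by Proposition~\ref{t:treeinf}) plus a single point-to-point moderate deviation.

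Where you diverge is in how the three estimates are combined. The paper (Lemma~\ref{l:treesupbasic}) uses a \emph{single} anchor pair $v=(-7r/8,-7r/8)$, $v'=(15r/8,15r/8)$, notes that the three events $F_1=\{\inf_{u\in L(U)}\tilde X_{v,u}\geq-\theta r^{1/3}/50\}$, $F_2$ (symmetric), and $F_3=\{\sup\tilde X_{u,u'}\geq\theta r^{1/3}\}$ are all \emph{increasing} in the point configuration, and invokes the FKG inequality to get $\P[F_1]\P[F_2]\P[F_3]\leq\P[F_1\cap F_2\cap F_3]\leq\P[\tilde X_{v,v'}\geq\theta r^{1/3}/2]$; since $\P[F_1],\P[F_2]\geq 1/2$ for $\theta$ large, this gives $\P[F_3]\leq 4e^{-c\theta}$. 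You instead introduce cell-dependent anchors at scale $\delta r$ and combine via a union bound. Both routes are valid; the FKG argument is slicker (only requires $\P[F_i]\geq 1/2$ rather than exponential decay, and no cell bookkeeping), whereas yours is more elementary and avoids any appeal to positive correlation. Two small observations: first, your cell decomposition is not actually needed — the same union bound goes through with a single anchor pair at distance $\sim r$, since Proposition~\ref{t:treeinf} already gives $\P[F_1^c]\leq e^{-c\theta/50}$, and the three probabilities sum to $e^{-c'\theta}$. Second, the final paragraph about a ``region of deterministically bounded mean Poisson mass'' controlled by a Chernoff estimate addresses a non-issue: with the anchor offset by $\sim\delta r$ at slope $\approx m$ and cells of height $\delta r^{2/3}$, the slope from anchor to any cell point deviates from $m$ only by $O(r^{-1/3})$, so the slope constraint is satisfied for \emph{all} $u$ in the cell once $r>r_0(\psi)$; no residual region needs separate treatment.
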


Observe that $r_0, \theta_0$ and $c_1$ in the above proposition can be taken to be the same as in Proposition \ref{t:treeinf}. The proof of Proposition \ref{t:treesup} follows from the following lemma in an identical manner to the proof of  Proposition \ref{t:treeinf} using Lemma \ref{l:treeinfbasic}. We omit the proof.

\begin{lemma}
\label{l:treesupbasic}
Consider the parallelogram $U=U_{r,m,1}$ where $m\in (\frac{4}{3\psi}, \frac{3\psi}{4})$. Define $L(U)=U\cap \{x\leq r/8\}$ and $R(U)=U\cap \{x\geq 7r/8\}$ as before. There exist constants $r_1>0$, $\theta_1>0, c_2>0$ such that for all $r>r_1$ and $\theta>\theta_1$ we have
\begin{equation}
\label{e:treesupbasic1}
\P\left(\sup_{u\in L(U), u'\in R(U)} \tilde{X}_{u,u'}\geq \theta r^{1/3}\right)\leq e^{-c_2\theta}.
\end{equation}
\end{lemma}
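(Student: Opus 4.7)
\textbf{Proof plan for Lemma \ref{l:treesupbasic}.}

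The argument mirrors the proof of Lemma \ref{l:treeinfbasic}, with the direction of all inequalities reversed. By scaling we may assume $m = 1$; let $u_* := (r/2, r/2)$. The reduction step is more delicate than in the lower bound case because superadditivity does not give a direct upper bound on $X_{u, u'}$; instead, we use the crossing decomposition $X_{u, u'} = \max_{v \in \{x = r/2\}} (X_{u, v} + X_{v, u'})$ together with the quadratic transverse penalty of Lemma \ref{l:penalty}, which ensures that the maximising $v$ lies within $O(r^{2/3})$ of $u_*$. This yields $\tilde X_{u, u'} \leq \sup_v \tilde X_{u, v} + \sup_v \tilde X_{v, u'} + O(r^{1/3})$ for $v$ in an $O(r^{2/3})$-window around $u_*$, reducing by symmetry between $L(U)$ and $R(U)$ to bounding $\P[\sup_{u \in L(U)} \sup_v \tilde X_{u, v} \geq \theta r^{1/3}/3] \leq \tfrac{1}{2} e^{-c\theta}$.

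We reuse verbatim the multi-scale construction from the proof of Lemma \ref{l:treeinfbasic}: parameters $a, K$ with $r/2a = 8^K$, nested vertex sets $V_K \subset V_{K-1} \subset \cdots \subset V_0$, and edge sets $\mathcal T_k$ on $V_k$. Define the upper-tail analogue of $\mathcal E_k$,
\[
\mathcal F_k := \left\{\tilde X_{v, v'} \leq \tfrac{\theta r^{1/3}}{100}(1.5)^{k-K} \quad \forall (v, v') \in \mathcal T_k\right\}.
\]
Since Corollary \ref{c:moddevtilde} is two-sided, the same counting argument and union bound over the polynomially many edges at each scale yield $\P[\bigcap_k \mathcal F_k] \geq 1 - e^{-c_2 \theta}$ for $\theta$ sufficiently large.

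The heart of the argument is the upper-bound counterpart of Lemma \ref{l:ek}: on $\bigcap_k \mathcal F_k$, $\sup_{u \in L(U)} \tilde X_{u, u_*} \leq \theta r^{1/3}/3$. Given $u \in L(U)$, construct the tree sequence $u = u_{-1}, u_0, u_1, \ldots, u_K = u_*$ from Lemma \ref{l:ek}, with $u_i \in V_i$ and $(u_{i-1}, u_i) \in \mathcal T_{i-1}$. The counterpart of superadditivity is the crossing decomposition at each vertical $L_i = \{x = x_i\}$: any maximal path from $u$ to $u_*$ meets $L_i$ at a unique point $w_i$, and $X_{u, u_*} = \sum_i X_{w_{i-1}, w_i}$. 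The quadratic penalty of Corollary \ref{c:deviation} restricts the relevant $w_i$ to within $O(4^i a^{2/3})$ of $u_i$, which matches the scale of the edge set $\mathcal T_i$; after discretising $w_i$ to the nearest vertex in $V_i$, each $X_{w_{i-1}, w_i}$ is bounded, up to an $O(a^{1/3} 2^i)$ expectation slack from Lemma \ref{l:penalty}, by a tree edge in $\mathcal F_i$. Summing geometrically, $\tilde X_{u, u_*} \leq \sum_i \tfrac{\theta r^{1/3}}{100}(1.5)^{i-K} \leq \theta r^{1/3}/3$.

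\textbf{Main obstacle.} The principal difficulty is converting the single-path superadditivity argument of Lemma \ref{l:ek} (which only requires exhibiting one path in $\Pi$) into a sup-bound over all increasing paths from all $u \in L(U)$. This is handled by combining the dyadic tree structure $V_k, \mathcal T_k$ with the quadratic transverse-deviation penalty of Corollary \ref{c:deviation}: the penalty truncates the supremum over crossings of each vertical line $L_i$ to the natural KPZ window $O(4^i a^{2/3})$, matching the refinement of $V_i$, so that the continuous sup over the crossing collapses to a discrete max over $\mathcal T_i$. Once this truncation is in place, the telescoping and the geometric sum proceed identically to the lower bound.
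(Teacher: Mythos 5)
Your plan takes a genuinely different route from the paper. The paper does \emph{not} mirror the multi-scale tree argument of Lemma~\ref{l:treeinfbasic}; instead it proves the upper tail by an FKG boosting. Fixing points $v, v'$ slightly outside $L(U)$ and $R(U)$, Lemma~\ref{l:penaltysum} gives $\tilde{X}_{v,v'} \geq \inf_{u\in L(U)}\tilde{X}_{v,u} + \inf_{u'\in R(U)}\tilde{X}_{u',v'} + \sup_{u,u'}\tilde{X}_{u,u'} - O(\theta r^{1/3})$. The three events (two infs not too negative, sup large) are all increasing, so FKG gives $\P[\tilde{X}_{v,v'} \geq \theta r^{1/3}/2]\geq \P[F_1]\P[F_2]\P[F_3]$; the first two factors are $\geq 1/2$ by the already-proved Proposition~\ref{t:treeinf}, and the left side is $\leq e^{-c\theta}$ by Corollary~\ref{c:moddevtilde}, which squeezes $\P[F_3]$. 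This is one line of algebra plus one application of FKG, and it cleanly sidesteps the difficulty you flag as the ``main obstacle.''

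There is a genuine gap in your resolution of that obstacle. You assert that ``the penalty truncates the supremum over crossings of each vertical line $L_i$ to the natural KPZ window $O(4^i a^{2/3})$, matching the refinement of $V_i$, so that the continuous sup over the crossing collapses to a discrete max over $\mathcal{T}_i$.'' But Corollary~\ref{c:deviation} is a statement about \emph{expectations} (a deterministic quadratic drift in $\E X$), not a deterministic restriction on where the maximizing path crosses $L_i$. To convert it into an effective truncation you must control the probability that the maximizing crossing $w_i$ lies outside the KPZ window, uniformly over $u\in L(U)$ and over $i$ --- i.e.\ you need a transversal-fluctuation estimate at every dyadic scale. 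In this paper, Theorem~\ref{t:transversal} (and Lemma~\ref{l:transversalmp}, the single-midpoint version) is proved \emph{after} and \emph{using} Proposition~\ref{t:treesup}, which in turn depends on this lemma; so invoking transversal fluctuation here would be circular. Alternatively you could enlarge the events $\mathcal{F}_k$ to also bound $\tilde{X}_{v,v'}$ for far-apart pairs in $V_k$, with tighter thresholds reflecting the quadratic penalty, and carry out a union bound over these enlarged edge sets; this is doable in principle but the bookkeeping is precisely the part you have not carried out, and the statement as written (`collapses to a discrete max over $\mathcal{T}_i$', where $\mathcal{T}_i$ only contains short edges) is not a valid deduction.

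The monotone-enlargement step you describe (replacing the random crossing $w_i$ by a slightly earlier/later grid vertex $v_i'$, which only increases $X$) is fine and is the right way to discretize for an upper bound. The problem is entirely in controlling how far the $w_i$ can wander. Either complete that control rigorously (allowing far edges in $\mathcal{T}_k$ with appropriately decaying thresholds and redoing the union bound), or switch to the paper's FKG route, which requires no transversal input at all.
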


Let us explain first the idea of the proof. We shall take points $v$ and $v'$ slightly to the left $L(U)$ and to the right of $R(U)$ respectively; see Figure \ref{f:treesup}. Observe that if $\sup_{u\in L(U), u'\in R(U)} \tilde{X}_{u,u'}$ is too large then at least one of the following three events must occur: (a) $\tilde{X}_{v,v'}$ is large, (b) $\inf_{u\in L(U)} \tilde{X}_{v,u}$ is small, or (c) $\inf_{u'\in R(U)} \tilde{X}_{u'v'}$ is small. Observe that (a) is unlikely by Theorem \ref{t:moddevuppertail} and (b) and (c) are unlikely by Proposition \ref{t:treeinf}. It will follow from this that it is unlikely that $\sup_{u\in L(U), u'\in R(U)} \tilde{X}_{u,u'}$ is large. Formally we have the following.

\begin{figure}[ht!]
\begin{center}
\includegraphics[width=0.8\textwidth]{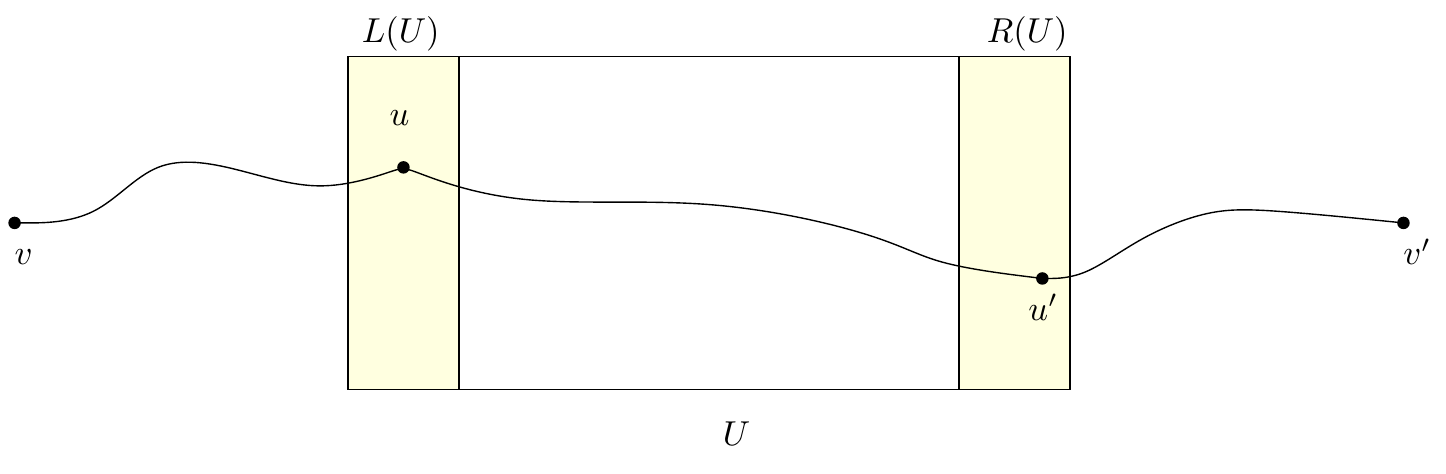}
\caption{$v$ and $v'$ as in the proof of Lemma \ref{l:treesupbasic}; $X_{v,v'}\geq X_{v,u}+X_{u,u'}+X_{u',v'}$}
\label{f:treesup}
\end{center}
\end{figure}

\begin{proof}[Proof of Lemma \ref{l:treesupbasic}]
As before, without loss of generality we shall restrict to the case $m=1$.
Consider the point $v=(-7r/8, -7r/8)$ and $v'=(15r/8, 15r/8)$. Now observe that it follows from Lemma \ref{l:penaltysum} that for $r$ sufficiently large and for $\theta$ sufficiently large 
$$\left| \E X_{v,u}+\E X_{u,u'}+\E X_{u',v'} -\E X_{v,v'}\right|\leq \frac{\theta r^{1/3}}{10}$$
for all $u\in L(U)$ and $u'\in R(U)$. It hence follows that for $r$ and $\theta$ sufficiently large we have
\begin{equation}
\label{e:treesupnew1}
\tilde{X}_{v,v'}\geq  \inf_{u\in L(U)} \tilde{X}_{v,u}+\inf_{u'\in R(U)}\tilde{X}_{u',v'}+\sup_{u\in  L(U), u'\in R(U)}\tilde{X}_{u,u'}-\frac{\theta r^{1/3}}{10}.
\end{equation}
%
%
%
%
%
%
Let $F_1$, $F_2$, $F_3$ denote the events
$$F_1=\left\{\inf_{u\in L(U)} \tilde{X}_{v,u} \geq -\frac{\theta r^{1/3}}{50}\right\};$$
$$F_2=\left\{\inf_{u'\in R(U)} \tilde{X}_{u',v'} \geq -\frac{\theta r^{1/3}}{50}\right\};$$
$$F_3=\left\{\sup_{u\in L(U), u'\in R(U)}\tilde{X}_{u,u'} \geq \theta r^{1/3}\right\}.$$
It is clear that
$$\left\{\tilde{X}_{v,v'}\geq \frac{\theta r^{1/3}}{2}\right\}\supseteq  F_1\cap F_2 \cap F_3.$$
Observe that the events $F_1,F_2,F_3$ are increasing in point configurations and hence by the FKG inequality we have
\begin{equation}
\label{e:treesupFKG}
\P\left[\tilde{X}_{v,v'}\geq \frac{\theta r^{1/3}}{2}\right]\geq \P[F_1]\P[F_2]\P[F_3].
\end{equation}
Notice now that it follows from Proposition \ref{t:treesup} that for $r$ and $\theta$ sufficiently large we have $\P[F_1]\geq \frac{1}{2}$ and $\P[F_2]\geq \frac{1}{2}$. Also observe that by Corollary \ref{c:moddevtilde} for $r$ sufficiently large and $\theta$ sufficiently large we have for some absolute constant $c>0$ that
$$\P\left[\tilde{X}_{v,v'}\geq \frac{\theta r^{1/3}}{2}\right]\leq e^{-c\theta}.$$
The above equation, together with (\ref{e:treesupFKG}) completes the proof of the lemma.
\end{proof}

Proposition \ref{t:treesup} has the following immediate corollary.

\begin{corollary}
\label{c:treesupwidth}
Consider the parallelogram $U=U_{h,m,\ell}$ where $\frac{4}{\psi}<m<\frac{\psi}{4}$ and $\ell >1$. There exists an absolute constant $c_1>0$, $h_0>0$ and $\theta_0=\theta_0(\psi)>0$ such that we have for all $h>h_0$ and $\theta> \theta _0$
\begin{equation}
\label{e:treesupgeneralwidth}
\P\left(\sup_{(u,u')\in \mathcal{S}(U)} \tilde{X}_{u,u'}\geq \theta\sqrt{\ell} h^{1/3}\right)\leq e^{-c_1\theta}.
\end{equation}
\end{corollary}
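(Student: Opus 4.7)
The corollary extends Proposition \ref{t:treesup} from unit-height parallelograms ($\ell = 1$) to wider ones ($\ell > 1$). My plan is a covering argument analogous to how Corollary \ref{c:treeinfwidth} follows from Proposition \ref{t:treeinf}: cover the wide parallelogram $U_{h,m,\ell}$ with a polynomial-in-$\ell$ family of sub-parallelograms, each a translate of $U_{h, m_\alpha, 1}$ with slope $m_\alpha$ in the admissible range $(\frac{4}{3\psi}, \frac{3\psi}{4})$, apply Proposition \ref{t:treesup} to each, and take a union bound. The key point is that the $\sqrt{\ell}$ inflation of the threshold produces enough exponential tail to absorb the union bound.

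Concretely, for integer pairs $(j_1, j_2) \in \{-\lceil \ell \rceil, \ldots, \lceil \ell \rceil\}^2$ with $|j_2 - j_1|$ small enough that $m_{j_1, j_2} := m + (j_2 - j_1)/h^{1/3}$ lies in $(\frac{4}{3\psi}, \frac{3\psi}{4})$, I would take $P_{j_1, j_2}$ to be the translate of $U_{h, m_{j_1, j_2}, 1}$ whose centerline at $x = 0$ sits at height $j_1 h^{2/3}$ and at $x = h$ sits at height $mh + j_2 h^{2/3}$. By construction, any pair $(u, u') \in \mathcal{S}(U_{h, m, \ell})$ with $x$-separation comparable to $h$ lies in $\mathcal{S}(P_{j_1, j_2})$ for an appropriate choice of indices bracketing the tilted coordinates of $u$ and $u'$, and the pair's slope automatically lies in $(\frac{2}{\psi}, \frac{\psi}{2})$. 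Applying Proposition \ref{t:treesup} to each $P_{j_1, j_2}$ at threshold $\theta \sqrt{\ell} h^{1/3}$ (which corresponds to $\theta' = \theta \sqrt{\ell}$ in that Proposition's notation) yields per-parallelogram tail $e^{-c_1 \theta \sqrt{\ell}}$. A union bound over the $O(\ell^2)$ sub-parallelograms gives $O(\ell^2) \, e^{-c_1 \theta \sqrt{\ell}} \leq e^{-c_1' \theta}$ for $\theta \geq \theta_0(\psi)$ sufficiently large, since the extra $\sqrt{\ell}$ in the exponent swallows the polynomial factor in $\ell$.

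The main technical obstacle is handling pairs $(u, u') \in \mathcal{S}(U_{h, m, \ell})$ with small $x$-separation $x_2 - x_1 \ll h$, whose slopes can deviate substantially from $m$; such pairs may fail to lie inside any single $P_{j_1, j_2}$ of width $h$. I would deal with them via a dyadic multi-scale refinement mirroring the proof of Proposition \ref{t:treeinf}: at each width scale $h_k = 2^{-k/4} h$, introduce an analogous family of tilted sub-parallelograms and apply Lemma \ref{l:treesupbasic} at that scale. The resulting per-scale tail exponent is of order $c_2 \theta \sqrt{\ell} \, 2^{k/12}$, which grows fast enough to dominate the polynomial-in-$k$ growth in the number of sub-parallelograms at scale $k$, so that the sum over scales converges to a bound of the form $e^{-c_1 \theta}$. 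Throughout, the factor $\sqrt{\ell}$ in the exponent plays the role of a safety margin absorbing all $\ell$-dependent combinatorial factors.
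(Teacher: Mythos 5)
Your proof is a workable covering-plus-chaining argument, but it does far more work than the statement requires and misses the short route. The paper calls this an \emph{immediate} corollary, and the intended argument is a single inclusion. Note that $U_{h,m,\ell} = \{(x,y): 0 \leq x \leq h,\ |y-mx| \leq \ell h^{2/3}\}$, and since $\ell h^{2/3} = (\ell^{3/2}h)^{2/3}$ while $h < \ell^{3/2}h$ for $\ell > 1$, the parallelogram $U_{h,m,\ell}$ (after translating so its center coincides with that of $U_{\ell^{3/2}h,\,m,\,1}$) is contained in $U_{\ell^{3/2}h,\,m,\,1}$. Since $\mathcal{S}(\cdot)$ is defined purely by a slope constraint on the pair of points, $\mathcal{S}(U_{h,m,\ell}) \subseteq \mathcal{S}(U_{\ell^{3/2}h,\,m,\,1})$. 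Applying Proposition \ref{t:treesup} at width $r' = \ell^{3/2}h$ and using $(r')^{1/3} = \sqrt{\ell}\, h^{1/3}$ gives precisely \eqref{e:treesupgeneralwidth} with no union bound and no multi-scale decomposition; the only thing to check is $r' \geq r_0$, which holds once one takes $h_0 = r_0$, since $r' > h \geq h_0$.

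This also clarifies the origin of the $\sqrt{\ell}$: it is not an arbitrary inflation chosen to ``absorb combinatorial factors'' but exactly the cube root of the inflated width, $(\ell^{3/2}h)^{1/3}$. Your version essentially re-derives Proposition \ref{t:treesup} from Lemma \ref{l:treesupbasic} at a new parameter value instead of using the proposition as a black box, and so it runs into precisely the technical obstacles you flag (pairs with small $x$-separation, the dyadic multi-scale hierarchy) which the inclusion sidesteps entirely. A small inconsistency in your write-up: you place the tilted sub-parallelogram slopes in $(\frac{4}{3\psi}, \frac{3\psi}{4})$, which is the slope range of Lemma \ref{l:treesupbasic}, not that of Proposition \ref{t:treesup}, which is stated for $m\in(\frac{4}{\psi},\frac{\psi}{4})$; so the per-parallelogram bound in your argument should be invoked from the lemma, not the proposition.
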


\section{Exponential tails in transversal fluctuation}
\label{s:trans}
It was proved by Johansson in \cite{J00} that the transversal fluctuations of the longest increasing subsequence from $(0,0)$ to $(n,n)$ is of the order $n^{2/3+o(1)}$. Using the estimates proved in the previous subsections we prove the following sharper version of Johansson's result.

For $a=(a_1,a_2),b=(b_1,b_2)\in \R^2$ with $a<b$, let $\Gamma_{a,b}=\{(x,\Gamma_{a,b}(x)):x\in [a_1,b_1]\}$ be the topmost maximal increasing path from $a$ to $b$. Let $\mathcal{L}_{a,b}=\mathcal{L}=\{(x,\mathcal{L}(x)):x\in [a_1,b_1]\}$ denote the straight line segment joining $a$ and $b$. Define  
$$D(a,b)=\sup_{x\in [0,r]}|\Gamma_{a,b}(x)-\mathcal{L}(x)|;$$
i.e., $D(a,b)$ denotes the maximal transversal fluctuation of the topmost maximal path from $a$ to $b$ about the straight line segment joining $a$ and $b$. 
Similarly define 
$$\tilde{D}(a,b)=\sup_{x\in [0,r]} (\Gamma_{a,b}(x)-\mathcal{L}(x));$$

\begin{theorem}
\label{t:transversal}
Define $\Gamma_r=\Gamma_{(0,0),(r,r)}$ and $D(r)=D((0,0),(r,r))$ (resp.\ $\tilde{D}(r)=\tilde{D}((0,0),(r,r))$). Then there exist absolute positive constants $r_0$ and $k_0$ and $c_4$ such that for all $r>r_0$, $k>k_0$, we have
$$\P[D(r) \geq kr^{2/3}]\leq e^{-c_4k}$$
and
$$\P[\tilde{D}(r) \geq kr^{2/3}]\leq \frac{1}{2}e^{-c_4k}.$$
\end{theorem}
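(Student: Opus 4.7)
The plan is to combine the lower-tail bound on $X_r = X_{(0,0),(r,r)}$ from Theorem \ref{t:moddevlowertail} with a union bound over possible ``split points'' where the maximal path $\Gamma_r$ would have to pass through in order to exhibit large transversal fluctuation, controlling each potential split via the upper-tail bound in Theorem \ref{t:moddevuppertail} together with the quadratic penalty in the expected passage time.

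First I would isolate the global event $\mathcal{G} = \{X_r \geq 2r - r^{1/3}\}$, which by Theorem \ref{t:moddevlowertail} has complementary probability at most $e^{-C_1}$. On $\mathcal{G} \cap \{D(r) \geq k r^{2/3}\}$, the path $\Gamma_r$ contains some point $u^* = (x^*, y^*) \in [0, r]^2$ with $|y^* - x^*| \geq k r^{2/3}$, and splitting the path at $u^*$ gives $X_{(0,0), u^*} + X_{u^*, (r,r)} \geq X_r \geq 2r - r^{1/3}$. The key computation is that, by Corollary \ref{c:moddevtilde} and a second-order Taylor expansion,
\[
\E X_{(0,0), u_0} + \E X_{u_0, (r,r)} \leq 2\sqrt{x_0 y_0} + 2\sqrt{(r - x_0)(r - y_0)} + O(r^{1/3}) \leq 2r + O(r^{1/3}) - c\, s^2 r^{1/3}
\]
for any $u_0 = (x_0, y_0)$ with $|y_0 - x_0| = s r^{2/3}$, with $c > 0$ a universal constant; the quadratic penalty is smallest when $x_0$ lies in $[r/4, 3r/4]$ but still of order $s^2 r^{1/3}$, and in fact only grows when $x_0$ approaches $0$ or $r$.

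To carry out the union bound, I would discretize the region $\{(x, y) \in [0, r]^2 : |y - x| \geq k r^{2/3}\}$ by a grid at scale $r^{1/3}$ (the fluctuation scale), so that the monotonicity of $u \mapsto X_{(0,0), u}$ and $u \mapsto X_{u, (r, r)}$ in the componentwise partial order reduces the supremum over each cell to values at the NE and SW corners, with expected values changing by only $O(r^{1/3})$ across a cell. For each cell representative $u_0$ at level $s \geq k/2$, Theorem \ref{t:moddevuppertail} gives $\P[\tilde X_{(0,0), u_0} \geq \alpha s^2 r^{1/3}] \leq e^{-c'(\alpha s^2)^{3/2}} = e^{-c'' s^3}$, and similarly for $X_{u_0, (r, r)}$; hence the ``bad sum'' at a fixed cell occurs with probability at most $e^{-c'' s^3}$. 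Summing over the $O(r^{1/3})$ cells per level and then over $s \geq k/2$ produces a total probability of order $\sum_{s \geq k/2} r^{1/3} e^{-c'' s^3}$, which beats $e^{-c_4 k}$ in the regime $k^3 \gtrsim \log r$.

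The main obstacle is handling the complementary regime $k_0 \leq k \leq (\log r)^{1/3}$, where the naive union bound leaks a polynomial factor in $r$. Here the plan is to replace the per-cell union bound by uniform sup estimates obtained via the chaining/dyadic grid construction developed for Lemma \ref{l:treeinfbasic} and Proposition \ref{t:treesup}: by setting up an analogous planar tree of auxiliary points joining the fixed endpoints $(0,0)$ and $(r,r)$ to the strip at level $s$, and invoking FKG together with the single-point upper-tail bound Theorem \ref{t:moddevuppertail}, one obtains a uniform bound on $\sup_{u \in \text{strip}}(X_{(0,0),u} + X_{u,(r,r)})$ with exponential tail independent of $r$, thereby absorbing the $r^{1/3}$ factor. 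The one-sided bound on $\tilde D(r)$ follows from the same argument run only on excursions above the diagonal; the factor $1/2$ reflects that the above- and below-diagonal contributions enter symmetrically into the two-sided probability.
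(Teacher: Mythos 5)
Your route differs genuinely from the paper's. The paper does \emph{not} take a union bound over candidate split-point locations: it controls the fluctuation at the single midpoint $x=r/2$ (Lemma \ref{l:transversalmp}, which only requires a supremum over a length-$r^{2/3}$ line segment $L_\ell$, handled by Propositions \ref{t:treesup} and \ref{t:treeinf}), and then propagates this to all dyadic points by a chaining argument (Lemmas \ref{l:transversal3} and \ref{l:transversal5}) that uses the polymer ordering Lemma \ref{l:order}. The chaining succeeds because at dyadic depth $j$ the required relative deviation grows like $k2^{j/2}$, which beats the $2^j$-fold union bound with room to spare; this is the paper's device for avoiding the $\mathrm{poly}(r)$ leak that you correctly identify.

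Before your argument can give the stated bound it has a real gap: the global event $\mathcal{G}=\{X_r\geq 2r-r^{1/3}\}$ has $\P[\mathcal{G}^c]\leq e^{-C_1}$ with $C_1$ a fixed absolute constant, so the resulting estimate $\P[D(r)\geq kr^{2/3}]\leq \P[\mathcal{G}^c]+\cdots$ cannot decay in $k$. You need a $k$-dependent cutoff, as the paper uses inside Lemma \ref{l:transversalmp} with $\{\tilde X_{(0,0),(r,r)}\geq -k^{3/2}r^{1/3}\}$ and Proposition \ref{t:treeinf}; taking, say, $\mathcal{G}_k=\{\tilde X_{(0,0),(r,r)}\geq -kr^{1/3}\}$ makes the deficit $(cs^2-k)r^{1/3}$, which still dominates for $s\geq k\geq k_0$.

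The small-$k$ workaround you sketch is plausible but under-specified, and it is not really the tree construction of Lemma \ref{l:treeinfbasic}. What you actually want is $\P[\sup_{u}\tilde X_{(0,0),u}\geq\theta\sqrt{s}\,r^{1/3}]\leq e^{-c\theta}$ with $u$ ranging over the level-$s$ strip, which is exactly the content of Corollary \ref{c:treesupwidth} applied to a parallelogram of length $r$ and transversal width $\sim sr^{2/3}$ with a corner at the origin (the pairs $((0,0),u)$ then lie in $\mathcal{S}(\cdot)$ whenever $x_0\gtrsim sr^{2/3}$). Taking $\theta\sim s^{3/2}$ yields a per-level probability $e^{-c's^{3/2}}$ with no $r$-dependence, summing to $e^{-c''k^{3/2}}$. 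Three remarks: (i) this tail has exponent $1$, not $3/2$, so the appeal to Theorem \ref{t:moddevuppertail}'s $s^{3/2}$ tail ``together with FKG'' is a red herring here --- the FKG input is already inside Corollary \ref{c:treesupwidth}; (ii) once you have this sup bound it works uniformly over $k\geq k_0$, making your two-regime split and the per-cell union bound superfluous; and (iii) you must still handle separately the split points with $x_0\lesssim sr^{2/3}$ or $r-x_0\lesssim sr^{2/3}$, outside the slope cone, where the quadratic penalty is $\gtrsim sr^{2/3}$ and a crude union bound closes the argument. Finally, the cell count per level at mesh $r^{1/3}$ over an $r\times r^{2/3}$ strip is $O(r)$, not $O(r^{1/3})$; this is moot once the sup-bound route is used throughout.
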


Note the by the obvious symmetry, it suffices to prove only the second result in the statement of the theorem. We shall need a few lemmas in order to prove Theorem \ref{t:transversal}. The following lemma is basic and its proof is omitted. See Figure \ref{f:order}.
 
\begin{lemma}[Polymer Ordering]
\label{l:order}
Consider points $a=(a_1,a_2), a'=(a_1,a_3), b=(b_1,b_2)$ and $b'=(b_1,b_3)$ such that $a_1<b_1$ and $a_2\leq a_3\leq b_2\leq b_3$. Let $\Gamma_{a,b}$ and $\Gamma_{a',b'}$ be as in Theorem \ref{t:transversal}. Then we have $\Gamma_{a,b}(x)\leq \Gamma_{a',b'}(x)$ for all $x\in [a_1,b_1]$.  
\end{lemma}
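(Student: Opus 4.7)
The plan is to argue by contradiction using the classical non-crossing (path-swap) argument for geodesics, leveraging the fact that $\Gamma_{a,b}$ and $\Gamma_{a',b'}$ are defined as the \emph{topmost} maximal paths. Suppose for contradiction that $\Gamma_{a,b}(x_0) > \Gamma_{a',b'}(x_0)$ for some $x_0 \in (a_1,b_1)$. Since $\Gamma_{a,b}(a_1)=a_2 \le a_3 = \Gamma_{a',b'}(a_1)$ and $\Gamma_{a,b}(b_1)=b_2 \le b_3 = \Gamma_{a',b'}(b_1)$, continuity of the piecewise-linear functions $\Gamma_{a,b}(\cdot)$ and $\Gamma_{a',b'}(\cdot)$ forces the existence of $p_1, q_1 \in [a_1,b_1]$ with $p_1 \le x_0 \le q_1$ at which the two paths coincide in $y$-value. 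With probability one the Poisson points are in general position, so these two crossings occur at generic points in the interior of edges of the two paths (not at shared Poisson corners).

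Next, I would define the two swapped increasing paths $\Gamma_1$ (from $a$ to $b$) and $\Gamma_2$ (from $a'$ to $b'$) by interchanging the middle sections between the two crossings: $\Gamma_1$ follows $\Gamma_{a,b}$ on $[a_1,p_1]$, then follows $\Gamma_{a',b'}$ on $[p_1,q_1]$, then returns to $\Gamma_{a,b}$ on $[q_1,b_1]$, and $\Gamma_2$ is constructed symmetrically. Monotonicity in both coordinates is preserved because at each crossing point both paths have the same height, so concatenation yields legitimate increasing paths. Because the crossings occur at generic non-Poisson points, each Poisson point visited by $\Gamma_{a,b}\cup\Gamma_{a',b'}$ is counted exactly once in $\Gamma_1\cup\Gamma_2$, so one obtains the conservation identity
\[
\ell^{\Pi}_{\Gamma_1}+\ell^{\Pi}_{\Gamma_2}=\ell^{\Pi}_{\Gamma_{a,b}}+\ell^{\Pi}_{\Gamma_{a',b'}}.
\]

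Now the maximality of $\Gamma_{a,b}$ and $\Gamma_{a',b'}$ gives $\ell^{\Pi}_{\Gamma_1}\le \ell^{\Pi}_{\Gamma_{a,b}}$ and $\ell^{\Pi}_{\Gamma_2}\le \ell^{\Pi}_{\Gamma_{a',b'}}$, so the display above forces both inequalities to be equalities; in particular $\Gamma_2$ is a maximal increasing path from $a'$ to $b'$. However, by construction $\Gamma_2$ agrees with $\Gamma_{a,b}$ on $(p_1,q_1)$, and on this open interval $\Gamma_{a,b}$ was assumed to lie strictly above $\Gamma_{a',b'}$. Thus $\Gamma_2$ is a maximal path from $a'$ to $b'$ that is strictly higher than $\Gamma_{a',b'}$ somewhere, contradicting the definition of $\Gamma_{a',b'}$ as the \emph{topmost} maximal path. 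This rules out the existence of $x_0$ and proves the lemma.

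The main technical point to handle carefully is the ``generic crossing'' step: one must rule out the negligible event that the two crossings occur at shared Poisson corners. This can be done by noting that almost surely no two Poisson points share an $x$- or $y$-coordinate, so all corners of $\Gamma_{a,b}$ have distinct $x$-coordinates from those of $\Gamma_{a',b'}$; the degenerate boundary cases $a_2=a_3$ or $b_2=b_3$ (where one of the crossings coincides with an endpoint) are handled identically, with the swap performed across only one interior crossing together with the shared endpoint.
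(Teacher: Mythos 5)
Your path-swap argument is correct and is the standard way to establish polymer ordering; the paper itself states the lemma with the comment ``the proof is omitted'' and gives no argument, so there is nothing in the paper to compare against. The overall structure---contradiction, intermediate value theorem to locate crossings $p_1\le x_0\le q_1$, swap the middle segments, observe conservation of total length, deduce both pieces are maximal, and contradict topmost-ness of $\Gamma_{a',b'}$ via $\Gamma_2$---is exactly right.

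One small inaccuracy that you should be careful about: it is \emph{not} true in general that the crossings occur away from shared Poisson corners. Topmost maximal paths from nearby endpoints routinely coalesce and share long stretches (indeed entire Poisson points), so the crossing point $p_1$ (or $q_1$) can perfectly well be a Poisson point lying on both $\Gamma_{a,b}$ and $\Gamma_{a',b'}$, and ``general position'' does nothing to rule this out. Fortunately, the proof does not need the generic-crossing assumption. What is actually used is the multiset identity: for every $x$ one has $\{\Gamma_1(x),\Gamma_2(x)\}=\{\Gamma_{a,b}(x),\Gamma_{a',b'}(x)\}$, so $\Gamma_1\cup\Gamma_2=\Gamma_{a,b}\cup\Gamma_{a',b'}$ and $\Gamma_1\cap\Gamma_2=\Gamma_{a,b}\cap\Gamma_{a',b'}$ as point sets. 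Since a maximal path necessarily collects every Poisson point on its geometric trace (and the uncounted terminal vertices $b$ and $b'$ are paired identically on both sides of the identity), the length conservation $\ell_{\Gamma_1}+\ell_{\Gamma_2}=\ell_{\Gamma_{a,b}}+\ell_{\Gamma_{a',b'}}$ holds whether or not the crossing sits at a shared Poisson corner. With this correction, the rest of your argument goes through unchanged: maximality of $\Gamma_{a,b}$ and $\Gamma_{a',b'}$ forces $\Gamma_2$ to be maximal, and $\Gamma_2>\Gamma_{a',b'}$ on the open interval $(p_1,q_1)$ contradicts $\Gamma_{a',b'}$ being the topmost maximal path from $a'$ to $b'$.
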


\begin{figure*}[ht!]
\begin{center}
\includegraphics[width=0.5\textwidth]{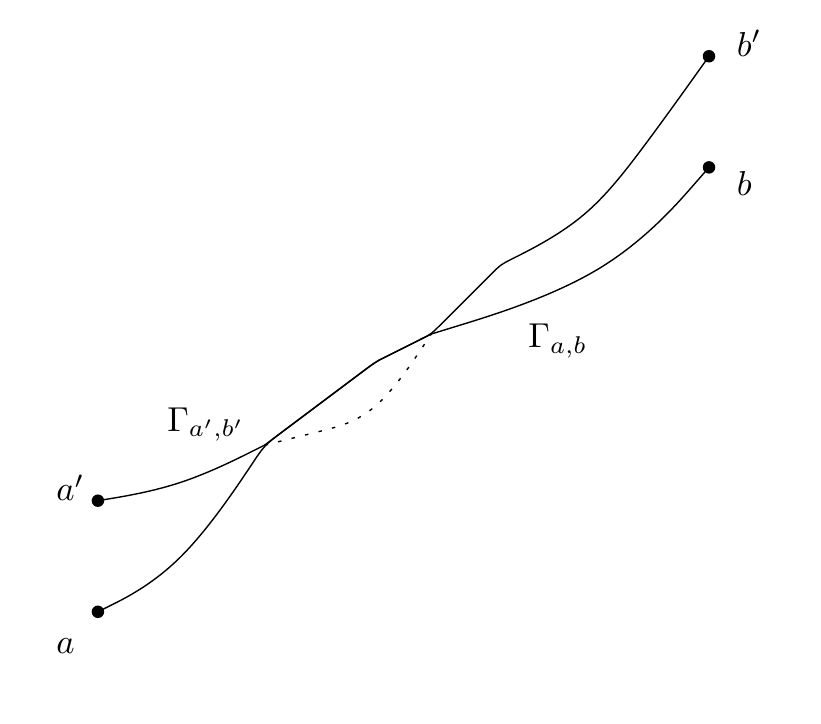}
\caption{Lemma \ref{l:order}: the dotted curve cannot be part of $\Gamma_{a',b'}$.}
\label{f:order}
\end{center}
\end{figure*}

The next lemma shows that the topmost maximal path $\Gamma_r$ cannot be too high at the midpoint of the interval $[0,r]$.

\begin{lemma}
\label{l:transversalmp}
Consider the set-up of Theorem \ref{t:transversal}. There exist constants $c>0$, $r_1>0$ and $k_1>0$ such that for all $k\geq k_1$ and for all $r>r_1$ we have 
$$\P\left[\Gamma_r\biggl(\frac{r}{2}\biggr)-\frac{r}{2}\geq kr^{2/3}\right]\leq e^{-ck}.$$
\end{lemma}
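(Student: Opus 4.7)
The plan is to exploit the quadratic curvature in the LLN for $\E X_{u,u'}$: a maximal path constrained to cross the vertical line $x=r/2$ at height $r/2 + h r^{2/3}$ suffers an expected-length deficit of order $h^2 r^{1/3}$ relative to the unconstrained maximum. A midpoint deviation of size $k r^{2/3}$ therefore forces one of a few passage-time variables to exhibit a moderate-deviation fluctuation of magnitude $\gtrsim k^2 r^{1/3}$, which is controlled by the upper-tail estimate of Theorem \ref{t:moddevuppertail} and the matching lower-tail estimate of Theorem \ref{t:moddevlowertail}.

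Concretely, I would decompose $\{\Gamma_r(r/2) - r/2 \geq k r^{2/3}\}$ as the disjoint union $\bigcup_{j \geq k} A_j$, with $A_j := \{\Gamma_r(r/2) - r/2 \in [j r^{2/3}, (j+1) r^{2/3})\}$, and set $Y_\ell := r/2 + \ell r^{2/3}$. On $A_j$ the path $\Gamma_r$ crosses $x=r/2$ at some height $Y \in [Y_j, Y_{j+1})$; splitting $\Gamma_r$ at this crossing and using the monotonicity of $X_{u,u'}$ in its endpoints gives the key inequality
$$
X_{(0,0),(r,r)} \leq X_{(0,0),(r/2,Y_{j+1})} + X_{(r/2,Y_j),(r,r)}.
$$
A Taylor expansion of $\E X_{u,u'} = 2\sqrt{(x'-x)(y'-y)} + O(r^{1/3})$ (Corollary \ref{c:moddevtilde}) yields
$$
\E X_{(0,0),(r,r)} - \E X_{(0,0),(r/2,Y_{j+1})} - \E X_{(r/2,Y_j),(r,r)} = \bigl(j^2 + j + \tfrac{1}{2}\bigr) r^{1/3} - r^{2/3} + O(r^{1/3}).
$$
For $j \geq C_0 r^{1/6}$ with $C_0$ a large absolute constant, this is at least $c_0 j^2 r^{1/3}$, so $A_j$ forces at least one of the centered variables $\tilde X_{(0,0),(r/2,Y_{j+1})}$, $\tilde X_{(r/2,Y_j),(r,r)}$, $-\tilde X_{(0,0),(r,r)}$ to exceed $c_0 j^2 r^{1/3}/3$, each having probability at most $\exp(-c_1 j^3)$ by Theorems \ref{t:moddevuppertail} and \ref{t:moddevlowertail} applied with $s \asymp j^2$.

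The hard part is the intermediate range $k_1 \leq j \lesssim r^{1/6}$: the mismatch between $Y_{j+1}$ on the left and $Y_j$ on the right contributes an additive $+r^{2/3}$ in the expectation comparison that overwhelms the quadratic gain $j^2 r^{1/3}$, so no effective penalty remains. I would refine the discretization inside each interval $[Y_j, Y_{j+1})$ at the adaptive scale $\delta_j r^{2/3}$ with $\delta_j \asymp j^2 / r^{1/3}$, which makes the mismatch $\delta_j r^{2/3} \asymp j^2 r^{1/3}$ strictly smaller than the gain; the accompanying union bound over the $O(r^{1/3}/j^2)$ refined sub-intervals is absorbed by the cubic-exponential tail once $k_1$ is taken sufficiently large. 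Summing the resulting bounds on $\P[A_j]$ over $j \geq k$ then yields the target $\exp(-c_4 k)$.
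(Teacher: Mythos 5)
Your overall decomposition into shell events $A_j$ at the midpoint, the use of the quadratic loss in $\E X_{u,u'}$ as the mechanism, and the call to the moderate-deviation tails is the same strategy as the paper. The difficulty you flag in the intermediate range $j\lesssim r^{1/6}$ is real and well-identified, but your proposed fix does not close it. Refining $[Y_j,Y_{j+1})$ into sub-intervals of height $\delta_j r^{2/3}$ with $\delta_j\asymp j^2 r^{-1/3}$ produces a union over $O(r^{1/3}/j^2)$ sub-events, each of probability $\exp(-c_1 j^3)$; the resulting bound
\[
\P[A_j]\ \lesssim\ \frac{r^{1/3}}{j^2}\,\exp(-c_1 j^3)
\]
has an explicit polynomial-in-$r$ prefactor that is \emph{not} "absorbed by the cubic-exponential tail once $k_1$ is taken sufficiently large." The constant $k_1$ in the lemma must be independent of $r$, so for $j$ at the bottom of the range (say $j=k_1$ fixed) the bound diverges as $r\to\infty$. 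There is no choice of $\delta_j$ that escapes this: making $\delta_j$ smaller reduces the mismatch but increases the number of sub-intervals proportionally, and making it larger leaves the $r^{2/3}$ mismatch overwhelming the $j^2 r^{1/3}$ gain. The crux is that a flat union bound over a discretized crossing point cannot give a bound uniform in $r$ at fixed $j$.

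What the paper does instead — and what is genuinely needed here — is to replace the union bound by the \emph{supremum} estimates of Proposition~\ref{t:treesup} and Proposition~\ref{t:treeinf}, which control $\sup_{u'\in L_\ell}\tilde X_{(0,0),u'}$ and $\sup_{u'\in L_\ell}\tilde X_{u',(r,r)}$ over the whole segment $L_\ell$ of height $r^{2/3}$ at once, with tail $e^{-c(k+\ell)}$ uniformly in $r$. Those propositions are themselves proved by a multi-scale chaining/tree construction in \S\ref{s:para} precisely so as to avoid the polynomial-in-$r$ loss you are running into. With those in hand, the paper compares $\sup_{u'\in L_\ell}\bigl(\tilde X_{(0,0),u'}+\tilde X_{u',(r,r)}\bigr)-\tilde X_{(0,0),(r,r)}$, each piece at scale $(k+\ell)^{3/2}r^{1/3}$, against the penalty $(k+\ell)^{7/4}r^{1/3}$ from Lemma~\ref{l:penaltyproper}, and sums $e^{-c(k+\ell)}$ over $\ell$; a separate gross-deviation event $\{\Gamma_r(r/2)\geq 9r/10\}$ is handled on its own to stay within the moderate-deviation regime. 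Your proof would become correct if you imported Propositions~\ref{t:treesup} and~\ref{t:treeinf} (or re-proved a chaining bound of that type) in place of the flat union bound; as written, it has a gap in the regime $k_1\leq j\lesssim r^{1/6}$.
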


\begin{proof}
Let $A$ denote the event
$$A=\left\{\Gamma_r\biggl(\frac{r}{2}\biggr)-\frac{r}{2}\geq kr^{2/3}\right\}.$$
Observe that if $k\geq r^{1/3}$, then $\P[A]=0$ and hence we can restrict ourselves to the case $k<r^{1/3}$.
For $\ell \geq 0$, let $B_{\ell}$ denote the event $$B_{\ell}=\left\{\Gamma_r\biggl(\frac{r}{2}\biggr)-\frac{r}{2}\in [(k+\ell)r^{2/3}, (k+\ell+1)r^{2/3}]\right\}.$$
Finally let $G$ denote the event
$$G=\left\{\Gamma_r\biggl(\frac{r}{2}\biggr)\geq \frac{9r}{10}\right\}.$$
It is clear that (see Figure \ref{f:transversalmp})
$$A \subseteq \bigcup_{\ell=0}^{\lceil \frac{4r^{1/3}}{10}\rceil -k}  B_{\ell} \cup G.$$

\begin{figure}[ht!]
\begin{center}
\includegraphics[width=0.8\textwidth]{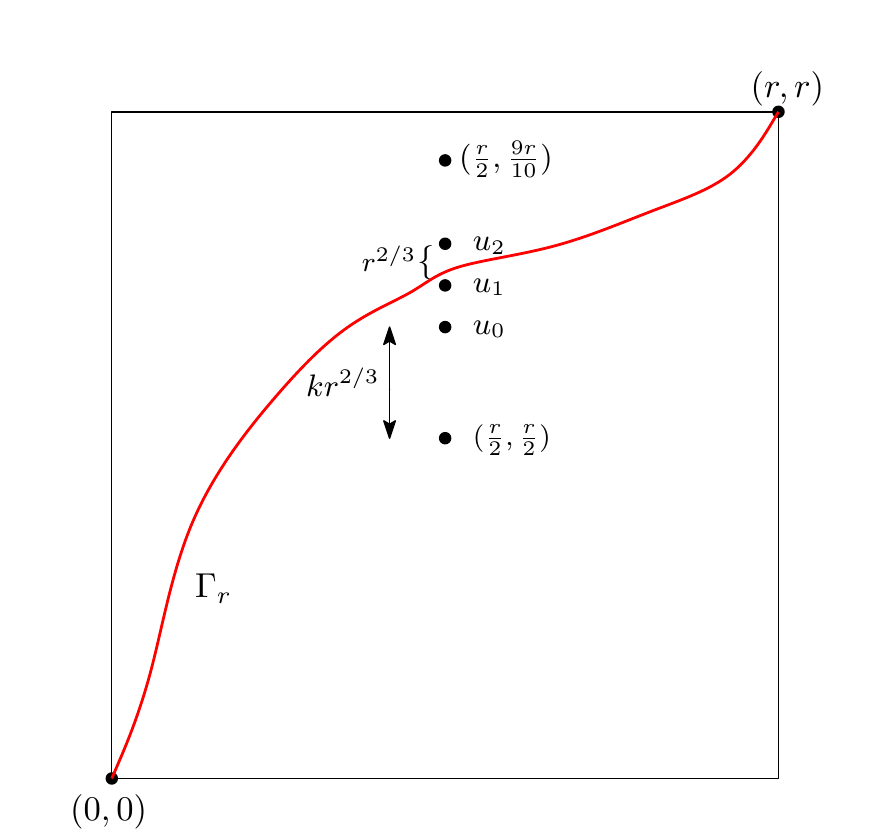}
\caption{Event $B_{1}$ as defined in the proof of Lemma \ref{l:transversalmp}}
\label{f:transversalmp}
\end{center}
\end{figure}

First let us bound $\P[G]$. Let $\mathcal{L}$ denote the line segment joining $(\frac{r}{2},\frac{9r}{10})$ and $(\frac{r}{2},r)$. It is clear that 
$$\sup_{u'\in \mathcal{L}} \biggl(X_{(0,0),u'}+X_{u',(r,r)}\biggr) \leq  X_{(0,0),(\frac{r}{2},r)}+X_{(\frac{r}{2},\frac{9r}{10}),(r,r)}$$
and hence
$$\P[G]\leq \P\left[ X_{(0,0),(\frac{r}{2},r)}+X_{(\frac{r}{2},\frac{9r}{10}),(r,r)} \geq X_{(0,0),(r,r)}\right].$$
An elementary computation as in Lemma \ref{l:penaltyproper} shows that
$$\E X_{(0,0),(\frac{r}{2},r)}+\E X_{(\frac{r}{2},\frac{9r}{10}),(r,r)}- \E X_{(0,0),(r,r)} \leq -c'r$$
for some constant $c'>0$. It follows from Theorem \ref{t:moddevuppertail} and  Theorem \ref{t:moddevlowertail} that for $r$ sufficiently large we have  
\begin{equation}
\label{e:gbound1}
\P[G]\leq e^{-cr^{2/3}}\leq e^{-ck}
\end{equation} 
for some absolute constant $c>0$ as $k\leq r^{1/3}$.

Now for the events $B_{\ell}$, observe the following. Let $u_{\ell}=(\frac{r}{2}, \frac{r}{2}+(k+\ell)r^{2/3})$. Let $L_{\ell}$ denote the line segment joining $u_{\ell}$ and $u_{\ell+1}$. Then we have
$$B_{\ell} \subseteq \left\{\sup_{u'\in L_{\ell}} X_{(0,0),u'}+X_{u',(r,r)}- X_{(0,0),(r,r)}\geq 0\right \}.$$

Notice that for $r$ sufficiently large and $k$ sufficiently large we have from Proposition \ref{t:treeinf} that
$$\P[\tilde{X}_{(0,0),(r,r)}\leq -k^{3/2}r^{1/3}]\leq e^{-ck}$$
for some constant $c>0$. Also by Proposition \ref{t:treesup} we have for $r$ and $k$ sufficiently large and $\ell \leq \frac{4r^{1/3}}{10}$ and some constant $c>0$
$$\P[\sup_{u'\in L_{\ell}} \tilde{X}_{(0,0),u'}\geq (\ell+ k)^{3/2}r^{1/3}]\leq e^{-c(k+\ell)}$$
and similarly
$$\P[\sup_{u'\in L_{\ell}} \tilde{X}_{u',(r,r)}\geq (k+\ell)^{3/2}r^{1/3}]\leq e^{-c(k+\ell)}.$$

Using Lemma \ref{l:penaltyproper} for $k$ sufficiently large and for all $u'\in L_{\ell}$
$$X_{(0,0),u'}+X_{u',(r,r)}-X_{(0,0),(r,r)} \leq \tilde{X}_{(0,0),u'}+\tilde{X}_{u',(r,r)}-\tilde{X}_{(0,0),(r,r)}-(k+\ell)^{7/4}r^{1/3}.$$

Putting together all these, we get for all $\ell \leq \lceil \frac{4r^{1/3}}{10}\rceil$
\begin{equation}
\label{e:bbound1}
\P[B_{\ell}]\leq e^{-c(k+\ell)}.
\end{equation}
Notice that we were very generous in the above calculations. We could have replaced the exponents $3/2$ by anything that is larger than $1$, and the exponent $7/4$ could have been replaced by anything smaller than $2$. However this is not important for us, and so $3/2$ was chosen arbitrarily and $7/4$ was chosen sufficiently large to beat it.

Taking a union bound over all $\ell \leq \lceil \frac{4r^{1/3}}{10}\rceil$
it follows from \eqref{e:gbound1} and \eqref{e:bbound1} that $\P[A]\leq e^{-ck}$ which completes the proof of the lemma. 
\end{proof}

Now we want to use a chaining argument to extend this bound on the transversal fluctuation of the topmost maximal path at the midpoint to a sequence of points, placed at the boundaries of dyadic sub-intervals of $[0,r]$. Let $r_1$ and $k_1$ be now given by Lemma \ref{l:transversalmp}, and let $k>10^5k_1$ be now fixed. Also fix $r$ sufficiently large so that $r^{2/3}\geq 10r_1$. Choose $j_0=j_0(k,r)>0$ such that $2^{-j_0}r=\frac{k}{10}r^{2/3}$. Without loss of generality we can assume that $j_0$ is an integer. For $j=1,2,\ldots , j_0$, define $S_j$ by
$$S_j=\{\ell r2^{-j}:\ell=0,1,\ldots 2^j\}.$$

For $j\geq 1$, define 
$$k_j=\frac{k}{10^5}\prod_{i=0}^{j-1}(1+2^{-i/10}).$$
Let $A_j$ denote the event that for all $x\in S_j$, we have $(\Gamma_r(x)-x)\leq k_jr^{2/3}$. 


\begin{lemma}
\label{l:transversal3}
Let $r$ and $k$ be as above. There exists an absolute constant $c>0$ such that for all $j$ with $j_0(k,r)\geq j\geq 1$, we have $\P[A_j^c\cap A_{j-1}]\leq 2^{-j}e^{-ck}$ where $A_0$ denotes the full set.
\end{lemma}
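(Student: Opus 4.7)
The plan is to carry out a chaining argument where each midpoint in $S_j \setminus S_{j-1}$ is controlled by applying Lemma \ref{l:transversalmp} to a sub-path of $\Gamma_r$ at the shorter scale $L = r 2^{-(j-1)}$.

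First I would observe that $S_j \setminus S_{j-1}$ consists of $2^{j-1}$ points, each of the form $x = (x_1 + x_2)/2$ where $x_1, x_2$ are adjacent points of $S_{j-1}$ with $x_2 - x_1 = L := r 2^{-(j-1)}$. On the event $A_{j-1}$, we have $\Gamma_r(x_i) \leq x_i + k_{j-1} r^{2/3}$ for $i=1,2$. Setting $a = (x_1, \Gamma_r(x_1))$, $b = (x_2, \Gamma_r(x_2))$, the sub-path of $\Gamma_r$ from $a$ to $b$ agrees with $\Gamma_{a,b}$. Then by Lemma \ref{l:order}, if $a' = (x_1, x_1 + k_{j-1} r^{2/3})$ and $b' = (x_2, x_2 + k_{j-1} r^{2/3})$,
\[
\Gamma_r(x) \leq \Gamma_{a',b'}(x).
\]

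Next, by translation invariance of $\Pi$, the centered deviation $\Gamma_{a',b'}(x) - (x_1 + L/2 + k_{j-1} r^{2/3})$ is distributed exactly as $\Gamma_L(L/2) - L/2$. Hence, writing $x = x_1 + L/2$,
\[
\Gamma_r(x) - x \leq [\Gamma_L(L/2) - L/2] + k_{j-1} r^{2/3},
\]
so $\{\Gamma_r(x) - x > k_j r^{2/3}\} \cap A_{j-1}$ is contained in the event that $\Gamma_L(L/2) - L/2 > k^* L^{2/3}$, where
\[
k^* := \frac{(k_j - k_{j-1}) r^{2/3}}{L^{2/3}} = (k_j - k_{j-1}) \cdot 2^{2(j-1)/3}.
\]
Using $k_j - k_{j-1} = k_{j-1} \cdot 2^{-(j-1)/10}$ and the trivial bound $k_{j-1} \geq k_0 = k/10^5$, I get
\[
k^* \geq \frac{k}{10^5} \cdot 2^{(j-1)(2/3 - 1/10)} = \frac{k}{10^5} \cdot 2^{17(j-1)/30}.
\]

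Now I would apply Lemma \ref{l:transversalmp} at scale $L$ with parameter $k^*$. The hypotheses are satisfied: $L \geq (k/10) r^{2/3} \geq r_1$ since $j \leq j_0$ and $r^{2/3} \geq 10 r_1$; and $k^* \geq k/10^5 \geq k_1$ by the assumption $k > 10^5 k_1$. This yields a per-midpoint bound of $e^{-c_0 k^*} \leq \exp(-c_0 (k/10^5) 2^{17(j-1)/30})$ for an absolute constant $c_0>0$. Finally, a union bound over the $2^{j-1}$ midpoints gives
\[
\P[A_j^c \cap A_{j-1}] \leq 2^{j-1} \exp\!\left(-\tfrac{c_0 k}{10^5} \cdot 2^{17(j-1)/30}\right).
\]
The exponent $c_0 k \cdot 2^{17(j-1)/30}/10^5$ grows faster in $j$ than $(2j-1)\log 2 + ck$ for any small enough absolute $c>0$ (e.g.\ $c = c_0/(2\cdot 10^5)$), provided $k$ is at least some absolute constant; this absorbs the factor $2^{j-1}$ into an extra $2^{-j}$ with room to spare, yielding the desired bound $2^{-j} e^{-ck}$.

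The only subtle point is the interplay between the geometric factors: the penalty $(k_j - k_{j-1}) r^{2/3}$ at scale $L$ corresponds to a parameter $k^*$ that grows exponentially in $j$ because the KPZ scaling $L^{2/3}$ shrinks much faster than the $2^{-(j-1)/10}$ decay of the increments $k_j - k_{j-1}$. This exponential growth of $k^*$ is exactly what lets the per-level probability beat both the union bound factor $2^{j-1}$ and the claimed $2^{-j}$ slack while maintaining a uniform $e^{-ck}$ base decay; the product bound $\prod_i(1+2^{-i/10})$ being finite is what keeps $k_{j-1}$ bounded below by $k/10^5$ uniformly in $j$.
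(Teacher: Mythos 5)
Your proof is correct and follows essentially the same route as the paper's: the paper isolates the per-interval bound as Lemma \ref{l:transversal5} (using polymer ordering from Lemma \ref{l:order}, translation invariance, and Lemma \ref{l:transversalmp}, then taking a union bound over the $\sim 2^j$ intervals), while you fold that computation into the proof directly, with the minor cosmetic difference that the paper rounds the growth rate $2^{17(j-1)/30}$ down to $2^{j/2}$ before invoking Lemma \ref{l:transversalmp}.
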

Notice that it is an immediate corollary of Lemma \ref{l:transversalmp} that $\P[A_1^c]\leq e^{-ck}$ and hence it remains to prove Lemma \ref{l:transversal3} for $j>1$. We postpone the proof for the moment and show how to complete the proof of Theorem \ref{t:transversal}.

\begin{lemma}
\label{l:transversal1}
If $A_j$ holds for each $j\leq j_0$, then we have  $\tilde{D}(r)\leq kr^{2/3}$.
\end{lemma}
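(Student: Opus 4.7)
The plan is essentially a monotonicity-plus-chaining check, with no real probabilistic content: all the probabilistic work has been pushed into the events $A_j$. The only geometric fact we need is that the path $\Gamma_r$, being an increasing path from $(0,0)$ to $(r,r)$, has the property that $x\mapsto \Gamma_r(x)$ is a (weakly) non-decreasing function of $x\in[0,r]$ — this is built into the definition of the notation $\gamma_x$ for an increasing path $\gamma$.

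Given an arbitrary $x\in[0,r]$, I would locate the unique pair of consecutive dyadic points $x_\ell<x_{\ell+1}$ in $S_{j_0}$ such that $x\in[x_\ell,x_{\ell+1}]$, where $x_\ell=\ell r 2^{-j_0}$. Monotonicity gives $\Gamma_r(x)\le \Gamma_r(x_{\ell+1})$, and since $x\ge x_\ell$ we get
\begin{equation*}
\Gamma_r(x)-x \;\le\; \Gamma_r(x_{\ell+1})-x_\ell \;=\; \bigl[\Gamma_r(x_{\ell+1})-x_{\ell+1}\bigr] \;+\; \bigl[x_{\ell+1}-x_\ell\bigr].
\end{equation*}
On the event $A_{j_0}$ the first bracket is at most $k_{j_0}r^{2/3}$, while the second bracket equals $2^{-j_0}r=\tfrac{k}{10}r^{2/3}$ by the choice of $j_0$. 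Taking the supremum over $x$ yields $\tilde D(r)\le \bigl(k_{j_0}+\tfrac{k}{10}\bigr)r^{2/3}$.

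What remains is to check the numerical inequality $k_{j_0}+\tfrac{k}{10}\le k$, i.e.\ $k_{j_0}\le \tfrac{9k}{10}$. From the definition,
\begin{equation*}
k_{j_0}\;=\;\frac{k}{10^{5}}\prod_{i=0}^{j_0-1}(1+2^{-i/10})\;\le\;\frac{k}{10^{5}}\prod_{i=0}^{\infty}(1+2^{-i/10}),
\end{equation*}
and the infinite product converges (taking logarithms, $\sum_{i\ge 0}\log(1+2^{-i/10})\le \sum_{i\ge 0}2^{-i/10}=1/(1-2^{-1/10})<\infty$) to a finite absolute constant $K$. The prefactor $10^{-5}$ was chosen precisely so that $K/10^{5}\le 9/10$, giving $k_{j_0}\le \tfrac{9k}{10}$ and hence $\tilde D(r)\le kr^{2/3}$.

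There is no real obstacle here — the only minor point to verify carefully is that the chaining constants are consistent (that the infinite product is tame enough that the fixed prefactor $10^{-5}$ in the definition of $k_j$ absorbs it with room to spare for the $\tfrac{k}{10}$ discretization error). This is the sole reason the geometric factor $(1+2^{-i/10})$ in $k_j$ was introduced with an exponent small enough to make $\sum 2^{-i/10}$ summable; the exponent $1/10$ plays no other role in this lemma.
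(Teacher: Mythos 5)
Your approach is structurally identical to the paper's: pick the nearest dyadic neighbors $x_\ell < x \le x_{\ell+1}$ in $S_{j_0}$, use monotonicity of $\Gamma_r$ to get $\Gamma_r(x)-x \le (\Gamma_r(x_{\ell+1})-x_{\ell+1}) + (x_{\ell+1}-x_\ell)$, apply $A_{j_0}$ to the first term and the choice $2^{-j_0}r = \tfrac{k}{10}r^{2/3}$ to the second. That part is fine, and it is exactly what the paper does (the paper writes the max of the two endpoint deviations, which is immaterial).

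The problem is the final numerical assertion. You claim the infinite product $K=\prod_{i\ge 0}(1+2^{-i/10})$ satisfies $K/10^5 \le 9/10$, i.e.\ $K\le 9\times 10^4$, but this is false and your own bound does not support it: you bound $\log K$ by $\sum_{i\ge 0}2^{-i/10} = 1/(1-2^{-1/10})\approx 14.9$, which gives only $K\le e^{14.9}\approx 3\times 10^6$. The true value is in fact about $2\times 10^5$ — already the first twenty factors multiply to more than $5000$, and the tail contributes another factor of roughly $30$ — so $k_{j_0}\approx 2k$, and the required inequality $k_{j_0}+\tfrac{k}{10}\le k$ fails. This constant slip is present in the source (the paper's proof simply says "the result follows" without checking), and it is harmless: replacing the prefactor $10^{-5}$ by $10^{-6}$ in the definition of $k_j$ (together with the hypothesis $k>10^6 k_1$) makes the arithmetic go through without affecting Lemma~\ref{l:transversal5} or Theorem~\ref{t:transversal}, or alternatively one can conclude only $\tilde D(r)\le 3kr^{2/3}$, which is equally good for the theorem after relabeling $k$. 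But you should not have asserted $K/10^5\le 9/10$ as a fact when your displayed estimate gives $K\le 3\times 10^6$; the honest conclusion of your computation is that the stated constant $10^{-5}$ is not small enough.
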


\begin{proof}
Let $x \in [x_1,x_2]$ where $x_1$, $x_2$ are consecutive elements of $S_{j_0}$. Clearly then
$\Gamma_r(x)-x\leq (\Gamma_{r}(x_1)-x_1)\vee (\Gamma_{r}(x_2)-x_2) +\frac{k}{10}r^{2/3}$. The result follows.
\end{proof}

We are now ready to prove Theorem \ref{t:transversal}.
\begin{proof}[Proof of Theorem \ref{t:transversal}]
Denoting the full set by $A_0$, notice that 
$$\P[(\cap_{j\geq 1} A_j)^c]\leq \sum_{j\geq 1} \P[A_{j}^c\cap A_{j-1}].$$ The theorem now follows from Lemma \ref{l:transversal1} and Lemma \ref{l:transversal3}.
\end{proof}

It remains to prove Lemma \ref{l:transversal3}. This will follow from the following lemma.

\begin{lemma}
\label{l:transversal5}
In the set-up of Lemma \ref{l:transversal3},
fix $j< j_0$ and $0\leq  h \leq 2^{j}$. Let $k_j=\frac{k}{10^5}\prod_{i=0}^{j-1}(1+2^{-i/10})$ be defined as above. Consider the line $y=x+k_jr^{2/3}$. Let $u_{h}$ denote the point where this line intersects the vertical line $x=h2^{-j}r$. Let 
$$A_{h,j}=\left\{\Gamma_{u_{h},u_{h+1}}((2h+1)2^{-(j+1)}r)-(2h+1)2^{-(j+1)}r\leq k_{j+1}r^{2/3}\right\}.$$ 
Then $\P[A_{h,j}^c]\leq 4^{-j}e^{-ck}$ for some absolute constant $c>0$.
\end{lemma}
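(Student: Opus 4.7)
The plan is to reduce the statement to Lemma \ref{l:transversalmp} by translation invariance, applied at the smaller scale $r_j := 2^{-j} r$, and then verify that the resulting Gaussian-type bound beats the $4^{-j}$ required by Lemma \ref{l:transversal3}.

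First, I would exploit translation invariance of $\Pi$. Shifting by $-u_h = -(h\, 2^{-j}r,\, h\, 2^{-j}r + k_j r^{2/3})$, the topmost maximal path $\Gamma_{u_h, u_{h+1}}$ becomes the topmost maximal path in an i.i.d.\ Poisson environment from $(0,0)$ to $(r_j, r_j)$, which I denote by $\Gamma_{r_j}$. Under this correspondence, the midpoint $x_m = (2h+1)2^{-(j+1)}r$ maps to $r_j/2$, and a direct computation shows
\[
\Gamma_{u_h, u_{h+1}}(x_m) - x_m - k_j r^{2/3} \stackrel{d}{=} \Gamma_{r_j}(r_j/2) - r_j/2,
\]
so that $A_{h,j}^c = \{\Gamma_{r_j}(r_j/2) - r_j/2 > (k_{j+1}-k_j)\, r^{2/3}\}$ in distribution.

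Second, I would measure the right-hand threshold in units of $r_j^{2/3}$. Since $k_{j+1}-k_j = k_j \cdot 2^{-j/10}$ and $k_j \geq k/10^5$,
\[
s_j := \frac{(k_{j+1}-k_j)\, r^{2/3}}{r_j^{2/3}} \;\geq\; \frac{k}{10^5}\cdot 2^{-j/10}\cdot 2^{2j/3} \;=\; \frac{k}{10^5}\, 2^{17j/30}.
\]
Because $k \geq 10^5 k_1$ and $r^{2/3} \geq 10 r_1$ in the regime preceding Lemma \ref{l:transversal3}, one checks that $s_j \geq k_1$ and $r_j \geq r_1$, so Lemma \ref{l:transversalmp} applies at scale $r_j$ and yields
\[
\P[A_{h,j}^c] \;\leq\; \exp(-c_0 s_j) \;\leq\; \exp\!\left(-\tfrac{c_0 k}{10^5}\, 2^{17j/30}\right),
\]
where $c_0$ is the constant of Lemma \ref{l:transversalmp}.

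Finally I would verify this entails $\P[A_{h,j}^c] \leq 4^{-j}e^{-ck}$ with $c := c_0/(2\cdot 10^5)$, provided $k$ exceeds an absolute threshold. Taking logarithms, the needed inequality is
\[
\tfrac{c_0 k}{10^5}\!\left(2^{17j/30} - \tfrac{1}{2}\right) \;\geq\; 2j\log 2,
\]
which is trivial at $j=0$. For $j\geq 1$, use $2^{17j/30}-\tfrac12 \geq \tfrac12\cdot 2^{17j/30}$ to reduce to $k \geq (4\log 2 \cdot 10^5/c_0)\cdot j\cdot 2^{-17j/30}$, and since $\sup_{j\geq 1} j\,2^{-17j/30} < \infty$, this holds once $k_0$ is an absolute constant chosen large enough. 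The main subtlety here is not the translation argument or the tail input, but the bookkeeping with the exponent $-1/10$ hidden in the definition of $k_j$: it is the slowest rate of growth of $k_j$ that still makes $s_j$ grow geometrically in $j$, which in turn is precisely what is needed so that $e^{-c_0 s_j}$ overcomes the $2^j$-sized union bound over $h$ appearing in the proof of Lemma \ref{l:transversal3}.
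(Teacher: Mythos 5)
Your proposal is correct and follows essentially the same approach as the paper: translation invariance reduces the event to the transversal deviation of $\Gamma_{r_j}$ at the midpoint with $r_j = 2^{-j}r$, and Lemma \ref{l:transversalmp} applied at this smaller scale gives the bound. The only cosmetic difference is that you retain the exact exponent $2^{17j/30}$ for the rescaled threshold $s_j$, whereas the paper immediately weakens this to $2^{j/2}$ before applying Lemma \ref{l:transversalmp}; both give $4^{-j}e^{-ck}$ once $k$ is sufficiently large.
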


\begin{proof}
Set $r'=2^{-j}r$. By translation invariance, we have that 
$$\P[A_{h,j}^c]=\P\left[\Gamma_{r'}\biggl(\frac{r'}{2}\biggr)-\frac{r'}{2}\geq (k_{j+1}-k_j)r^{2/3}\right].$$

Observing that $(k_{j+1}-k_j)r^{2/3}=k_j2^{-j/10}2^{2j/3}(r')^{2/3}\geq \frac{k2^{j/2}}{10^{5}}(r')^{2/3}$ and $r'>r_1$, $k>10^5k_1$ it follows from Lemma \ref{l:transversalmp} that
$\P[A_{h,j}^c]\leq e^{-ck2^{j/2}}\leq 4^{-j}e^{-ck/2}$ since $k$ is sufficiently large. This completes the proof of the lemma.
\end{proof}

\begin{figure}[h!]
\begin{center}
\includegraphics[width=0.6\textwidth]{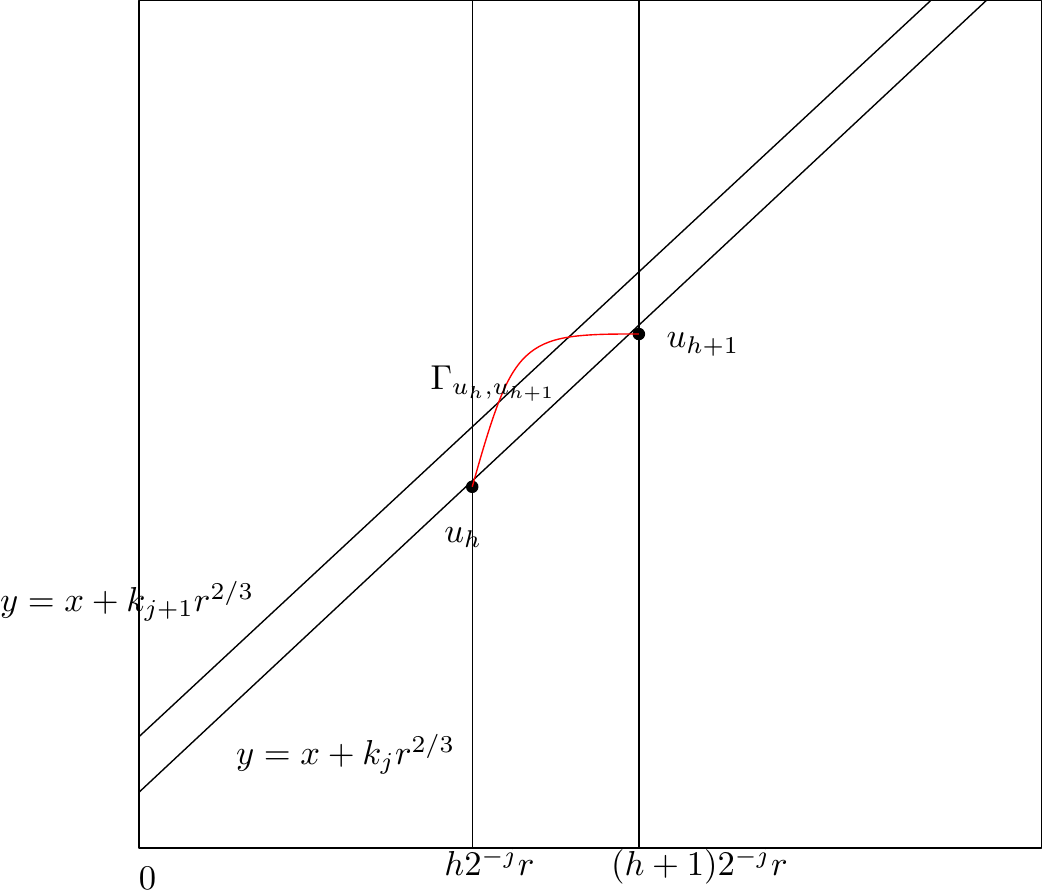}
\caption{Event $A_{h,j}^c$ in the proof of Lemma \ref{l:transversal3}}
\label{f:transversalc}
\end{center}
\end{figure}

Now we complete the proof of Lemma \ref{l:transversal3}.

\begin{proof}[Proof of Lemma \ref{l:transversal3}]
Observe that using Lemma \ref{l:order}, it follows that 
$$A_{j+1}^{c}\cap A_{j} \subseteq \bigcup_{h\in [2^j]} A_{h,j}^c.$$
The lemma now follows from Lemma \ref{l:transversal5} by taking a union bound over all $h$.
\end{proof}

Observe now that the arguments proving Theorem \ref{t:transversal} would still go through if we were looking at the transversal fluctuation of the topmost maximal path between two points $a$ and $b$ such that the line segment joining them has slope bounded away from $0$ and infinity. In particular we have the following corollary whose proof we omit.

\begin{corollary}
\label{c:transversaldiffslope}
Let $a=(a_1,a_2)<b=(b_1,b_2)\in \R^2$ be such that $m:=\frac{b_2-a_2}{b_1-a_1}\in (\frac{100}{\psi}, \frac{\psi}{100})$. Then there exist  positive constants $r_0(\psi)$ and $k_0(\psi)$ and $c_4(\psi)$ such that for all $r:=b_1-a_1>r_0$, $k>k_0$, we have
$$\P\left[D(a,b)\geq kr^{2/3}\right]\leq e^{-c_4k}.$$
\end{corollary}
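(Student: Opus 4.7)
The plan is to run the same chaining argument that established Theorem~\ref{t:transversal}, with the only substantive change being a reparametrization to handle a general slope $m \in (100/\psi, \psi/100)$. First, by translation invariance we may assume $a=(0,0)$, so $b=(r,mr)$. We continue to write $\mathcal{L}$ for the line segment joining $a$ and $b$, and we work with the signed fluctuation $\tilde D(a,b) = \sup_{x}(\Gamma_{a,b}(x)-\mathcal{L}(x))$; the bound on $-\inf_x(\Gamma_{a,b}(x)-\mathcal L(x))$ follows by a symmetric argument (reflect the picture across $\mathcal L$), so dealing with $\tilde D(a,b)$ suffices for $D(a,b)$.

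The key step is the analogue of Lemma~\ref{l:transversalmp}: if $u^* = (r/2,mr/2)$ is the midpoint of $\mathcal L$ and $L_\ell$ is the unit segment at height $k+\ell$ above $u^*$, then
\[
\P\!\left[\Gamma_{a,b}(r/2)-mr/2 \geq k r^{2/3}\right] \;\leq\; e^{-c(\psi)k}.
\]
To prove this, decompose the event into the ``far'' part $G=\{\Gamma_{a,b}(r/2)\ge mr/2 + \tfrac{r}{10}\min(1, m^{-1})\}$ and the dyadic pieces $B_\ell=\{\Gamma_{a,b}(r/2)-mr/2 \in [(k{+}\ell)r^{2/3},(k{+}\ell{+}1)r^{2/3}]\}$. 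For $G$, we use the mean comparison between $X_{(0,0),(r,mr)}$ and $X_{(0,0),u'}+X_{u',(r,mr)}$ for $u'$ on a macroscopically displaced segment, which gains a linear-in-$r$ negative term; this works uniformly for $m$ in a bounded sub-interval of $(0,\infty)$. For the $B_\ell$ pieces, the central computation is Lemma~\ref{l:penaltyproper}: it gives exactly the quadratic penalty $-\frac{(k+\ell)^2}{8(m\vee 1)^{3/2}}r^{1/3}$ for passing through a point displaced by $(k+\ell)r^{2/3}$ from $\mathcal L$. Since $m \in (100/\psi, \psi/100)$, the prefactor $1/(m\vee 1)^{3/2}$ is bounded below by $c(\psi)>0$, and we can absorb the supremum of $\tilde X_{(0,0),u'} + \tilde X_{u',(r,mr)} - \tilde X_{(0,0),(r,mr)}$ over $u' \in L_\ell$ by using Proposition~\ref{t:treesup} at scales $r$ and Proposition~\ref{t:treeinf} to lower bound $\tilde X_{(0,0),(r,mr)}$; this yields $\P[B_\ell] \le e^{-c(\psi)(k+\ell)}$, and summing over $\ell$ gives the required midpoint bound.

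With the midpoint estimate in hand, the chaining from Lemmas~\ref{l:transversal3}--\ref{l:transversal5} transfers verbatim. Choose $j_0$ such that $2^{-j_0}r = (k/10)r^{2/3}$, define the sequence $k_j = \frac{k}{10^5}\prod_{i<j}(1+2^{-i/10})$, and let $A_j$ be the event that $\Gamma_{a,b}(x) - \mathcal L(x) \le k_j r^{2/3}$ at every dyadic point $x \in S_j = \{\ell r 2^{-j}\}$. Lemma~\ref{l:order} (Polymer Ordering) works for any slope, since it is purely a crossing argument, so $A_{j+1}^c \cap A_j$ is contained in the union over $h \in [2^j]$ of the event that a localized sub-polymer starting and ending on the shifted line $y = mx + k_j r^{2/3}$ deviates at its midpoint by more than $(k_{j+1}-k_j)r^{2/3}$; applying the generalized midpoint lemma at scale $r' = 2^{-j}r$ with excess $(k/10^5)2^{j/2}(r')^{2/3}$ gives the required $4^{-j}e^{-c(\psi)k}$ bound. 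Summing over $j$ yields control of $\tilde D(a,b)$ and hence of $D(a,b)$ with the desired exponential tail.

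The main obstacle, and essentially the only place where the slope genuinely matters, is ensuring that the constants in the ``far'' event $G$ and in the quadratic penalty of Lemma~\ref{l:penaltyproper} remain uniformly bounded in $\psi$ as $m$ varies over $(100/\psi, \psi/100)$. Since the range of $m$ is a compact sub-interval of $(0,\infty)$, all implicit constants can be taken to depend only on $\psi$, and the rest of the machinery (Propositions~\ref{t:treeinf} and~\ref{t:treesup}, Lemma~\ref{l:penaltysum}) already contains the required uniformity. No new geometric ideas are needed beyond checking that the translation-plus-slope reparametrization leaves every estimate intact.
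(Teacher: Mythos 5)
Your proposal is correct and follows exactly the route the paper intends: the paper omits the proof of this corollary, remarking only that the chaining argument behind Theorem~\ref{t:transversal} (the midpoint bound of Lemma~\ref{l:transversalmp} plus the dyadic chaining of Lemmas~\ref{l:transversal3}--\ref{l:transversal5}, all resting on the quadratic penalty of Lemma~\ref{l:penaltyproper} and the polymer ordering of Lemma~\ref{l:order}) goes through verbatim for any slope bounded away from $0$ and $\infty$, which is precisely what you verify, with the uniformity in $m$ correctly traced to the compactness of $(100/\psi,\psi/100)$. One small imprecision: reflection across the slanted line $\mathcal{L}$ is not a symmetry of the increasing-path structure; the reduction from $D$ to $\tilde{D}$ should instead use the point reflection through the center $(r/2,mr/2)$ (or the coordinate transpose $(x,y)\mapsto(y,x)$, which sends slope $m$ to $1/m$ and stays in the allowed range).
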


\section{Bounds on constrained paths}
\label{s:const}
Our objective in this subsection is to obtain bounds on lengths of the longest increasing path between two points constrained to be contained within certain parallelograms. Clearly the constrained paths have smaller lengths than unconstrained paths, hence the upper tail results in Corollary \ref{c:moddevtilde} automatically holds in this scenario. For the lower tail, we have the following result.

\begin{lemma}
\label{l:pathbox}
Let $k>0$ be fixed. Consider the parallelogram $U=U_{r,m,k}$ where $m\in (\frac{2}{\psi}, \frac{\psi}{2})$. Let $u=(0,0)$ and let $u'=(r,mr+hr^{2/3})$ where $|h|\leq \frac{k}{2}$. Then there exist positive constants $r_0(k,\psi)$, $\theta_0(k,\psi)$ and a constant $c=c(k,\psi)>0$ such that for all $r>r_0$ and $\theta>\theta_0$ we have
$$\P[\tilde{X}_{u,u'}^{U^c}\leq -\theta r^{1/3}]\leq e^{-c\sqrt{\theta}}.$$
\end{lemma}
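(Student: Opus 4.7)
The plan is to construct a near-optimal path from $u$ to $u'$ that provably lies in $U$ by concatenating locally optimal paths on a fine subdivision of the segment $[u,u']$. I would fix $N\asymp\theta^{3/2}$ with a large implicit constant $C'=C'(k,\psi)$ to be specified, and set $u_i=u+(i/N)(u'-u)$ for $i=0,\ldots,N$. Since $|h|\leq k/2$, the whole segment joining $u$ and $u'$ sits at distance at least $(k/2)r^{2/3}$ from $\partial U$, and the same holds for every intermediate point $u_i$.

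First, applying Corollary \ref{c:transversaldiffslope} at horizontal scale $r/N$ to each subinterval (with a slight enlargement of the slope range constant to accommodate $m+hr^{-1/3}$), the topmost maximal path from $u_i$ to $u_{i+1}$ has transversal fluctuation at most $(k/4)r^{2/3}$ outside an exceptional event of probability $\exp(-c_1 k N^{2/3}/4)$; a union bound then gives that the event $A$ on which every local optimum lies inside $U$ has probability at least $1-\exp(-c_2 k\theta)$, since $N^{2/3}\asymp\theta$. On $A$, concatenation produces a valid increasing path in $U$, so
$$X^{U^c}_{u,u'}\ \geq\ \sum_{i=0}^{N-1} X_{u_i,u_{i+1}}.$$
Second, Lemma \ref{l:penaltysum} applied to the collinear sequence $u_0<\cdots<u_N$ (for which $h_i=0$ relative to the line through $u$ and $u'$) gives the deterministic estimate
$$\left|\sum_{i=0}^{N-1} \E X_{u_i,u_{i+1}}\ -\ \E X_{u,u'}\right|\ \leq\ \theta_\ast(1+N^{2/3})r^{1/3},$$
which is at most $(\theta/4)r^{1/3}$ once $C'$ is chosen large in terms of $\theta_\ast$. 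Third, the $X_{u_i,u_{i+1}}$ are independent because they depend on $\Pi$ restricted to the disjoint rectangles $\mbox{Box}(u_i,u_{i+1})$, and by Corollary \ref{c:moddevtilde} each $-\tilde{X}_{u_i,u_{i+1}}$ has exponential upper tail at scale $(r/N)^{1/3}$. A standard Chernoff argument with the uniformly bounded moment generating function $\E\exp\!\bigl(\lambda(-\tilde{X}_{u_i,u_{i+1}})^+/(r/N)^{1/3}\bigr)\leq K$ yields
$$\P\!\left[\sum_{i=0}^{N-1}\tilde{X}_{u_i,u_{i+1}}\leq -\tfrac{\theta}{4}r^{1/3}\right]\ \leq\ K^{N}\exp\!\bigl(-\lambda\theta N^{1/3}/4\bigr)\ \leq\ \exp(-c_3\theta^{3/2}),$$
where the last inequality uses $N^{1/3}\asymp\theta^{1/2}$ and takes $C'$ large enough that the entropy term $N\log K$ is swallowed by $\lambda\theta N^{1/3}/4$. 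Summing the three contributions bounds $\P[\tilde{X}^{U^c}_{u,u'}\leq -\theta r^{1/3}]$ by $\exp(-c_2 k\theta)+\exp(-c_3\theta^{3/2})$, which in particular is at most $\exp(-c\sqrt{\theta})$ for $\theta\geq \theta_0(k,\psi)$.

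The main obstacle is the simultaneous tuning of $N$: it must be (a) large enough that $kN^{2/3}$ dominates $\sqrt{\theta}$ so the slack $(k/4)r^{2/3}$ absorbs the local transversal fluctuation; (b) small enough that the cumulative endpoint-mismatch penalty $\theta_\ast N^{2/3}r^{1/3}$ from Lemma \ref{l:penaltysum} stays well below $\theta r^{1/3}$; and (c) arranged so that the entropy cost $N\log K$ is dwarfed by $\lambda\theta N^{1/3}/4$ in the Chernoff bound. These three constraints together pin down the sweet spot $N\asymp\theta^{3/2}$, and with the implicit constant $C'$ large enough in terms of $k,\psi,\theta_\ast,\lambda,K$ all three constraints are met.
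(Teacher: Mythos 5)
Your decomposition strategy is the same in spirit as the paper's (subdivide the segment into many pieces, bound the deviation on each piece using moderate deviations and a transversal-fluctuation control to stay inside $U$), but you substitute a Chernoff bound on the sum of segment fluctuations in place of the paper's simpler pigeonhole-plus-union-bound argument. The paper takes $J\asymp\theta^{3/4}$ pieces; if the deviation of $\tilde{X}^{U^c}_{u,u'}$ is $\leq -\theta r^{1/3}$ then, after accounting for the $O(\theta_\ast J^{2/3}r^{1/3})=O(\theta^{1/2}r^{1/3})$ centering error from Lemma~\ref{l:penaltysum}, pigeonhole forces one segment to have constrained deviation $\lesssim -(\theta/J)r^{1/3}$, and the paper then bounds each segment by splitting into the event that its unconstrained deviation is that small and the event $A_j$ that the local optimizer exits $U$. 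Your approach exploits the independence of the segments more aggressively and, done correctly, would actually give a stronger exponent.

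However, the argument as written contains a sign error in the tuning of $N$ that makes it collapse. You set $N\asymp C'\theta^{3/2}$ and then claim repeatedly that $C'$ should be taken large. But the Lemma~\ref{l:penaltysum} centering error is $\theta_\ast(1+N^{2/3})r^{1/3}=\theta_\ast(1+(C')^{2/3}\theta)r^{1/3}$: it \emph{increases} with $C'$. For this to be $\leq (\theta/4)r^{1/3}$ you need $(C')^{2/3}\theta_\ast\leq 1/4$, i.e.\ $C'$ \emph{small}, not large; taking $C'$ large makes the centering error exceed $\theta r^{1/3}$ and the whole chain of inequalities fails. The same direction error appears in your constraint (c): the entropy cost is $N\log K\asymp C'\theta^{3/2}\log K$ while the Chernoff exponent is $\lambda\theta N^{1/3}/4\asymp\lambda(C')^{1/3}\theta^{3/2}/4$, and you need $(C')^{2/3}<\lambda/(4\log K)$ — again $C'$ small. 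A related point you do not address: the moderate-deviation bounds (Corollary~\ref{c:moddevtilde}) and transversal-fluctuation bounds (Corollary~\ref{c:transversaldiffslope}) require the horizontal scale $r/N$ to exceed a fixed threshold $r_0(\psi)$. Since trivially $\theta\lesssim r^{2/3}$ (the paper observes $\theta\leq 10r^{2/3}$, else the event is empty), your $N\asymp\theta^{3/2}$ can be of order $r$, so $r/N$ is only bounded below by a constant — forcing $C'$ small once more, and requiring you to state the restriction on $\theta$ explicitly. The paper's choice $J\asymp\theta^{3/4}$ avoids this issue entirely since then $r/J\gtrsim r^{1/2}$. With $C'$ chosen appropriately small all three of your constraints can be satisfied simultaneously and the argument does close (yielding an exponent $e^{-c\theta}$ or better, stronger than the paper's $e^{-c\sqrt{\theta}}$), but as written, with $C'$ declared large, the proposal is internally inconsistent and the key centering estimate fails.
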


Observe that instead of $\theta$, we get a worse exponent of $\theta^{1/2}$ in this case which is not optimal (the exponent of $\theta$ was not optimal either), but is sufficient for our purposes.

\begin{proof}[Proof of Lemma \ref{l:pathbox}]
As usual we take $m=1$ without loss of generality.
Note that we only need to consider the case where $\theta \leq 10 r^{2/3}$, as for $\theta > 10 r^{2/3}$ and $r$ sufficiently large $$\P[\tilde{X}_{u,u'}^{U^c}\leq -\theta r^{1/3}]=\P[X_{u,u'}^{U^c}<0]=0.$$ 

Let $\theta$ now be sufficiently large and set $J=\lfloor\theta^{3/4}\rfloor $. For $0\leq j\leq J$, define $u_j=(\frac{jr}{J},\frac{jr+jhr^{2/3}}{J})$. See Figure \ref{f:constrainedp2p}. Notice that,
\begin{equation}
\label{e:p2pbox1}
\P[\tilde{X}_{u_{j},u_{j+1}}^{U^c}\leq -\frac{\theta}{J} r^{1/3}]\leq \P[\tilde{X}_{u_j,u_{j+1}}\leq -\frac{\theta}{J} r^{1/3}]+\P[A_j]
\end{equation}
where $A_j$ denotes the event the all maximal paths from $u_{j}$ to $u_{j+1}$ exit $U$. Since 
$$\frac{\theta}{J}r^{1/3}= \frac{\theta}{J^{2/3}}\cdot (\frac{r}{J})^{1/3}$$
it follows using Corollary \ref{c:moddevtilde} that for $r$ sufficiently large the first term in the right hand side of \eqref{e:p2pbox1} is bounded by $e^{-c\theta/J^{2/3}}\leq e^{-c\sqrt{\theta}}$ for some absolute constant $c>0$. Notice that we have used above that since $J\leq \theta \leq 10 r^{2/3}$, we have  $r\gg J$. Since 
$kr^{2/3}=kJ^{2/3}\cdot (\frac{r}{J})^{2/3}$
it follows similarly that for $\theta$ (and hence $J$) sufficiently large and $r$ sufficiently large the second term in the right hand side of \eqref{e:p2pbox1} is bounded by $e^{-ckJ^{2/3}}=e^{-c\theta^{1/2}}$.
Taking a union bound over all $j\in \{0,1,\ldots, J-1\}$ the result follows.
\end{proof}

\begin{figure}[h!]
\begin{center}
\includegraphics[width=\textwidth]{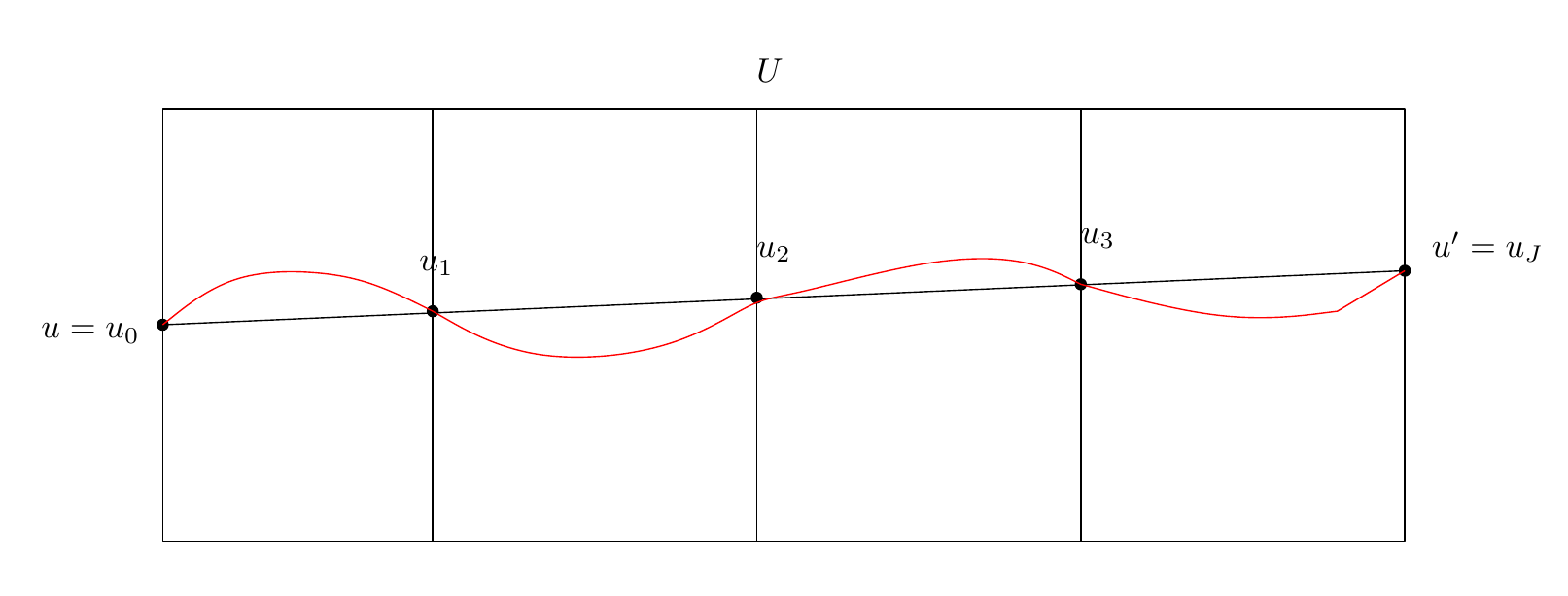}
\caption{Sequence of points $u_j$ as in the proof of Lemma \ref{l:pathbox}, between $u_j$ and $u_{j+1}$, the path is constrained to be in a very fat parallelogram, and hence the maximal path between $u_j$ and $u_{j+1}$ is unlikely to exit $U$ for any $j$}
\label{f:constrainedp2p}
\end{center}
\end{figure}

Our next objective is to prove results analogous to Proposition \ref{t:treeinf} and Proposition \ref{t:treesup} for constrained maximal paths. The result for the upper tail is a trivial corollary of Proposition \ref{t:treesup}, and for the lower tail we have the following result.

\begin{proposition}
\label{t:treebox}
Let $W$ be a fixed positive constant. Consider the parallelogram $U=U_{r,1,W}$. Then there is an absolute constant $\theta_0>0$ such that for all $\theta>\theta_0$ and a constant $c=c(W)>0$ such that and for all sufficiently large $r \geq r_0(W)$ we have

$$\P[\inf_{(u,u')\in \mathcal{S}(U)} \tilde{X}_{u,u'}^{U^c}\leq -\theta r^{1/3}]\leq e^{-c\theta^{1/3}}. $$
\end{proposition}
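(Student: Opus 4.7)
The strategy will be to mirror the proof of Proposition \ref{t:treeinf}, but replacing every use of Corollary \ref{c:moddevtilde} by Lemma \ref{l:pathbox} and, crucially, arranging all auxiliary parallelograms so that constrained paths between intermediate points automatically remain inside $U$. Since Lemma \ref{l:pathbox} gives tail exponent $\sqrt{\theta}$ rather than $\theta$, each step of the argument loses a power, leading to the final exponent $\theta^{1/3}$.

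First I would prove the boundary-to-boundary analog of Lemma \ref{l:treeinfbasic}: for the parallelogram $U=U_{r,1,W}$ with $L(U)=U\cap\{x\leq r/8\}$ and $R(U)=U\cap\{x\geq 7r/8\}$,
\[
\P\Bigl(\inf_{u\in L(U),\,u'\in R(U)}\tilde X^{U^c}_{u,u'}\leq -\theta r^{1/3}\Bigr)\leq e^{-c\sqrt{\theta}}.
\]
This proceeds exactly as in Lemma \ref{l:treeinfbasic}: one builds the dyadic tree of points $V_k\subset U$ and connects near-neighbours by the graph $\mathcal{T}_k$, checking the event $\mathcal{E}_k$ with budget $\theta(3/2)^{k-K}/100$. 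The two required modifications are (i) shrinking the tree slightly so every edge $(v,v')\in\mathcal{T}_k$ lies in a sub-parallelogram of $U$ of height $O(4^k a^{2/3})$ fully contained inside $U$ (this is possible because $U$ has height $Wr^{2/3}\gg 4^K a^{2/3}$ when $r$ is large), and (ii) estimating the edge probability by Lemma \ref{l:pathbox} instead of Corollary \ref{c:moddevtilde}. The tail becomes $e^{-c\theta^{1/2}(4/3)^{(K-k)/2}}$, and since the polynomial edge count $8^{K-k}$ is still dominated by this super-exponential decay for $\theta$ large, the union bound over all edges and scales gives $e^{-c\sqrt{\theta}}$.

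Next I would lift this to general pairs $(u,u')\in \mathcal{S}(U)$ by the multi-scale covering argument of Proposition \ref{t:treeinf}. One introduces families $\mathcal{U}_k$ of sub-parallelograms $U^*\subseteq U$ of width $2^{-k/4}r$ and height $2^{-k/6}r^{2/3}$, chosen so that for every $(u,u')\in\mathcal{S}(U)$ with $|x-x'|\in[0.75\cdot 2^{-k/4}r,\,2^{-k/4}r]$ there exists some $U^*\in\mathcal{U}_k$ containing $u\in L(U^*)$ and $u'\in R(U^*)$, and so that each $U^*$ is itself contained in $U$. Applying the boundary-to-boundary bound above to every $U^*\in\mathcal{U}_k$ with the appropriately rescaled budget gives at scale $k$ a failure probability at most $|\mathcal{U}_k|\exp(-c\sqrt{\theta}\,2^{k/24})$. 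Allocating $\theta$ suitably across the $O(\log r)$ scales and summing over $k$ produces the stated tail. The exponent $\theta^{1/3}$ (rather than $\theta^{1/2}$) arises because, to make the union bound at the finest scale close, one must split the deviation budget across scales in a way that trades one further power.

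The principal obstacle is geometric rather than probabilistic: every auxiliary parallelogram and every tree-edge ``tube'' that appears in the argument must be verified to sit inside $U$ so that Lemma \ref{l:pathbox} is actually applicable (otherwise the constrained length $X^{U^c}$ could differ drastically from the sub-parallelogram-constrained length we are estimating). This forces slight shrinkings of the Lemma \ref{l:treeinfbasic} tree and of the families $\mathcal{U}_k$ near the top and bottom boundaries of $U$; the parameter $W$ only affects the range of scales $k$ that are usable (hence the dependence $c=c(W)$) and the threshold $r_0(W)$, but not the form of the bound. Once these containment checks are in place, the probabilistic part is a direct transcription of the proofs of Lemma \ref{l:treeinfbasic} and Proposition \ref{t:treeinf} with Lemma \ref{l:pathbox} substituted for Corollary \ref{c:moddevtilde}.
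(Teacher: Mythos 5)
Your proposal follows essentially the same route as the paper: one proves a ``one-sided'' analogue of Lemma~\ref{l:treeinfbasic} (the paper's Lemma~\ref{l:treeboxinfbasiconesided}, going from $L(U)$ to the center $u_*$) by running the same dyadic tree argument with Lemma~\ref{l:pathbox} in place of Corollary~\ref{c:moddevtilde}, and then lifts it to all $(u,u')\in\mathcal{S}(U)$ via the multi-scale covering of Proposition~\ref{t:treeinf}. Two issues in your write-up are worth flagging. First, the justification you give for why the shrunken tree fits inside $U$ is not correct as stated: you claim ``$U$ has height $Wr^{2/3}\gg 4^K a^{2/3}$ when $r$ is large'', but $4^K a^{2/3}=(r/2)^{2/3}$, so the two sides are of the \emph{same} order $r^{2/3}$ and there is no such gap. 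The containment is still true, but for a subtler reason: (i) at coarse tree levels $k$ close to $K$ the construction of $u_k$ interpolates the vertical offset linearly towards $u_*$, so those points are automatically far from the top and bottom of $U$; and (ii) at finer levels the required clearance $4^k a^{2/3}$ is much smaller than $r^{2/3}$, so one only needs to push the tree root slightly away from the corners of $U$. The paper implements this by replacing the starting vertex $v$ with a point $g(v)$ displaced by $10r^{1/4}$ from the corners, after which all edges have the clearance Lemma~\ref{l:pathbox} requires; you should use something of this nature rather than appeal to an order-of-magnitude gap that does not exist. Second, you attribute the loss from exponent $\theta^{1/2}$ to $\theta^{1/3}$ to the covering step (``splitting the deviation budget across scales''), whereas the paper attributes it to the one-sided lemma itself (Lemma~\ref{l:treeboxinfbasiconesided} is already stated with exponent $\theta^{1/3}$), and the covering argument in Proposition~\ref{t:treeinf} actually uses the \emph{same} budget $\theta r^{1/3}$ at every scale without any splitting. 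Neither accounting is made fully explicit, and $\theta^{1/3}$ is in any case weaker than what the union-bound estimates yield, so this does not invalidate your sketch, but your stated mechanism for the exponent drop does not match what happens in either version of the argument.
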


Proposition \ref{t:treebox} follows from the next lemma.



\begin{lemma}
\label{l:treeboxinfbasiconesided}
Consider the parallelogram $U=U_{r,m,W}$ where $m\in (\frac{2}{\psi}, \frac{\psi}{2})$. Define $u_*=(\frac{r}{2}, \frac{mr}{2})$ and $L(U)= U\cap \{x< r/8\}$. Then there exist constants $r_0$, $\theta_2$ and $c_3>0$ such that for all $r>r_0$ and $\theta>\theta_2$ we have

\begin{equation}
\label{e:tibo1box}
\P\left(\inf_{u\in L(U)} \tilde{X}_{u,u_*}^{U^c}\leq -\theta r^{1/3}\right)\leq e^{-c_3\theta^{1/3}}.
\end{equation}
\end{lemma}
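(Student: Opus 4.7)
The plan is to adapt the tree-of-points argument from the proof of Lemma \ref{l:treeinfbasic}, replacing each unconstrained deviation estimate for $\tilde{X}_{v,v'}$ with a constrained estimate coming from Lemma \ref{l:pathbox}. By scaling we may assume $m = 1$, and we may assume $W \leq 1$ (since larger $W$ is easier). Fix $a$ with $r \gg a$ and $r/(2a) = 8^K$ for some positive integer $K$. Define the grid points $V_k \subseteq U$ and the auxiliary graph $\mathcal{T}_k$ on $V_k$ exactly as in the proof of Lemma \ref{l:treeinfbasic}, except that the transverse coordinates $T_k$ are rescaled by a small factor $c_W > 0$ (depending only on $W$) so that every tree point lies in the strip $\{\, |y - x| \leq W r^{2/3}/4 \,\} \subseteq U$. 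For each edge $(v, v') \in \mathcal{T}_k$ let $U_{v,v'}$ denote the parallelogram centered on the segment $\overline{vv'}$ with length $|v - v'| \asymp 8^k a$ and transverse half-width $h_W (8^k a)^{2/3}$, where $h_W$ is chosen small enough (depending only on $W$) so that every $U_{v,v'}$ sits inside $U$.

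With these local constraint parallelograms in place, define the good events
\[
\mathcal{E}_k := \Bigl\{\tilde{X}_{v,v'}^{U_{v,v'}^c} \geq -\tfrac{\theta}{100}\, r^{1/3} (1.5)^{k-K} \text{ for all } (v,v') \in \mathcal{T}_k \Bigr\}.
\]
A deviation of size $(\theta/100) r^{1/3} (1.5)^{k-K}$ across a segment of length $8^k a$ is, in natural units of $(8^k a)^{1/3}$, equal to $\theta_k (8^k a)^{1/3}$ with $\theta_k := (\theta/100)(4/3)^{K-k}$, so by Lemma \ref{l:pathbox} the probability of violation along a single edge is at most $e^{-c\sqrt{\theta_k}}$. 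Since $|\mathcal{T}_k|$ has size $O(32^{K-k})$,
\[
\P[\mathcal{E}_k^c] \leq C \cdot 32^{K-k} \exp\bigl(-c\sqrt{(\theta/100)(4/3)^{K-k}}\bigr).
\]
Letting $j = K - k$ and summing, the series is dominated by the $j = 0$ term for $\theta$ large, giving $\sum_k \P[\mathcal{E}_k^c] \leq e^{-c_3 \theta^{1/3}}$ with room to spare (the argument in fact yields the stronger exponent $\sqrt{\theta}$).

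It remains to verify that on $\bigcap_k \mathcal{E}_k$ we have $\inf_{u \in L(U)} \tilde{X}_{u, u_*}^{U^c} \geq -\theta r^{1/3}$. Given $u \in L(U)$, follow the construction in the proof of Lemma \ref{l:ek}: produce a sequence $u = u_{-1}, u_0, u_1, \ldots, u_{i_0} = u_*$ by snapping the straight line from $u$ to $u_*$ to grid points, with $(u_i, u_{i+1}) \in \mathcal{T}_i$ at level $i$. Concatenating the $U_{u_i, u_{i+1}}$-constrained increasing paths realizing each $X_{u_i, u_{i+1}}^{U_{u_i, u_{i+1}}^c}$ yields an increasing path from $u$ to $u_*$ lying inside $\bigcup_i U_{u_i, u_{i+1}} \subseteq U$. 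Using Lemma \ref{l:penaltysum} to compare $\sum_i \E X_{u_i, u_{i+1}}$ with $\E X_{u, u_*}$, the telescoping error is $O(r^{1/3})$ and the total length is at least $\E X_{u, u_*} - (\theta/3) r^{1/3}$, giving the claim. The main obstacle is the bookkeeping of widths: the $U_{v,v'}$ must simultaneously be wide enough in the transverse direction for Lemma \ref{l:pathbox} to be useful and narrow enough so that their union fits in the width-$W r^{2/3}$ strip $U$. The natural KPZ transverse scaling $(8^k a)^{2/3}$ makes this possible by choosing $h_W$ and $c_W$ as suitably small constants proportional to $W$.
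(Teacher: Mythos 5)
Your overall approach — adapting the tree-of-points construction from Lemma~\ref{l:treeinfbasic} and replacing the unconstrained one-step estimates by Lemma~\ref{l:pathbox} — is the right one, and it matches the paper's route. However, there is a concrete gap in how you handle the starting point $u$.

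You shrink the transverse grid by a factor $c_W$ so that \emph{every tree vertex} lies in the strip $\{|y-x|\leq Wr^{2/3}/4\}$. But $u\in L(U)$ is arbitrary: it may lie essentially on the boundary of $U$, with $|y-x|$ as large as $Wr^{2/3}$. In that case the nearest level-$0$ grid point $u_0$ in your rescaled $V_0$ sits a transverse distance $\gtrsim \tfrac{3}{4}Wr^{2/3}$ away, while the horizontal displacement $x_0-x_{-1}$ is $O(a)$. Either $u_0$ is not even $\geq u$ in the partial order, or the box spanned by $u$ and $u_0$ has $\E X_{u,u_0}\asymp\sqrt{a\cdot Wr^{2/3}}$, which is far larger than the allowed budget $\theta r^{1/3}/100$ once $r$ is large. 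So the estimate $\tilde{X}_{u_{-1},u_0}^{U^c}\geq -O(a)$ that you implicitly inherit from Lemma~\ref{l:ek} breaks down for such $u$, and the concatenation argument fails at its very first edge. (Your remark about ``bookkeeping of widths'' addresses whether the local parallelograms $U_{v,v'}$ fit inside $U$, which they do, but it does not address the initial jump from $u$ into your inner grid.)

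The paper resolves this not by rescaling the grid but by a preprocessing step: it defines a map $g$ which moves any $u\in L(U)$ that is within distance $10r^{1/4}$ of $\partial U$ a tiny distance $10r^{1/4}$ inward (either increasing $x$ or increasing $y$). Since the displacement is only $O(r^{1/4})$, one has $\tilde{X}_{u,u_*}^{U^c}\geq \tilde{X}_{g(u),u_*}^{U^c}-O(r^{1/4})$, so it suffices to control the infimum over $g(u)$. Starting the tree construction from $g(u)$, the paper then verifies that every edge $(u_j,u_{j+1})$ stays at distance at least $c\,|u_{j+1}-u_j|^{2/3}$ from $\partial U$, which is exactly the condition Lemma~\ref{l:pathbox} requires; the grid itself is \emph{not} shrunk. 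To repair your argument you would either need to introduce such a pre-processing map, or else modify the grid so that near the boundary the level-$0$ points are at full density (not confined to the inner strip) and handle the distance-to-boundary requirement of Lemma~\ref{l:pathbox} for those near-boundary vertices by a separate device. As written, the first step of your chain is not controlled.

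A minor remark: you claim the argument gives the stronger exponent $\sqrt{\theta}$; the paper states the weaker $\theta^{1/3}$, which is all it needs. Your accounting of the union bound is plausible, so this part is not the problem; the gap is the boundary issue above.
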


Proposition \ref{t:treebox} follows from Lemma \ref{l:treeboxinfbasiconesided} exactly as in the proof of Proposition \ref{t:treeinf}. We shall omit the details. Lemma \ref{l:treeboxinfbasiconesided} is proved similarly to how \eqref{e:treeinfbasic1} is established in the proof of Lemma \ref{l:treeinfbasic}. In stead of writing out the whole proof again, we only point out the significant differences below.

As before we assume without loss of generality $m=1$. As in the proof of Lemma \ref{l:treeinfbasic} we construct a graph with vertices in $U$ such that for every $u\in L(U)$ there exists a path $u_0,u_1,\ldots ,u_{i}=u_{*}$ in this graph. Then we ask that $\tilde{X}_{u_{j},u_{j+1}}$ is not too small for any $j$, and show that on this event $\tilde{X}_{u,u_{*}}\geq -\theta r^{1/3}$. Then an upper bound on $\P[\inf_{u\in L(U)} \tilde{X}_{u,u_*}^{\partial U}\leq -\theta r^{1/3}]$ is established taking an union bound over the edges of the graph, where  in stead of Theorem \ref{t:moddevlowertail} (or the lower tail bound for Corollary \ref{c:moddevtilde}), now we use Lemma \ref{l:pathbox}.

However, there is a significant difference in how the sequence of $u_j$'s is constructed for a given point $u\in L(u)$. Observe that Lemma \ref{l:pathbox} only applies to pairs of points $(u,u')$ if the distance of both $u$ and $u'$ from the boundary of $U$ is at least $c|u-u'|^{2/3}$ for some constant $c>0$. Hence for example we cannot set $u_0$ to be the top left corner of $u$; see Figure \ref{f:constrained2}. Hence we do the following. For $u=(x,y)\in L(u)$, define $g(u)$ as follows. If $Wr^{2/3}-|y-x| \geq 10r^{1/4}$; set $g(u)=u$. If $y-x\geq Wr^{2/3}-10r^{1/4}$; set $g(u)=(x+10r^{1/4},y)$. If $y-x\leq -Wr^{2/3}+10r^{1/4}$, set $g(u)=(x,y+10r^{1/4})$. Intuitively, for points $u$, that are near the corners of $u$, we are choosing $g(u)$ to be a point slightly away from the corners of $U$. See Figure \ref{f:constrained2}. It is not too hard to observe that for $r$ and $\theta$ sufficiently large   
$$\left\{\inf_{u\in L(U)} \tilde{X}_{u,u_*}^{U^c}\geq -\theta r^{1/3} \right\}\supseteq \left\{\inf_{u\in L(U)} \tilde{X}_{g(u),u_*}^{U^c}\geq -\frac{\theta r^{1/3}}{2} \right\}.$$

\begin{figure}[h!]
\begin{center}
\includegraphics[width=0.3\textwidth]{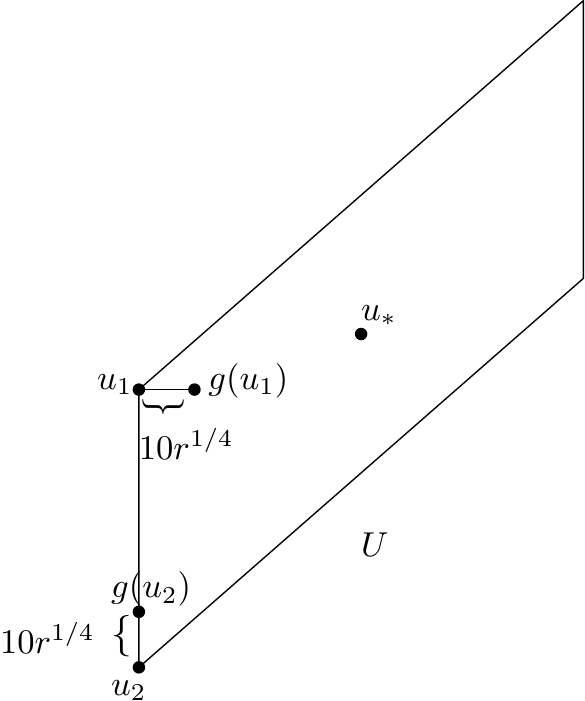}
\caption{Constructing $g(u)$'s for different $u\in L(U)$ for the proof of Lemma \ref{l:treeboxinfbasiconesided}}
\label{f:constrained2}
\end{center}
\end{figure}

The rest of the proof is identical to the corresponding part in the proof of Lemma \ref{l:treeinfbasic} after observing that if the sequence $u_0, u_1, \ldots$ is constructed as in that proof starting from $u_{-1}=g(v)$ for some $v\in L(u)$ then for all $j$ the distance from $u_j$ and $u_{j+1}$ to $\partial U$ as it least $c|u_{j+1}-u_{j}|^{2/3}$ for some absolute constant $c$. Observe that since we start with an exponent of $\theta ^{1/2}$, we end up with a worse exponent $\theta^{1/3}$, which nonetheless is sufficient for our purposes.

\section{Necessary modifications for the Exponential Case}
\label{s:discrete}
In this section we briefly describe how we can adapt the preceding arguments to prove Theorem \ref{t:maintheoremdlpp}. The argument is in essence the same, though some minor modification is necessary. In this case we  shall only consider increasing paths between points in $\Z^2$. For $u=(x,y),u'=(x',y')\in \Z^2$, we define
$$X_{u,u'}=\max_{\pi}\sum_{v\in \pi\setminus \{u'\} } \xi_{v}$$
where the maximum is taken over all increasing lattice paths $\pi$ from $u$ to $u'$.
The following Tracy-Widom Fluctuation result in this case is due to Johansson \cite{Jo99}.

\begin{theorem}
\label{t:Jo99}
Let $h>0$ be fixed. Let $v=(0,0)$ and $v_{n}=(n,\lfloor hn \rfloor)$. Let $T_{n}=X_{v,v_n}$. Then
\begin{equation}
\label{e:twlimitdiscrete}
\dfrac{T_n-(1+\sqrt{h})^2n}{h^{-1/6}(1+\sqrt{h})^{4/3}n^{1/3}} \stackrel{d}{\rightarrow} F_{TW}.
\end{equation}
\end{theorem}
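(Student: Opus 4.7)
The plan is to leverage the exact solvability of exponential directed last passage percolation through its underlying determinantal structure, paralleling the approach of Baik–Deift–Johansson for the Poissonian model (Theorem \ref{BDJ99}). First I would apply the Robinson–Schensted–Knuth correspondence, which identifies the joint law of the passage times $\{X_{(0,0),(i,j)}\}$ with the shape of a random Young tableau generated from the weights $\{\xi_v\}$. For i.i.d.\ $\mathrm{Exp}(1)$ weights this yields the identity in distribution
\[
T_n \stackrel{d}{=} \lambda_{\max}^{(n)},
\]
where $\lambda_{\max}^{(n)}$ is the top eigenvalue of an $n \times (\lfloor hn\rfloor+1)$ complex Wishart matrix, i.e.\ a Laguerre Unitary Ensemble with aspect ratio governed by $h$.

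Next I would express the CDF of $\lambda_{\max}^{(n)}$ as a Fredholm determinant built from the Laguerre orthogonal polynomials via the Christoffel–Darboux kernel,
\[
\mathbb{P}[T_n \le t] \;=\; \det\!\left(I - K_n^{\mathrm{Lag}} \mathbf{1}_{(t,\infty)}\right)_{L^2(\mathbb{R})}.
\]
The centering $(1+\sqrt{h})^2 n$ and scale $h^{-1/6}(1+\sqrt{h})^{4/3} n^{1/3}$ are, respectively, the location of the right (soft) edge of the limiting Marchenko–Pastur spectral density for this ensemble and the local Airy scaling at that edge. The core step is a steepest-descent asymptotic analysis of $K_n^{\mathrm{Lag}}$ under the substitution $t \mapsto (1+\sqrt{h})^2 n + s h^{-1/6}(1+\sqrt{h})^{4/3} n^{1/3}$, executed either via classical Plancherel–Rotach asymptotics for Laguerre polynomials or through a Deift–Zhou Riemann–Hilbert analysis. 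The output is trace-norm convergence of the rescaled kernel to the Airy kernel
\[
K_{\mathrm{Ai}}(u,v) \;=\; \frac{\mathrm{Ai}(u)\,\mathrm{Ai}'(v) - \mathrm{Ai}'(u)\,\mathrm{Ai}(v)}{u - v}.
\]

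Finally, continuity of Fredholm determinants on trace-class operators promotes this kernel convergence to
\[
\mathbb{P}\!\left[\frac{T_n - (1+\sqrt{h})^2 n}{h^{-1/6}(1+\sqrt{h})^{4/3} n^{1/3}} \le s\right] \;\longrightarrow\; \det(I - K_{\mathrm{Ai}} \mathbf{1}_{(s,\infty)}) \;=\; F_{TW}(s),
\]
which is precisely the claimed convergence in distribution. The main obstacle is the edge asymptotic analysis of the Laguerre kernel: one must construct local Airy parametrices at the soft edge, control contour deformations uniformly in $n$, and upgrade pointwise kernel convergence to trace-norm convergence on a half-line so that the Fredholm determinant identity passes to the limit. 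Once this technical step is in place, the identification of the explicit scaling constants reduces to a direct computation from the Marchenko–Pastur edge data together with the standard soft-edge universality framework.
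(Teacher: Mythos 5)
The paper does not prove this theorem; it is quoted as a black-box input from Johansson \cite{Jo99}, in the same way that Theorem \ref{BDJ99} is quoted from \cite{BDJ99}. Your sketch correctly reconstructs the argument from that cited source: the identification of $T_n$ with the largest eigenvalue of a Laguerre Unitary (complex Wishart) ensemble, followed by Fredholm-determinant representation and steepest-descent/Riemann--Hilbert edge asymptotics of the kernel, with the Marchenko--Pastur soft edge supplying the centering $(1+\sqrt{h})^2 n$ and the scale $h^{-1/6}(1+\sqrt{h})^{4/3}n^{1/3}$. One small historical caveat: Johansson's original paper actually works first with geometric weights, where RSK gives a Schur measure whose correlation kernel is the Meixner kernel, and then passes to the exponential/Laguerre case by a scaling limit, rather than directly constructing the Laguerre parametrix as you describe; both routes are valid and reach the same Airy-kernel limit. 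In any event, since the paper offers no proof to compare against, your proposal stands as a correct independent account of the standard argument.
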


As before, we define $\tilde{X}_{u,u'}=X_{u,u'}-\E X_{u,u'}$ and $\hat{X}_{u,u'}=X_{u,u'}-2d(u,u')$ since now $2d(u,u')$ is the first order term in $\E X_{u,u'}$. We have the following moderate deviation estimates from \cite{BFP12} and \cite{Baik}.

\begin{theorem}
\label{t:moddevdiscrete}
Let $\psi>1$ be fixed. Let $Z_{h,n}$ denote the last passage time from $(0,0)$ to $(n, \lfloor h n \rfloor)$ where $h \in (1/\psi, \psi)$. Then  there exist constants $N_0=N_0(\psi)$, $t_0=t_0(\psi)$ and $c=c(\psi)$ such that we have for all $n>N_0, t>t_0$ and all $h \in (\frac{1}{\psi}, \psi)$

$$\P[|Z_{h,n}- n(1+\sqrt{h})^{2}|\geq tn^{1/3}]\leq e^{-ct}.$$
\end{theorem}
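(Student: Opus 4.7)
The plan is to deduce this uniform-in-$h$ estimate from the sharper moderate-deviation bounds already in the literature, together with a compactness argument in the parameter $h$. By Johansson's correspondence \cite{Jo99}, $Z_{h,n}$ is distributed as the largest eigenvalue of an appropriate Laguerre Unitary Ensemble, and $\P[Z_{h,n}\leq s]$ admits a Fredholm determinant representation with a kernel depending analytically on $h$ for $h$ bounded away from $0$ and $\infty$. The analyses in \cite{BFP12} and \cite{Baik} yield, for each fixed $h>0$, upper and lower tail bounds of Tracy--Widom type with rates $\exp(-c(h)t^{3/2})$ and $\exp(-c(h)t^{3})$ respectively, both strictly stronger than the $e^{-ct}$ we need. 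The entire content of the theorem is therefore the uniformity of the constants as $h$ varies over the compact interval $[1/\psi,\psi]$.

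For that uniformity, I would first reduce to $h\in [1,\psi]$ via the transposition symmetry $Z_{h,n}\stackrel{d}{=}Z_{1/h,\lfloor hn\rfloor}$ (which holds up to rounding). Then, since the steepest-descent estimates underlying \cite{BFP12,Baik} are uniform on compact subsets of parameter space---the Laguerre kernel $K_{n}^{(h)}$ and its scaling limit depend analytically on $h$ on any such set---inspection of those arguments gives constants that are continuous in $h$, hence bounded on $[1,\psi]$. A softer alternative that avoids reopening the Riemann--Hilbert analysis is to fix a finite $\varepsilon$-net $h_1,\ldots,h_K\in [1,\psi]$, apply the pointwise bounds at each $h_i$, and interpolate by a monotone coupling: replacing the endpoint $(n,\lfloor hn\rfloor)$ with $(n,\lfloor h_in\rfloor)$ changes the passage time by at most the sum of $\exp(1)$ weights in a strip of area $O(|h-h_i|n)$, and this is easy to control by Bernstein's inequality.

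The main obstacle is the large-$t$ regime $t\gtrsim n^{2/3}$, where the Tracy--Widom-type asymptotics no longer govern the tails and the above bounds are not directly available. Here one argues by hand. For the lower tail, $Z_{h,n}$ dominates the length of any single deterministic monotone path, in particular a staircase with $n+\lfloor hn\rfloor$ steps, which is a sum of $\Theta(n)$ i.i.d.\ $\exp(1)$ variables; a standard Bernstein inequality then gives $\P[Z_{h,n}\leq n(1+\sqrt{h})^{2}-tn^{1/3}]\leq e^{-ct}$ uniformly in $h\in[1/\psi,\psi]$. For the upper tail in the large-$t$ regime, a union bound over the $\binom{(1+\lfloor hn\rfloor)}{n}=e^{O(n)}$ monotone lattice paths, combined with a Chernoff bound for sums of $\exp(1)$ variables, yields $\P[Z_{h,n}\geq n(1+\sqrt{h})^2+tn^{1/3}]\leq e^{-ct}$ because $tn^{1/3}$ in this regime dominates any linear-in-$n$ entropy contribution. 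Splicing together the moderate-deviation regime $t_{0}\leq t\leq n^{2/3}$ (uniform in $h$ by the argument above) with the large-deviation regime $t\gtrsim n^{2/3}$ then produces the claimed uniform bound.
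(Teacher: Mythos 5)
The paper's own ``proof'' of this theorem is essentially a citation: the upper tail is read off from Lemma~1 (equation (37)) of~\cite{BFP12}, and the lower tail from Proposition~3 and the surrounding discussion (equations (56)--(57)) there, with uniformity in the slope $h$ over a compact interval tacitly taken from those estimates. Your proposal goes a different route: you try to deduce the uniform bound from the pointwise-in-$h$ estimates of~\cite{BFP12} together with a compactness/interpolation argument, and you separately handle the large-$t$ regime by elementary path-counting and Bernstein bounds. Being explicit about uniformity and about $t\gtrsim n^{2/3}$ is a reasonable concern that the paper's sketch glosses over, so the attempt to be more careful is welcome.

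However, the interpolation step is where the argument breaks. You claim that replacing the endpoint $(n,\lfloor hn\rfloor)$ by $(n,\lfloor h_i n\rfloor)$ changes the passage time by ``at most the sum of $\exp(1)$ weights in a strip of area $O(|h-h_i|n)$''. This is not correct: if $h>h_i$, the clean bound obtained by splitting the optimal path at the line $y=\lfloor h_i n\rfloor$ is
\[
Z_{h,n}\;\leq\; Z_{h_i,n}\;+\;X_{(0,\lfloor h_i n\rfloor),(n,\lfloor hn\rfloor)},
\]
and the second term is a last passage time across a box with $n$ columns, so its mean is at least of order $n$ \emph{regardless} of how small $|h-h_i|$ is. This $O(n)$ slack swamps the change in centering $n[(1+\sqrt h)^2-(1+\sqrt{h_i})^2]=O(|h-h_i|n)$ and the $O(n^{1/3})$ error budget, so no choice of $\varepsilon$-net makes the interpolation close. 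To make a coupling argument of this type work you would need to control where and for how long the geodesic actually lives in the thin strip above $y=\lfloor h_i n\rfloor$ (transversal-fluctuation input), which is a real argument, not an application of Bernstein's inequality. The ``inspect the Riemann--Hilbert analysis'' alternative is not a proof as written. A smaller point: you assert that~\cite{BFP12} yields rates $e^{-c(h)t^{3/2}}$ (upper) and $e^{-c(h)t^3}$ (lower); those are the \emph{optimal} exponents, but, as the paper explicitly remarks, the bounds actually extracted from~\cite{BFP12} have the weaker exponents $t$ and $t^{3/2}$, which still suffice for the stated $e^{-ct}$ conclusion. Your treatment of the large-$t$ regime by a deterministic staircase (lower tail) and path-counting plus Chernoff (upper tail) is fine in spirit, though the matching constants at the crossover $t\asymp n^{2/3}$ need care --- for $t$ below roughly $2\sqrt{\psi}\,n^{2/3}$ the staircase bound gives nothing, so you must check that the moderate-deviation range genuinely covers that gap.
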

Theorem \ref{t:moddevdiscrete} is not explicitly stated and proved in \cite{BFP12} in the above form. We sketch briefly below how Theorem \ref{t:moddevdiscrete} follows from the estimates in \cite{BFP12}, as explained to us by Baik \cite{Baik}. Recall the connection between DLPP with Exponential weights and TASEP with step initial conditions. The upper tail estimate in Theorem \ref{t:moddevdiscrete} is contained in Lemma 1 of \cite{BFP12}, see equation (37) there. For the lower tail, observe the following. Using the Fredholm determinant formula for the distribution function of $Z_{h,n}$ (properly centred and scaled) given by (22) in \cite{BFP12}, the lower tail follows from Proposition 3 there and the paragraph preceding it. See (56) and (57) in \cite{BFP12}.  Observe that, unlike \cite{LM01, LMS02} the exponents obtained from \cite{BFP12} are not optimal, it gives an exponent of $t$ in the upper tail and $t^{3/2}$ in the lower tail (the optimal exponents are $t^{3/2}$ and $t^3$ respectively, as mentioned before) but this is sufficient for our purposes.

Once we have Theorem \ref{t:moddevdiscrete} at our disposal, the proof proceeds in exactly similar manner, we establish all consequences of moderate deviation estimates in \S~\ref{s:moddev} (with possibly changed constants) using Theorem \ref{t:moddevdiscrete} instead of Theorem \ref{t:moddevlowertail} and Theorem \ref{t:moddevuppertail}. The proof now proceeds as before. We define key events exactly as in \S~\ref{s:events}. The results in \S~\ref{s:maxpathnice}, \S~\ref{s:gx}, \S~\ref{s:rxcond} and \S~\ref{s:locsuccess} follows as before.

To get an improved path in an analogous manner to the argument of \S~\ref{s:finish}, we do the following. Observe that
\[
\zeta_{1-\epsilon} \stackrel{d}{=} \zeta_{1} + B_{\epsilon}\zeta'_{1-\epsilon}
\]
where $\zeta_{1-\epsilon}$, $\zeta'_{1-\epsilon}$ are exponential variables with rate $(1-\epsilon)$, $\zeta_1$ is an exponential variable with rate $1$, $B_{\epsilon}$ is a $\mbox{Ber}(\epsilon)$ variable and all of these are independent. Introducing a defect on the diagonal is equivalent to reinforcing the diagonal (i.e., adding to the entries on the diagonal) with these independent variables with positive expectation. The rest of the arguments in \S~\ref{s:finish} works in the same way as before, where we reinforce on discrete lines $y=x+m$ with $m\in \Z$, and add up the improvements instead of integrating. Notice that we can get rid of the area condition in $G_x$ for the Exponential case, as for any path $\gamma$ the expected increase in length in the reinforced environment is proportional to the number of points $\gamma$ hits the reinforced line.

Another thing one needs to take care of is the following. In the Exponential set-up to make sure that the length of two augmented paths is equal to the sum of their individual lengths, we do not add up the contribution of the very last vertex. To make sure, that this does not change any of our estimates (and also to make sure that the estimates about paths conditioned not to hit certain paths work as before), we need to condition on the event that no passage time on $[0,n]^2$ is bigger than $\log ^2 n$. This event holds with high probability and hence rest of the arguments will work as before.

{\bf Remark:}
Notice that the main difference between the Exponential and the Poissonian case is that in the Poissonian case, the moderate deviation estimates for the length of a path between two corners of a rectangle does not depend on the aspect ratio of the rectangle. In the Exponential case, we can only get uniform moderate deviation estimates for rectangles with bounded aspect ratio, which forces us to work harder to avoid steep paths, or work out different estimates for steep paths. To deal with only the Poissonian case, one can get rid of all the conditions involving $\psi$ (e.g. the steepness condition) and also one can prove Proposition \ref{t:treesup} and Proposition \ref{t:treeinf} without the assumptions that the slope between the pairs of points considered are bounded. However, we worked under these assumptions in order to have a proof which can be adapted to the Exponential case with minimal changes.

Before we finish we add a brief discussion about the diffusive fluctuations for $L_n^{\lambda}$ and $T_n^{\epsilon}$.

\begin{figure*}[h]
\begin{center}
\includegraphics[width=0.5\textwidth]{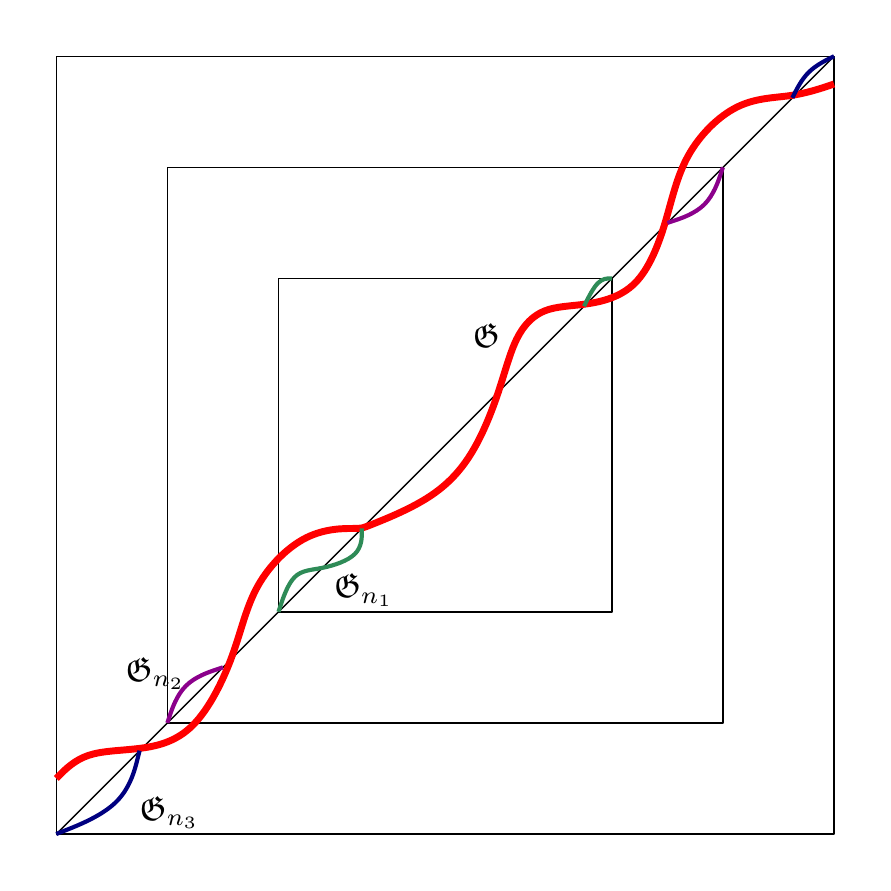}
\caption{Paths $\mathfrak{G}_{n_1}$, $\mathfrak{G}_{n_2}$, $\mathfrak{G}_{n_3}$ for $n_1<n_2<n_3$ and the limiting path $\mathfrak{G}$}
\label{f:clt}
\end{center}
\end{figure*}

{\bf Remark:} For convenience we shall only consider the case of Exponential Directed Last Passage Percolation here. Let $\epsilon>0$ be fixed. For $n\in \N$, consider the maximal path $\mathfrak{G}_n$ from $(-n,-n)$ to $(n,n)$, let $\mathfrak{T}_n$ denote its passage time. It is a consequence of Theorem \ref{t:maintheoremdlpp} that reinforcement on the diagonal leads to a pinning transition for $\mathfrak{G}_n$, and it is not too hard conclude that the typical transversal distance of $\mathfrak{G}_n$ from the diagonal is $O(1)$, and therefore as $n\to \infty$, the paths $\mathfrak{G}_n$ converges almost surely to a limiting path $\mathfrak{G}$; see Figure \ref{f:clt}.  For $i\in \Z$, let $\mathfrak{G}_{i}\in \Z$ be such that $(i,\mathfrak{G}_{i})\in \mathfrak{G}$. Let $\mathfrak{X}_i$ be the passage time corresponding to this vertex. By translation invariance of the environment, $\{\mathfrak{X}_i\}_{i\in \Z}$ is a stationary process. Also since the path is pinned to the diagonal, it can be argued that the correlation function of this process decays sufficiently fast. This implies that $\sum_{i=-n}^{n}\mathfrak{X}_i$ has diffusive fluctuations and obeys a Gaussian central limit theorem. A central limit theorem for $\mathfrak{T}_n$ (which is the same as a CLT for $T_n^{\epsilon}$) can then be inferred by observing that $\sum_{-n}^{n} \mathfrak{X}_i$ is close in distribution to $\mathfrak{T}_n$ (after rescaling), again using the fact that the paths $\mathfrak{G}_n$ are localised around the diagonal.    

\bigskip

{\bf Acknowledgements.}  The authors would like to thank Jinho Baik for many helpful discussions and in particular for pointing out and discussing the results of~\cite{BFP12}. We thank anonymous referees for numerous comments and suggestions that helped improve the exposition of the paper. V.S.  thanks H. Spohn for introducing the problem to him fifteen years ago and for many stimulating discussions since then. V.S. also thanks V. Beffara. This work was completed while R.B. was a graduate student at the Department of Statistics, University of California, Berkeley. He gratefully acknowledges the support of UC Berkeley Graduate Fellowship.

\bibliography{slowbondnew}

\begin{thebibliography}{10}

\bibitem{AD95}
David Aldous and Persi Diaconis.
\newblock Hammersley's interacting particle process and longest increasing
  subsequences.
\newblock {\em Probab. Th. Rel. Fields}, 103:199--213, 1995.

\bibitem{ay}
K.~Alexander and K.~Yildirim.
\newblock Directed polymers in a random environment with a defect line.
\newblock {\em arXiv:1402.6660}, 2014.

\bibitem{Baik}
J.~Baik.
\newblock Personal Communication.

\bibitem{BFP12}
J.~Baik, Ferrari P.L., and P\'ech\'e S.
\newblock Convergence of the two-point function of the stationary {TASEP}.
\newblock {\em Arxiv preprint arXiv:1209.0116}, 2012.

\bibitem{br}
J.~Baik and E.~Rains.
\newblock Symmetrized random permutations.
\newblock MSRI volume: Random Matrix Models and Their Applications(40):1--19,
  2001.

\bibitem{BDJ99}
Jinho Baik, Percy Deift, and Kurt Johansson.
\newblock On the distribution of the length of the longest increasing
  subsequence of random permutations.
\newblock {\em J. Amer. Math. Soc}, 12:1119--1178, 1999.

\bibitem{BXX01}
Jinho Baik, Percy Deift, Ken T.-R. McLaughlin, Peter Miller, and Xin Zhou.
\newblock Optimal tail estimates for directed last passage site percolation
  with geometric random variables.
\newblock {\em Adv. Theor. Math. Phys.}, 2001.

\bibitem{bssv}
V.~Beffara, V.~Sidoravicius, D.~Surgailis, and M.E. Vares.
\newblock Polymer pinning in a random medium as influence percolation.
\newblock volume~48, pages 1--15. IMS, 2006.

\bibitem{CLW15}
Ivan Corwin, Zhipeng Liu, and Dong Wang.
\newblock Fluctuations of tasep and lpp with general initial data.
\newblock {\em Ann. Appl. Probab.}, 2015.
\newblock To appear.

\bibitem{clst13}
O.~Costin, J.L. Lebowitz, E.R. Speer, and A.~Troiani.
\newblock The blockage problem.
\newblock {\em Bull. Inst. Math. Acad. Sinica (New Series)}, 8(1):47--72, 2013.

\bibitem{crez}
P.~Covert and F~Rezakhanlou.
\newblock Hydrodynamic limit for particle systems with nonconstant speed
  parameter.
\newblock {\em Journal of Stat. Phys.}, 88(1/2), 1997.

\bibitem{htm03}
M.~Ha, J.~Timonen, and M.~den Nijs.
\newblock Queuing transitions in the asymmetric simple exclusion process.
\newblock {\em Phys. Rev. E}, 68(056122), 2003.

\bibitem{hn}
T.~Hwa and T.~Nattermann.
\newblock Disorder-induced depinning transition.
\newblock {\em Phys Rev B Condens Matter}, 51(1):455--469, 1995.

\bibitem{JL1}
S.~Janowsky and J.~Lebowitz.
\newblock Finite size effects and shock fluctuations in the asym- metric simple
  exclusion process.
\newblock {\em Phys. Rev. A}, 45:618--625, 1992.

\bibitem{JL2}
S.~Janowsky and J.~Lebowitz.
\newblock Exact results for the asymmetric simple exclusion process with a
  blockage.
\newblock {\em Phys. Rev. A}, 77:35--51, 1994.

\bibitem{Jo99}
Kurt Johansson.
\newblock Shape fluctuations and random matrices.
\newblock {\em Communications in Mathematical Physics}, 209(2):437--476, 2000.

\bibitem{J00}
Kurt Johansson.
\newblock Transversal fluctuations for increasing subsequences on the plane.
\newblock {\em Probability theory and related fields}, 116(4):445--456, 2000.

\bibitem{km}
D.~Kandel and D.~Mukamel.
\newblock Defects, interface profile and phase transitions in growth models.
\newblock {\em Europhys. Lett.}, 20(4):325 --331, 1992.

\bibitem{lgt2}
T.M. Liggett.
\newblock {\em Stochastic Interacting Systems: Contact, Voter and Exclusion
  Processes}.
\newblock Springer Verlag, New York, 1999.

\bibitem{LogShep77}
B.F. Logan and L.A. Shepp.
\newblock A variational problem for random young tableaux.
\newblock {\em Advances in Math.}, 26:206--222, 1977.

\bibitem{LM01}
Matthias L{\"o}we and Franz Merkl.
\newblock {Moderate deviations for longest increasing subsequences: The upper
  tail.}
\newblock {\em Comm. Pure Appl. Math.}, 54:1488--1519, 2001.

\bibitem{LMS02}
Matthias L{\"o}we, Franz Merkl, and Silke Rolles.
\newblock {Moderate deviations for longest increasing subsequences: The lower
  tail.}
\newblock {\em J. Theor. Probab.}, 15(4):1031--1047, 2002.

\bibitem{mmmthn}
M.~Myllys, J.~Maunuksela, J.~Merikoski, J.~Timonen, M.~Ha, and M.~den Nijs.
\newblock Effect of a columnar defect on the shape of slow-combustion fronts.
\newblock {\em Phys. Rev. E}, 68(051103), 2003.

\bibitem{Ro81}
H.~Rost.
\newblock Nonequilibrium behaviour of a many particle process: Density profile
  and local equi- libria.
\newblock {\em Zeitschrift f. Warsch. Verw. Gebiete}, 58(1):41--53, 1981.

\bibitem{sas}
T.~Sasamoto.
\newblock Fluctuations of the one-dimensional asymmetric exclusion process
  using random matrix techniques.
\newblock {\em Journal of statistical Mechanics: Theory and experiment}, pages
  1--31, 2007.

\bibitem{SPS15}
Johannes Schmidt, Vladislav Popkov, and Andreas Schadschneider.
\newblock Defect-induced phase transition in the asymmetric simple exclusion
  process.
\newblock {\em EPL (Europhysics Letters)}, 110(2):20008, 2015.

\bibitem{timo}
T.~Seppalainen.
\newblock Hydrodynamic profiles for the totally asymmetric exclusion process
  with a slow bond.
\newblock {\em Journal of Statistical Physics}, 102(1/2), 2001.

\bibitem{ssv}
V.~Sidoravicius, D.~Surgailis, and M.E. Vares.
\newblock Poisson broken lines process and its application to bernoulli first
  passage percolation.
\newblock {\em Acta Appl. Math.}, 58(1-3):311--325, 1999.

\bibitem{VerKer77}
A.M. Vershik and S.V. Kerov.
\newblock Asymptotics of the plancherel measure of the symmetric group and the
  limiting form of young tables.
\newblock {\em Soviet Math. Dokl.}, 18:527--531, 1977.
\newblock Translation of Dokl. Acad. Nauk. SSSR 233 (1977) 1024-1027.

\end{thebibliography}
\bibliographystyle{plain}

\end{document}